\theoremstyle{plain}
\newtheorem{theorem}{Theorem}[section]
\newtheorem{corollary}[theorem]{Corollary}
\newtheorem{lemma}[theorem]{Lemma}
\newtheorem{proposition}[theorem]{Proposition}
\newtheorem{claim}[theorem]{Claim}
\newtheorem{fact}[theorem]{Fact}
\newtheorem{thmintro}{Theorem}
\theoremstyle{definition}
\newtheorem{definition}[theorem]{Definition}
\newtheorem{remark}[theorem]{Remark}
\newcommand{\setword}[2]{%
  \phantomsection
  #1\def\@currentlabel{\unexpanded{#1}}\label{#2}%
}
\newcommand{\eps}{\varepsilon}
\newcommand{\wt}[1]{\widetilde{#1}}
\newcommand{\R}{\mathbb R}
\newcommand{\ZZ}{\mathbb Z}
\newcommand{\gbl}{g_{\beta,L_1}}
\newcommand{\cT}{\mathcal{T}}
\newcommand{\cW}{\mathcal{W}}
\newcommand{\cF}{\mathcal{F}}
\newcommand{\cL}{\mathcal{L}}
\newcommand{\cC}{\mathcal{C}}
\newcommand{\cS}{\mathcal{S}}
\newcommand{\lt}{\widetilde {\mathcal{T}}}
\newcommand{\mt}{\widetilde M}
\newcommand{\ft}{\widetilde f}
\newcommand{\TT}{\mathbb{T}}
\newcommand{\tf}{\widetilde f}
\newcommand{\fol}{\cF}
\newcommand{\fn}{\widetilde \cF}
\newcommand{\cG}{\mathcal G}
\newcommand{\wcs}{\widetilde \cW^{\mathrm{cs}}}
\newcommand{\wcu}{\widetilde \cW^{\mathrm{cu}}}
\newcommand{\wphi}{\widetilde \Phi}
\newcommand{\cs}{\mathrm{cs}}
\newcommand{\cu}{\mathrm{cu}}
\newcommand{\cfix}{ \mathrm{Fix}^{\mathrm{\mathrm{c}}}_{\widetilde f}}
\newcommand{\rquotient}[2]{{\left.\raisebox{.1em}{$#1$}\middle/\raisebox{-.1em}{$#2$}\right.}}
\title[Partial hyperbolicity in $3$-manifolds, Part I]{Partially hyperbolic diffeomorphisms homotopic to 
the identity in dimension $3$ \\ Part I: the dynamically coherent case}
\author[T. Barthelm\'e]{Thomas Barthelm\'e}
\address{Queen's University, Kingston, ON}
\email{thomas.barthelme@queensu.edu}
\urladdr{sites.google.com/site/thomasbarthelme}
\author[S.R. Fenley]{Sergio R.\ Fenley} 
\address{Florida State University, Tallahassee, FL 32306}
\email{fenley@math.fsu.edu}
\author[S. Frankel]{Steven Frankel} 
\address{Washington University in St.~Louis, St.~Louis, Mo}
\email{steven.frankel@wustl.edu}
\author[R. Potrie]{Rafael Potrie} 
\address{Centro de Matem\'atica, Universidad de la Rep\'ublica, Uruguay}
\email{rpotrie@cmat.edu.uy}
\urladdr{http://www.cmat.edu.uy/~rpotrie/}
\begin{document}
 
 \begin{abstract}
We study 3-dimensional dynamically coherent partially hyperbolic diffeomorphisms that are homotopic to the identity, focusing on the transverse geometry and topology of the center-stable and center-unstable foliations, and the dynamics within their leaves. We find a structural dichotomy for these foliations, which we use to show that every such diffeomorphism on a hyperbolic or Seifert-fibered 3-manifold is leaf-conjugate to the time-one map of a (topological) Anosov flow. This proves a classification conjecture of Hertz--Hertz--Ures in hyperbolic 3-manifolds and in the homotopy class of the identity of Seifert manifolds. 
\end{abstract}
%

\subjclass{37D30,57R30,37C15,57M50,37D20}

\keywords{Partial hyperbolicity, 3-manifold topology, foliations, classification}
 

\thanks{
T.~Barthelm\'e was partially supported by the NSERC (Funding reference number RGPIN-2017-04592).
S.~Fenley was partially supported by Simons Foundation grant number 280429.
S.~Frankel was partially supported by National Science Foundation grant number~DMS-1611768.
R.~Potrie was partially supported by CSIC~618 and ANII--FCE--135352.}
 \maketitle



\section{Introduction}\label{s.introduction}

A diffeomorphism $f$ of a $3$-manifold $M$ is \emph{partially hyperbolic} if it preserves a splitting of the tangent bundle $TM$ into three $1$-dimensional sub-bundles
\[ TM = E^{\mathrm{s}} \oplus E^{\mathrm{c}} \oplus E^{\mathrm{u}}, \]
where the \emph{stable bundle} $E^{\mathrm{s}}$ is eventually contracted, the \emph{unstable bundle} $E^{\mathrm{u}}$ is eventually expanded, and the \emph{center bundle} $E^{\mathrm{c}}$ is distorted less than the stable and unstable bundles at each point.

From a dynamical perspective, the interest in partial hyperbolicity stems from its appearance as a generic consequence of certain dynamical conditions, such as stable ergodicity and robust transitivity. For example, a diffeomorphism is \emph{transitive} if it has a dense orbit, and \emph{robustly transitive} if this behavior persists under $C^1$-small deformations. Every robustly transitive diffeomorphism on a $3$-manifold is either Anosov or ``weakly'' partially hyperbolic \cite{DPU}. Analogous results hold for stable ergodicity and in higher dimensions \cite{BDV}.

From a geometric perspective, one can think of partial hyperbolicity as a generalization of the discrete behavior of Anosov flows, which feature prominently in the theory of $3$-manifolds. Recall that a flow $\Phi$ on a $3$-manifold $M$ is \emph{Anosov} if it preserves a splitting of the unit tangent bundle $TM$ into three $1$-dimensional sub-bundles
\[ TM = E^{\mathrm{s}} \oplus T\Phi \oplus E^{\mathrm{u}}, \]
where $T\Phi$ is the tangent direction to the flow, $E^{\mathrm{s}}$ is eventually exponentially contracted, and $E^{\mathrm{u}}$ is eventually exponentially expanded. After flowing for a fixed time, an Anosov flow generates a partially hyperbolic diffeomorphism of a particularly simple type, where the stable and unstable bundles are contracted uniformly, and the center direction, which corresponds to $T\Phi$, is left undistorted. More generally, one can construct partially hyperbolic diffeomorphisms of the form $f(x) = \Phi_{\tau(x)}(x)$ where $\Phi$ is a (topological) Anosov flow and $\tau\colon M \to \mathbb{R}_{>0}$ is a positive continuous function; the partially hyperbolic diffeomorphisms obtained in this way are called \emph{discretized Anosov flows}\footnote{Note that a discretized Anosov flow is not in general the time-$1$ map of a reparametrization of the Anosov flow, see Appendix \ref{ss.DAF}.}.

In this article and its sequel \cite{BFFP-sequel} we show that large classes of partially hyperbolic diffeomorphisms can be identified with discretized Anosov flows. This confirms a large part of the well-known conjecture by Pujals \cite{BW}, which attempts to classify $3$-dimensional partially hyperbolic diffeomorphisms by asserting that they are all either discretized Anosov flows or deformations of certain kinds of algebraic examples.

\subsection{Homotopy, integrability, and conjugacy}
There are two important obstructions to identifying a partially hyperbolic diffeomorphism with a discretized Anosov flow. The first comes from the fact that the latter are homotopic to the identity, while the former may be homotopically non-trivial. Homotopically non-trivial examples include Anosov diffeomorphisms on the $3$-torus with distinct eigenvalues, ``skew products,'' and the counterexamples to Pujals' conjecture constructed in \cite{BPP,BGP,BZ,BGHP}.

The second major obstruction comes from the integrability of the bundles in a partially hyperbolic splitting. In the context of an Anosov flow $\Phi$, the stable and unstable bundles $E^{\mathrm{s}}$ and $E^{\mathrm{u}}$ integrate uniquely into a pair of $1$-dimensional foliations, the \emph{stable} foliation $\cW^{\mathrm{s}}$ and \emph{unstable} foliation $\cW^{\mathrm{u}}$. In fact, even the \emph{weak stable} and \emph{weak unstable} bundles $E^{\mathrm{s}} \oplus T\Phi$ and $E^{\mathrm{u}} \oplus T\Phi$ integrate uniquely into a transverse pair of $\Phi$-invariant $2$-dimensional foliations, the \emph{weak stable} foliation $\cW^{ws}$ and \emph{weak unstable} foliation $\cW^{wu}$.

In the context of a partially hyperbolic diffeomorphism $f$, the stable and unstable bundles still integrate uniquely into stable and unstable foliations, $\cW^{\mathrm{s}}$ and $\cW^{\mathrm{u}}$ \cite{HP-survey}. However, the \emph{center-stable} and \emph{center-unstable} bundles $E^{\mathrm{c}} \oplus E^{\mathrm{s}}$ and $E^{\mathrm{c}} \oplus E^{\mathrm{u}}$ may fail to be uniquely integrable. In fact, there are examples where it is impossible to find \emph{any} $f$-invariant $2$-dimensional foliation tangent to the center-stable or center-unstable bundle \cite{HHU-noncoherent, BGHP}.

If one can find a pair of $f$-invariant foliations tangent to the center-stable and center-unstable bundles then $f$ is said to be \emph{dynamically coherent}. This condition is certainly satisfied if $f$ is a discretized Anosov flow (cf.~Appendix~\ref{ss.DAF}).

We take dynamical coherence as an assumption in the present article; Part II \cite{BFFP-sequel} works without this.

\subsection{Results}
Most of the existing progress towards classifying partially hyperbolic diffeomorphisms takes an outside-in approach, restricting attention to particular classes of manifolds, and comparing to an \emph{a priori} known model partially hyperbolic (see \cite{CHHU,HP-survey} for recent surveys). In particular, partially hyperbolic diffeomorphisms have been completely classified in manifolds with solvable or virtually solvable fundamental group \cite{HP-Nil,HP-Sol}. Here, by classification we mean both the description of the topology of manifolds and isotopy classes admitting such dynamics as well as the production of topological models that describe such systems. 

Ours is an inside-out approach, using the theory of foliations to understand the way the local structure that defines partial hyperbolicity is pieced together into a global picture. We then relate the dynamics of these foliations to the large-scale structure of the ambient manifold. An advantage of this method is that, since it does not rely on a model partially hyperbolic to compare to, we can consider any manifold, not just one where an Anosov flow is known to exist. Note that here it will be possible to construct a topological model even if the manifold is not known to admit an Anosov flow, nor if such flow is unique. 

The following two theorems are the main consequences of our work, applied to two of the major classes of $3$-manifolds. Note that the classification of partially hyperbolic diffeomorphisms is always considered up to finite lifts and iterates, since one can easily build infinitely many different but uninteresting examples by taking finite covers.

\begin{thmintro}[Seifert manifolds] \label{thmintro:Seifert}
	Let $f\colon M \to M$ be a dynamically coherent partially hyperbolic diffeomorphism on a closed Seifert-fibered $3$-manifold. If $f$ is homotopic to the identity, then some iterate is a discretized Anosov flow. 
\end{thmintro}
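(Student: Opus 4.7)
The plan is to analyze the center-stable and center-unstable foliations $\cW^{cs}$ and $\cW^{cu}$ of $f$ via their lifts to the universal cover $\wt M$, and to show---possibly after replacing $f$ by an iterate---that a good lift $\wt f$ fixes every leaf of $\wt\cW^{cs}$ and $\wt\cW^{cu}$. Once this is achieved, the one-dimensional $f$-invariant center foliation $\cW^{c}=\cW^{cs}\cap\cW^{cu}$ has each of its leaves individually fixed by $\wt f$, and one can integrate a continuous invariant vector field along its leaves to produce a topological flow $\Phi$ whose orbits are exactly the center leaves. Partial hyperbolicity of $f$ then translates into uniform contraction along $\cW^{s}$ and expansion along $\cW^{u}$ for $\Phi$, so $\Phi$ is a topological Anosov flow; and since each center leaf is invariant, one can write $f=\Phi_{\tau}$ for a positive continuous $\tau$, which is precisely the definition of a discretized Anosov flow.

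First I would reduce to the case where the Seifert fibration has hyperbolic base orbifold: the other Seifert fibered $3$-manifolds have (virtually) solvable fundamental group, and partially hyperbolic diffeomorphisms on them are already classified in \cite{HP-Nil,HP-Sol}. In the hyperbolic-base case, after passing to a finite cover, the regular fiber represents an infinite-order central element in $\pi_1(M)$, and $\wt M$ fibers over $\HH^{2}$ with $\R$-fibers. Because $f$ is homotopic to the identity, $\wt f$ commutes with every deck transformation, so it acts as a $\pi_1(M)$-equivariant homeomorphism on the (possibly non-Hausdorff) leaf spaces $\cL^{cs}$ and $\cL^{cu}$ of $\wt\cW^{cs}$ and $\wt\cW^{cu}$. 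At this point I would invoke the structural dichotomy announced in the abstract: either $\wt f$ acts trivially on both leaf spaces, or it acts as a nontrivial translation on at least one of them.

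The heart of the argument is ruling out this translation case on Seifert manifolds with hyperbolic base. Here I would use a Brittenham--Thurston--Levitt style theorem adapted to the (possibly branching) foliations $\cW^{cs}$ and $\cW^{cu}$: up to isotopy they are either vertical (saturated by Seifert fibers) or horizontal (transverse to them). The vertical case conflicts with the stable or unstable contraction, since the Seifert fiber is a nontrivial central element whose length cannot be uniformly contracted or expanded; the horizontal case forces $\wt M$ to split, up to leaf-preserving isotopy, as $\HH^{2}\times\R$ with the Seifert fiber acting by a fixed nonzero translation on the $\R$-factor. The $\pi_1(M)$-equivariance of $\wt f$ together with the homotopic-to-identity hypothesis then pins the action of $\wt f$ on the $\R$-factor to be a bounded, and hence, after an iterate to deal with orientation, trivial homeomorphism of each leaf space. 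Combined with the transversality of $\cW^{cs}$ and $\cW^{cu}$ along the center direction, this forces both leaf-space actions into the ``fixed'' alternative simultaneously.

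The main obstacle I anticipate is precisely this rigidity step for the branching center foliations. The foliations $\cW^{cs}$ and $\cW^{cu}$ are not a priori taut in a classical sense, their leaf spaces may be non-Hausdorff, and they are only quasi-isometric to genuine foliations in a controlled way, so Brittenham's theorem has to be promoted to a form valid for $f$-invariant branching foliations on a Seifert fibered $3$-manifold with hyperbolic base. One also has to synchronize the analyses of $\cW^{cs}$ and $\cW^{cu}$, ruling out mixed configurations where one foliation is horizontal while the other behaves differently, by exploiting their transversality and the centrality of the Seifert fiber. Once that is in hand, the construction of the flow $\Phi$ and the identification $f=\Phi_{\tau}$ are essentially routine given the general properties of discretized Anosov flows collected in the appendix.
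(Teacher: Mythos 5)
There is a genuine gap, and it sits exactly where you locate ``the heart of the argument.'' Your plan is to rule out the translation case by arguing that horizontality of the foliations plus $\pi_1$-equivariance and the homotopic-to-identity hypothesis force the action of $\wt f$ on the leaf space to be ``bounded, and hence, after an iterate to deal with orientation, trivial.'' This deduction fails: a bounded-displacement, fixed-point-free homeomorphism of $\R$ (a translation) stays fixed-point-free under all iterates, and the translation case cannot be excluded for $f$ itself --- it genuinely occurs. Take a $k$-fold fiberwise cover $M$ of $T^1\Sigma$ for a hyperbolic surface $\Sigma$, and let $f = g^1_M\circ s^i$, where $g^t_M$ lifts the geodesic flow and $s$ lifts rotation by $2\pi$ in the fiber; this is dynamically coherent, partially hyperbolic, homotopic to the identity, and every good lift translates both leaf spaces (this is Remark~\ref{rem.classification_weird_examples}). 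What saves the theorem is not that $\wt f^k$ fixes a leaf, but that one may choose a \emph{different} good lift of an iterate, namely $\delta^{n}\wt f^{k}$ with $\delta$ the central (fiber) deck transformation; such a lift exists fixing a leaf if and only if the circle homeomorphism induced by $\wt f$ on $\cL^{cs}/\langle\delta\rangle$ has rational rotation number. Proving rationality is the real content of Proposition~\ref{p.liftfixleaf_DC}: if the rotation number were irrational, the induced $\pi_1(\Sigma)$-representation into $\mathrm{Homeo}_+(S^1)$ would commute with a(n eventually minimal) irrational rotation, hence be (semi)conjugate into rotations and have zero Euler class, contradicting the nonzero Euler class of $M$ coming from partial hyperbolicity (\cite{HaPS}). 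Your proposal contains no substitute for this rotation-number/Euler-class step, and no argument of the ``bounded hence eventually trivial'' type can replace it, since the obstruction is realized by actual examples.

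Two smaller points. First, ruling out mixed behavior is not a formal consequence of transversality and centrality of the fiber: in the paper it rests on the center-dynamics analysis inside fixed center stable leaves (Proposition~\ref{p.fixedcenter}, via perfect fits, the graph transform, and the expansion-versus-coarse-contraction contradiction). Second, even once a good lift fixes every leaf of both $\wt\cW^{cs}$ and $\wt\cW^{cu}$, it does not immediately fix every center leaf, because a component of the intersection of a cs-leaf with a cu-leaf need not be unique; this requires the open-and-closed argument of Proposition~\ref{p.allcentralfixed} before the Bonatti--Wilkinson construction of the topological Anosov flow can be applied. Your final ``integrate the center field'' step is fine as a sketch of that last stage, but the two preceding steps are not routine, and the translation case is where the proposal breaks down.
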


We eliminate the assumption of dynamical coherence in \cite{BFFP-sequel}; this resolves the Pujals' Conjecture for Seifered fibered manifolds\footnote{The conjecture is true for Seifert manifolds with fundamental group with polynomial growth \cite{HP-Nil} and false in Seifert-fibered manifolds when the isotopy class is not the identity as the examples in \cite{BGP,BGHP} are not homotopic to identity and so cannot be discretized Anosov flows.}. Note that Theorem \ref{thmintro:Seifert} does not use the classification of Anosov flows on Seifert-fibered 3-manifolds \cite{Ghys,Barbot96}.

\begin{thmintro}[Hyperbolic manifolds] \label{thmintro:Hyperbolic}
	Let $f\colon M \to M$ be a dynamically coherent partially hyperbolic diffeomorphism on a closed hyperbolic $3$-manifold. Then some iterate of $f$ is a discretized Anosov flow.
\end{thmintro}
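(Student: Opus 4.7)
My plan is to reduce to the identity homotopy class using Mostow rigidity, and then invoke the structural dichotomy established in the paper, finally ruling out the pathological branch using the topology of hyperbolic 3-manifolds. \emph{First}, by Mostow rigidity every self-homotopy equivalence of a closed hyperbolic 3-manifold $M$ is homotopic to an isometry, and $\mathrm{Isom}(M)$ is finite; since closed hyperbolic 3-manifolds are $K(\pi,1)$'s, this forces the set of homotopy classes of self-diffeomorphisms to be finite. Hence some iterate $f^k$ is homotopic to the identity, and replacing $f$ by $f^k$ brings us exactly into the hypotheses handled by the paper's main structural results. Note that partial hyperbolicity and dynamical coherence are preserved under taking iterates, and the conclusion of being a discretized Anosov flow for $f^k$ is what the theorem statement asks for.

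\emph{Next}, I would apply the dichotomy for the center-stable and center-unstable foliations $\cW^{cs}$ and $\cW^{cu}$. One branch, call it (a), asserts that these foliations are ``uniform'' enough to directly produce a topological Anosov flow $\Phi$ on $M$ whose weak foliations are $\cW^{cs}$ and $\cW^{cu}$, in which case $f$ is automatically a discretized Anosov flow over $\Phi$. The other branch, call it (b), produces a constrained but nontrivial configuration of ``double translates''---center leaves of $\wcs$ or $\wcu$ in $\mt$ that are fixed by nontrivial deck transformations, together with controlled asymptotic behavior of nearby leaves (periodic center leaves, perfect fits, and so on).

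\emph{Then} I would rule out branch (b) under the hypothesis of hyperbolicity. Working in $\mt = \HH^3$, leaves of $\wcs$ and $\wcu$ are properly embedded planes and admit well-defined limit sets in $S^2_\infty$; the deck group acts on the leaf spaces preserving this asymptotic structure. The configurations produced by (b) force the existence of either a $\pi_1$-injective torus, a Reeb component in $\cW^{cs}$ or $\cW^{cu}$, or two non-commensurable deck transformations sharing an invariant axis/leaf and thus generating a $\ZZ^2$ subgroup of $\pi_1(M)$. Each of these contradicts atoroidality or word-hyperbolicity of $\pi_1(M)$, so only branch (a) can occur and $f$ is a discretized Anosov flow.

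The main obstacle is the last step: one must convert the foliated/dynamical pathology of (b) into clean group-theoretic obstructions, and this requires careful bookkeeping of the fixed-leaf sets, their stabilizers, and their action at infinity. Much of the intricate earlier material of the paper---branching foliations, axes of deck transformations acting on leaf spaces, and the fine analysis of center dynamics within a leaf---is presumably exactly what makes this translation sharp enough to rule (b) out on a hyperbolic 3-manifold. By contrast, the reduction in the first step and the identification ``good branch $\Rightarrow$ discretized Anosov flow'' in the second are, while technical, conceptually more straightforward.
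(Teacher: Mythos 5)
Your first two steps track the paper: Mostow rigidity gives an iterate homotopic to the identity, and the structural dichotomy (after establishing $f$-minimality on hyperbolic manifolds and excluding mixed behavior) reduces everything to two cases, double invariance --- which yields a discretized Anosov flow via the Bonatti--Wilkinson-type argument --- and double translation, where both $\cW^{cs}$ and $\cW^{cu}$ are $\R$-covered and uniform and the good lift $\ft$ translates both leaf spaces. The genuine gap is in your third step: you propose to kill the double-translation branch by extracting a purely topological/group-theoretic obstruction (a $\pi_1$-injective torus, a Reeb component, or a $\ZZ^2$ in $\pi_1(M)$), and this cannot work. The double-translation configuration is perfectly compatible with hyperbolic $3$-manifolds at the level of foliations and homeomorphisms: taut, $\R$-covered, uniform foliations abound on hyperbolic manifolds (e.g.\ weak stable foliations of $\R$-covered Anosov flows), and one can build homeomorphisms homotopic to the identity that translate both leaf spaces of such a pair --- the paper points this out explicitly. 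No torus, Reeb component, or $\ZZ^2$ is forced; the foliations involved are taut and the stabilizers behave exactly as they do for honest Anosov weak foliations. So atoroidality alone cannot rule out case (b); what must be used is the partially hyperbolic \emph{dynamics}, not just the foliated topology.

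The actual mechanism is quite different and is where the real work lies. For an $\R$-covered uniform foliation on a hyperbolic manifold one invokes the Thurston--Calegari--Fenley transverse regulating pseudo-Anosov flow $\Phi$. Given $\gamma\in\pi_1(M)$ corresponding to a periodic orbit of $\Phi$, one first shows (using expansion of unstable arcs) that some $h=\gamma^n\circ\ft^m$ fixes a leaf $L$ of $\wt\cW^{cs}$; then the shadowing/core construction (Proposition~\ref{p.homeotranslation}) shows that the fixed set of $h$ in any fixed leaf is compact with \emph{negative} Lefschetz index, hence nonempty and of saddle or $p$-prong type coarsely. Partial hyperbolicity, on the other hand, forces every fixed point of $h$ in a fixed center-stable leaf to be repelling in the unstable direction transverse to that leaf, so every $h$-fixed leaf in the closed, nonempty interval of fixed leaves between $L_0$ and $\gamma(L_0)$ would be repelling --- impossible for a compact interval of fixed leaves. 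Your proposal contains neither the regulating pseudo-Anosov flow nor the Lefschetz-index comparison nor the final use of partial hyperbolicity, and without some replacement for these ingredients the double-translation case is not excluded.
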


This resolves a classification conjecture in \cite{CHHU} for hyperbolic 3-manifolds. We note here that hyperbolic 3-manifolds are well known to admit many partially hyperbolic diffeomorphisms as many Anosov flows have been constructed on them. The question of which hyperbolic 3-manifolds admit Anosov flows is still open, but our result implies that those which admit dynamically coherent partially hyperbolic diffeomorphisms must admit Anosov flows.

Note that this theorem does not assume that $f$ is homotopic to the identity, since any homeomorphism on a closed hyperbolic $3$-manifold has a finite power that is homotopic to the identity. It does, however, assume dynamical coherence.

Theorems~\ref{thmintro:Seifert} and \ref{thmintro:Hyperbolic} strengthen a more general statement which works in every 3-manifold and which requires some knowledge of taut foliations. See Appendix~\ref{ap.tautfol} for the relevant definitions.

Let $f\colon M \to M$ be a partially hyperbolic diffeomorphism on a closed $3$-manifold $M$. When $f$ is homotopic to the identity, we denote by $\ft$ a lift to the universal cover $\mt$ that is obtained by lifting such a homotopy. For a dynamically coherent partially hyperbolic diffeomorphism, we denote the center-stable and center-unstable foliations by $\cW^{\mathrm{cs}}$ and $\cW^{\mathrm{cu}}$, and their lifts to $\mt$ by $\wcs$ and $\wcu$. 

\begin{thmintro}\label{teo-main-coherent} 
	Let $f\colon M \to M$ be a dynamically coherent partially hyperbolic diffeomorphism on a closed $3$-manifold $M$ that is homotopic to 
	the identity. If $\cW^{\cs}$ and $\cW^{\cu}$ are $f$-minimal, then either 
	\begin{enumerate}[label=\rm{(\roman*)}]
		\item \label{item.teo-main-coherent-discretized} $f$ is a discretized Anosov flow, or 
		
		\item \label{item.teo-main-coherent-double_translation} $\cW^{\cs}$ and $\cW^{\cu}$ are $\R$-covered and uniform, and $\widetilde f$ acts as a translation on the leaf spaces of $\widetilde \cW^{\cs}$ and $\widetilde \cW^{\cu}$.   
	\end{enumerate}
\end{thmintro}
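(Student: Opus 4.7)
\medskip

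\noindent\textbf{Proof proposal.}
The plan is to analyze the action of $\ft$ on the leaf spaces of $\wcs$ and $\wcu$ via a dichotomy based on fixed leaves. Let $\cL^{cs}$ and $\cL^{cu}$ denote these leaf spaces, which are simply connected (possibly non-Hausdorff) $1$-manifolds. Since $f$ is homotopic to $\id$, the distinguished lift $\ft$ coming from the homotopy commutes with every deck transformation, so its induced action on both $\cL^{cs}$ and $\cL^{cu}$ is equivariant under the full deck group $\pi_1(M)$. I would also exploit $f$-minimality of $\wcs$ and $\wcu$ as the main propagation tool to pass from local to global statements.

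The main dichotomy to establish is: either $\ft$ fixes a leaf of $\wcs$ (equivalently has a fixed point on $\cL^{cs}$), or it acts freely on $\cL^{cs}$; and symmetrically for $\wcu$. I would then argue that the fixed-leaf case yields conclusion \ref{item.teo-main-coherent-discretized}, while the doubly-free case yields \ref{item.teo-main-coherent-double_translation}. Some care is required for the asymmetric situation in which $\ft$ fixes a leaf of one foliation but not of the other: one shows that this forces the complementary foliation to have a fixed leaf as well, using the transverse product structure of $\wcs$ and $\wcu$ and $f$-minimality.

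In the fixed-leaf case, pick a fixed center stable leaf $L$. Partial hyperbolicity forces $\ft|_L$ to contract along $E^s$, so the non-wandering behavior of $\ft|_L$ is concentrated along $E^c$; this produces an $\ft$-invariant $1$-dimensional structure inside $L$ tangent to $E^c$. Combining with the analogous picture on a fixed leaf of $\wcu$, and globalizing through $f$-minimality, one assembles a continuous $f$-invariant topological flow $\Phi$ tangent to $E^c$ together with a positive continuous function $\tau$ satisfying $f(x)=\Phi_{\tau(x)}(x)$, i.e.\ a discretized Anosov flow.

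In the doubly-free case, the first task is to rule out non-Hausdorffness of $\cL^{cs}$ (and $\cL^{cu}$). A pair of non-separated leaves is sent by $\ft$ to another such pair, so the set of branching points in $\cL^{cs}$ is $\ft$-invariant; combined with $\pi_1(M)$-equivariance, cocompactness of the deck action, and the free $\ft$-action, one derives a contradiction by a Brouwer-translation style argument on the branching locus (after possibly passing to an iterate). Once $\cL^{cs}\cong\R$, the freely-acting, commuting action of $\ft$ is a translation, and uniformity of $\wcs$ follows from this translation structure together with the transverse contraction/expansion coming from $E^s$, which prevents leaves from diverging in $\mt$. I expect this last case to be the main obstacle: eliminating branching in a non-Hausdorff $1$-manifold under only a freeness hypothesis is delicate, and it is here that the equivariance under $\pi_1(M)$ and the $f$-minimality must be leveraged most carefully to turn the local branching data into a global contradiction.
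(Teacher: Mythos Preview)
Your overall architecture (dichotomy on whether $\ft$ fixes a leaf, then handle each branch) matches the paper, but both branches contain genuine gaps.

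For the free case, the paper does \emph{not} argue by eliminating branching via a Brouwer-type argument on non-separated pairs; I do not see how your proposed argument would run. Instead, the paper proves a geometric lemma: each complementary half-space of a leaf of $\wcs$ contains balls of arbitrarily large radius. Since $\ft$ moves points a bounded distance, this forces the closed region between any non-fixed leaf $L$ and $\ft(L)$ to be product-foliated (any leaf in that region that failed to separate $L$ from $\ft(L)$ would have a half-space moved off itself). Stringing these product regions together when $\ft$ has no fixed leaf gives $\cL^{cs}\cong\R$ directly, and the same bounded-movement estimate gives uniformity. No analysis of a ``branching locus'' is needed.

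The more serious gap is in the fixed-leaf branch. Saying that ``non-wandering behavior of $\ft|_L$ is concentrated along $E^c$'' and that this ``produces an $\ft$-invariant $1$-dimensional structure'' is not an argument; in particular it does not produce a fixed center leaf, and it does not explain why a fixed $\wcs$ leaf forces a fixed $\wcu$ leaf (your handling of the mixed case). The paper's route here is the technical core of the article: first, $f$-minimality upgrades one fixed $\wcs$ leaf to all $\wcs$ leaves being fixed. Then one must find a fixed \emph{center} leaf inside some $L$; this is done by analyzing the action of $\ft$ and a deck transformation $\gamma$ (stabilizing $L$) on the stable leaf space in $L$, showing their common axis is a line (via a perfect-fit / gap-contraction argument using Gromov hyperbolicity of leaves), and then running a graph-transform to produce a $\gamma$- and $\ft$-invariant center leaf. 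That fixed center leaf lies in a $\wcu$ leaf, which is therefore fixed, eliminating the mixed case. Finally, an open--closed argument propagates ``$\ft$ fixes this center leaf'' to ``$\ft$ fixes every center leaf'', after which the Bonatti--Wilkinson mechanism yields the discretized Anosov flow. None of this is visible in your sketch, and the step ``find a fixed center leaf'' genuinely requires the axis machinery --- there is no soft substitute.
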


Here, \emph{$f$-minimal} means that the only closed sets that are both $f$-invariant and saturated by the foliation are the empty set and the whole manifold $M$. If $f$ is transitive or volume-preserving, then it is already known that $\cW^{\cs}$ and $\cW^{\cu}$ are $f$-minimal \cite{BW}. We will show that this holds as well when $M$ is hyperbolic or Seifert and the lift $\tf$ fixes a leaf in the universal cover (see Proposition \ref{p.hypSeifminimal}). We show that \ref{item.teo-main-coherent-double_translation} cannot occur in a hyperbolic manifold, and Theorem~\ref{thmintro:Hyperbolic} follows.  

It is likely that Theorem~\ref{thmintro:Hyperbolic} could be shown in the setting of 3-manifolds with atoroidal pieces in their JSJ decomposition, but we have not pursued this here as it would require proving results similar to \cite{Thurston2,Calegari00,Fen2002} in this setting.

The technique to eliminate the possibility \ref{item.teo-main-coherent-double_translation} in Theorem \ref{teo-main-coherent} is more widely applicable: In a companion article \cite{BFFP_companion}, we use the same ideas to show that a partially hyperbolic diffeomorphism on a Seifert manifold which acts as a pseudo-Anosov on (part of) the base is not dynamically coherent.

For Seifert manifolds, it is possible to show that, after taking an iterate, there is another lift $\tf$ that is still a bounded distance from the identity and which fixes a leaf of $\widetilde \cW^{\cs}$, and $f$-minimality follows. We show that \ref{item.teo-main-coherent-double_translation} implies leaf conjugacy of (possibly an iterate of) $f$ to a time-one map of an Anosov flow on a Seifert-fibered manifold, and Theorem~\ref{thmintro:Seifert} follows. We also completely classify the partially hyperbolic diffeomorphisms for which it is necessary to take an iterate, as opposed to $f$ itself, to get a discretized Anosov flow (see Remark \ref{rem.classification_weird_examples}).

\subsection{Remarks and references}
The definition of a partially hyperbolic diffeomorphism traces back to \cite{HPS} and \cite{BrinPesin}.

The classification problem for $3$-dimensional partially hyperbolic diffeomorphisms has attracted significant attention since the pioneering work of \cite{BW,BBI1,BI}, which was partially motivated by Pujals' conjecture (see also \cite[\S 20]{PughShub}).
See also the surveys \cite{CHHU,HP-survey,PotrieICM}.

Besides its intrinsic interest, the classification problem for partially hyperbolic diffeomorphisms has dynamical implications. For example, several finer dynamical and ergodic properties have been studied under the assumption of having a discretized Anosov flow (while not using that terminology), for instance in \cite{AVW,BFT} (see also \cite{PotrieICM, Wilkinson}). Our results here and in \cite{BFFP-sequel} make that condition checkable.
Several of the techniques presented here also yield information about the dynamics along the center direction, which is so far poorly understood (see, e.g.~\cite{FenleyPotrie}).

In addition, this article contains several new results of independent interest. Indeed, important steps in our study do not use partial hyperbolicity, but instead only use the more general setting of foliation preserving diffeomorphisms. Thus some results (see \S\ref{s.dicho} and \S\ref{s.coarse_dynamics_translations} for instance) are much more widely applicable. In particular, in \S\ref{s.coarse_dynamics_translations} we use regulating pseudo-Anosov flows to understand the dynamics of a diffeomorphism that translates the leaves of an $\R$-covered foliation, showing that any such diffeomorphism has ``invariant cores'' that shadow the closed orbits of the corresponding flow.

\subsection{Acknowledgments}
We would like to thank Christian Bonatti, Andrey Gogolev, and Andy Hammerlindl for many helpful comments and discussions.

\section{Outline and discussion} \label{s.plan_of_proof}

In this section we will set some basic terminology, outline our major arguments, and detail the organization of this paper.

\begin{definition}
	A $C^1$-diffeomorphism $f \colon M \to M$ on a $3$-manifold $M$ is \emph{partially hyperbolic} if there is a $Df$-invariant splitting of the tangent bundle $TM$ into three $1$-dimensional bundles
	\[ TM = E^{\mathrm{s}} \oplus E^{\mathrm{c}} \oplus E^{\mathrm{u}} \]
	such that for some $n>0$, one has
	\begin{align*}
	 \|Df^n|_{E^{\mathrm{s}}(x)}\| &< 1, \\
	 \|Df^n|_{E^{\mathrm{u}}(x)}\| &> 1, \text{ and}\\
	 \|Df^n|_{E^{\mathrm{s}}(x)}\| < \|Df^n|_{E^{\mathrm{c}}(x)}\| &< \|Df^n|_{E^{\mathrm{u}}(x)}\|,
	\end{align*}
	for all $ x\in M$.
\end{definition}

See Appendix~\ref{app.partial_hyperbolicity} for more details. Our major goal is to show that large classes of partially hyperbolic diffeomorphisms are discretized Anosov flows:

\begin{definition}
	A \emph{discretized Anosov flow} is a partially hyperbolic diffeomorphism $g\colon M \to M$ on a $3$-manifold $M$ that is of the form $g(p) = \Phi_{t(p)}(p)$ for a topological Anosov flow $\Phi$ and a map $t \colon M \rightarrow (0,\infty)$.
\end{definition}

The precise definition of a topological Anosov flow is given in Appendix~\ref{ss.DAF}, where we also explain the relationship between discretized Anosov flows and the more common notion of partially hyperbolic diffeomorphisms that are leaf-conjugate to time-$1$ maps of Anosov flows. 

Consider a discretized Anosov flow $g\colon M \to M$ on a closed $3$-manifold $M$. We will see (Proposition~\ref{prop.equivDALC}) that $g$ is dynamically coherent, and that the center leaves of $g$ are exactly the orbits of the underlying flow. This means that $g$ fixes each leaf of the center foliation. Moreover, it has a natural lift $\widetilde{g}\colon \mt \to \mt$ to the universal cover that fixes the lift of each center leaf, but fixes no point in $\mt$.  Indeed, such a lift may be obtained by flowing points along lifted orbits. That is, $\widetilde{g}(p) = \widetilde{\Phi}_{t(\pi(p))}(p)$, where $\widetilde{\Phi}$ is the lifted flow and $\pi\colon \mt \to M$ is the covering map.

In fact, to show that a partially hyperbolic diffeomorphism $f\colon M \to M$ is a discretized Anosov flow, it will suffice to find a lift $\ft\colon \mt \to \mt$ with this property, i.e., that fixes the leaves of the lifted center foliation, but fixes no point in $\mt$. This argument is essentially given in \cite[Section 3.5]{BW} --- see Section~\ref{ss.topoAnosov}.

\subsection{Setup}
We will now set some basic definitions and outline our major arguments. We will assume some familiarity with $3$-manifold topology, taut foliations, and leaf spaces; see Appendices~\ref{app.3_manifold_topology} and \ref{ap.tautfol} for an outline of the necessary background.

In this paper, $M$ will be a closed $3$-manifold, and $f\colon M \to M$ will be a dynamically coherent partially hyperbolic diffeomorphism that is homotopic to the identity.

The center-stable, center-unstable, stable, unstable, and center foliations on $M$ are denoted by $\cW^{\cs}$, $\cW^{\cu}$, $\cW^{\mathrm{s}}$, $\cW^{\mathrm{u}}$, and $\cW^{\mathrm{c}}$. These lift by the universal covering map $\pi\colon \mt \to M$ to foliations on $\mt$ which we denote by $\wt\cW^{\cs}$, $\wt\cW^{\cu}$, $\wt\cW^{\mathrm{s}}$, $\wt\cW^{\mathrm{u}}$, and $\wt\cW^{\mathrm{c}}$.

\vspace{.1cm}
\fbox{\begin{minipage}{.95\textwidth}
        \textbf{Convention:}  Throughout this paper we will assume that $\pi_1(M)$ is not virtually solvable.
       \end{minipage}}
\vspace{.1cm}
       
This assumption implies that there is no closed surface tangent to either $E^{\cs}$ or $E^{\cu}$ (Theorem~\ref{thm-HHUtori}), a fact that we will use often.

The classification of partially hyperbolic diffeomorphisms on manifolds with virtually solvable fundamental group is complete \cite{HP-Nil,HP-Sol}, and our assumption does not affect our main results (see Theorem~\ref{teo.solv}).  

\subsubsection{Good lifts}\label{sss.goodlift}
Since $f$ is homotopic to the identity, we can lift such a homotopy to $\mt$, and obtain a lift $\ft\colon \mt \to \mt$ that is \emph{good}:

\begin{definition}\label{d.goodlift}
	A lift $\tf \colon \mt \to \mt$ of a homeomorphism $f \colon M \to M$ is called a \emph{good lift} if
	\begin{enumerate}[label=(\roman*)]
		\item\label{item.goodlift_bounded} it moves each point a uniformly bounded distance (i.e., there exists $K>0$ such that $d_{\mt}(x, \ft(x)) < K$ for all $x\in \mt$), and
		\item\label{item.goodlift_commutes} it commutes with every deck transformation.
	\end{enumerate}
\end{definition}

Throughout this paper we will always take $\ft$ to be a good lift of $f$.

\begin{remark}
	In fact, it is easy to show that \ref{item.goodlift_bounded} follows from \ref{item.goodlift_commutes} on a closed manifold.  
	
	A homeomorphism may have more than one good lift. Indeed, composing a good lift with a deck transformation in the center of the fundamental group yields another good lift. Conversely, the existence of more than one good lift implies that the fundamental group has non-trivial center. By the Seifert-fibered space conjecture (see, e.g., \cite{Calegari:book}), this implies that the manifold is Seifert-fibered with orientable Seifert fibration.
\end{remark}

\subsection{Outline of the paper}

Recall that there are no closed surfaces tangent to the center-stable or center-unstable bundles. In particular, $\cW^{\cs}$ and $\cW^{\cu}$ have no closed leaves, which implies that they are taut. Furthermore, $\mt$ is homeomorphic to $\mathbb{R}^3$, and each leaf of $\wt\cW^{\cs}$ or $\wt\cW^{\cu}$ is a properly embedded plane that separates $\mt$ into two open balls (cf.~Theorem \ref{thm-tautfoliation}).

\subsubsection{Dichotomies for foliations}\label{sss.proof_program}
In \S\ref{s.dicho} we study the basic structure of the center-stable and center-unstable foliations, and the way that $\ft$ permutes their lifted leaves. Much of this section applies more generally to a homeomorphism that is homotopic to the identity and preserves a foliation.

The basic tool is Lemma~\ref{l-unboundedhalfspaces}, which says that the complementary components of a lifted center-stable or center-unstable leaf are ``large'' in the sense that they contains balls of arbitrary radius. Since $\ft$ moves points a uniformly bounded distance, this has immediate consequences for the way that it acts on the leaf spaces of $\wt\cW^{\cs}$ and $\wt\cW^{\cu}$.

In particular, we deduce that the set of center-stable leaves that are fixed by $\ft$ is closed in the leaf space $\cL^{\cs}$ of $\wt\cW^{\cs}$, each complementary component of this set is an open interval that is acted on by $\ft$ as a translation, and any two leaves in one of these ``translation regions'' are a finite Hausdorff distance apart (Proposition~\ref{p.dichotomy}). The same holds for the center-unstable foliation.

When $\cW^{\cs}$ is $f$-minimal, or $M$ is hyperbolic or Seifert-fibered, we use this to show that either:\\
 \begin{tabular}{cl}
 \setword{$(\star)$}{star_dichotomy}
&%
 \begin{minipage}{.85\textwidth}
 \vspace{.1cm}
  \begin{itemize}
		\item $\ft$ fixes every leaf of $\wt\cW^{\cs}$, or
		\item $\cW^{\cs}$ is $\R$-covered and uniform, and $\ft$ acts as a translation on the leaf space of $\wt\cW^{\cs}$.
	\end{itemize}
 \end{minipage}
 \end{tabular}
\vspace{.1cm}
 
Recall that \emph{$\R$-covered} means that the leaf space in the universal cover is homeomorphic to  $\R$, and \emph{uniform} means that any two leaves in the universal cover are a finite Hausdorff distance apart.

This dichotomy is easy to show under the assumption of $f$-minimality, where it does not use partial hyperbolicity (Corollary~\ref{c.minimalcase}). It takes significantly more work assuming instead that $M$ is hyperbolic or Seifert-fibered (Proposition~\ref{p.hypSeifminimal}).\footnote{This dichotomy holds even without the assumption of dynamical coherence, but the proof is substantially more difficult (see \cite{BFFP-sequel}).}

If $\cW^{\cs}$ and $\cW^{\cu}$ are $f$-minimal, or $M$ is hyperbolic or Seifert-fibered, we are left with three possibilities:
\begin{enumerate}[label=(\arabic*)]
\item\label{it.dinv} {\bf double invariance:} $\ft$ fixes every leaf of both $\widetilde \cW^{\cs}$ and $\widetilde \cW^{\cu}$;
\item\label{it.nomix} {\bf mixed behavior:} $\ft$ fixes every leaf of either $\widetilde \cW^{\cs}$ or $\wt\cW^{\cu}$, and acts as a translation on the leaf space of the other, which is $\R$-covered and uniform; or
\item\label{it.dtran} {\bf double translation:} $\ft$ acts as a translation on both $\widetilde \cW^{\cs}$ and $\widetilde \cW^{\cu}$, which are $\R$-covered and uniform. 
\end{enumerate}

The remainder of the argument is arranged around these three possibilities. We will see in \S\ref{s.nomix} that mixed behavior cannot happen. In \S\ref{s.doublyinvariant} we show that double invariance implies that $f$ is a discretized Anosov flow. The double translation case is ruled out for Seifert-fibered manifolds in \S\ref{s.proof_of_thmA_DCcase}, and for hyperbolic manifolds in \S\ref{s.coarse_dynamics_translations}--\S\ref{sec-thmB}.

\subsubsection{Center dynamics in fixed leaves} \label{sss.intro_fixed_center}
In \S\ref{s.perfectfits}, we work under the assumption that $\ft$ fixes every leaf of $\wt\cW^{\cs}$, and study the dynamics \emph{within} each center-stable leaf. In particular, we show (Proposition~\ref{p.fixedcenter}):\\
\begin{tabular}{cl}
	\setword{$(\star\star)$}{star_fixedcenters}
	&%
	\begin{minipage}{.85\textwidth}
		\vspace{.1cm}
		If $\ft$ fixes every leaf of $\wt\cW^{\cs}$, then any leaf of $\wt\cW^{\cs}$ that is fixed by a non-trivial deck transformation contains a center leaf that is fixed by $\ft$.
	\end{minipage}
\end{tabular}
\vspace{.1cm}

This immediately eliminates the possibility of mixed behavior (see \S\ref{s.nomix}). It will also be used in \S\ref{s.doublyinvariant} to show that double invariance implies that $f$ is a discretized Anosov flow. 

Consider a center-stable leaf $L$ that is fixed by a deck transformation $\gamma$. The proof of \ref{star_fixedcenters} comes down to understanding the topology of the stable foliation within $L$ ``in the direction of'' $\gamma$. The formal meaning of this is the \emph{axis} for the action of $\ft$ on the stable leaf space in $L$ (see Appendix~\ref{ss.axes}), but it can be understood intuitively as the set of all stable leaves that cross the core of the cylinder $\rquotient{\mt}{\langle\gamma\rangle}$ essentially. 

Suppose that there is an line's worth of stable leaves in this direction, which corresponds to circle's worth in $\rquotient{\mt}{\langle\gamma\rangle}$ as depicted (roughly) in the left half of Figure~\ref{f-fixedcenters}. Then one can find a curve representing $\gamma$ that is transverse to the stable foliation, and a ``graph transform argument'' finds a corresponding center leaf preserved by both $\gamma$ and $\ft$ (Lemma~\ref{l.Axs-line}).

\begin{figure}[h]
	\centering
	\begin{subfigure}[t]{0.45\textwidth}
		\centering
		\includegraphics{./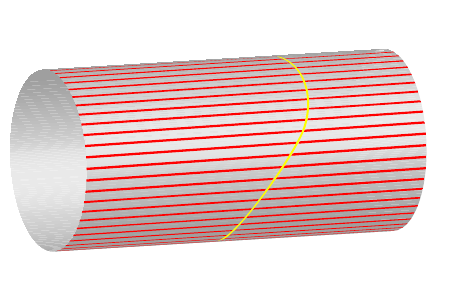} 
	\end{subfigure}%
	\hspace{.05\textwidth}
	\begin{subfigure}[t]{0.45\textwidth}
		\centering
		\includegraphics{./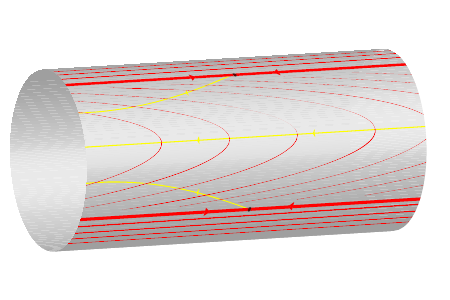} 
	\end{subfigure}
	\caption{Axes}\label{f-fixedcenters}
\end{figure}

The other possibility is that one finds gaps, which look roughly like Reeb components as in the right half of Figure~\ref{f-fixedcenters}. We eliminate the possibility of such gaps by combining the dynamics coming from partial hyperbolicity with two conflicting forces:
\begin{enumerate}
	\item[(i)] On one hand, the topology of the stable and center foliations within $L$ forces the existence of a center ray within this gap that is expanded by $f$ (Lemma~\ref{l.expandedcenterDC}).
	
	\item[(ii)] On the other hand, we find from the geometry of $L$ that the entire gap, and any center leaf within it, must be coarsely contracted (Lemma~\ref{l.gapcontractionDC}).
\end{enumerate}
These conclusions are contradictory, so there can be no gaps, completing the proof of \ref{star_fixedcenters}.

The existence of the expanded center ray (i) is delicate, and depends crucially on dynamical coherence (see Remark~\ref{r.problemndc} and Figure~\ref{f.pfDC}). The coarse contraction of gaps (ii) is more robust, and will be used again in the dynamically incoherent case in \cite{BFFP-sequel}.

\subsubsection{Double invariance}\label{sss.double_invariance}
In the doubly invariant case~\ref{it.dinv}, one would like to show that $\ft$ fixes each center leaf. Since by assumption it fixes each center-stable and center-unstable leaf, it fixes the intersection between any two such leaves. Each component of this intersection is a center leaf, but there is no a priori reason for it to have a single component. In \S\ref{ss.fixingcenter}, we show that $\ft$ fixes either every center leaf or no center leaf, and so by \ref{star_fixedcenters} it fixes every center leaf.

Once we know that $\ft$ fixes every center leaf, we can use the arguments of Bonatti--Wilkinson \cite{BW} to show that the center foliation is the orbit foliation of a topological Anosov flow, and hence that $f$ is a discretized Anosov flow. This is done in \S\ref{ss.topoAnosov}, completing the proof of Theorem \ref{teo-main-coherent}.

\subsubsection{Double translation in Seifert-fibered manifolds}
The double translation case \ref{it.dtran} turns out to be the trickiest. The preceding results work with either topological conditions ($M$ being Seifert-fibered or hyperbolic) or dynamical conditions (minimality or $f$-minimality). To handle double translations we will need topological restrictions.

Part of the difficulty is that double translation \emph{do} in fact exist (Remark \ref{rem.classification_weird_examples})! However, these examples live in Seifert-fibered manifolds, and have iterates that are discretized Anosov flows. Using a similar idea (see section 6.2 of \cite{BFFP_Announcement}), one can build homeomorphisms that act as a double translation on any manifold which admits an $\R$-covered Anosov flow which is not a suspension, but our techniques do show that they cannot be dynamically coherent partially hyperbolic (even in a topological sense) when the ambient manifold is hyperbolic.

Eliminating double translations when $M$ is a Seifert manifold relies on a trick: 
Since there are many good lifts, we show in \S\ref{s.proof_of_thmA_DCcase} that some good lifts (of a power of $f$) must fix the leaf of at least one foliation. This completes the proof of Theorem~\ref{thmintro:Seifert}.

\subsubsection{Double translation in hyperbolic manifolds}
We are left to treat the case of double translations in hyperbolic manifolds, which we do in \S\ref{s.coarse_dynamics_translations} and \S\ref{sec-thmB}.

In \S\ref{s.coarse_dynamics_translations}, we prove a result about $\R$-covered foliations that is of general interest. In a hyperbolic $3$-manifold, an $\R$-covered foliation admits a transverse regulating pseudo-Anosov flow (see Appendix \ref{app.regulatingpA}). We will use this flow to understand the dynamics of any homeomorphism that acts as a translation on its leaf space (Proposition \ref{p.homeotranslation}):\\

\begin{tabular}{cl}
	\setword{$(\star\star\star)$}{star_cores}
	&%
	\begin{minipage}{.85\textwidth}
		\vspace{.1cm}
		Let $f\colon M \to M$ be a homeomorphism on a closed hyperbolic $3$-manifold that is homotopic to the identity and preserves a taut, $\R$-covered foliation $\cT$. Suppose that a good lift acts as a translation on $\wt\cT$.
		
		Then for each periodic orbit $\gamma$ of the regulating pseudo-Anosov flow $\Phi$, there is a corresponding invariant ``core'' $T_{\gamma}$ for $f$. Moreover, the dynamics of $f$ at $T_{\gamma}$ is coarsely identical to the dynamics of $\Phi$ at $\gamma$ (in the sense that they have the same Lefschetz index).
	\end{minipage}
\end{tabular}
\vspace{.1cm}

There is a little lie in this description, as the core $T_{\gamma}$ is in fact in the cover $\rquotient{\mt}{\langle\gamma\rangle}$ and is invariant under the appropriate lift of $f$ to that cover. Also, having a hyperbolic manifold is not essential  --- we use similar techniques on Seifert-fibered manifolds in \cite{BFFP_companion}.

The result \ref{star_cores} is the main ingredient in \S\ref{sec-thmB}, where we show that double translations cannot occur in hyperbolic manifolds and complete the proof of Theorem \ref{thmintro:Hyperbolic}. 
The rough idea is that \ref{star_cores} gives a circle invariant by (a lift of) $f$ and with at least one fixed point, but partial hyperbolicity implies that any fixed point must be, say, repulsive. But the devil is in the details, and while one can make this rough idea precise in the case of a foliation, it does not lead to a contradiction for branching foliation. This is why Theorem \ref{thmintro:Hyperbolic} requires dynamical coherence. In the non-dynamically coherent case, treated in \cite{BFFP-sequel}, while we do not prove or disprove its existence, we obtain a detailed description of the permitted structure in the translation case that in particular produces some topological obstructions for the existence of partially hyperbolic diffeomorphisms in hyperbolic 3-manifolds.

%
%
%
%
\section{Foliations and good lifts} \label{s.dicho}

In this section we will study the way that a good lift $\ft$ of a dynamically coherent partially hyperbolic diffeomorphism $f\colon M \to M$ that is homotopic to the identity permutes the leaves of the lifted center-stable and center-unstable foliations.

Most of the arguments in this section apply to any homeomorphism of a $3$-manifold that preserves an appropriate foliation and is homotopic to the identity, so we will work for a while in this more general setting. At the end, we obtain the following results for our setting.

\begin{proposition}\label{prop.endgoal_section_foliation}
	Let $f\colon M \to M$ be a dynamically coherent partially hyperbolic diffeomorphism on a closed $3$-manifold that is homotopic to the identity, and let $\ft$ be a good lift of $f$. If $\cW^{\cs}$ is $f$-minimal, or $M$ is hyperbolic or Seifert-fibered, then either
	\begin{enumerate}
		\item $\cW^{\cs}$ is $\mathbb{R}$-covered and uniform, and $\tf$ acts on the leaf space of the lifted foliations as a translation, or
		\item $\tf$ fixes each leaf of the lifted foliation.
	\end{enumerate}

	The same holds for the center-unstable foliation $\cW^{\cu}$.
\end{proposition}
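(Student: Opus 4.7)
The plan is to establish a general dichotomy valid for any homeomorphism homotopic to the identity preserving a taut foliation by planes, and then feed in either $f$-minimality or the topological assumption on $M$.

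First I would set up the general picture. Let $L \in \widetilde{\cW^{cs}}$; by Lemma~\ref{l-unboundedhalfspaces} each of the two complementary components of $L$ in $\widetilde M$ contains balls of arbitrary radius. Since $\tf$ is a good lift, it moves every point a uniformly bounded distance $K$, so if $\tf(L) \neq L$ then $\tf(L)$ cannot cross back, and lies entirely in one of the two complementary components. This gives a consistent transverse orientation to the $\tf$-action on the leaf space $\cL^{cs}$. Let $\Lambda \subset \cL^{cs}$ be the set of $\tf$-fixed leaves. The set $\Lambda$ is closed: if $L_n \to L_\infty$ in $\cL^{cs}$ with $\tf(L_n) = L_n$, then $\tf(L_\infty)$ is within Hausdorff distance $K$ of $L_\infty$ and is a limit of the $L_n$, so it equals $L_\infty$. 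On any complementary interval $I$ of $\Lambda$ in $\cL^{cs}$, the map $\tf$ acts freely and preserves the transverse orientation, hence as a translation. Finally, because $\tf$ moves points at most $K$, any leaf $L \in I$ satisfies $d_{\mathrm{Haus}}(L,\tf(L)) \le K$, and iterating together with the translation behavior shows that any two leaves in $I$ are at finite Hausdorff distance. This is the content of Proposition~\ref{p.dichotomy}.

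Next I would specialize to the $f$-minimal case. The saturated set $\pi(\Lambda) \subset M$ --- or equivalently, the set of leaves of $\cW^{cs}$ that admit a $\tf$-fixed lift --- is closed (by closedness of $\Lambda$ and properness of the foliation) and $f$-invariant (since $\tf$ commutes with deck transformations, the property of being $\tf$-fixed descends to $f$-invariant information on $M$). By $f$-minimality it is either empty or all of $M$. In the first case, $\cL^{cs}$ is a single translation interval, so by the previous paragraph it is Hausdorff and linearly ordered with leaves at finite Hausdorff distance, giving that $\cW^{cs}$ is $\R$-covered and uniform with $\tf$ acting as a translation --- case (1). In the second case, every leaf of $\widetilde{\cW^{cs}}$ is fixed by $\tf$ --- case (2). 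The same argument applies verbatim to $\cW^{cu}$.

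For the hyperbolic or Seifert-fibered case I would reduce to the $f$-minimal case by invoking Proposition~\ref{p.hypSeifminimal}, whose conclusion is exactly that under these topological hypotheses the center stable and center unstable foliations are automatically $f$-minimal (once one has a good lift fixing at least one leaf, or more generally using the restricted structure of closed $f$-invariant saturated laminations in these manifolds). The main obstacle is of course in this last step: unlike the minimal case, where the conclusion is a direct application of the general dichotomy, establishing $f$-minimality in hyperbolic and Seifert manifolds requires real input from 3-manifold topology. In the Seifert case one exploits the center of $\pi_1(M)$ and the interaction of the foliation with the Seifert fibration; in the hyperbolic case one uses the absence of $\ZZ^2$ subgroups together with the structure of taut foliations (regulating pseudo-Anosov flows, Candel uniformization, etc.) to rule out nontrivial proper closed invariant saturated subsets. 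Since Proposition~\ref{p.hypSeifminimal} is stated separately as a standalone result in this section, the present proposition follows by combining the general dichotomy of paragraph one with the two reductions.
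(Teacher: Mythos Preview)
Your proposal is correct and follows the paper's approach exactly: establish the general dichotomy (Proposition~\ref{p.dichotomy}), deduce the result under $f$-minimality (Corollary~\ref{c.minimalcase}), and handle the hyperbolic/Seifert case by invoking Proposition~\ref{p.hypSeifminimal} when $\ft$ fixes at least one leaf (and falling back on the translation case otherwise). One correction to your closing speculation: the actual proof of Proposition~\ref{p.hypSeifminimal} uses neither regulating pseudo-Anosov flows nor Candel uniformization nor the center of $\pi_1$ --- instead it builds a torus from annular boundary leaves of a complementary region of a minimal sublamination, shows it bounds a solid torus (via the torus lemma in the hyperbolic case, via horizontality in the Seifert case), and derives a contradiction from a volume-versus-length argument applied to unstable arcs trapped in that solid torus.
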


\subsection{General homeomorphisms}\label{ss.general_homeomorphisms}
Let $\cT$ be a taut foliation on a closed $3$-manifold $M$. Recall our standing assumption that $M$ does not have virtually solvable fundamental group. This implies that the universal cover $\mt$ is homeomorphic to $\mathbb{R}^3$, and that each leaf of the lifted foliation $\wt\cT$ is a properly embedded plane (see Theorem \ref{thm-tautfoliation}). 

Fix a homeomorphism $f\colon M \to M$ that preserves $\cT$ and is homotopic to the identity, and a good lift $\ft\colon \mt \to \mt$ (Definition~\ref{d.goodlift}).

\subsubsection{Complementary regions} \label{sss.complementary_regions}
Being a properly embedded plane, each leaf $K \in \wt\cT$ separates $\mt$ into two open balls. We will call these two components of $\mt \setminus K$ the \emph{complementary regions} of $K$. The closure of such a complementary region $U$ is called a \emph{side} of $K$ and is simply $\overline{U} = U \cup K$.

If $K, L \in \wt\cT$ are distinct leaves, then $K \cup L$ separates $\mt$ into three open complementary regions, which can be built from the complementary regions of $K$ and $L$: Let $U$, $U'$ be the complementary regions of $K$, labeled so that $L \subset U'$, and let $W$, $W'$ be the complementary regions of $L$, labeled so that $K \subset W'$. Then the complementary regions of $K \cup L$ are $U$, $V = U' \cap W'$, and $W$. See Figure~\ref{f-regionbetween}. We call $V$ the \emph{(open) region between $K$ and $L$}. Its closure, which is simply $\overline{V} = K \cup V \cup L$, is called the \emph{closed region between $K$ and $L$}.

\begin{figure}[h!]
	\begin{center}
		\includegraphics{./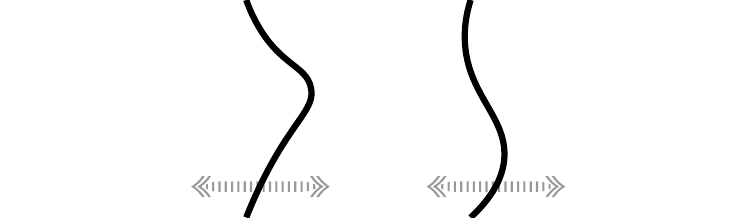}
		\begin{picture}(0,0)
		\put(-262, 90){$K$}
		\put(-135, 90){$L$}
		
		\put(-320,50){$U$}
		\put(-180,50){$V$}
		\put(-80,50){$W$}

		\put(-266,0){\textcolor[gray]{0.6}{$U$}}
		\put(-220,0){\textcolor[gray]{0.6}{$U'$}}
		
		\put(-153,0){\textcolor[gray]{0.6}{$W'$}}
		\put(-110,0){\textcolor[gray]{0.6}{$W$}}
		
		\end{picture}
	\end{center}
	\caption{The region between two leaves}\label{f-regionbetween}
\end{figure}

Since $\mt$ is simply connected, the lifted foliation $\wt\cT$ is coorientable. A coorientation determines a labeling of the complementary regions of each leaf $L \in \lt$ as a \emph{positive} complementary region denoted $L^\oplus$ and a \emph{negative} complementary region denoted $L^\ominus$. The corresponding positive and negative sides are denoted by $L^+ = L^\oplus \cup L$ and $L^- = L^\ominus \cup L$.

\begin{remark}
	We stress that a priori, some deck transformations or lifts of $\cT$-preserving homeomorphisms may exchange the coorientations of $\lt$.
\end{remark}

\subsubsection{The big half-space lemma}\label{ss.geometry_lifted_leaves}
The following lemmas will be used to understand the way that $\ft$ can act on the leaf space of $\lt$.

\begin{lemma}\label{l-unboundedhalfspaces}
For every leaf $L \in \lt$, and every $R>0$, there is a ball of radius $R$ contained in each of the complementary regions of $L$.
\end{lemma}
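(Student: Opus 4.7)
I would proceed by contradiction. Suppose some complementary region $U$ of $L$ fails to contain a ball of radius $R$; then $U \subset N_R(L)$, the $R$-neighborhood of $L$ in $\mt$. The strategy is to use the tautness of $\cT$ to transport $L$ into $U$ via a deck transformation, then derive a contradiction from the freeness and proper discontinuity of the $\pi_1(M)$-action on $\mt$.

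First, I exploit tautness: pick a closed transversal $\tau \subset M$ through a point of $\pi(L)$ and lift it through a point $p \in L$ to an arc $\tilde\tau$ transverse to $\lt$, oriented to enter $U$ at $p$. Since $\mt$ is simply connected and $\lt$ is transversely orientable, $\tilde\tau$ cannot close up: a closed transversal in $\mt$ would induce a monotone loop in the $1$-dimensional leaf space, which is absurd. So the deck transformation $\gamma := [\tau] \in \pi_1(M)$ is nontrivial and of infinite order (as $\pi_1(M)$ is torsion-free, $\mt$ being contractible), and it acts on $\tilde\tau$ as translation by one period. Following $\tilde\tau$ forward shows $\gamma L \subset U$; iterating and using that $\gamma$ preserves the transverse orientation, one gets $\gamma^n L \subset U$ and $\gamma^n U \subsetneq U$ for every $n \geq 1$.

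Applying $\gamma^{-n}$ to the chain $\gamma^n U \subset U \subset N_R(L)$ yields $U \subset N_R(\gamma^{-n} L)$. In particular, the point $p \in L \subset \overline{U}$ lies within $R$ of $\gamma^{-n} L$, so there exist $z_n \in \gamma^{-n} L$ with $d_\mt(p, z_n) \leq R$. By compactness of $\overline{B(p, R)}$, pass to a subsequence with $z_n \to z_\infty$; by local Hausdorff continuity of the foliation, the leaves $\gamma^{-n} L$ converge locally to a leaf $L_\infty$ through $z_\infty$. Comparing $\gamma^{-(n+1)} L = \gamma^{-1}(\gamma^{-n} L)$, continuity of $\gamma$ forces $\gamma L_\infty = L_\infty$. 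Moreover $L_\infty \neq L$: otherwise $\gamma$ would fix $L$, contradicting $\gamma p \in U \setminus L$.

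The main obstacle I expect is the final step: converting the existence of a $\gamma$-fixed leaf $L_\infty$ together with the translation behavior of $\gamma$ on $\tilde\tau$ into an outright contradiction. My plan is to iterate the construction on the complementary region of $L_\infty$ containing $L$: the bound $U \subset N_R(L)$ transfers, producing a further $\gamma$-fixed leaf deeper in the opposite side $V$ of $L$, and so on. This infinite family of $\gamma$-fixed leaves must then accumulate in $\mt$, but the free, properly discontinuous action of $\pi_1(M)$ on $\mt$ forbids such accumulation, yielding the contradiction. If the iterative scheme proves too delicate, a fallback is a direct orbit-accumulation argument on $L_\infty$: choose a compact piece $K \subset L_\infty$ and note that the points $\gamma^{n}$ applied to lifts near $K$ would cluster in $N_R(L) \cap \overline{B(p, C)}$ (with $C$ coming from cocompactness), producing an orbit with an accumulation point and violating properness of the deck action.
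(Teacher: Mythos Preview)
Your setup through the construction of the limit leaf $L_\infty$ is essentially sound (modulo a routine fix of the subsequence issue in concluding $\gamma L_\infty = L_\infty$: you should argue via monotonicity of the nested leaves $\gamma^{-n}L$ in the leaf space rather than by shifting the index of an arbitrary subsequence). The real problem is exactly where you flagged it: neither of your proposed endgames yields a contradiction.

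Your plan (a) asserts that an infinite family of $\gamma$-fixed leaves accumulating in $\mt$ violates proper discontinuity. It does not. Proper discontinuity bounds, for each compact $K$, the number of \emph{group elements} $g$ with $gK \cap K \neq \emptyset$; it says nothing about how many leaves a \emph{single} element $\gamma$ may fix. Foliations routinely have infinitely many cylinder leaves sharing a common holonomy generator, and their lifts are precisely an accumulating family of $\gamma$-fixed planes. So this step is simply false as stated. Plan (b) is too vague to evaluate: it is unclear what ``lifts near $K$'' means, why their $\gamma^n$-images should land in a fixed compact ball, or which orbit is supposed to accumulate. Moreover, even granting $\gamma L_\infty = L_\infty$, there is no evident contradiction: $\gamma$ acts as a translation on \emph{your chosen} transversal, but that transversal need not reach $L_\infty$, and leaves with nontrivial stabilizer are entirely compatible with tautness.

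By contrast, the paper avoids contradiction altogether and argues directly: fix an arbitrary ball $B$, use compactness in the leaf space to find a leaf $F$ with $B$ on its positive side, and then use tautness to produce a \emph{positive} closed transversal from $\pi(F)$ through $\pi(L)$. The associated deck transformation carries $F^\oplus$ (hence $B$) into $L^\oplus$. The key economy is that one only needs to \emph{move a ball into a half-space}, not to derive global dynamical consequences from a hypothetical thin half-space.
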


\begin{proof}
	If necessary, we pass to a double cover of $M$ for which $\cT$ is coorientable, and choose such a coorientation. Then every deck transformation preserves the corresponding coorientation on $\wt\cT$ and orientation on the leaf space $\cL = \cL_{\lt}$.
	
	Fix a ball $B \subset \mt$ of arbitrary radius, and a leaf $L \in \lt$. We will find a deck transformation $g$ that takes $B$ into $L^{\oplus}$; a similar argument would find a deck transformation that takes $B$ into $L^{\ominus}$, completing the proof.
	
	Since $B$ is compact, we can find a leaf $F \in \mt$ such that $B \subset F^{\oplus}$. Indeed, the quotient map $\nu\colon \mt \to \cL$ takes $B$ to a compact subset $\nu(B)$ of the leaf space, which can be covered by a finite collection of open intervals $I_1, I_2, \cdots, I_n$. We may assume that $\nu(B)$ intersects every one of these intervals. Since the leaf space is simply connected, we claim that
at least one of these intervals has an initial point (with respect to the orientation on $\cL$) that is not contained in any other interval.
To prove this we use induction.
Start with any interval, denote it by $I_{i_1}$ and let $p_1$ be
its initial point. If $p_1$ is not contained in 
the interior of any of the $I_i's$ then we proved the property we want.
Otherwise $p_1$ is contained in the interior of another interval,
which we denote by $I_{i_2}$. Let $p_2$ be the initial point
of $I_{i_2}$. Then, since $\cL$ is simply connected, $p_2$ is not in $I_{i_1}$ (and, obviously, nor is it in $I_{i_2}$). Notice that $I_{i_2}$ is distinct from $I_{i_1}$ as $p_1$ is in $I_{i_2}$ but
not in $I_{i_1}$. In this way, we inductively
obtain distinct intervals $I_{i_n}$ with 
initial points $p_n$ of $I_{i_n}$ not contained
in $\cup_{1 \leq j \leq n} I_{i_j}$. Since the collection
is finite the process terminates at some $n$, and we obtain
$I_{i_n}$ so that its initial point is not contained in the interior of any $I_i$.

The lowest endpoint of $I_{i_n}$ 
is therefore disjoint from $\nu(B)$. Then $B$ is contained in the positive complementary region of the leaf $F$ corresponding to this initial point.
	
	Let us now find a deck transformation $g$ that takes $F^{\oplus}$, and hence $B \subset F^{\oplus}$, into $L^{\oplus}$. Since $\cT$ is taut, we can find a positively oriented closed transversal $\gamma\colon  [0, 1] \to M$, based at a point in $\pi(F)$, that passes through $\pi(L)$. Let $\wt\gamma$ be a the lift of $\gamma$ based at a point in $F$, which passes through some lift $L'$ of $\pi(L)$. Then we can take $g = h' \circ h$, where $h$ is the deck transformation that takes $\wt{\gamma}(0)$ to $\wt{\gamma}(1)$, and $h'$ takes $L'$ to $L$. The oriented transversal $\wt\gamma$ certifies that $h(F^{\oplus}) \subset L'^{\oplus}$, and $h'(L'^{\oplus}) \subset L^{\oplus}$ because our deck transformations preserve coorientation.
\end{proof}

It follows that $\ft$ can never take a complementary region of a leaf off of itself: This would mean that it takes an arbitrarily large ball off of itself, which contradicts the fact that $\ft$ moves points a uniformly bounded distance. This has important consequences for the way that $\ft$ behaves with respect to each leaf.

In particular, if $\ft$ fixes a leaf, then it cannot interchange its complementary components, and we have:

\begin{corollary}
	If $L \in \lt$ is fixed by $\ft$, then $\ft$ preserves coorientations at $L$.
\end{corollary}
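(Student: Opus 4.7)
The plan is to derive a contradiction from the assumption that $\tf$ fixes $L$ but swaps its two complementary regions. Let $K>0$ be a uniform displacement constant for the good lift, so that $d_{\mt}(x,\tf(x)) < K$ for all $x\in\mt$.

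First, I will apply Lemma~\ref{l-unboundedhalfspaces} to produce a ball $B \subset L^\oplus$ of radius $R$ strictly greater than $K$, and let $p$ be its center. By construction, $d_{\mt}(p,L) \geq R > K$. Now, if $\tf$ were to reverse the coorientation at $L$, then $\tf(L^\oplus) = L^\ominus$, so in particular $\tf(p) \in L^\ominus$. Since $L$ is a properly embedded plane in $\mt \cong \R^3$, it separates the two complementary regions, and every path in $\mt$ joining a point of $L^\oplus$ to a point of $L^\ominus$ must cross $L$. Consequently any such path, and in particular a distance-realizing path from $p$ to $\tf(p)$, has length at least $d_{\mt}(p,L) > K$. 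This contradicts $d_{\mt}(p,\tf(p)) < K$, so $\tf$ must preserve the coorientation at $L$.

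The argument is really a direct corollary of the remark made just before the statement — the new content lives entirely in Lemma~\ref{l-unboundedhalfspaces}, which produces the large ball that a bounded-displacement homeomorphism cannot push through $L$. There is no genuine obstacle to overcome; the only subtlety is to observe that the separation property of a properly embedded plane in $\R^3$ turns ``$\tf(p)$ lies on the opposite side of $L$'' into the quantitative lower bound $d_{\mt}(p,\tf(p)) \geq d_{\mt}(p,L)$, which is then forced to be larger than $K$ by choosing $R$ large enough.
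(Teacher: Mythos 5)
Your proof is correct and follows essentially the same route as the paper: the paper deduces the corollary from Lemma~\ref{l-unboundedhalfspaces} by noting that a good lift cannot take a complementary region (hence an arbitrarily large ball inside it) off of itself, which is exactly your argument with the large ball in $L^{\oplus}$ and the bounded displacement of $\ft$. The only cosmetic difference is that you extract the quantitative bound via the separation property of $L$ ($d(p,\ft(p)) \geq d(p,L) \geq R$) rather than via the disjointness $\ft(L^{\oplus}) \cap L^{\oplus} = \emptyset$, but these are the same estimate.
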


\subsubsection{Translated leaves}

\begin{figure}[h!]
	\begin{center}
	\begin{overpic}{./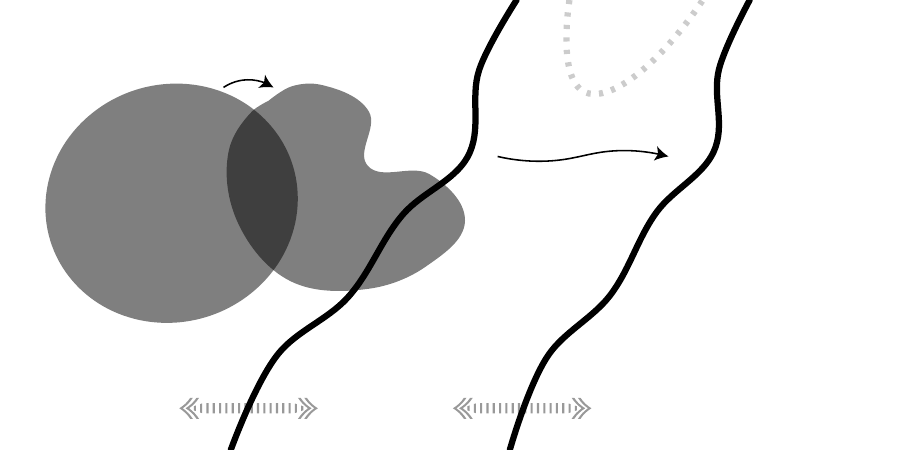}
		\put(67, 45){$\textcolor{black!20}{F}$}
		
		\put(50, 35){$L$}
		\put(77, 27){$\ft(L)$}
		
		\put(42,4){$\textcolor{black!40}{V}$}
		
		\put(22,1){$\textcolor{black!40}{U}$}
		\put(31,1){$\textcolor{black!40}{U'}$}
		
		\put(52,1){$\textcolor{black!40}{W'}$}
		\put(61,1){$\textcolor{black!40}{W}$}
	\end{overpic}
	\end{center}
	\caption{Translation-like behavior}\label{f-translationlike}
\end{figure}

If $\ft$ moves some leaf, then it does so in a ``translation-like'' manner as is illustrated in Figure~\ref{f-translationlike}. In fact, something a bit stronger is true:

\begin{proposition} \label{p.translatedleaves}
	Let $L \in \wt\cT$ be a leaf that is not fixed by $\ft$, then
	\begin{enumerate}
		\item the closed region between $L$ and $\ft(L)$ is foliated as a product,
		\item $\ft$ takes each coorientation at $L$ to the corresponding coorientation at $\ft(L)$, and
		\item the closed region between $L$ and $\ft(L)$ is contained in the closed $2R$-neighborhood of $L$, where $R = \max_{y \in \mt} d(y,\ft(y))$.
	\end{enumerate}
\end{proposition}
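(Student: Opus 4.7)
My plan is to establish the items in the order $(2), (3), (1)$, since $(2)$ supplies the key combinatorial separation $\ft(V)\cap V=\emptyset$ needed for $(3)$, while $(1)$ follows softly from the leaf space structure. First I would fix notation: as $L$ is not fixed, $\ft(L)$ lies in a definite open complementary region of $L$; label the two complementary regions of $L$ so that $\ft(L)\subset L^\oplus$, and label those of $\ft(L)$ so that $L\subset \ft(L)^\ominus$. Then $\ft(L)^\oplus$ is the side of $\ft(L)$ not containing $L$, and it is contained in $L^\oplus$ (being separated from $L$ by the leaf $\ft(L)\subset L^\oplus$). Write $V=L^\oplus\cap \ft(L)^\ominus$ for the open region between.

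The heart of the argument is $(2)$, which amounts to showing $\ft(L^\oplus)=\ft(L)^\oplus$. Suppose for contradiction the opposite holds, so $\ft(L^\oplus)=\ft(L)^\ominus$. Since $\ft(L)^\oplus\subset L^\oplus$, this would give $\ft(\ft(L)^\oplus)\subset \ft(L)^\ominus$. By Lemma~\ref{l-unboundedhalfspaces}, the complementary region $\ft(L)^\oplus$ contains a ball of radius $R+1$ around some point $p$, where $R=\max_{y\in\mt} d(y,\ft(y))$; in particular $d(p,\ft(L))>R$. But then $\ft(p)\in \ft(L)^\ominus$, placing $p$ and $\ft(p)$ on opposite sides of $\ft(L)$, so the segment from $p$ to $\ft(p)$---of length at most $R$---must cross $\ft(L)$, forcing $d(p,\ft(L))\leq R$ and contradicting the choice of $p$.

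With $(2)$ in hand, item $(3)$ is immediate: $\ft(V)\subset \ft(L^\oplus)=\ft(L)^\oplus$ is disjoint from $V\subset \ft(L)^\ominus$, so for any $z\in V$ the image $\ft(z)$ lies outside $V$ and the segment from $z$ to $\ft(z)$ must cross the boundary $L\cup \ft(L)$. Since every point of $\ft(L)$ is within $R$ of $L$, this gives $d(z,L)\leq 2R$, so $\overline V$ sits in the closed $2R$-neighborhood of $L$. For $(1)$, the restriction of $\lt$ to $\overline V$ is a foliation whose leaf space is the embedded closed arc in $\cL_{\lt}$ joining $[L]$ and $[\ft(L)]$; as such an arc is Hausdorff and each leaf in $\overline V$ is a properly embedded plane, standard taut foliation theory identifies $\overline V$ with the product $L\times[0,1]$ foliated by horizontal slices.

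The main obstacle is $(2)$: bounded displacement by itself does not prevent a homeomorphism from swapping the two sides of some leaf. The decisive ingredient is Lemma~\ref{l-unboundedhalfspaces}, which produces a point deep inside $\ft(L)^\oplus$ whose image under a hypothetical side-swapping $\ft$ would have to traverse $\ft(L)$ in a single step of length at most $R$---forbidden by its distance from $\ft(L)$. Once $(2)$ is in place, $(3)$ and $(1)$ reduce to bookkeeping with the resulting product structure on $\overline V$.
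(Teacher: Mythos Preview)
Your arguments for (2) and (3) are correct and essentially match the paper's: the paper also uses Lemma~\ref{l-unboundedhalfspaces} to conclude $\ft(L^\oplus)=\ft(L)^\oplus$ (equivalently $\ft(U')=W$ in its notation), then uses $\ft(V)\cap V=\emptyset$ to bound $\overline V$ within the $2R$-neighborhood of $L$.

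The gap is in (1). You assert that the leaf space of $\lt$ restricted to $\overline V$ is an embedded closed arc in $\cL_{\lt}$, but this is precisely what (1) asks you to prove, and it does not follow from ``standard taut foliation theory'' alone: the leaf space $\cL_{\lt}$ may be non-Hausdorff, and a priori there could be leaves $F\subset V$ that fail to separate $L$ from $\ft(L)$ (so the set of leaves in $\overline V$ branches rather than forming an arc). The paper rules this out directly: if such an $F$ existed, then one complementary region of $F$ would lie entirely in $V$, and by Lemma~\ref{l-unboundedhalfspaces} that region would contain balls of arbitrary radius---contradicting $\ft(V)\cap V=\emptyset$, which you have already established in proving (3). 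So you have all the ingredients; you just need to insert this separation argument rather than invoke an unproved structural claim about the leaf space.
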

\begin{proof}
	As in Figure~\ref{f-translationlike}, let $U, U'$ be the complementary components of $L$, labeled so that $\ft(L) \subset U'$, and let $W, W'$ be the complementary components of $\ft(L)$, labeled so that $L \subset W'$. Then $V = U' \cap W'$ is the open region between $L$ and $f(L)$.
	
	 Note that $\ft$ must take $U$ to either $W$ or $W'$. But $W$ is disjoint from $U$, so we cannot have $\ft(U) = W$ by Lemma~\ref{l-unboundedhalfspaces}. Thus $\ft$ takes $U$ to $W'$, and $U'$ to $W$. This is what is meant formally by the aforementioned ``translation-like'' behavior. Denote $K=\ft(L)$.

	\begin{enumerate}
		\item It follows, in particular, that $\ft$ takes $V$ off of itself and into $W$. To see that $\overline{V} = K \cup V \cup L$ is foliated as a product, it suffices to show that every leaf that lies in $V$ separates $K$ from $L$. Suppose that some leaf $F \subset V$ does not separate $K$ from $L$. Then $K$ and $L$ are contained in the same complementary region of $F$, so the other complementary region is contained entirely in the open region $V$ between $K$ and $L$. But this means that $V$ contains balls of arbitrary radius, which contradicts the fact that $\ft$ takes $V$ off of itself. Thus every leaf that lies in $V$ separates $K$ from $L$, and $\overline{V}$ is foliated as a product.
		
		\item Since $\overline{V} = K \cup V \cup L$ is foliated as a product, it follows that a coorientation taking $L^{\oplus} = U'$ will take $f(L)^{\oplus} = W$. We have already seen that $\ft(U') = W$, so (2) follows.
		
		\item Suppose for a contradiction that there is a point $p \in \overline{V}$ with $d(p, L) = 2R + \varepsilon$ for some $\varepsilon > 0$. Then since $d(L, \ft(L)) \leq R$, it follows from the triangle inequality that $d(p, \ft(L)) \geq R + \varepsilon$. This means that the open ball $B_{R + \varepsilon}(p)$ at $p$ of radius $R + \varepsilon$ is contained in $V$. But we have already seen that $\ft$ takes $V$ off of itself, so this implies that $d(p, \ft(p)) \geq R + \varepsilon > R$, a contradiction. \qedhere
	\end{enumerate} 
\end{proof}

It follows that if $L \in \wt\cT$ is not fixed by $\ft$, then one can string together the $\ft$-translates of the closed region $\overline{V}$ between $L$ and $\ft(L)$ to see that their union
\[
U = \cdots \cup \ft^{-2}(\overline{V}) \cup \ft^{-1}(\overline{V}) \cup \overline{V} \cup \ft^{1}(\overline{V}) \cup \ft^{2}(\overline{V}) \cup \cdots
\]
is an open, product-foliated set that is preserved by $\ft$. This corresponds to an open interval in the leaf space on which $\ft$ acts as a translation.

Let $X \subset \mt$ be the union of all leaves of $\wt\cT$ that are fixed by $\ft$. Then $U$ is contained in a connected component of $\mt \setminus X$. In fact, the following lemma says that $U$ is exactly a connected component of $\mt \setminus X$.

\begin{lemma}\label{l.borderoftranslationdomain}
	Let $L$ be a leaf of $\wt\cT$ that is not fixed by $\ft$, and let $U = \bigcup_{i = -\infty}^{\infty} f^i(\overline{V})$, where $\overline{V}$ is the closed region between $L$ and $\ft(L)$. Then each leaf in $\partial U = \overline{U} \setminus U$ is fixed by $\ft$.
\end{lemma}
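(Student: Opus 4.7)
My plan is to argue by contradiction: suppose some leaf $F \subset \partial U$ satisfies $\ft(F) \neq F$, apply Proposition~\ref{p.translatedleaves} to $F$ to produce a product region $\overline V_F$ between $F$ and $\ft(F)$, pick a leaf $G$ in its interior $V_F$, and then derive a contradiction from a pigeonhole on where $G$ can sit inside the chain $\{\ft^i(\overline V)\}_{i\in\ZZ}$.

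Before starting I would collect a few preliminary observations. The set $U$ is actually open in $\mt$: at an interior point of some $\ft^i(\overline V)$ this is immediate, and at a shared boundary leaf $\ft^{i+1}(L) = \ft^i(\overline V) \cap \ft^{i+1}(\overline V)$ the two adjacent open product regions together with that leaf form a full neighborhood inside $U$. The set $U$ is $\ft$-invariant and saturated by $\wt\cT$, so the same is true of $\overline U$ and of $\partial U = \overline U \setminus U$; in particular each leaf of $\partial U$ is disjoint from $U$, and $\ft(F) \subset \partial U$ is also disjoint from $U$. Finally, $G$ is a properly embedded plane separating $\mt$ into complementary regions $G^\ominus$ and $G^\oplus$ which I label so that $F \subset G^\ominus$ and $\ft(F) \subset G^\oplus$.

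Next I would produce a sequence in $U$ approaching $F$ with indices going to infinity. Pick $z \in F$ and points $y_k \in U$ with $y_k \to z$, and write $y_k \in \ft^{i_k}(\overline V)$. If the indices $i_k$ were bounded, a subsequence would have $y_k$ in one fixed closed product region $\ft^i(\overline V)$, forcing $z \in \ft^i(\overline V) \subset U$ and contradicting $F \cap U = \emptyset$. So $|i_k|\to\infty$, and by symmetry I may assume $i_k \to +\infty$. By openness of $G^\ominus$ and $G^\oplus$, for all $k$ sufficiently large $y_k \in G^\ominus$ and $\ft(y_k) \in G^\oplus$, the latter since $\ft(y_k) \to \ft(z) \in \ft(F) \subset G^\oplus$.

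The contradiction then comes from a pigeonhole on $G$. The product regions $\ft^j(\overline V)$ are essentially disjoint, meeting only at shared boundary leaves, so the single leaf $G$ is contained in at most two of them. Hence for $k$ large enough, $G$ is disjoint from both $\ft^{i_k}(\overline V)$ and $\ft^{i_k + 1}(\overline V)$. Each of these, being connected and disjoint from the separating plane $G$, must lie entirely in one complementary region of $G$, with the interior witness deciding which: $\ft^{i_k}(\overline V) \subset G^\ominus$ since $y_k \in G^\ominus$, and $\ft^{i_k + 1}(\overline V) \subset G^\oplus$ since $\ft(y_k) \in G^\oplus$. But these two closed product regions share the boundary leaf $\ft^{i_k + 1}(L)$, forcing this leaf to lie in both $G^\ominus$ and $G^\oplus$; absurd, and so $\ft(F) = F$. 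The main delicate point is this last step, where one must simultaneously use that $G$ is a single leaf (hence pigeonholed into a bounded range of indices) and that by Proposition~\ref{p.translatedleaves} each entire product region $\ft^{i_k}(\overline V)$ has its sidedness with respect to $G$ dictated by a single interior point.
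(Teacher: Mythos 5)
Your argument is correct, and it reaches the conclusion by a somewhat different mechanism than the paper. Both proofs hinge on Proposition~\ref{p.translatedleaves}: if a frontier leaf $F$ were not fixed, the closed region between $F$ and $\ft(F)$ is a product. The paper then decomposes $\partial U$ into the forward and backward frontiers $\limsup_{i\to\pm\infty}\ft^i(L)$, and uses connectedness and $\ft$-invariance of $U$ to conclude that either $F$ separates $U$ from $\ft(F)$ or $\ft(F)$ separates $U$ from $F$, which is incompatible with both leaves lying in the same ($\ft$-invariant) frontier. You instead introduce an auxiliary leaf $G$ strictly between $F$ and $\ft(F)$, extract a sequence $y_k\in\ft^{i_k}(\overline V)$ converging to a point of $F$ with $|i_k|\to\infty$ (your bounded-index argument is exactly what justifies the paper's implicit claim that $\partial U$ consists of such limit leaves), and then pigeonhole: since the translates $\ft^j(\overline V)$ meet only along shared boundary leaves, $G$ meets at most two of them, so for large $k$ the two intersecting connected sets $\ft^{i_k}(\overline V)\ni y_k$ and $\ft^{i_k+1}(\overline V)\ni\ft(y_k)$ would lie in opposite complementary regions of $G$ — absurd. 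What your route buys is that you never need to decide on which side of $F\cup\ft(F)$ the whole set $U$ sits (the step the paper compresses into ``it follows that''); what it costs is an explicit appeal to the essential disjointness of the translates $\ft^j(\overline V)$, which you assert rather than prove, though it is available from the discussion preceding the lemma (the union $U$ is an interval in the leaf space on which $\ft$ acts as a translation), and the statement ``$G$ is contained in at most two of them'' is more precisely ``$G$ meets at most two of them'' — equivalent here, since a leaf meeting one of these closed regions is either a boundary leaf or lies in the open region between. Also note the reduction ``by symmetry $i_k\to+\infty$'' is unnecessary: your final step uses only that $i_k$ and $i_k+1$ eventually avoid the (at most two) exceptional indices, which holds for $|i_k|\to\infty$ of either sign.
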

\begin{proof}
	The frontier $\partial U$ can be broken into ``forwards'' and ``backwards'' frontiers
	\[
		\partial_\omega U = \limsup_{i \to \infty} f^i(L) \text{ and } \partial_\alpha U= \limsup_{i \to -\infty} f^i(L),
	\]
	each of which is preserved by $f$.
	
	Let $K$ be a leaf in $\partial_\omega U$, and suppose that $\ft(K) \neq K$. Then the closed region between $K$ and $\ft(K)$ would be product foliated, and it follows that either $K$ separates $U$ from $\ft(K)$ or $\ft(K)$ separates $U$ from $K$. This contradicts the fact that $K, \ft(K) \subset \partial_\omega U$, so we must have $\ft(K) = K$. A similar argument shows that every leaf in $\partial_\alpha U$ is fixed by $\ft$.
\end{proof}

\subsubsection{The dichotomy} \label{ss.general_dichotomy}
We summarize the preceding discussion in terms of the leaf space:

\begin{proposition}\label{p.dichotomy} 
	Let $M$ be a closed $3$-manifold, $f\colon M \to M$ a homeomorphism homotopic to the identity that preserves a taut foliation $\cT$, and $\ft$ a good lift.
	
	The set $\Lambda \subset \cL_{\wt\cT}$ of leaves that are fixed by $\ft$ is closed and $\pi_1(M)$-invariant. Moreover, each connected component $I$ of $\cL_{\wt\cT} \setminus \Lambda$ is an open interval that $\ft$ preserves and acts on as a translation, and every pair of leaves in $I$ are a finite Hausdorff distance apart.
\end{proposition}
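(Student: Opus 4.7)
The plan is to assemble Lemma \ref{l-unboundedhalfspaces}, Proposition \ref{p.translatedleaves}, and Lemma \ref{l.borderoftranslationdomain} into the stated description. Two pieces are immediate: $\Lambda$ is the fixed set of the continuous map induced by $\ft$ on the leaf space $\cL = \cL_{\wt\cT}$, hence closed; and $\Lambda$ is $\pi_1(M)$-invariant because a good lift commutes with every deck transformation (property \ref{item.goodlift_commutes}), so $\ft(\gamma L) = \gamma\ft(L) = \gamma L$ whenever $L \in \Lambda$.

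The core of the argument is to describe a connected component $I$ of $\cL \setminus \Lambda$. Starting from any leaf $L \in I$, Proposition \ref{p.translatedleaves}(1) yields a product-foliated closed region $\overline{V}$ between $L$ and $\ft(L)$. Iterating, the $\ft$-invariant union
\[
U \;=\; \bigcup_{i\in\ZZ} \ft^{i}(\overline{V})
\]
is open and product-foliated (parts (1) and (2) of Proposition \ref{p.translatedleaves} ensure the iterated pieces glue coherently along the leaves $\ft^{i}(L)$). Its image in $\cL$ is therefore homeomorphic to an open interval $I'$. Lemma \ref{l.borderoftranslationdomain} says every frontier leaf of $I'$ is fixed, so $I'$ is clopen in $\cL \setminus \Lambda$ and hence equals $I$. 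On this interval $\ft$ has no fixed points and preserves orientation (Proposition \ref{p.translatedleaves}(2) pins down the coorientations along the orbit), so it is topologically conjugate to a translation on $\R$.

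For the Hausdorff distance bound, set $R = \sup_{y \in \mt} d_{\mt}(y,\ft(y))$. Proposition \ref{p.translatedleaves}(3) says the region between $\ft^i(L)$ and $\ft^{i+1}(L)$ lies in the $2R$-neighborhood of $\ft^i(L)$, so any leaf in that fundamental domain has Hausdorff distance at most $2R$ from $\ft^i(L)$. For arbitrary $L_1, L_2 \in I$, each lies in one such fundamental domain, so a triangle inequality along finitely many consecutive iterates of $L$ gives a finite Hausdorff distance between $L_1$ and $L_2$.

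The main subtlety I anticipate is the identification $I' = I$: one must show that $U$ is not merely contained in a component of $\cL \setminus \Lambda$ but actually exhausts it, and this is exactly the role of Lemma \ref{l.borderoftranslationdomain}, which rules out non-fixed leaves on the frontier. Everything else is essentially bookkeeping given the earlier results.
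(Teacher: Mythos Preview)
Your proposal is correct and is essentially the paper's own approach: the paper's proof simply points back to the preceding discussion (Proposition~\ref{p.translatedleaves} and Lemma~\ref{l.borderoftranslationdomain}) and only singles out the $\pi_1(M)$-invariance, all of which you handle explicitly. One small caution: the one-line claim that $\Lambda$ is closed as the fixed set of a continuous self-map is not automatic in a non-Hausdorff leaf space, but your subsequent argument (exhibiting the complement as a union of open intervals $I'$) independently establishes it.
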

\begin{proof} 
	The only detail that needs to be pointed out is that $\Lambda$ is $\pi_1(M)$-invariant, which follows from the fact that $\ft$ commutes with every deck transformation.
\end{proof}

In particular, one may have $\Lambda = \emptyset$:

\begin{corollary}
	Let $M$ be a closed $3$-manifold, $f\colon M \to M$ a homeomorphism homotopic to the identity that preserves a taut foliation $\cT$, and $\ft$ a good lift.
	
	If $\ft$ fixes no leaf of $\wt\cT$, then $\cT$ is $\R$-covered and uniform, and $\ft$ acts on $\cL_{\wt\cT} \simeq \R$ as a translation.
\end{corollary}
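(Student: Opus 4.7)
The corollary is essentially an unpacking of Proposition~\ref{p.dichotomy} in the extreme case $\Lambda = \emptyset$, where $\Lambda \subset \cL_{\wt\cT}$ denotes the set of leaves fixed by $\ft$. The plan is simply to verify that the three conclusions (\emph{$\R$-covered}, \emph{uniform}, \emph{$\ft$ acts as a translation}) all follow once this fixed-leaf set is empty.

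First, I would observe that the leaf space $\cL_{\wt\cT}$ is connected, since $\mt$ is connected and the quotient map $\mt \to \cL_{\wt\cT}$ is continuous and surjective. By hypothesis $\Lambda = \emptyset$, so $\cL_{\wt\cT} \setminus \Lambda = \cL_{\wt\cT}$ is itself a single connected component of the complement. Proposition~\ref{p.dichotomy} tells us that every such connected component is an open interval, so $\cL_{\wt\cT}$ is homeomorphic to $\R$; this is precisely the definition of $\cT$ being $\R$-covered.

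Next, the same proposition states that $\ft$ preserves this interval and acts on it as a translation, which gives the translation statement directly. Finally, the proposition asserts that any two leaves in such a component are a finite Hausdorff distance apart in $\mt$. Since the component is all of $\cL_{\wt\cT}$, this means any two leaves of $\wt\cT$ are at finite Hausdorff distance, which is the definition of $\cT$ being uniform.

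The argument is a one-line deduction from Proposition~\ref{p.dichotomy}, so there is no real obstacle here; the substantive work has already been done in Lemma~\ref{l-unboundedhalfspaces}, Proposition~\ref{p.translatedleaves}, and Lemma~\ref{l.borderoftranslationdomain}. The only point worth mentioning carefully is the connectedness of $\cL_{\wt\cT}$, which ensures that the absence of fixed leaves forces the whole leaf space to be a single translation interval rather than a disjoint union of such intervals.
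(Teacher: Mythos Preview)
Your proposal is correct and matches the paper's approach exactly: the paper states this corollary immediately after Proposition~\ref{p.dichotomy} with the remark ``In particular, one may have $\Lambda = \emptyset$'' and gives no further proof, treating it as a direct unpacking of that proposition. You have simply spelled out the details of this unpacking, including the one point (connectedness of the leaf space) that makes the deduction clean.
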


This leads to a simple dichotomy when the foliation is $f$-minimal. Recall: 
\begin{definition}\label{def.f_minimal} A foliation $\cT$ that is preserved by a map $f\colon M \to M$ is said to be \emph{$f$-minimal} if the only closed sets that are both $f$-invariant and saturated are $M$ and $\emptyset$.
\end{definition}

\begin{corollary} \label{c.minimalcase}
	Let $M$ be a closed $3$-manifold, $f\colon M \to M$ a homeomorphism homotopic to the identity that preserves a taut foliation $\cT$, and $\ft$ a good lift.
	
	If $\cT$ is $f$-minimal, then either
	\begin{enumerate}
		\item $\ft$ fixes every leaf of $\lt$, or
		\item $\cT$ is $\R$-covered and uniform, and $\ft$ acts as a translation on the leaf space of $\lt$.
	\end{enumerate}
\end{corollary}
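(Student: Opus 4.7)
The plan is to invoke Proposition~\ref{p.dichotomy} and use $f$-minimality to force the fixed-leaf set to be either everything or nothing.

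Let $\Lambda \subset \cL_{\wt\cT}$ denote the set of leaves fixed by $\ft$, and let $Z \subset \mt$ be the union of all leaves belonging to $\Lambda$. Write $\nu\colon \mt \to \cL_{\wt\cT}$ for the quotient map; then $Z = \nu^{-1}(\Lambda)$. By Proposition~\ref{p.dichotomy}, $\Lambda$ is closed in $\cL_{\wt\cT}$ and invariant under $\pi_1(M)$, so $Z$ is a closed, $\wt\cT$-saturated, $\pi_1(M)$-invariant subset of $\mt$. Consequently, $\pi(Z) \subset M$ is closed (because $\pi^{-1}(\pi(Z)) = Z$ is closed), and it is saturated by $\cT$.

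Next I would check that $\pi(Z)$ is $f$-invariant. If $L \in \Lambda$ then $\ft(L) = L$, so using that $\ft$ is a good lift (hence descends to $f$) we get $f(\pi(L)) = \pi(\ft(L)) = \pi(L)$. Thus every leaf of $\cT$ meeting $\pi(Z)$ is fixed by $f$, and $f(\pi(Z)) = \pi(Z)$.

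Now $f$-minimality of $\cT$ forces $\pi(Z) = \emptyset$ or $\pi(Z) = M$. If $\pi(Z) = M$ then $Z = \mt$, i.e.\ $\ft$ fixes every leaf of $\wt\cT$, which is conclusion~(1). If $\pi(Z) = \emptyset$ then $\ft$ fixes no leaf of $\wt\cT$, and the corollary immediately preceding this one gives conclusion~(2): $\cT$ is $\R$-covered and uniform, and $\ft$ acts on $\cL_{\wt\cT} \simeq \R$ as a translation. No step here presents a real obstacle; the only mild point to get right is that closure, saturation, and $\pi_1(M)$-invariance of $Z$ all feed through to $\pi(Z)$, after which $f$-minimality closes the argument.
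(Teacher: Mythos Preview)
Your proof is correct and follows essentially the same route as the paper's: use Proposition~\ref{p.dichotomy} to see that the fixed-leaf set $\Lambda$ descends to a closed, $\cT$-saturated, $f$-invariant subset of $M$, then invoke $f$-minimality. The only cosmetic difference is that the paper argues by contradiction (assuming $\emptyset \neq \Lambda \neq \cL$ and observing the projection cannot accumulate on leaves in complementary intervals), whereas you directly use $\pi^{-1}(\pi(Z)) = Z$ to pass between $\pi(Z) = M$ and $Z = \mt$; these amount to the same thing.
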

\begin{proof}
	Since $\ft$ commutes with each deck transformation, each deck transformation preserves the set $\Lambda \subset \cL$ of fixed leaves. In particular, if $I$ is a component of $\cL \setminus \Lambda$ and $g \in \pi_1(M)$ then one has either $g(I) = I$ or $g(I) \cap I = \emptyset$. If $\Lambda = \emptyset$ then we are in case (2) by the preceding corollary.
	
	Suppose instead that $\Lambda \neq \emptyset$. If $\Lambda \neq \cL$, then it corresponds to a closed, $\cT$-saturated subset of $M$ that is preserved by $f$. Furthermore, this subset is not all of $M$ since it cannot accumulate on a leaf lying in the interior of a complementary interval to $\Lambda$. This contradicts $f$-minimality, so we have $\Lambda = \cL$ and are in case (1).
\end{proof}

\subsubsection{Bounded movement inside leaves}
We end this section by showing that a good lift that fixes every leaf will be within a bounded distance of the identity not only in $\mt$ but also in each leaf.

\begin{lemma}\label{lema-boundedinfixcsleaf}
	Let $M$ be a closed $3$-manifold,
$f\colon M \to M$ a homeomorphism homotopic to the identity that preserves a taut foliation $\cT$, and $\ft$ a good lift.
	
	If $\ft$ fixes every leaf of $\lt$, then there is a uniform bound $K>0$ such that for any leaf $L \in \lt$ one has 
	\[
	d_L(x, \ft(x) ) <K \text{ for all } x \in L,
	\]
	where $d_L$ is the path metric on $L$.
\end{lemma}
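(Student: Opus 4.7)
My plan is to descend the question to a function on $M$ and use compactness. Define $\phi \colon M \to [0,\infty)$ by $\phi(p) = d_L(\tilde p, \ft(\tilde p))$, where $\tilde p \in \mt$ is any lift of $p$ and $L \in \lt$ is the leaf through $\tilde p$. This is well-defined: if $\tilde p$ is replaced by $\gamma \tilde p$ for a deck transformation $\gamma$, then $\ft(\gamma \tilde p) = \gamma \ft(\tilde p)$ since $\ft$ commutes with $\gamma$, and $\gamma$ is an isometry of $\mt$, hence of the induced path metrics on leaves. The value $\phi(p)$ is finite because each leaf of $\lt$ is a connected surface. Since $\phi$ is finite on the compact set $M$, upper semi-continuity would give a uniform bound $K$ for $\phi$, and then setting $p = \pi(x)$ for any $x \in L \in \lt$ would yield $d_L(x, \ft(x)) = \phi(p) \leq K$, which is the desired conclusion.

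To prove upper semi-continuity at $p$, fix $\epsilon > 0$, a lift $\tilde p$ of $p$, and a path $\sigma$ in $L$ from $\tilde p$ to $\ft(\tilde p)$ of length at most $\phi(p) + \epsilon/2$. I would cover the compact image of $\sigma$ by finitely many foliation boxes $U_1, \ldots, U_n \subset \mt$, and choose a foliation box $V$ around $\ft(\tilde p)$. For $p'$ close to $p$, pick a lift $\tilde{p'}$ close to $\tilde p$; by continuity of $\ft$ the point $\ft(\tilde{p'})$ lies in $V$ close to $\ft(\tilde p)$. Using the local product structure of $\lt$ in each $U_i$, I construct inductively a path $\sigma'$ in the leaf $L'$ through $\tilde{p'}$ by concatenating plaques of $L'$ inside the $U_i$ that shadow $\sigma \cap U_i$ along the transverse direction. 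Since plaques vary continuously with the transverse parameter, for $p'$ sufficiently close to $p$ the total length of $\sigma'$ is at most $\phi(p) + 3\epsilon/4$, and $\sigma'$ ends at some point $\tilde q' \in L' \cap V$ close to $\ft(\tilde p)$.

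The crucial final step uses the hypothesis: because $\ft$ fixes every leaf of $\lt$, both $\tilde q'$ and $\ft(\tilde{p'})$ lie in $L'$; since they also lie in the small foliation box $V$, they are on a common plaque of $V$ and can be joined inside $L'$ by a path of length at most $\epsilon/4$. Concatenating yields a path in $L'$ from $\tilde{p'}$ to $\ft(\tilde{p'})$ of length at most $\phi(p) + \epsilon$, so $\phi(p') \leq \phi(p) + \epsilon$, proving upper semi-continuity. The main subtlety is the parallel-path construction in the middle paragraph: one must ensure that the chain of plaques all lies inside a single leaf $L'$ and has length close to that of $\sigma$, which is a standard uniform-continuity-of-holonomy argument once a finite foliation atlas of $M$ is fixed. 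The concluding identification of $\tilde q'$ with $\ft(\tilde{p'})$ through a common plaque of $V$ is where the assumption $\ft(L') = L'$ is indispensable — without it one could not even guarantee $\ft(\tilde{p'}) \in L'$, and the argument would collapse.
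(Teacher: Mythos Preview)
Your proof is correct and is essentially the same argument as the paper's, just packaged differently: both use compactness of $M$ together with upper semi-continuity of the leafwise distance function $p\mapsto d_L(\tilde p,\ft(\tilde p))$. The paper phrases it by contradiction --- if $d_{L_i}(x_i,\ft(x_i))\to\infty$, pass to a convergent subsequence in $M$ and observe that the limit point $x_\infty$ would have $\ft(x_\infty)$ in a different leaf --- whereas you make the upper semi-continuity explicit via holonomy transport of paths; your version is more detailed about that step, but the content is the same.
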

\begin{proof}
	Suppose for a contradiction that there is a sequence of points $x_i \in \mt$ for which $d_{L_i} (x_i, \ft(x_i) )$ tends to infinity, where $d_{L_i}$ is the path metric on the leaf $L_i \in \wt\cT$ containing $x_i$.
	
	Since $M$ is compact, we can pass to a subsequence and find a sequence of deck transformations $\gamma_i$ such that $\gamma_i(x_i)$ converges to a point $x_{\infty}$. Since $\ft$ commutes with $\gamma_i$, we have that $\gamma_i ( \ft(x_i) ) = \ft( \gamma_i(x_i))$ converges to $\ft(x_{\infty} )$.
	
	Now, since $d_{L_i} ( \gamma_i (\ft(x_i) ), \gamma_i(x_i)) = d_{L_i} ( \ft (x_i) , x_i )$ goes to infinity, the points $\ft (x_{\infty} )$ and $x_{\infty}$ must be in different leaves of $\lt$. This contradicts the fact that $\ft$ fixes each leaf of $\lt$.
\end{proof}

\begin{remark}\label{r.lema-boundedinfixcsleaf_sublamination}
	This lemma applies as well to a leaf of a closed sublamination of $\cT$ whose lift is leafwise fixed by $\ft$. 
	\end{remark}

\subsection{Consequences for partially hyperbolic systems} \label{ss.direct_consequences}
Let us now specialize, and fix a closed $3$-manifold $M$ whose fundamental group is not virtually solvable, a dynamically coherent partially hyperbolic diffeomorphism $f \colon M \to M$ that is homotopic to the identity, and a good lift $\ft\colon \mt \to \mt$. 

We denote by $\cW^{\cs}$, $\cW^{\cu}$, $\cW^{\mathrm{s}}$, $\cW^{\mathrm{u}}$, and $\cW^{\mathrm{c}}$ the center-stable, center-unstable, stable, unstable, and center foliations.

\subsubsection{Fixed points and the topology of leaves}

\begin{lemma}\label{l.nofixedpoints} 
	Let $L \in \widetilde \cW^{\cs}$ be a leaf that is fixed by $\ft$. If there is a sequence of leaves $L_i \in \widetilde \cW^{\cs}$ that are fixed by $\ft$ and accumulate on $L$, then there are no points in $L$ fixed by non-trivial power of $\ft$. 
\end{lemma}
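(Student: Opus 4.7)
The plan is to argue by contradiction: suppose $\ft^k(x) = x$ for some $x \in L$ and some $k \neq 0$ (WLOG $k > 0$), and use the nearby fixed leaves $L_i$ to manufacture additional fixed points of $\ft^k$ on the unstable leaf through $x$ converging to $x$. This will contradict the fact that partial hyperbolicity makes $x$ isolated among the fixed points of $\ft^k$ in the one-dimensional leaf $\widetilde \cW^u(x)$.

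First I would verify that isolation. Since $\ft$ permutes the leaves of the unstable foliation and $\ft^k(x) = x$, the map $\ft^k$ restricts to a $C^1$ self-diffeomorphism of $\widetilde \cW^u(x)$ fixing $x$. Partial hyperbolicity provides $n > 0$ and $c > 1$ with $\|Df^n|_{E^u}\| \geq c$ uniformly; iterating along the periodic orbit of $\pi(x)$ in $M$ gives $\|D\ft^{nk}|_{E^u(x)}\| \geq c^k > 1$. Since $\ft^{nk}(x) = x$, the chain rule yields $D\ft^{nk}|_{E^u(x)} = (D\ft^{k}|_{E^u(x)})^n$, so the eigenvalue $\lambda$ of $D\ft^k|_{E^u(x)}$ satisfies $|\lambda| > 1$. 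Hence $x$ is a topologically expanding, in particular isolated, fixed point of $\ft^k$ on $\widetilde \cW^u(x)$.

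Next I would use the local product structure of $\widetilde \cW^{cs}$ and $\widetilde \cW^u$. Choose a small foliation chart $V$ around $x$ adapted to $\widetilde \cW^{cs}$ in which $\widetilde \cW^u(x) \cap V$ is a single arc transverse to the $\widetilde \cW^{cs}$-plaques and meeting each of them in exactly one point. By continuity of $\ft^k$ at the fixed point $x$, shrink $V$ so that $\ft^k(V) \subset V'$ for a slightly larger chart $V'$ with the same product property. Because the $L_i$ accumulate on $L$ with $x \in L$, for $i$ large $L_i$ meets $V$ in a plaque close to the plaque of $L$ through $x$, and in particular meets $\widetilde \cW^u(x)$ in a single point $y_i$, with $y_i \to x$.

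For the punchline: $\ft^k$ fixes each $L_i$ (since $\ft$ does by hypothesis) and preserves $\widetilde \cW^u(x)$ (since $\ft$ permutes unstable leaves and $\ft^k(x) = x$), so $\ft^k(y_i) \in L_i \cap \widetilde \cW^u(x)$. For $i$ large, continuity gives $\ft^k(y_i) \in V'$, and the product property of $V'$ then forces $\ft^k(y_i) = y_i$. This exhibits a sequence of fixed points of $\ft^k$ on $\widetilde \cW^u(x)$ distinct from $x$ and converging to $x$, contradicting the isolation shown in the first step. The only genuinely delicate point is the chart-shrinking in step two: one must ensure that $\ft^k(y_i)$ lands in a region where $\widetilde \cW^u(x) \cap L_i$ is a single point, so that the identification $\ft^k(y_i) = y_i$ actually goes through; everything else is a direct combination of partial hyperbolicity on $E^u$ with the transversality of $\widetilde \cW^u$ to $\widetilde \cW^{cs}$ and the hypothesis that every $L_i$ is $\ft$-fixed.
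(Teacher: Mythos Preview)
Your proof is correct and follows essentially the same approach as the paper's: produce fixed points of $\ft^k$ on $\widetilde\cW^u(x)$ by intersecting with the nearby fixed leaves $L_i$, then contradict expansion along $E^u$.

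Two simplifications are worth noting. First, the derivative computation in your step one is unnecessary: once you have two distinct fixed points $x, y_i$ on the same unstable leaf, the compact unstable arc between them is $\ft^k$-invariant with fixed endpoints, which directly contradicts the uniform expansion (equivalently, contraction by $\ft^{-k}$) of unstable arcs; no local linearization is needed. Second, the chart-shrinking you flag as ``delicate'' is in fact not needed at all: since $\widetilde\cW^{cs}$ is taut and lifted to the universal cover, any transversal (in particular $\widetilde\cW^u(x)$) meets each leaf of $\widetilde\cW^{cs}$ in at most one point globally (Theorem~\ref{thm-tautfoliation}). Hence $L_i \cap \widetilde\cW^u(x)$ is either empty or a single point, and once you know $\ft^k$ preserves both $L_i$ and $\widetilde\cW^u(x)$, the conclusion $\ft^k(y_i)=y_i$ is immediate. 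The paper's proof uses exactly this global fact and is accordingly three lines long.
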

\begin{proof}
Suppose that $\ft^n$, $n>0$, fixes some point $x \in L$. Then it fixes the unstable leaf $\widetilde \cW^u (x)$ through that point. When $i$ is sufficiently large, $\widetilde \cW^u(x)$ intersects $L_i$ at a single point $x_i$, which must therefore also be fixed by $\ft^n$. This contradicts the fact that $\ft^n$ expands unstable leaves. 
\end{proof}

\begin{proposition}\label{p.planeannuliorfixedpoints} 
	Let $L \in \widetilde \cW^{\cs}$ be a leaf that is fixed by $\ft$. If $\ft$ fixes no point in $L$, then $A = \pi(L)$ has cyclic fundamental group (and is therefore a plane, cylinder, or M\"{o}bius band).
\end{proposition}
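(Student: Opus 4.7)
The plan is to identify $\pi_1(A)$ with the stabilizer $G = \{\gamma \in \pi_1(M) \mid \gamma(L) = L\}$ of $L$ in $\pi_1(M)$. Since the deck action is free and properly discontinuous, so is its restriction to $G$, with quotient $L/G \cong A$. Thus $\pi_1(A) \cong G$, and our goal becomes showing $G$ is cyclic.

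First I would observe that $L$ is homeomorphic to $\R^2$: by Theorem~\ref{thm-tautfoliation}, each leaf of the taut foliation $\widetilde\cW^{cs}$ is a properly embedded plane in $\mt \cong \R^3$. Next I would rule out the possibility that $A$ is a closed surface: the standing convention that $\pi_1(M)$ is not virtually solvable together with Theorem~\ref{thm-HHUtori} implies that no leaf of $\cW^{cs}$ is a closed surface. Hence $A$ is a non-compact surface without boundary, and by the classical structure theorem for open surfaces, $\pi_1(A) \cong G$ is a free group. It remains to show $G$ has rank at most one.

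Suppose for contradiction that $G$ contains a rank-two free subgroup $F_2 = \langle \alpha, \beta\rangle$. The dynamical input is that $\ft|_L : L \to L$ is an orientation-preserving homeomorphism (orientation-preservation follows from the corollary of Lemma~\ref{l-unboundedhalfspaces}, which gives that $\ft$ preserves coorientation at any fixed leaf, combined with $\ft$ being globally orientation-preserving on $\mt$) which has no fixed points and commutes with every element of $G$. By the Brouwer plane translation theorem, $\ft|_L$ has no periodic points and generates a free, properly discontinuous $\ZZ$-action on $L$. The plan is then to combine the actions: one shows that $G \times \langle \ft|_L\rangle$ acts freely and properly discontinuously on $L \cong \R^2$, so that the quotient is a 2-manifold with fundamental group $G \times \ZZ$. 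If $G$ contains $F_2$, then this fundamental group contains $F_2 \times \ZZ$, which is not isomorphic to a subgroup of any surface group, giving the desired contradiction.

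I expect the main obstacle to be the verification that the joint $G \times \langle \ft|_L\rangle$-action on $L$ is free and properly discontinuous. Freeness could fail if some composition $\gamma \ft^n$ has a fixed point in $L$, which corresponds to a different good lift of $f^n$ fixing a point in $L$; ruling this out requires using the bounded displacement of $\ft$ in $\mt$ together with the fact that $L$ is properly embedded in $\mt$ (so that orbits which escape compact subsets of $L$ also escape compact subsets of $\mt$, while $\ft^n$-orbits can move at most linearly in $\mt$). Similar reasoning, exploiting the finite translation budget of $\ft$ combined with Brouwer's theorem, should yield proper discontinuity of the joint action and complete the argument.
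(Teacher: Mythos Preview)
Your approach is genuinely different from the paper's, and the gap is exactly where you flag the ``main obstacle'': the joint $G \times \langle \ft|_L\rangle$-action on $L$ need not be free or properly discontinuous under the stated hypotheses, and your sketch does not establish either. A fixed point of $\gamma \ft^n$ in $L$ is precisely a period-$n$ point of $f$ in $A = \pi(L)$, and nothing in the hypothesis ``$\ft$ fixes no point in $L$'' rules such points out. Concretely, for a discretized Anosov flow the good lift $\ft$ has no fixed points anywhere, yet $f$ has periodic points on every cylindrical center-stable leaf; so $\gamma \ft^n$ does have fixed points in $L$ for suitable $\gamma \in G$ and $n > 0$. Your bounded-displacement idea does not help: from $\gamma \ft^n(x) = x$ one gets $d_{\mt}(x,\ft^n(x)) = d_{\mt}(x,\gamma^{-1}x)$; the left side is at most $nK$, but the right side is merely the displacement of a fixed isometry at a single point, which can certainly be $\leq nK$. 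Proper discontinuity fails for the same reason --- there is no mechanism to bound $n$. Since you only invoke the free-subgroup hypothesis \emph{after} forming the quotient, you cannot appeal to it to rescue freeness of the action.

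The paper avoids this entirely by working with the stable foliation on $L$ rather than with the $\ft$-action directly. Because any stable leaf fixed by $\ft$ would contain a fixed point, $\ft$ acts freely on the leaf space of the stable foliation in $L$ and therefore has an axis (Proposition~\ref{proposition-axes}). Each nontrivial $\gamma \in G$ also acts freely on this leaf space (there are no closed stable leaves) and commutes with $\ft$, so shares the same axis. The axis theory then forces any two elements of $G$ to generate an abelian group, which --- since $\cW^{cs}$ has no compact leaves --- must be cyclic. The essential ingredient your argument lacks is a $G$-invariant one-dimensional object on which the relevant actions are \emph{automatically} free; the stable leaf space provides exactly this.
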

\begin{proof}
	Let $\cL$ be the leaf space of the stable foliation within $L$.	
	Since $\ft$ fixes no point in $L$, it cannot fix any stable leaf in $L$, since a stable leaf that is fixed by $\ft$ would contain a fixed point. In other words, $\ft$ acts freely on $\cL$, and hence has an axis $A_f$ by Proposition~\ref{proposition-axes}.
	
	Consider two elements $\gamma_1, \gamma_2 \in \pi_1(M)$ that fix $L$. Since the stable foliation can have no circular leaves, neither of these elements may fix a stable leaf. Hence each $\gamma_i$ acts freely on $\cL$ with an axis $A_i$. 
	
	As $\ft$ commutes with both $\gamma_1$ and $\gamma_2$ it follows from Proposition~\ref{proposition-axes} that in fact these axes are the same, i.e., $A_1 = A_f = A_2$. Proposition~\ref{proposition-axes} further implies that the subgroup generated by $\gamma_1$ and $\gamma_2$ is abelian. Since there are no compact leaves in $\cW^{\cs}$, it follows that this subgroup is cyclic, and hence $\gamma_1^n = \gamma_2^m$ for some $n,m$.
\end{proof}

\subsubsection{Minimality in hyperbolic or Seifert manifolds}\label{ss.minimalsandh} 

The following proposition implies that the dichotomy in Corollary~\ref{c.minimalcase} holds, without the assumption of $f$-minimality, when $M$ is hyperbolic or Seifert-fibered.

\begin{proposition}\label{p.hypSeifminimal}
Let $M$ be a closed $3$-manifold that is hyperbolic or Seifert-fibered, $f\colon M \to M$ a dynamically coherent partially hyperbolic diffeomorphism that is homotopic to the identity, and $\ft$ a good lift.

If $\ft$ fixes one leaf of $\widetilde \cW^{\cs}$, then $\cW^{\cs}$ is a minimal foliation, and $\ft$ fixes every leaf of $\widetilde \cW^{\cs}$. 
The same statement holds for $\cW^{\cu}$. 
\end{proposition}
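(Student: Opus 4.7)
The plan is to prove both conclusions in two stages: first that $\tf$ fixes every leaf of $\widetilde\cW^{cs}$, and second that $\cW^{cs}$ is minimal. Both stages combine the dichotomy of Proposition~\ref{p.dichotomy} with topological rigidity coming from the hyperbolic or Seifert-fibered structure of $M$.

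For the first stage, let $\Lambda \subset \cL^{cs}$ be the set of $\tf$-fixed leaves, which by Proposition~\ref{p.dichotomy} is closed, $\pi_1(M)$-invariant, and nonempty by hypothesis. Suppose toward contradiction that $\Lambda \neq \cL^{cs}$; then there is a component $I$ of $\cL^{cs} \setminus \Lambda$ on which $\tf$ acts freely as a translation, with any two leaves in $I$ at finite Hausdorff distance. Fix a boundary leaf $L \in \partial I \cap \Lambda$ (the case where $I$ has no endpoint in $\Lambda$ requires separate handling via accumulation of the $\pi_1(M)$-orbit of a leaf in $I$). After verifying that $L$ is accumulated by other $\tf$-fixed leaves of $\Lambda$, Lemma~\ref{l.nofixedpoints} precludes fixed points of any power of $\tf$ on $L$, and Proposition~\ref{p.planeannuliorfixedpoints} forces $\pi(L)$ to have cyclic fundamental group. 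In the Seifert case, the central element $\gamma_0 \in \pi_1(M)$ coming from a regular Seifert fiber commutes with $\tf$ and preserves $\Lambda$; combining this with Brittenham-type structure theorems for taut foliations on Seifert manifolds yields a contradiction, essentially forcing a compact leaf and contradicting Theorem~\ref{thm-HHUtori}. In the hyperbolic case, I would invoke that in $\widetilde M \cong \HH^3$, leaves of a taut foliation at finite Hausdorff distance share limit sets on $S^2_\infty$; continuous extension theorems of Calegari--Dunfield or Fenley then convert the uniform Hausdorff-distance structure across $I$ into asymptotic constraints conflicting with $L$ having cyclic stabilizer and being properly embedded.

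For the second stage, now that $\tf$ fixes every leaf of $\widetilde\cW^{cs}$, Lemma~\ref{lema-boundedinfixcsleaf} gives bounded displacement inside each leaf. Suppose $\Sigma \subsetneq M$ is a proper $\cW^{cs}$-minimal set. Since $f$ fixes every leaf of $\cW^{cs}$, it preserves $\Sigma$ setwise; lifting produces a closed, $\pi_1(M)$-invariant, saturated $\widetilde\Sigma$ with every leaf $\tf$-fixed. Boundary leaves $L$ of $\widetilde\Sigma$ are accumulated by $\tf$-fixed leaves of $\widetilde\Sigma$, so Lemma~\ref{l.nofixedpoints} and Proposition~\ref{p.planeannuliorfixedpoints} force $\pi(L)$ to have cyclic fundamental group. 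Combining the density of leaves within a minimal set with the cyclic-stabilizer structure and the topology of $M$ then yields a contradiction: cyclic stabilizers of distinct accumulating boundary leaves generate a non-cyclic abelian subgroup of $\pi_1(M)$, incompatible with hyperbolicity or (in the Seifert case) with the absence of compact leaves guaranteed by Theorem~\ref{thm-HHUtori} together with Brittenham's classification.

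The main obstacle is ruling out translation intervals in the hyperbolic case at the first stage. Converting the finite-Hausdorff-distance condition of Proposition~\ref{p.dichotomy} into a contradiction requires substantive machinery on the asymptotic behavior of taut foliations in hyperbolic 3-manifolds --- continuous extensions, universal circles, or laminations at infinity. The Seifert case is more tractable via the central element trick, but still requires Brittenham-type structural input. The minimality conclusion of the second stage is lighter once the first stage is in place, relying mainly on Lemmas~\ref{l.nofixedpoints}--\ref{lema-boundedinfixcsleaf} and topological rigidity.
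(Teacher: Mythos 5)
Your proposal does not contain a proof at the point where one is actually needed. In both stages the decisive step is an unsubstantiated appeal to outside machinery: in the hyperbolic case you hope that continuous-extension/limit-set results will ``conflict'' with the boundary leaf having cyclic stabilizer, but no contradiction is identified (for a taut foliation with no compact leaves the limit set of essentially every leaf of $\wt\cW^{cs}$ is all of $S^2_\infty$, so ``leaves at finite Hausdorff distance share limit sets'' constrains nothing); in the Seifert case ``Brittenham-type structure theorems \ldots essentially forcing a compact leaf'' is not an argument; and in your second stage the claim that cyclic stabilizers of distinct accumulating boundary leaves generate a non-cyclic \emph{abelian} subgroup is simply false as stated --- stabilizers of different leaves have no reason to commute. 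There is also a glossed-over point at the start: a boundary leaf of a translation interval is fixed by $\ft$ but need not be accumulated by \emph{fixed} leaves, so Lemma~\ref{l.nofixedpoints} and Proposition~\ref{p.planeannuliorfixedpoints} do not automatically apply to it; the paper handles this by using Zorn's lemma to pass to a minimal nonempty compact saturated sublamination $\Lambda$ of the fixed set, which can have no isolated leaves.

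The actual mechanism of the paper is quite different and concrete, and both conclusions come out simultaneously by showing that this minimal sublamination $\Lambda$ is all of $M$. Assuming $\Lambda\neq M$, one studies a complementary region $V$: for each boundary leaf $L$ one looks at the set $A^r_L$ of points whose maximal unstable segment into $\overline V$ has length at least $r$, shows $\pi(A^r_L)$ is compact, and then rules out $\pi(L)$ being a plane by a Brouwer fixed point argument (unstable lengths are expanded, so a disk containing $A^r_L$ would be mapped into itself, producing a fixed point forbidden by Lemma~\ref{l.nofixedpoints}); hence the boundary leaves are annuli. One then glues compact annuli in these boundary leaves to transverse annuli made of unstable segments to build a torus $T\subset \overline{\pi(V)}$. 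The hyperbolic or Seifert hypothesis enters \emph{only} here, to show $T$ bounds a solid torus (via Lemma~\ref{l.torus_inball_or_bounds_solid_torus} plus tautness in the hyperbolic case, via horizontality, Theorem~\ref{thm-horizontalseif}, in the Seifert case). Finally a volume-versus-length argument inside this solid torus (unstable arcs grow exponentially under $\ft^n$ while the available volume grows only linearly because $\ft$ moves points a bounded distance and $\pi_1(T)=\ZZ$) forces an unstable arc to self-accumulate transversally to $\wt\cW^{cs}$, a contradiction. None of these ingredients --- the minimal sublamination, the annulus/plane dichotomy via Brouwer, the torus construction, the solid-torus step, or the volume-versus-length estimate --- appears in your outline, and without them (or a genuine substitute) the proposal does not close.
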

\begin{proof}
	Without loss of generality, we may assume, by passing to a finite cover of $M$ and power of $f$, that $\cW^{\cs}$ is orientable and coorientable, and $f$ preserves all orientations and coorientations, and
in addition that $M$ is orientable.

	Let $X \subset \mt$ be the union of all leaves of $\wt\cW^{\cs}$ that are fixed by $\ft$. This set is non-empty by hypothesis, $\pi_1(M)$-invariant as $\ft$ is a good lift and closed by Proposition~\ref{p.dichotomy}. It follows that $\pi(X) \subset M$ is compact and non-empty. By Zorn's lemma, we can find a minimal compact, non-empty, $\cW^{\cs}$-saturated subset $\Lambda \subset \pi(X)$. We will show that $\Lambda = M$, which implies both that $\cW^{\cs}$ is minimal and that $\ft$ fixes every leaf.
	
	Note that $\Lambda$ cannot contain any isolated leaves. Indeed, it cannot consist solely of isolated leaves since then these leaves would be compact, and $\cW^{\cs}$ has no compact leaves. Deleting an isolated leaf from $\Lambda$ still leaves a closed, saturated subset, so the existence of an isolated leaf would contradict our minimality assumption.
	
	Let $\wt\Lambda$ be the preimage of $\Lambda$ in $\mt$. Since no leaf in $\Lambda$ is isolated, every leaf in $\wt\Lambda$ is accumulated on by a sequence of leaves in $\wt\Lambda$. Since these leaves are all fixed by $\ft$, Lemma~\ref{l.nofixedpoints} implies that $\ft$ has no fixed points in $\wt\Lambda$. It follows from Proposition~\ref{p.planeannuliorfixedpoints} that each leaf of $\Lambda$ is either a cylinder or a plane. 

	Assume for a contradiction that $\Lambda \neq M$, and hence $\wt\Lambda \neq \mt$. Then we can choose a non-trivial connected component $V$ of $\mt \setminus \wt\Lambda$.

	\begin{claim} \label{claim_finitely_many_leaves}
		The projection $\pi(\partial V)$ to $M$ consists of finitely many leaves.
	\end{claim}
	This is a standard fact in the theory of foliations \cite[Lemma 5.2.5]{CandelConlonI}.

	For each $x \in \partial V$, let $u_x$ be the maximal connected unstable segment that starts at $x$ and is contained in $\overline{V}$, which is either a closed interval or a ray. That is, $u_x$ is the component of $\wt\cW^{\mathrm{u}}(x) \cap \overline{V}$ that contains $x$. Given $r > 0$, and a leaf $L \subset \partial V$ define
	\[ 
	A^r_L = \{ x \in L \mid \ell(u_x) \geq r \}.
	\]
	
	\begin{claim}
		For any leaf $L \subset \partial V$, and any $r > 0$, the set $\pi(A^r_L)$ is compact as a subset of $\pi(L)$.
	\end{claim}
	\begin{proof}
		It is straightforward to see that $\pi(A^r_L)$ is closed as a subset of $\pi(L)$. If it is not compact, then one can find sequence of points $x_i \subset \pi(A^r_L)$ that escapes every compact subset of $\pi(L)$. After taking a subsequence we can assume that the $x_i$ converges in $M$ to some point $x$. Take a chart around $x$ of the form $D^2 \times (0, 1)$ where each $D^2 \times \{y\}$ is a plaque of $\cW^{\cs}$, and each $\{p\} \times (0, 1)$ is an oriented plaque of $\cW^u$. Since the $x_i$ escape every compact subset of $\pi(L)$, we can pass to a subsequence such that each $x_i$ is contained in a different plaque. Then it is easy to see that the lengths of the unstable segments at $x_i$ that stay in $\pi(\overline{V})$ must go to $0$, a contradiction.
	\end{proof}
	
	\begin{claim} \label{claim_piLannulus}
		Each leaf $L \subset \partial V$ corresponds to an annulus $\pi(L)$ in $M$. 
	\end{claim}
	\begin{proof}
		Fix a leaf $L \subset \partial V$ and an $r > 0$ for which $A_L := A^r_L$ is non-empty.	As $\pi(L)$ is either a plane or an annulus, we assume for a contradiction that it is a plane. Then the covering map $\pi$ restricts to a homeomorphism on $L$, so the fact that $\pi(A_L)$ is compact means that $A_L$ is compact. Let $D$ be a disk in $L$ containing $A_L$ in its interior. 
		
		Since the leaves of $\partial V$ are fixed by $\ft$, and a positive iterate of $\ft$ expands the lengths of unstable arcs, we can find an $n \geq 1$ for which $\ft^n(D) \subset A_L \subset D$. Then Brouwer's fixed point theorem implies that $\ft^n$ has a fixed point in $L$, which contradicts Lemma \ref{l.nofixedpoints}. So $L$ must be an annulus.
	\end{proof}
	
	Now we can complete the proof of Proposition \ref{p.hypSeifminimal}. Let $L_1, \ldots, L_k$ be a finite collection of leaves in $\partial V$ that cover $\pi(\partial V)$, and fix $r > 0$ such that each $A_i := A^r_{L_i}$ is non-empty. Choose a compact annulus $C_i$ in each $\pi(L_i)$ that contains $\pi(A_i)$.	Since $f$ preserves orientations and coorientations, we can join each $C_i$ to an adjoining $C_j$ with an annulus built out of unstable segments $u_x$ for points $x \in \partial C_i$. Iterating this procedure, we obtain a torus $T$ that consists of alternating annuli 
	contained in leaves of $\cW^{\cs}$ and annuli transverse	to $\cW^{\cs}$ inside $W = \pi(V)$.  
Notice that $T$ is a torus and not a Klein bottle, since $T$
is two sided and $M$ is orientable.
	
	We will now (for the first time) use the assumption that $M$ is hyperbolic or Seifert-fibered to see that $T$ bounds a solid torus.
	If $M$ is hyperbolic, then $T$ either bounds a solid torus or is contained in a $3$-ball (Lemma~\ref{l.torus_inball_or_bounds_solid_torus}).	If $T$ is contained in a $3$-ball, then the annuli $C_i$ are contained in that ball, so the $\cW^{\cs}$ leaf containing $C_i$ is compressible. This contradicts the fact that $\cW^{\cs}$ is a taut foliation (see Theorem \ref{thm-tautfoliation}), so $T$ bounds a closed solid torus $U$.
Notice that $V$ is not $\pi_1(M)$ invariant, but it is precisely
invariant, that is, if $\gamma(V), V$ intersect for $\gamma \in \pi_1(M)$
then they are equal. The set $\pi(V)$ is just the projection
of $V$ to $M$, and it is a complementary region of $\Lambda$.
	
	If $M$ is Seifert-fibered, then $\cW^{\cs}$ is a horizontal foliation. That is, one can isotope $\cW^{\cs}$ so that all leaves are transverse to the Seifert fibers of $M$ (Theorem \ref{thm-horizontalseif}). It follows that the complementary regions of the lamination $\Lambda$ are horizontal. In particular, the region $\pi(V \cup \partial V)$ is a product, which means that the torus $T$ is made up of two horizontal $C_i$ and two transverse annuli, and hence bounds a closed solid torus $U$.
	
	We will now use a ``volume vs.~length'' argument to get a contradiction. 
Roughly the argument is that volume grows linearly with iteration,
but unstable length grows exponentially, leading to a contradiction.
We refer to \cite[Proposition 5.2]{HaPS} for a detailed proof and give only a sketch:
	Consider an unstable arc inside $U$ from a point in some $\pi(A_i)$ to some $C_j$. Fix some $\eps>0$, and call $u$ the non-empty part of that unstable segment that is at distance $\geq\eps$ from both $C_i$ and $C_j$. Up to taking $\eps>0$ smaller if necessary, we can then assume that $u$ is at distance at least $\eps>0$ from $T$. Consider a lift $\tilde u \subset V$ of $u$, and note that for any positive $n$, $\tf^n (\tilde u)$ stays a bounded distance $a_0 > 0$ away from the corresponding lift $\widetilde T$ of $T$. This is the reason for taking $u$ which is $\eps$ away from $T$,
the value of $\eps$ is not important.
Notice also that $\tilde f(U) \subset U$.
The length of $\tf^n (\tilde u)$ will grow exponentially in $n$, while the volume of its maximal tubular neighborhood of size $a_0$
can only grow linearly, as $\ft$ is at bounded distance from the identity and the fundamental group of $T$ is $\mathbb{Z}$. This means that for
$n$ arbitrarily big, $\tf^n (\tilde u)$ returns very near to itself in $\mt$, contradicting the fact that it is transverse to $\wt\cW^{\cs}$. Thus $\Lambda = M$ as desired.
\end{proof}

\begin{remark}
We point out here that the hypothesis of $M$ being hyperbolic or Seifert-fibered is used in a single place, but it is crucial. To see this, it is enough to consider the time-one map of Franks-Williams intransitive
Anosov flow \cite{Fr-Wi} (or any other non-transitive Anosov flow),
for which neither the center-stable nor the center-unstable foliations are minimal. 
\end{remark}

\subsection{Gromov-hyperbolicity of leaves}

In this section we show that Candel's Theorem (Theorem \ref{thm-leafuniformisation}) applies under the assumptions that $f$ is partially hyperbolic and that $\ft$ fixes the leaves of the center-stable foliation. It is known that the assumption for Candel's Theorem is always satisfied for hyperbolic 3-manifolds (see e.g., \cite{Calegari:book}), as well as for horizontal foliations in Seifert-fibered manifolds with exponential growth of fundamental group (which is automatic in our case thanks to Theorem \ref{thm-horizontalseif}). However, in order to deal with other 3-manifolds, we need a more general version.

\begin{lemma}\label{Gromovhyp}
Let $M$ be a closed $3$-manifold, $f\colon M \to M$ a dynamically coherent partially hyperbolic diffeomorphism homotopic to the identity, and $\ft$ a good lift.
 
Suppose $\cW^{\cs}$ has no compact leaves, and $\ft$ fixes every leaf of $\widetilde{\cW}^{\cs}$. Then every leaf of ${\mathcal W}^{\cs}$ is Gromov-hyperbolic. Moreover, there is a metric on $M$ which restricts to a metric of constant negative curvature on each leaf of $\cW^{\cs}$. 
\end{lemma}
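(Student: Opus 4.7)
The plan is to apply Candel's uniformization theorem (Theorem \ref{thm-leafuniformisation}): this will produce the required leafwise metric of constant curvature $-1$ (hence Gromov-hyperbolicity of every leaf) as soon as we verify that $\cW^{cs}$ admits no holonomy-invariant transverse measure supported on leaves of non-negative Euler characteristic. Since $\cW^{cs}$ has no compact leaves by hypothesis, and the standing convention on $\pi_1(M)$ excludes sphere leaves, any such transverse measure would have to be supported on leaves whose image in $M$ is a plane, an annulus, or a M\"obius band.

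I argue by contradiction, so suppose such an invariant transverse measure $\mu$ exists. The support of $\mu$ is a closed, $\cW^{cs}$-saturated subset of $M$. Since $f$ preserves $\cW^{cs}$, pushing $\mu$ forward under iterates of $f$ and using a Zorn/compactness argument, I can replace $\mu$ by an invariant transverse measure supported on a minimal closed, $\cW^{cs}$-saturated, $f$-invariant subset $\Lambda\subset M$. By the minimality of $\Lambda$ together with the absence of compact leaves, no leaf of $\Lambda$ is isolated, so by Lemma~\ref{l.nofixedpoints} no leaf of $\wt\Lambda$ contains a point fixed by a power of $\tf$. Proposition~\ref{p.planeannuliorfixedpoints} then forces each leaf of $\Lambda$ to project to an annulus or M\"obius band in $M$, with cyclic fundamental group, and Lemma~\ref{lema-boundedinfixcsleaf} (via Remark~\ref{r.lema-boundedinfixcsleaf_sublamination}) gives that $\tf$ has uniformly bounded displacement inside each leaf of $\wt\Lambda$.

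The main step, and the principal technical obstacle, is to derive a contradiction from a length-versus-volume comparison in the spirit of the final argument of Proposition~\ref{p.hypSeifminimal}. Fix a leaf $L\subset\partial\wt\Lambda$ and an unstable segment $u$ transverse to $\wt\cW^{cs}$ emanating from a point of $L$ into a complementary region of $\wt\Lambda$. The iterates $\tf^n(u)$ grow exponentially in length, while the bounded displacement of $\tf$ inside the boundary leaves together with the cyclic fundamental group of $\pi(L)$ confines them to tubular neighborhoods of $L$ whose volume (measured in the cyclic cover of $M$ associated with $\pi_1(\pi(L))$) grows only linearly in $n$. This forces $\tf^n(u)$ to self-accumulate in $\mt$, contradicting its transversality to $\wt\cW^{cs}$.

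Unlike in Proposition~\ref{p.hypSeifminimal}, we do not have ambient topological constraints that would let us assemble the annular leaves of $\Lambda$ into a torus bounding a solid torus; the crucial observation that makes the argument go through in any closed $3$-manifold is that the cyclic fundamental group of each annular (or M\"obius) leaf of $\Lambda$ is by itself enough to run the volume-vs-length argument in the appropriate cyclic cover, once one knows $\tf$ is within bounded distance of the identity on each leaf.
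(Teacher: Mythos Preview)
Your approach is far more elaborate than necessary and has real gaps; the paper's argument is a three-line application of the dynamics to the measure itself.

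The key observation you miss is that the hypothesis ``$\ft$ fixes every leaf of $\wt\cW^{cs}$'' \emph{immediately} forces any holonomy-invariant transverse measure $\mu$ to be $f$-invariant: if $\wt\tau$ is a small transversal in $\mt$, then $\wt\tau$ and $\ft(\wt\tau)$ intersect exactly the same set of leaves of $\wt\cW^{cs}$, so $\wt\mu(\wt\tau)=\wt\mu(\ft(\wt\tau))$, hence $\mu(\tau)=\mu(f(\tau))$. No Zorn-type averaging is needed. With $f$-invariance in hand, take an unstable segment $\tau$ with $\mu(\tau)>0$ and endpoint $x$; choose $n_i\to -\infty$ with $f^{n_i}(x)\to y$. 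Backward iteration contracts unstable length, so $f^{n_i}(\tau)\to y$, while $\mu(f^{n_i}(\tau))=\mu(\tau)>0$ for all $i$. Passing to the limit makes the leaf through $y$ an atom of $\mu$, contradicting the absence of compact leaves. That is the entire proof.

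Your proposed route has two concrete problems. First, it never actually uses the transverse measure in the contradiction: once you pass to a minimal $f$-invariant saturated set $\Lambda$ and start a length-versus-volume argument with unstable arcs, $\mu$ has disappeared from the picture, and you are trying to rule out $\Lambda$ on purely topological/dynamical grounds. But such $\Lambda$ are not contradictory in themselves --- for a discretized Anosov flow, $\Lambda=M$ is a perfectly good minimal $f$-invariant saturated set with leaves of cyclic fundamental group. Second, and relatedly, the length-versus-volume step is not justified: if $\Lambda=M$ there is no boundary leaf and no complementary region into which to launch your unstable segment $u$; and even when $\Lambda\neq M$ you have given no reason why $\ft^n(u)$ is confined to a region of linearly growing volume in the cyclic cover (the complementary region of a general sublamination is not a product, and you have dropped exactly the Seifert/hyperbolic hypothesis that made this work in Proposition~\ref{p.hypSeifminimal}).
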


\begin{proof}
Thanks to Candel's Theorem (Theorem \ref{thm-leafuniformisation}), all we have to show is that $\cW^{\cs}$ does not admit a holonomy invariant transverse measure. So we suppose that there is an invariant transverse measure $\mu$ to $\cW^{\cs}$. Let $S$ be its support. First notice that, as there are no compact leaves in ${\mathcal W}^{\cs}$, $\mu$ has no atoms, so there are no isolated leaves in $S$. 

Let $\widetilde \mu$ be the lift of $\mu$ to $\mt$. The fact that $\ft$ fixes every leaf of 
$\widetilde \cW^{\cs}$ implies that the measure $\mu$ is $f$-invariant. To see this, consider $\tau$ a small transversal to ${\mathcal W}^{\cs}$, and $\wt\tau$ its lift to $\mt$. Then, since $\ft$ fixes every leaf of $\widetilde \cW^{\cs}$, the transversals $\wt\tau$ and $\ft(\wt\tau)$ intersect the same set of leaves of $\cW^{\cs}$. Hence, $\wt\tau$ and $\ft(\wt\tau)$ have the same $\wt\mu$-measure (because $\mu$ is an invariant transverse measure), thus $\mu(\tau)=\mu(f(\tau))$ as desired.

Now let $\tau$ be a closed segment on an unstable leaf and call $x$ one of its endpoints.
Note that $\tau$ is a transversal to ${\mathcal W}^{\cs}$, and, up to taking a different unstable segment, we assume that $\tau$ is chosen so that $\mu(\tau)>0$.
We can also choose a sequence $(n_i)$ of negative integers converging to $-\infty$ such that $(f^{n_i}(x))$ converges to some $y\in M$.

Then, as the $n_i$ are negative integers, $f^{n_i}$ contracts the unstable length, so the sequence of segments  $(f^{n_i}(\tau))$ also converges to $y$. Now, since $\mu$ is $f$-invariant, it implies that $\mu\left(f^{n_i}(\tau)\right) = \mu(\tau) >0$, for all $n_i$. By taking the limit, we get that the $\cW^{\cs}$ leaf containing $y$ must be an atom of $\mu$, in contradiction with the fact recalled earlier that $\mu$ has no atoms.
Thus $\cW^{\cs}$ does not admit an invariant transverse measure and Candel's Theorem yields the conclusion of our lemma.
\end{proof}

We will use the metric given by this lemma on $M$ 
in the specific situations where a hyperbolic metric
makes the proof less technical. But all such results
only need a Gromov-hyperbolic metric in the center-stable
or center-unstable leaves.

\subsection{Summary}

For convenience, we summarize the results obtained in Section \ref{s.dicho}.

\begin{corollary}\label{coro-sumarizedichotomy} Let $f \colon M \to M$ be a partially hyperbolic, dynamically coherent, diffeomorphism of a $3$-manifold $M$ that is homotopic to the identity. Suppose that $\cW^{\cs}$ is 
$f$-minimal, or that $M$ is hyperbolic or Seifert-fibered. Let $\ft\colon \mt \to \mt$ be any good lift of $f$.

Then, $\ft$ has no fixed points and either
\begin{enumerate} 
\item the foliation $\cW^{\cs}$ is 
$\R$-covered and uniform, and $\ft$ acts as a translation on the
leaf space of $\widetilde \cW^{\cs}$; or,
\item the map $\ft$ leaves every leaf of 
$\widetilde \cW^{\cs}$ fixed and every leaf of 
$\cW^{\cs}$ is a plane, an annulus or a M\"{o}bius band. Moreover, there is a metric on $M$ that restricted to each leaf has constant negative curvature
$-1$.
\end{enumerate}
\end{corollary}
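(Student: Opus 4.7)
The plan is to assemble the pieces already established in Section~\ref{s.dicho}. First I would invoke the dichotomy on how $\ft$ permutes leaves of $\wt\cW^{cs}$. In the $f$-minimal case, Corollary~\ref{c.minimalcase} immediately splits the situation into options~(1) and (2). In the hyperbolic or Seifert fibered case, Proposition~\ref{p.dichotomy} guarantees that the set of $\ft$-fixed leaves is closed and $\pi_1(M)$-invariant, while Proposition~\ref{p.hypSeifminimal} shows that as soon as one leaf of $\wt\cW^{cs}$ is fixed by $\ft$, every leaf is fixed (and $\cW^{cs}$ is moreover minimal); if no leaf is fixed, the corollary to Proposition~\ref{p.dichotomy} yields that $\cW^{cs}$ is $\R$-covered and uniform with $\ft$ acting as a translation on the leaf space. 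Either way we land in (1) or (2).

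Next I would rule out fixed points of $\ft$. In case~(1), $\ft$ acts as a translation on the leaf space, so no leaf of $\wt\cW^{cs}$ is fixed, and in particular no point of $\mt$ can be fixed. In case~(2), the standing convention that $\pi_1(M)$ is not virtually solvable together with Theorem~\ref{thm-HHUtori} rules out closed surfaces tangent to $E^{cs}$, so $\cW^{cs}$ has no compact leaves; Theorem~\ref{thm-tautfoliation} then ensures that each leaf of $\wt\cW^{cs}$ is a properly embedded plane, so a transversal through any of its points meets infinitely many distinct leaves of $\wt\cW^{cs}$, all of which are $\ft$-fixed. Hence every leaf is accumulated by $\ft$-fixed leaves, and Lemma~\ref{l.nofixedpoints} forbids periodic points of $\ft$ in any such leaf.

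Finally, for the structural statements in case~(2): since $\ft$ fixes each leaf $L \in \wt\cW^{cs}$ without fixing any point of $L$, Proposition~\ref{p.planeannuliorfixedpoints} forces $\pi(L)$ to have cyclic fundamental group and therefore to be a plane, an annulus, or a M\"{o}bius band. The existence of a metric on $M$ restricting to constant negative curvature $-1$ on each center stable leaf is then a direct application of Lemma~\ref{Gromovhyp}, whose hypotheses (no compact leaves of $\cW^{cs}$, and $\ft$ fixing every leaf of $\wt\cW^{cs}$) are already in hand. No new estimate is needed; the only point deserving any care is matching, in the hyperbolic/Seifert case, the ``no-fixed-leaf'' alternative in Proposition~\ref{p.hypSeifminimal} with option~(1) of the corollary via the translation statement following Proposition~\ref{p.dichotomy}, and this is essentially bookkeeping.
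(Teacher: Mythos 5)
Your proposal is correct and follows essentially the same route as the paper: the dichotomy from Proposition~\ref{p.dichotomy} (and its corollary) combined with Proposition~\ref{p.hypSeifminimal} or Corollary~\ref{c.minimalcase} to land in alternative (1) or (2), then Lemma~\ref{l.nofixedpoints} for the absence of fixed points, Proposition~\ref{p.planeannuliorfixedpoints} for the leaf topology, and Lemma~\ref{Gromovhyp} for the leafwise hyperbolic metric. The only cosmetic difference is that the paper routes the hyperbolic/Seifert case through $f$-minimality and Corollary~\ref{c.minimalcase}, while you use the ``fixes every leaf'' conclusion of Proposition~\ref{p.hypSeifminimal} directly, which amounts to the same thing.
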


\begin{proof}
By Proposition \ref{p.dichotomy}, either the foliation $\cW^{\cs}$ is 
$\R$-covered and uniform, and $\ft$ acts as a translation on the
leaf space of $\widetilde \cW^{\cs}$, i.e.~we are in case $1.$, or, if $\ft$ does not act as a translation, then it must fix at least one leaf. 

Then, by assumption, or by Proposition \ref{p.hypSeifminimal} if $M$ is hyperbolic or Seifert-fibered, we have that $\cW^{\cs}$ is $f$-minimal.
Hence, if $\ft$ does not act as a translation, then we can apply Corollary \ref{c.minimalcase} 
to get that $\ft$ must fix every leaf of $\widetilde \cW^{\cs}$.

Now, if no leaf of $\widetilde \cW^{\cs}$ is fixed by 
$\ft$ then $\ft$ cannot have fixed points. On the other hand, if all leaves 
of $\widetilde \cW^{\cs}$ are fixed, then we can apply Lemma \ref{l.nofixedpoints} to deduce that $\ft$ still does not fix points. 
Finally, Proposition \ref{p.planeannuliorfixedpoints} implies that when 
all leaves of $\widetilde \cW^{\cs}$ are fixed then every leaf 
is a plane, an annulus, or a M\"{o}bius band.
The existence of the claimed metric follows from Lemma \ref{Gromovhyp}.
\end{proof} 

The same statement holds for the foliation $\cW^{\cu}$. Notice that, in principle, the behavior of each foliation is independent. The goal of the next few sections is to show that the behavior of one of the foliations forces the same behavior in the other foliation.



\section{Center dynamics in fixed leaves}\label{s.perfectfits}

In this section we will study the dynamics \emph{within} center-stable leaves. The main result is Proposition~\ref{p.fixedcenter}, which will be used to understand the doubly invariant and mixed cases (see \S\ref{sss.proof_program}).

\subsection{Perfect fits}\label{ss.perfectfit}
Much of this section will be concerned with transverse pairs of foliations of a plane --- in particular, the stable and center foliations within a center-stable leaf. We begin by introducing some basic tools, in particular the idea of ``perfect fits'' first used by Barbot and the second author \cite{Fenley94,BarbotFeuill}.

Let $L$ be a complete plane equipped with a transverse pair of one-dimensional foliations $\cS$ and $\cC$. We denote by $\cL^s:= \rquotient{L}{\cS}$ and $\cL^c:= \rquotient{L}{\cC}$ their respective leaf spaces. These are simply-connected, separable $1$-manifolds which may not be Hausdorff (see e.g., \cite{Bar98,Calegari:book}). 

\begin{definition} \label{def.perfectfit}
	A leaf $c \in \cC$ and leaf $s \in \cS$ are said to make a $\cC\cS$-\emph{perfect fit}, if they do not intersect, but there is a local transversal $\tau$ to $\cC$ through $c$, such that every leaf $c' \in \cC$ that intersects $\tau$ on one side of $c$ must intersect $s$. 
	
	On the other hand, if there exists $\tau'$ a local transversal to $s \in \cS$, such that every leaf $s' \in \cS$ that intersect $\tau'$ on one side of $s$ has to intersect $c$, we will say that $s$ and $c$ make a $\cS\cC$-\emph{perfect fit}.
	
	If $c$ and $s$ make both a $\cC\cS$-perfect fit and a $\cS\cC$-perfect fit, we say that they make a \emph{perfect fit}.
\end{definition}

\begin{figure}[ht]
	\begin{center}
		\includegraphics[scale=0.5]{./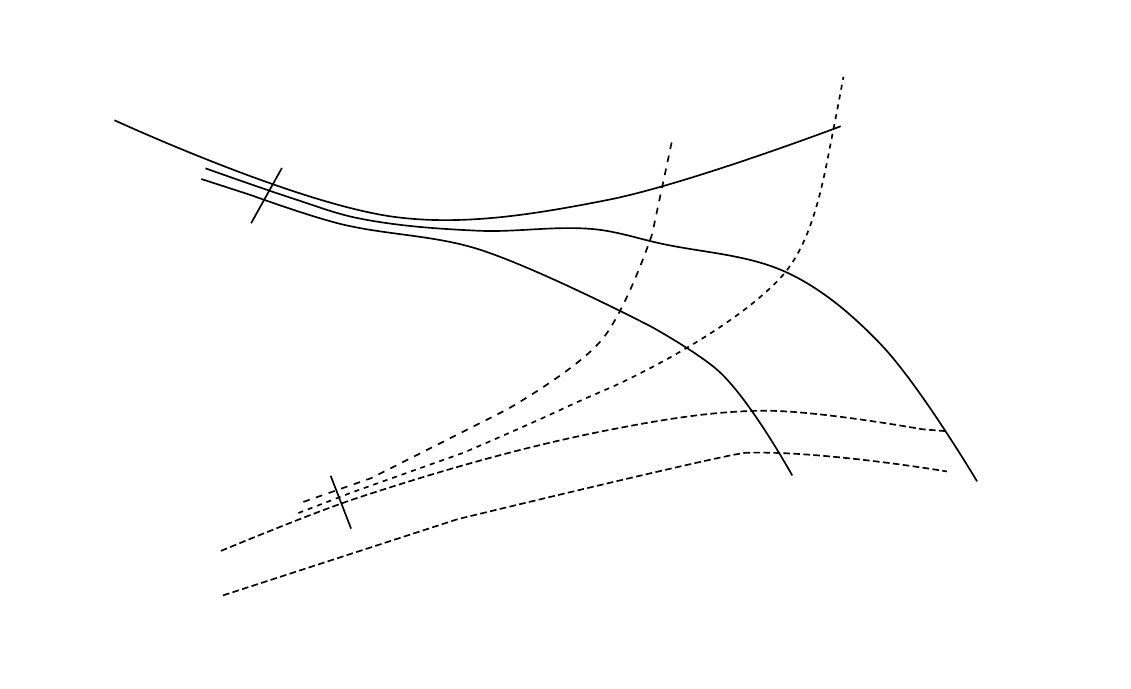}
		\begin{picture}(0,0)
		\put(-230,22){$s$}
		\put(-222,38){$s'$}
		\put(-224,110){$\tau$}
		\put(-205,47){$\tau'$}
		\put(-250,140){$c$}
		\end{picture}
	\end{center}
	\vspace{-0.5cm}
	\caption{{\small The leaves $c$ and $s$ make a $\cC\cS$-perfect fit,
			but not a $\cS\cC$-perfect fit. The leaves $c$ and $s'$ make a perfect fit.}}\label{f.perfectfit}
\end{figure}

\begin{lemma}
	\label{l-gettingperfectfit}
	If two leaves $c \in \cC$ and $s \in \cS$ make a $\cC\cS$-perfect fit, then there exists $s' \in \cS$, possibly distinct from $s$ such that $c$ and $s'$ make a perfect fit. The symmetric statement holds for $\cS\cC$-perfect fits. 
\end{lemma}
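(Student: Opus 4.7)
The plan is to define $s'$ as the leaf closest to $c$ among the $\cS$-leaves that make a $\cC\cS$-perfect fit with $c$ along the same transversal $\tau$, and then to argue that the extremality of $s'$ automatically forces the $\cS\cC$-perfect fit. Let $\tau\colon[0,\epsilon)\to L$ be the transversal to $c$ at $p\in c$ witnessing the $\cC\cS$-perfect fit, and for $t\in(0,\epsilon)$ let $c_t$ denote the $\cC$-leaf through $\tau(t)$ with $q_t := c_t\cap s$. Define
\[
\mathcal{A} := \{\, s''\in\cS : s''\cap c=\emptyset \text{ and } c_t\cap s''\ne\emptyset \text{ for all sufficiently small } t>0\,\}.
\]
By hypothesis $s\in\mathcal{A}$. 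Since the leaves of $\cS$ are pairwise disjoint and properly embedded in $L\cong\mathbb{R}^2$, any two elements of $\mathcal{A}$ lie on the same side of $c$, and plane separation shows they are comparable under the order $s_1\le s_2$ iff $s_1$ separates $c$ from $s_2$; thus $\mathcal{A}$ is totally ordered.

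The main step is to find the minimum $s'$ of $\mathcal{A}$. I would take a decreasing sequence $s_n$ in $\mathcal{A}$ and extract a Hausdorff limit leaf $s'\in\cS$ using that a transversal to $\cS$ along an arc of $c_{t_1}$ meets every $s_n$, together with the standard compactness argument for foliations. The limit $s'$ lies in $\mathcal{A}$: it is disjoint from $c$ because disjointness is closed under Hausdorff limits, and for each small $t>0$, $c_t$ crosses $s'$ because $c_t$ crosses every $s_n$ and $s'$ separates $c$ from $s_n$ in the plane. Possible non-Hausdorff branching of $\cL^s$ at the infimum is resolved by choosing either branch of the limiting pair, and the argument adapts either way.

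Finally, I would verify the $\cS\cC$-perfect fit at $(c,s')$ using minimality. Suppose for contradiction there is a sequence of $\cS$-leaves $s''_k\to s'$ from the $c$-side with $s''_k\cap c=\emptyset$. Each $s''_k$ is strictly between $c$ and $s'$ in the order $\le$; moreover, $c_t\cap s''_k\ne\emptyset$ for every small $t$, because $c_t$ runs from $\tau(t)$ on the $c$-side of $s''_k$ to $s'$ on the opposite side of $s''_k$, so it must cross $s''_k$. Hence $s''_k\in\mathcal{A}$ and $s''_k<s'$, contradicting the minimality of $s'$. Consequently the $\cS\cC$-perfect fit holds on the $c$-side of $s'$; combined with the already-established $\cC\cS$-perfect fit for $(c,s')$, we conclude that $(c,s')$ is a full perfect fit.

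The main obstacle is to handle the non-Hausdorff structure of $\cL^s$ when extracting the limit $s'$ and verifying that it belongs to $\mathcal{A}$, and to make the minimality argument rigorous at a potential branch point of the leaf space. The symmetric assertion about $\cS\cC$-perfect fits follows by exchanging the roles of $\cS$ and $\cC$ throughout the argument above.
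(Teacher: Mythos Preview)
Your approach and the paper's are the same idea---find the $\cS$-leaf ``closest'' to $c$ among those missing $c$---but the paper's implementation is considerably simpler and avoids the soft spots in your argument. Rather than working in the leaf space $\cL^s$ and extracting an infimum of the abstract set $\mathcal{A}$ via Hausdorff limits, the paper picks a single nearby $\cC$-leaf $c'$ that meets both $\tau$ (at $p$) and $s$ (at $q$), and walks along the compact arc of $c'$ from $p$ to $q$: the set of points $x$ on this arc with $\cS(x)\cap c\neq\emptyset$ is open, contains a neighborhood of $p$, and excludes $q$, so there is a \emph{first} point $y$ where the $\cS$-leaf misses $c$; set $s'=\cS(y)$. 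The $\cS\cC$-perfect fit is then witnessed by the arc of $c'$ just before $y$, and the $\cC\cS$-perfect fit for $(c,s')$ follows immediately from the original $\cC\cS$-perfect fit for $(c,s)$ together with the fact that $s'$ separates $c$ from $s$ (since crossing $s'$ at $y$ occurs between $p$ and $q$ on $c'$).

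What this buys: by staying on a compact arc inside the plane, the paper never needs to discuss Hausdorff limits, sequences in $\cL^s$, or non-Hausdorff branch points---all of which you correctly flag as the delicate part of your proof. In your write-up, the claim that a decreasing sequence in $\mathcal{A}$ has a limit \emph{in} $\mathcal{A}$ is not fully justified (e.g., verifying that $c_t\cap s'\neq\emptyset$ for \emph{all} small $t$, not just $t$ near some fixed $t_1$, takes an extra step), and the remark that one can ``choose either branch'' at a non-Hausdorff point is too casual. These gaps are fillable, but the single-transversal argument makes them disappear entirely.
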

\begin{proof}
	Fix a small transversal $\tau$ to $c$. Let $c'$ near enough $c$
	which also intersects $s$. Let $p = c' \cap \tau$ and $q = c' \cap s$.
	For any $x$ in $c'$ between $p$ and $q$ and near enough $p$, the stable
	leaf of $x$ intersects $c$. Let $y$ in $c'$ between $p$ and $q$ be
	the first point such that the stable leaf of $y$ does not 
	intersect $c$. Let $s'$ be this stable leaf. Then $c, s'$ form a perfect
	fit.
\end{proof}

A straightforward argument shows the following --- see, e.g., \cite[Claim in Theorem~3.5]{Fen99}. Note that the argument in \cite{Fen99}
generally apply to any plane with two transverse foliations by lines. 
\begin{lemma}\label{l-nonsepperfectfit}
	If two leaves $s, s' \in \cS$ are non-separated in the leaf space $\cL^s$, then there is a unique leaf $c \in \cC$ that separates $s$ from $s'$ and makes a perfect fit with $s$.
\end{lemma}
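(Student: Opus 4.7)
Proof proposal: By non-separation, pick a sequence $s_n \in \cS$ converging in $\cL^s$ to both $s$ and $s'$, and choose $p_n, q_n \in s_n$ with $p_n \to p \in s$ and $q_n \to q \in s'$. Let $\alpha_n \subset s_n$ be the arc joining $p_n$ and $q_n$. In a product foliation chart around any compact arc of $s$, the leaves $s_n$ converge uniformly to $s$, so $\alpha_n$ shadows arbitrarily long arcs of $s$ starting at $p$ on one end, and of $s'$ starting at $q$ on the other end. Parameterize $s$ so that $s(0) = p$ and $s(r)$ for $r > 0$ is in the shadowed direction, and set $c_r := c(s(r))$.

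To produce the perfect fit leaf, the plan is to take a subsequential limit $c := \lim_{r_k \to \infty} c_{r_k}$ in $\cL^c$. The limit $c$ satisfies $c \cap s = \emptyset$: otherwise, by transversality of $\cS$ and $\cC$, the crossings $s(r_k) = c_{r_k} \cap s$ would converge to the supposed intersection point, contradicting $r_k \to \infty$. Since each $c_{r_k}$ crosses $s$ and approaches $c$, a small transversal to $\cC$ at a point of $c$ shows that every nearby center leaf on one side meets $s$, yielding the $\cC\cS$-perfect fit condition. For the $\cS\cC$-direction I would apply the symmetric construction to points escaping along $c$ itself (or appeal to Lemma~\ref{l-gettingperfectfit} and then verify that the perfect fit actually takes place with $s$, not some neighboring leaf, using that $c$ was obtained as a limit of leaves passing through $s$ itself).

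For separation, $\alpha_n$ starts near $p$ on the side of $c$ containing $s$ and ends near $q \in s'$; by connectedness it must cross the leaf $c$, forcing $s$ and $s'$ into opposite complementary components. For uniqueness, suppose $c_1 \neq c_2$ are two such leaves with $c_2$ in the closed region between $c_1$ and $s'$. The perfect fit condition at $c_1$ provides center leaves approaching $c_1$ on the $s$-side, each crossing $s$; but $c_2$ separates $s$ from $s'$, so these center leaves, approaching $c_1$ from the side opposite $s'$, cannot cross $c_2$, whereas the product region they would sweep out between $c_1$ and $c_2$ contradicts $c_2$ also separating $s$ from $s'$.

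The main obstacle I anticipate is the non-Hausdorff nature of the leaf space $\cL^c$: the sequence $c_{r_k}$ may have several subsequential limits, and the naive limit might not be the separator. I expect this is handled by extracting subsequences diagonally in $(n, r)$ and using the coorientations on the cooriented pair of transverse foliations of the simply connected plane $L$ to track which side of each center leaf the arcs $\alpha_n$ lie on, thereby singling out the unique leaf $c$ that both separates $s$ from $s'$ and makes the perfect fit with $s$.
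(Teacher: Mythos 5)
The paper does not actually write out a proof of this lemma (it is quoted from \cite[Claim in Theorem~3.5]{Fen99}), so your attempt has to stand on its own, and it has a genuine gap at its very first step: you never justify that the sequence $c_{r_k}=c(s(r_k))$ has \emph{any} subsequential limit in $\cL^c$. A sequence of leaves of a foliation of the plane through points escaping to infinity may simply leave every compact set and converge to nothing (take $\cS$, $\cC$ to be the horizontal and vertical foliations of $\R^2$: the vertical leaves through $s(r)$ escape and no leaf disjoint from $s$ is a limit). What rules this out here is precisely the existence of the non-separated leaf $s'$, but your construction of $c$ makes no use of $s'$, of the leaves $s_n$, or of the arcs $\alpha_n$ — they reappear only afterwards, for ``separation''. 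Since the statement is false without the non-separation hypothesis, any correct argument must invoke it at the existence step; the standard way (and, in spirit, Fenley's) is to extract the candidate leaf from the arcs $\alpha_n$ themselves, e.g.\ as the center leaf through the first point of $\alpha_n$ (starting from $p_n$) whose center leaf fails to meet $s$, which exists because the center leaf through the other endpoint meets $s'$ and no center leaf can meet both $s$ and $s'$ (it would cross some $s_n$ twice). In your final paragraph you flag the possibility of \emph{several} limits, but not the possibility of \emph{no} limit, and the coorientation bookkeeping you propose does not address it.

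The later steps also do not go through as written. For the $\cS\cC$ half you defer either to a ``symmetric construction'' or to Lemma~\ref{l-gettingperfectfit}; but that lemma only produces \emph{some} stable leaf making a perfect fit with $c$, and showing that this leaf is $s$ itself (equivalently, that stable leaves through a transversal to $s$ on the branching side all meet $c$) is the other place where the non-separation must be used — it is not a routine verification. The separation argument is circular: ``$\alpha_n$ must cross $c$ by connectedness'' presupposes that its endpoint near $q\in s'$ lies on the opposite side of $c$ from $s$, which is exactly what separation asserts. And the uniqueness argument applies the perfect-fit condition at the wrong leaf: with $c_2$ on the $s'$-side of $c_1$, center leaves near $c_1$ on the $s$-side crossing $s$ give no contradiction (they need not meet $c_2$, and they do not lie between $c_1$ and $c_2$, so there is no ``product region''); the correct argument uses the $\cC\cS$-condition at the \emph{farther} leaf $c_2$: center leaves close to $c_2$ that cross $s$ would have points on both sides of $c_1$ and hence would have to cross the center leaf $c_1$, which is impossible. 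Finally, several steps silently use that a $\cC$-leaf and an $\cS$-leaf meet at most once and that no center leaf meets both $s$ and $s'$; these are standard but should be stated, since your limit, separation and uniqueness arguments all lean on them.
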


\subsection{Finding fixed center leaves} \label{ss.finding_fixed_center_leaves}
The following proposition is the main result of this section.

\begin{proposition}\label{p.fixedcenter}
Let $f \colon M \to M$ be a dynamically coherent partially hyperbolic diffeomorphism homotopic to the identity, and $\ft$ a good lift.

Suppose that $\ft$ fixes every leaf of $\widetilde \cW^{\cs}$. Then any leaf of $\wt\cW^{\cs}$ that is fixed by a non-trivial element of $\pi_1(M)$ contains a center leaf that is fixed by $\ft$. 
\end{proposition}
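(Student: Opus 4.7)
The plan is to work inside a single center stable leaf $L \in \wt\cW^{cs}$ that is fixed by $\ft$ and by some chosen nontrivial $\gamma \in \pi_1(M)$ that stabilizes $L$. By Lemma~\ref{Gromovhyp} I can equip $L$ with a complete hyperbolic metric. Inside $L$ we have the transverse pair of one-dimensional foliations $\cS = \wt\cW^s \cap L$ and $\cC = \wt\cW^c \cap L$, and both $\ft|_L$ and $\gamma|_L$ are commuting homeomorphisms preserving both foliations. They therefore act on the stable leaf space $\cL^s$ of $\cS$, and since no stable leaf is closed (stable leaves are non-compact lines) the action of $\gamma$ on $\cL^s$ is free. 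In particular $\gamma$ admits an axis $A \subset \cL^s$ in the sense of Appendix~\ref{ss.axes}, and $A$ is $\ft$-invariant because $\ft$ commutes with $\gamma$.

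I then split according to the topology of $A$ inside the (possibly non-Hausdorff) space $\cL^s$: either $A$ is Hausdorff, hence homeomorphic to $\R$, which I call the \emph{line case}; or $A$ contains at least one pair of non-separated stable leaves, which I call the \emph{gap case}. In the line case, using the transversality of $\cC$ to $\cS$ and the absence of branching along $A$, I would build a simple arc $\sigma \subset L$ transverse to $\cS$ joining some $s \in A$ to $\gamma(s)$, projecting to an essential transverse loop in the annular quotient $L/\langle\gamma\rangle$. A standard graph-transform argument along this loop (to be recorded as Lemma~\ref{l.Axs-line}), powered by the uniform contraction of $\cS$ under $f$, then produces an $f$-invariant center arc in the quotient that lifts to a center leaf in $\cC$ simultaneously fixed by $\gamma$ and by $\ft$.

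In the gap case, Lemma~\ref{l-nonsepperfectfit} furnishes a center leaf $c \in \cC$ separating the two non-separated stable leaves of $A$ and making a perfect fit with each of them, producing a Reeb-component-like strip in $L$. I then plan to derive a contradiction from two opposite estimates, each packaged as its own lemma. The first, Lemma~\ref{l.expandedcenterDC}, uses dynamical coherence together with the perfect-fit configuration to produce a $\ft$-invariant center ray inside the gap along which $f$ acts by \emph{expansion}: dynamical coherence guarantees that the gap is genuinely foliated by $\cC$-leaves, and the perfect-fit geometry combined with the nontrivial translation of $\gamma$ along $A$ forces at least one of these center leaves to be stretched. The second, Lemma~\ref{l.gapcontractionDC}, will show that the same region must instead be coarsely \emph{contracted} by $f$: the uniform stable contraction, the bounded-distance property of the good lift (Lemma~\ref{lema-boundedinfixcsleaf}), and the hyperbolic geometry of $L$ together trap any center curve caught between the two perfect-fit stable leaves to a bounded forward orbit. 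These two conclusions are incompatible, ruling out the gap case.

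The main obstacle is Lemma~\ref{l.expandedcenterDC}: extracting an honest $\ft$-invariant expanded center ray inside a gap is the step where dynamical coherence is used essentially, since it is precisely the integrability of the center bundle into a foliation that singles out a canonical invariant center curve within the perfect-fit region. The matching contraction estimate in Lemma~\ref{l.gapcontractionDC} is more robust and uses only the intrinsic geometry of $L$ together with the fact that $\ft$ is a good lift fixing every center stable leaf, so it will reappear in the non-dynamically-coherent sequel as well.
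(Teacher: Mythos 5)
Your outline follows essentially the same route as the paper: the dichotomy for the axis of the action on the stable leaf space $\cL^s$ inside $L$, a graph-transform argument when the axis is a line, and an expansion-versus-coarse-contraction contradiction at a gap bounded by non-separated stable leaves. But the gap case as you describe it has a genuine hole: you run both estimates for $f$ (resp.\ $\ft$) itself, speaking of a ``$\ft$-invariant center ray inside the gap'' and of center curves with ``bounded forward orbit.'' Under the hypothesis that $\ft$ fixes every leaf of $\wt\cW^{cs}$, Lemma~\ref{l.nofixedpoints} shows that no power of $\ft$ has a fixed point in $\mt$, hence $\ft$ fixes no stable leaf and acts freely on $\cL^s$ --- a fact you never establish, and which is precisely why $\ft$ does \emph{not} preserve any gap: it shares its axis with $\gamma$ (Proposition~\ref{proposition-axes}) and translates the $\ZZ$-family of complementary intervals along it. (Moreover, if a $\ft$-invariant center ray in the gap existed, the proposition would already follow, so no contradiction could be extracted from that configuration.) The missing step is to use that $\ft$ and $\gamma$ act freely on the index set $\ZZ$ of the intervals of the axis, so that some combination $h=\gamma^m\circ\ft^n$ with $n>0$ fixes the two non-separated leaves $s^{\pm}$ bounding a gap; $h$ then fixes the unique perfect-fit center leaf $c$ separating them, contracts stable leaves, hence has a unique fixed point $x\in s^{+}$, and it is for $h$ --- not for $f$ --- that one proves expansion of the center ray at $x$ toward $c$ (perfect fit plus shrinking stable segments from the ray to $c$) and coarse contraction of a rectangle spanning the gap (bounded Hausdorff distance between $s^{+}$ and $s^{-}$ via Lemma~\ref{lema-boundedinfixcsleaf}, plus the fact that $h$ moves points far out on $s^{+}$ a large distance, which uses the hyperbolic metric on $L$, the axis of $\gamma$ as a hyperbolic isometry, and a Poincar\'e--Bendixson argument excluding a closed stable leaf). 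Without introducing $h$, neither of your two lemmas has a well-defined map to apply to.

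Two smaller points. First, the graph transform (Lemma~\ref{l.grapht}) only yields a curve tangent to $E^c$ invariant under $\ft$ and $\gamma$; even with dynamical coherence $E^c$ need not be uniquely integrable, so you must still argue, as the paper does by covering a fundamental domain with center segments crossing the same stable leaves and pushing forward by $f$, that this invariant curve is an actual leaf of $\wt\cW^{c}$. Second, make the free actions on $\cL^s$ explicit (for $\gamma$ via the absence of closed stable leaves, and for $\ft$ via Lemma~\ref{l.nofixedpoints}); this is where the hypothesis that $\ft$ fixes \emph{every} leaf of $\wt\cW^{cs}$ does its essential work, and it is what legitimizes the axis structure your dichotomy rests on.
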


The proof of this will span the rest of this section. Let us fix $M$, $f$, and $\ft$ as above, along with a leaf $L \in \widetilde \cW^{\cs}$ and a non-trivial element $\gamma \in \pi_1(M)$ that fixes $L$. Our goal is to find a center leaf $c \subset L$ that is fixed by $\ft$.

Let $\cL^s$ and $\cL^c$ be the leaf spaces of the foliations $\widetilde \cW^{\mathrm{s}}$ and $\widetilde \cW^c$ \emph{restricted to} $L$. These are simply connected, separable, $1$-manifolds that may not be Hausdorff.
Since $\ft$ fixes every center-stable leaf, Lemma~\ref{l.nofixedpoints} implies that $\ft$ has no fixed points. This means that $\ft$ cannot fix any stable leaf, since such a leaf would be contracted and hence contain a fixed point, so $\ft$ acts freely on $\cL^s$.

Since there are no circular stable leaves downstairs, $\gamma$ must also act freely on $\cL^s$. By Proposition~\ref{p.planeannuliorfixedpoints}, the stabilizer of $L$ is cyclic, so we may take $\gamma$ to be a generator.
Finally, since $\ft$ and $\gamma$ commute and act freely on $\cL^s$, they preserve an axis $A^s \subset \cL^s$, which is either a line or a $\ZZ$-union of intervals (see Proposition \ref{proposition-axes} and Remark \ref{r.axis}).

The following lemma completes the proof of Proposition~\ref{p.fixedcenter} when $A^s$ is a line. We remark here that a center leaf is an intersection of a center-stable and center-unstable leaf; but there could be curves tangent to the center which are not made this way if the center bundle is not uniquely integrable (which could be the case even if $f$ is dynamically coherent). We will call the latter curves `center curves' to distinguish them from 'center leaves'.

\begin{lemma}\label{l.Axs-line} 
	If $A^s \simeq \R$, then there exists a center leaf $c \subset L$ that is fixed by both $\ft$ and $\gamma$.
\end{lemma}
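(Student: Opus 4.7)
The plan is a graph-transform argument, leveraging that $\ft$ and $\gamma$ commute and act as commuting, free translations on $A^s \simeq \R$, together with the stable contraction coming from partial hyperbolicity.

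First, using that $A^s$ is Hausdorff as a subset of $\cL^s$, the tube $U = \bigcup_{s \in A^s} s \subset L$ is an open, connected, $\gamma$- and $\ft$-invariant subset of $L$ foliated by stable leaves parametrized by $\R$. I would construct a transverse arc $\alpha \subset U$ from a point $p_0 \in s_0$ to $\gamma(p_0) \in \gamma(s_0)$ by concatenating finitely many foliation charts along the compact interval $[s_0, \gamma(s_0)] \subset A^s$, and then project to obtain a closed transversal $\bar\alpha \subset \pi(L) = L/\langle\gamma\rangle$ representing the generator $\gamma$ of $\pi_1(\pi(L)) \cong \ZZ$. Here the hypothesis $A^s \simeq \R$ is essential: a non-Hausdorff pair of stable leaves in $A^s$ (which could appear in the $\ZZ$-union-of-intervals case) would obstruct a consistent transversal along all of $A^s$.

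Next, I would apply the graph transform. Set $\bar\alpha_n := f^n(\bar\alpha)$; each $\bar\alpha_n$ is a closed transversal to $\cW^s|_{\pi(L)}$ representing $\gamma$ in $\pi_1(\pi(L))$, since $f$ is homotopic to the identity. Partial hyperbolicity gives uniform contraction of $Df^n$ on the stable bundle, so the tangent vectors of $\bar\alpha_n$ converge uniformly to the center direction. The lifts $\alpha_n = \ft^n(\alpha) \subset L$ remain in a bounded neighborhood of $\alpha$, because $\ft$ moves every point a uniformly bounded distance and fixes $L$, so the $\bar\alpha_n$ are confined to a compact subannulus of $\pi(L)$. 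Arzel\`a--Ascoli then extracts a limit $\bar c \subset \pi(L)$: a closed curve tangent everywhere to $\cW^c|_{\pi(L)}$, hence contained in a single center leaf of $\pi(L)$. By continuity $f(\bar c) = \bar c$, and lifting $\bar c$ to $L$ yields a center leaf $c$ preserved by $\gamma$ (since $\bar c$ is closed and homotopic to $\bar\alpha$, representing $\gamma$) and preserved by $\ft$ (from $f$-invariance of $\bar c$ together with the fact that $\ft$ fixes $L$), completing the proof.

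The main obstacle will be making the graph transform rigorous: one must carefully argue that the iterates $\bar\alpha_n$ do not degenerate --- that they neither collapse nor escape the compact subannulus --- and that the limit is a single closed center leaf rather than just a union of center-tangent segments glued at non-smooth points. The key inputs are the uniform stable contraction of $Df$ and the bounded displacement of $\ft$ given by the good-lift property, together with the clean tube structure over $A^s$ that comes from $A^s \simeq \R$.
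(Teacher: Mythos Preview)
Your approach is essentially the same as the paper's --- build a $\gamma$-invariant transversal over the line axis $A^s$ and run a graph transform --- but there are two genuine gaps.

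First, your compactness justification is wrong as stated. Bounded displacement of $\ft$ gives $d_L(x,\ft(x)) < K$, but $d_L(x,\ft^n(x))$ can a priori be as large as $nK$, so ``$\ft$ moves every point a uniformly bounded distance'' does not by itself pin down the iterates $\alpha_n$. The correct mechanism is the one the paper uses (via Lemma~\ref{l.grapht}): since $\ft(\eta)$ crosses \emph{the same} stable leaves as $\eta$ (both represent $A^s$), the stable-direction height of $\ft(\eta)$ over $\eta$ is bounded by some $a_0$, and then stable contraction gives a geometric series bound $a_0/(1-\lambda)$ for all iterates. Your Arzel\`a--Ascoli route also only gives subsequential limits, from which ``by continuity $f(\bar c)=\bar c$'' does not follow; the paper instead runs the graph transform as a contraction on a complete space of Lipschitz graphs and invokes the Banach fixed point theorem, which yields both existence and $\ft$-invariance at once.

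Second, and more seriously, you write that the limit is ``tangent everywhere to $\cW^c|_{\pi(L)}$, hence contained in a single center leaf.'' This implication is false in general: $E^c$ need not be uniquely integrable, so a $C^1$ curve tangent to $E^c$ is not automatically a leaf of $\cW^c$. The paper addresses this explicitly: it covers a fundamental domain of the invariant curve $\beta$ by finitely many genuine center segments $c_1,\dots,c_k$ (crossing the same stable leaves as $\beta$), observes that $f^n(\bigcup \pi(c_i))\to\pi(\beta)$ by stable contraction, and concludes that $\pi(\beta)$ is a center leaf because limits of center segments are center segments. Your anticipated obstacle (``union of center-tangent segments glued at non-smooth points'') is about regularity, which the graph transform handles; the real missing step is this integrability argument.
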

\begin{proof}
We will use the graph transform argument (Lemma \ref{l.grapht}). 
Since $A^s$ is homeomorphic to $\R$, one can find a bi-infinite curve $\eta$ in $L$ that is transverse to the stable foliation and invariant under $\gamma$. For instance, pick a point $x$ in $L$ and an arc $a$ from $x$ to $\gamma x$ transverse to the stable foliation. Concatenating the positive and negative iterates of $a$ by $\gamma$ gives such a curve $\eta$ (that can be smoothed if required).

In particular, $\eta$ represents the axis $A^s$ of $\gamma$, in the sense that a stable leaf is in $A^s$ if and only if it intersects $\eta$. 
Since $A^s$ is also the axis for the action of $\ft$ on $\cL^s$, every $\ft$-iterate of $\eta$ also represents $A^s$. In particular, $\ft(\eta)$ and $\eta$ intersect the exact same set of stable leaves.
So the curve $\eta$ satisfies the two hypothesis of Lemma \ref{l.grapht}, and we obtain a curve $\beta$ in $L$ that is tangent to $E^{\mathrm{c}}$ and invariant under both $\ft$ and $\gamma$. In particular $\beta$ is a center curve.
It remains to show that $\beta$ is in fact a center leaf.

Choose a point $x \in \beta$, and let $\beta'$ be the compact subsegment of $\beta$ running from $x$ to $\gamma x$. This is a fundamental domain for the action of $\gamma$ on $\beta$.
At each point $y \in \beta$, one can find a compact center segment $c_y$ through $y$ that intersects the same set of stable leaves as some compact subsegment $\beta_y$ of $\beta$, where the interior of $\beta_y$ contains $y$. A center segment is a segment contained in a center leaf.
By compactness of $\beta'$, one can find a finite collection $c_1, c_2, \cdots, c_k$ of these center segments such that the corresponding subsegments of $\beta$ covers $\beta'$.

Projecting to $M$, we have a finite union of center segments $\bigcup \pi(c_i)$ that intersects the same set of stable leaves as the closed curve $\pi(\beta)$. Since $f$ contracts stable segments, $f^n(\bigcup \pi(c_i))$ converges to $\pi(\beta)$ as $n \to \infty$. Since a sequence of center segments can only converge to a center segment, it follows that $\pi(\beta)$ is a center leaf, and so is $\beta$.
\end{proof}

\subsubsection{Gaps}
To complete the proof of Proposition~\ref{p.fixedcenter} we will show that $A^s$ is indeed a line.

Suppose that $A^s$ is not a line. Then it is a $\ZZ$-union of closed intervals
\[ A^s = \bigcup_{i\in \ZZ} [s^-_i, s^+_i]. \]
We will call each of the pairs $s^+_i, s^-_{i+1}$ a \emph{gap} in this axis.
The following lemma says that some positive power of $f$ fixes the image of each gap in $M$.

\begin{lemma}
	There are $m \neq 0$ and $n > 0$ such that $h = \gamma^m \circ \tf^n$ fixes every $s^\pm_i$.
\end{lemma}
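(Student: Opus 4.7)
The plan is to exploit the fact that both $\gamma$ and $\ft$ act as order-preserving homeomorphisms on the axis $A^s$, which carries a natural total order in which the endpoints appear as
\[ \cdots < s^-_{-1} < s^+_{-1} < s^-_0 < s^+_0 < s^-_1 < s^+_1 < \cdots \]
with each consecutive pair $s^+_i, s^-_{i+1}$ being non-separated in $\cL^s$. After possibly replacing $\gamma$ and $\ft$ by their squares---which is harmless since we are free to scale $m$ and $n$ at the end---we may assume that both preserve this linear order on $A^s$.

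Any order-preserving self-homeomorphism of $A^s$ must permute the maximal closed intervals $\{[s^-_i, s^+_i]\}_{i \in \ZZ}$ in an order-preserving manner, and hence acts on the index set $\ZZ$ as a shift. Let $a$ be the shift induced by $\gamma$ and $b$ the shift induced by $\ft$. Neither shift can be zero: if $a = 0$, then $\gamma$ would preserve some (indeed every) $[s^-_i, s^+_i]$ setwise, and being order-preserving it would have to fix both endpoints $s^\pm_i$, contradicting the fact (established just before this lemma) that $\gamma$ acts freely on $\cL^s$. The same argument applied to $\ft$ shows $b \neq 0$.

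Setting $n := |a| > 0$ and $m := -b\cdot \mathrm{sgn}(a)$, we have $m \neq 0$ because $b \neq 0$, and the shift induced on the indices by $h = \gamma^m \circ \ft^n$ is $ma + nb = -|a|b + |a|b = 0$. Therefore $h$ preserves each $[s^-_i, s^+_i]$ setwise; being a composition of order-preserving maps, $h$ itself is order-preserving on $A^s$, so it fixes each endpoint $s^\pm_i$.

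The only subtle point is justifying order-preservation of $\gamma$ and $\ft$ on the (non-Hausdorff) axis $A^s$, but this follows from the general axis formalism of Proposition~\ref{proposition-axes} together with the squaring trick, since an order-reversing homeomorphism has an order-preserving square and our conclusion is insensitive to replacing $m$, $n$ by multiples.
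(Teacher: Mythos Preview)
Your proof is correct and follows essentially the same approach as the paper: reduce to the action of the commuting pair $\gamma, \ft$ on the index set $\ZZ$ of the intervals, observe that each acts by a nonzero shift (since each acts freely on $\cL^s$), and take an integer combination that cancels the shifts. Your version is more explicit about the order structure and the formula for $m,n$; the paper simply notes that some nontrivial element of $\langle\gamma,\ft\rangle$ must act trivially on $\ZZ$ and then argues $m,n\neq 0$ from freeness. Your squaring step is harmless but in fact unnecessary: if either map were order-reversing on $A^s$, it (or its square) would fix an interval and hence a point of $\cL^s$, contradicting freeness---so both are automatically order-preserving.
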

\begin{proof}
	Since $\ft$ and $\gamma$ act freely on $A^s$, they act freely on the index set of the collection of intervals, which is $\ZZ$. It follows that some non-trivial element of the group generated by $\ft$ and $\gamma$ acts trivially on $\ZZ$. This element is of the form $h := \gamma^m \circ \ft^n$. Since both $\gamma$, and $\ft$ act freely on $A^s$, neither $n$ nor $m$ can be equal to zero, and we can take $n > 0$. Since $h$ fixes each interval, it fixes the endpoints of each interval as desired. 
\end{proof}

For the remainder of this section, we fix $h$ as in this lemma, and look at a single gap, setting $s^+ = s^+_i$ and $s^- = s^-_{i+1}$ for some fixed $i$. 

We highlight some features of this gap --- see Figure~\ref{f.pfDC}: 
First, Proposition~\ref{proposition-axes} says that $s^+$ is non-separated from $s^-$ in the leaf space $\cL^s$, so Lemma~\ref{l-nonsepperfectfit} provides a center leaf $c$ that makes a perfect fit with $s^+$ and separates $s^+$ from $s^-$. Since there is a unique such leaf, it follows that $h$ fixes $c$. 

Note that $h$ eventually contracts stable leaves; this is because $\gamma$ is an isometry, $\ft$ eventually contracts stable leaves, and $n > 0$. Up to an iterate we can assume that this contraction is immediate. It follows that $h$ fixes a single point $x$ within $s^+$. Let $c'$ be the center ray that starts at $x$ on the side of $c$. 

\begin{figure}[ht]
	\begin{center}
		\begin{overpic}[scale=0.70]{./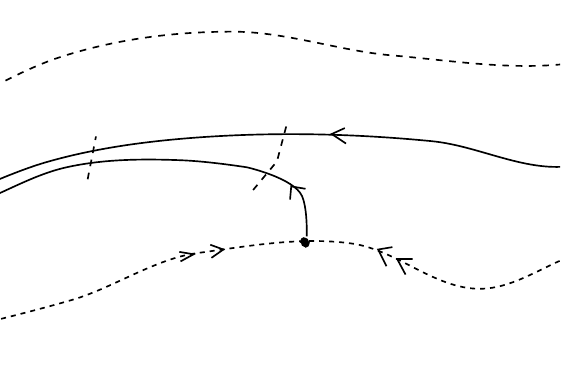}
		\put(52,18){$x$}
		\put(52,36){$s_y$}
		\put(36,30){$c'$}
		\put(6,5){$s^+$}
		\put(6,50){$s^-$}
		\put(40,43){$c$}
		\end{overpic}
		\hspace{.1cm}
        \begin{overpic}[scale=0.70]{./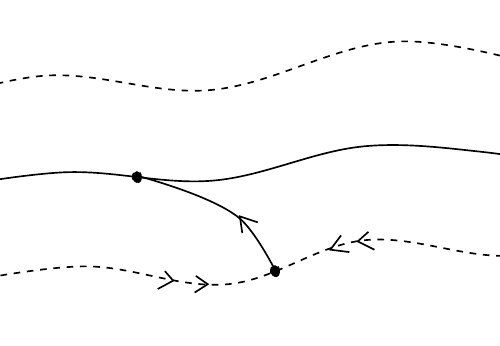}
		\put(46,36){$c$}
		\put(42,20){$c'$}
		\put(53,8){$x$}
		\end{overpic}
	\end{center}
	\vspace{-0.5cm}
	\caption{{\small A perfect fit forces expansion on a center ray (left), while, in the non-dynamically coherent case, $c'$ may land by merging with $c$ (right).}\label{f.pfDC}}
\end{figure} 

We will show that $h$ expands $c'$ in Lemma~\ref{l.expandedcenterDC}, and that it contracts $c'$ in Corollary~\ref{c.contractedcenterDC}. This is contradictory, so there are no gaps in $A^s$, i.e., it is a line, and the proof of Proposition~\ref{p.fixedcenter} is complete.

\subsubsection{Perfect fits and expanded center rays}\label{ss.perfect_fits_and_behavior_center_leaves}
In the following lemma we find that the topology of the stable and center foliations in $L$ forces a stable ray in our gap to expand.

\begin{lemma}\label{l.expandedcenterDC}
	$h$ acts as an expansion on $c'$ with unique fixed point $x$.
\end{lemma}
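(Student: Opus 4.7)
The plan is to use the perfect fit between $c$ and $s^+$ together with the stable contraction of $h$ to produce a positive transverse quantity on $c'$ that $h$ strictly decreases. First I would check that $h(c') = c'$: since $h$ fixes $x$, it preserves the center leaf $c_x$ through $x$; and since it also fixes both $c$ and $s^+$, it preserves the wedge region $R \subset L$ bounded by $s^+$ and $c$ on the perfect-fit side, and therefore the unique ray of $c_x \setminus \{x\}$ lying in $R$, which is $c'$.

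For $y \in c'$ close enough to $x$, the $\cS\cC$-perfect fit of $c$ and $s^+$ forces the stable leaf $s_y$ to cross $c$ in a single point $z_y$. Set $d(y) := d_s(y, z_y)$, the stable arclength along $s_y$ from $c'$ to $c$. Because $h$ fixes $c$ one has $z_{h(y)} = h(z_y)$, and since (after replacing $h$ by a positive iterate, which the argument has already allowed) $h$ strictly contracts stable arclength,
\[
 d(h(y)) \;=\; d_s\bigl(h(y), h(z_y)\bigr) \;<\; d(y).
\]
This immediately rules out any fixed point of $h$ on the set of $y \in c' \setminus \{x\}$ where $s_y$ meets $c$. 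Moreover, as $y \to x$ along $c'$ the perfect fit forces $d(y) \to +\infty$ (the wedge has its vertex at an ideal point), so the orbit $h^n(y)$ cannot accumulate at $x$; being fixed-point-free on this region, $h$ must push $y$ strictly outward along $c'$, which is exactly the desired expansion on an initial segment of the ray.

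The remaining task is to show that $s_y$ meets $c$ for \emph{every} $y \in c' \setminus \{x\}$, so that the argument applies globally. Since $c'$ is a center leaf distinct from $c$ it cannot cross $c$, and after leaving $x$ it cannot re-cross $s^+$, so $c'$ is trapped in $R \cup \{x\}$ forever. If the set of $y$ with $s_y \cap c = \emptyset$ were nonempty, the first such point along the ray would yield a stable leaf asymptotic to $c$, that is, a new perfect fit on the far side of $c$; uniformizing $L$ hyperbolically via Lemma~\ref{Gromovhyp}, then invoking the uniqueness in Lemma~\ref{l-nonsepperfectfit} together with the $h$-invariance of this configuration, allows the same contraction argument to reach a contradiction. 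This last extension step is where I expect the main difficulty: local expansion near $x$ is essentially tautological once $d$ is in hand, but controlling stable leaves all along $c'$ requires the global geometry of the wedge $R$ inside $L$, which is why the hyperbolic leaf metric from Lemma~\ref{Gromovhyp} was set up earlier in the paper.
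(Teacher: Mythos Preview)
Your core idea --- connecting $y\in c'$ to $c$ by a stable segment and using stable contraction --- is exactly the paper's. The divergence is at the end. From the strict inequality $d(h(y))<d(y)$ alone you cannot rule out that the set $I=\{y\in c':s_y\cap c\neq\emptyset\}$ is a proper initial segment $(x,p)$ with $h(p)=p$; this is what you correctly sense as a loose end. You then try to close it by proving $I$ is all of $c'$, but your sketch misapplies Lemma~\ref{l-nonsepperfectfit} (that lemma produces a \emph{center} leaf separating two non-separated \emph{stable} leaves, not the stable-versus-center perfect fit you describe), and no appeal to the hyperbolic leaf metric is needed here.

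The paper finishes more directly, and you had all the ingredients. Since $h$ \emph{uniformly} contracts stable length (not merely strictly), one has $\ell\big(h^n(s'_y)\big)\to 0$. Thus if $h^n(y)$ accumulated on some $q\in L$, then $q$ would also be the limit of $h^n(z_y)\in c$, forcing $q\in c$; but $q\in c'$ as well, contradicting $c\cap c'=\emptyset$. Hence $h^n(y)$ escapes every compact subset of $L$. Since $y$ may be taken arbitrarily close to $x$, the forward orbit of such $y$ sweeps through all of $c'$, so there is no other fixed point and $h$ is a global expansion on $c'$. The step you were missing is simply to upgrade ``$d$ strictly decreases'' to ``$d\to 0$'' via uniform stable contraction; once you do that, the ``remaining task'' you set yourself evaporates.
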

\begin{proof}
		
	Note that the stable leaf $s_y = \wt{\cW}^s(y)$ through any point $y \in c'$ that is sufficiently close to $x$ will intersect $c$. This is because $s^+$ and $c$ make a perfect fit, and $c'$ is a transversal to $\cS$ on the side of $c$. Given such a point, let $s'_y$ be the compact segment of $s_y$ that runs from $c'$ to $c$. Since the lengths of $h$-iterates of this segment go to zero, i.e., $\lim_{n \to \infty} \ell(h^n(s')) = 0$,  it follows that the $h$-iterates of $y$ eventually escape every compact set. Indeed, otherwise one would find that $c$ and $c'$ intersect at some accumulation point of $h^n(y)$. See Figure~\ref{f.pfDC} (left).

	The lemma follows since we can take $y \in c'$ arbitrarily close to $x$.
\end{proof}

\begin{remark}\label{r.problemndc}
	The proof of Lemma~\ref{l.expandedcenterDC} uses the structure of the transverse pair of foliations in an essential way. It does not hold when the center leaves are allowed to merge --- see Figure~\ref{f.pfDC} (right). This is exactly the type of behavior that arises in the examples of \cite{HHU-noncoherent}. 
\end{remark}

\subsubsection{Coarse contraction in stable gaps} \label{sss.using_perfect_fit_to_contradiction}  
In the following lemma we find that the geometry of the gap forces it to contract laterally. This contradicts the expansion found in Lemma~\ref{l.expandedcenterDC} --- see Corollary~\ref{c.contractedcenterDC}.

\begin{lemma}\label{l.gapcontractionDC}
	There is a rectangle $R$ bounded by segments of $s^+$ and $s^-$ that contain the fixed points, together with two arcs $\tau_1, \tau_2$, such that $h(R)$ is contained in the interior of $R$. See Figure~\ref{f.7M_firstuseoffigure}.
\end{lemma}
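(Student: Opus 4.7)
The plan is to exploit that $h$ is a bounded perturbation of an isometry on $L$. By Lemma~\ref{Gromovhyp}, $L$ carries a complete metric of constant curvature $-1$ in which the deck transformation $\gamma^m$ acts as an isometry, and by Lemma~\ref{lema-boundedinfixcsleaf} the map $\ft^n$ displaces every point of $L$ by at most a uniform constant $K_0$. Writing $h = \gamma^m \circ \ft^n$ then gives a uniform estimate
\[
 d_L(h(z), \gamma^m(z)) \leq K_0 \quad \text{for every } z \in L.
\]
Since $h|_{s^+}$ is a strict contraction toward its unique fixed point $x$ and $s^+$ is an unbounded quasi-geodesic, $\gamma^m$ must act nontrivially on $s^+$; thus $\gamma^m$ is a nontrivial isometry of $L$ preserving each of the quasi-geodesics $s^+,s^-,c$, and in particular fixing their common asymptotic direction $p_\infty$.

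Next I would construct $R$ using the perfect-fit geometry near $p_\infty$. Since $s^+$ and $s^-$ are non-separated and make perfect fits with $c$, they come arbitrarily close to each other along the $p_\infty$-side. So, choosing points $y_1 \in s^+$ and $y_1' \in s^-$ sufficiently deep toward $p_\infty$, I obtain a short transverse arc $\tau_1$ in $V$ joining them; similarly pick $y_2 \in s^+$, $y_2' \in s^-$ on the opposite side of $x,x'$, and a transverse arc $\tau_2$ joining them. Let $\sigma^+ \subset s^+$ be the segment from $y_1$ to $y_2$ (so it contains $x$ in its interior) and likewise $\sigma^- \subset s^-$ the segment from $y_1'$ to $y_2'$. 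Define $R$ to be the closed topological disk in $\bar V$ with boundary $\sigma^+ \cup \tau_2 \cup \sigma^- \cup \tau_1$.

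To check $h(R) \subset \mathrm{int}(R)$, I would argue boundary piece by boundary piece. The inclusions $h(\sigma^{\pm}) \subset \mathrm{int}(\sigma^{\pm})$ follow at once from the strict contraction of $h$ on $s^{\pm}$ toward $x, x'$, since the endpoints $y_i, y_i'$ are not fixed. The delicate part is showing $h(\tau_i) \subset \mathrm{int}(R)$. For this I use that $h(\tau_i)$ lies in the $K_0$-neighborhood of the isometric image $\gamma^m(\tau_i)$, which is another transverse arc in $V$ of length equal to $\ell(\tau_i)$. By choosing the arcs $\tau_i$ short enough (on both sides) and the segments $\sigma^{\pm}$ long enough, one arranges that $\gamma^m(\tau_i)$, together with its $K_0$-neighborhood, is contained in the interior of $R$. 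The endpoints of $h(\tau_i)$ are automatically in $\mathrm{int}(\sigma^{\pm})$ by the preceding paragraph.

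The main obstacle is the simultaneous control of both transversal sides. A single isometry $\gamma^m$ translates in only one direction along its axis, so without care one expects $\gamma^m(\tau_1)$ to land inside $R$ but $\gamma^m(\tau_2)$ to be pushed past $\tau_2$. This is resolved by first passing to a further iterate (absorbed into $n$) so that the stable contraction rate of $h$ on $s^{\pm}$ strictly dominates the translation length of $\gamma^m$ along $s^{\pm}$; then the inward pull of the endpoints $h(y_i), h(y_i')$ toward $x, x'$ is strong enough to guarantee that both $\gamma^m(\tau_1)$ and $\gamma^m(\tau_2)$ lie in $\mathrm{int}(R)$, regardless of the direction in which $\gamma^m$ translates. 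Combined with the $K_0$-perturbation bound, this yields $h(\tau_i) \subset \mathrm{int}(R)$ and completes the construction.
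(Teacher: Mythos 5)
Your construction hinges on two claims that are not available, and together they conceal the real content of the lemma. First, you assert that $\gamma^m$ ``preserves each of the quasi-geodesics $s^+,s^-,c$'' and hence fixes a common ideal point $p_\infty$. This is false: a nontrivial deck transformation fixing a stable leaf of $L$ would produce a circular stable leaf in $M$, which is exactly what is excluded (this is why $\gamma$ acts freely on the stable leaf space in the first place). Only $h=\gamma^m\circ\ft^n$ preserves $s^\pm$ and $c$, not $\gamma^m$ alone. Consequently $\gamma^m$ need not preserve the gap region $V$, so $\gamma^m(\tau_i)$ need not be a transverse arc in $V$ at all, and the strategy ``put $\gamma^m(\tau_i)$ together with its $K_0$-neighborhood inside $R$'' has no foundation. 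Relatedly, the claim that $s^+$ and $s^-$ ``come arbitrarily close'' near the perfect fit because they are non-separated is unjustified (non-separated leaves need not approach each other metrically); what is actually true, and all that is needed, is that they are a \emph{bounded} Hausdorff distance apart (Lemma~\ref{claim.boundeddistanceapart}, which follows from Lemma~\ref{lema-boundedinfixcsleaf}), which bounds the lengths of the arcs $\tau_i$.

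Second, the quantitative step that makes $h(\tau_i)$ land inside $R$ is missing. What one needs is that, far out along each ray of $s^+$, the map $h$ displaces points by an arbitrarily large \emph{leafwise} distance $d_L$, so that $y_i$ can be chosen with $d_L(y_i,h(y_i))$ exceeding $\ell(\tau_i)+\ell(h(\tau_i))$; this is Lemma~\ref{lema-pointsfar} in the paper, and its proof is genuinely nontrivial: if a ray of $s^+$ had bounded $h$-displacement, then by Lemma~\ref{lema-boundedinfixcsleaf} it would have bounded $\gamma^m$-displacement, hence stay at bounded distance from the axis of the hyperbolic isometry $\gamma$ of $L$ (here Lemma~\ref{Gromovhyp} is used), and its projection to $M$ would accumulate, by Poincar\'e--Bendixson, onto a closed stable leaf --- impossible. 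Your substitute, ``pass to an iterate so that the stable contraction rate of $h$ dominates the translation length of $\gamma^m$ along $s^\pm$,'' is not meaningful as stated ($\gamma^m$ does not preserve $s^\pm$, so it has no translation length along them), and even read charitably it only controls stable arc length, which is not comparable a priori to $d_L$; so it does not yield the large leafwise displacement of $y_1,y_2$ in both directions that the disjointness $h(\tau_i)\cap\tau_i=\emptyset$ requires. Once Lemma~\ref{lema-pointsfar} is in hand, the paper's construction is essentially the simple one you outline (bounded-length arcs $\tau_i$ far out on either side of the fixed points, endpoints pulled strictly inward along $s^\pm$), so the gap is precisely this missing lemma together with the incorrect use of $\gamma^m$.
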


\begin{figure}[ht]
	\begin{center}
		\includegraphics[scale=0.68]{./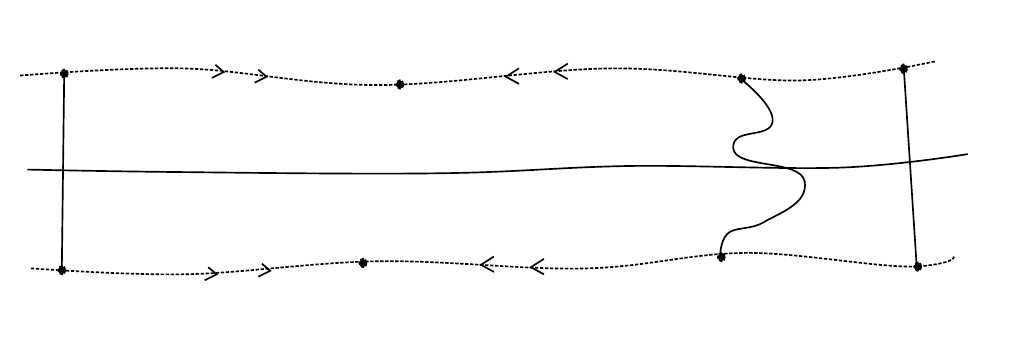} 
		\begin{picture}(0,0)
		\put(-225,18){$x$}
		\put(-325,18){$y_2$}
		\put(-45,18){$y_1$}
		\put(-121,72){$h(\tau_1)$}
		\put(-20,22){$s^+$}
		\put(-54,69){$\tau_1$}
		\put(-317,69){$\tau_2$}
		\put(-27,94){$s^-$}
		\put(-26,67){$c$}
		\put(-189,66){$R$}
		\end{picture}
	\end{center}
	\vspace{-0.5cm}
	\caption{The domain $R$ is mapped onto itself by $h$.}\label{f.7M_firstuseoffigure}
\end{figure}

We will need two lemmas. The first is that the gap is ``uniformly thin'':

\begin{lemma} \label{claim.boundeddistanceapart}
	The leaves $s^+$ and $s^-$ are a bounded Hausdorff distance apart with respect to the path metric on $L$.
\end{lemma}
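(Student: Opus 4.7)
The main input is Lemma~\ref{lema-boundedinfixcsleaf}: because $\ft$ fixes every leaf of $\wt\cW^{cs}$, there is a uniform constant $K>0$ with $d_L(y,\ft(y))\le K$ for every $y\in L$. In particular, for any subset $\sigma\subset L$ the Hausdorff distance (in the path metric of $L$) between $\sigma$ and $\ft(\sigma)$ is at most $K$, and because $\ft$ commutes with every deck transformation, the same bound holds in every leaf $\alpha(L)$ for $\alpha\in\pi_1(M)$.

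My plan is to argue by contradiction: assume $s^+$ and $s^-$ are not at bounded Hausdorff distance in $L$. Then, up to swapping the roles, one can find a sequence $p_k\in s^+$ with $d_L(p_k,s^-)\to\infty$. The action of $h$ on $s^+$ is an immediate contraction toward the unique fixed point $x$, so by replacing $p_k$ with a large-index $h$-iterate (and using a fundamental domain for $h$ on $s^+$) we may arrange that the $p_k$ go to infinity on $s^+$ while their projections $\pi(p_k)\in M$ stay in a compact part of $M$. After passing to a subsequence we then obtain deck transformations $\alpha_k\in\pi_1(M)$ with $\alpha_k(p_k)\to \tilde p_\infty$ in $\mt$.

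Next, translate the entire configuration by $\alpha_k$: set $L_k=\alpha_k(L)$, $s^\pm_k=\alpha_k(s^\pm)$ and $c_k=\alpha_k(c)$, where $c$ is the center leaf from Lemma~\ref{l-nonsepperfectfit} that separates $s^+$ from $s^-$ and makes a perfect fit with $s^+$. Since deck transformations preserve the foliations, the pair $(s^+_k,s^-_k)$ is still non-separated inside $L_k$ with a separating perfect-fit center leaf $c_k$, and the conjugated map $h_k=\alpha_k h\alpha_k^{-1}$ fixes each of these and still has bounded displacement in $L_k$. By continuity of $\wt\cW^{cs}$ and $\wt\cW^s$ (and compactness on bounded sets in $\mt$), after a further subsequence $L_k\to L_\infty$, $s^+_k\to s^+_\infty$ and $c_k\to c_\infty$, all passing through $\tilde p_\infty$.

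The contradiction will come from comparing $s^-_k$ with $s^+_k$ in this limit. On one hand, $d_{L_k}(\alpha_k(p_k),s^-_k)=d_L(p_k,s^-)\to\infty$, so the $s^-_k$ escape every compact set of $\mt$ and cannot converge to any leaf inside $L_\infty$ near $\tilde p_\infty$. On the other hand, using the perfect fit of $c_k$ with $s^+_k$, one can construct inside $L_k$ a transversal arc $\tau_k$ starting near $\tilde p_\infty$ which must cross a stable leaf $\sigma_k$ from any neighborhood in $\cL^s|_{L_k}$ of $s^+_k$; by non-separatedness we can choose $\sigma_k$ so that it also meets a transversal to $s^-_k$. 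Passing to the limit, $\sigma_k$ converges to a stable leaf $\sigma_\infty\subset L_\infty$ near $\tilde p_\infty$, but connectedness forces $\sigma_k$ to exit every compact set, contradicting that the stable foliation has local product structure on the compact manifold $M$. Hence the assumption fails and $s^+,s^-$ are at bounded Hausdorff distance in $L$.

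The step I expect to be most delicate is the passage to the limit together with the preservation of the non-separated/perfect-fit data. In particular, one must control the $\alpha_k(L)$ (which are a priori different center stable leaves), use that $\ft$-displacement bound and commutation with $\alpha_k$ to keep $s^+_k$ and $c_k$ from degenerating near $\tilde p_\infty$, and exploit the local product structure of $\wt\cW^s$ in compact regions of $\mt$ to turn the contradictory behavior of $s^-_k$ into a genuine contradiction. I would likely phrase the final step using a transversal of bounded length to the limit leaf $s^+_\infty$, analogous to the proof of Claim~\ref{claim_finitely_many_leaves}, together with Remark~\ref{r.lema-boundedinfixcsleaf_sublamination} applied to the closed sublamination of $\wt\cW^s$ consisting of $h_k$-fixed leaves, to prevent escape to infinity of the non-separated companion.
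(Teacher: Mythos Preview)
Your proposal takes a much more complicated route than necessary, and the final contradiction does not actually materialize.

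The paper's proof is a two-line direct argument using only the separation structure of the axis together with Lemma~\ref{lema-boundedinfixcsleaf}. Recall that the gap $(s^+,s^-)$ sits between consecutive intervals of the axis $A^s=\bigcup_i [s_i^-,s_i^+]$, and $\ft$ acts freely as a translation on this axis. Hence $s^-$ separates $s^+$ from $\ft(s^+)$ (or from $\ft^{-1}(s^+)$), and symmetrically $s^+$ separates $s^-$ from $\ft^{\mp 1}(s^-)$. Now Lemma~\ref{lema-boundedinfixcsleaf} gives $d_L(x,\ft(x))\le K$ for all $x\in L$; any path in $L$ of length $\le K$ from $x\in s^+$ to $\ft(x)\in\ft(s^+)$ must cross $s^-$, so $s^+\subset N_K(s^-)$. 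The symmetric separation gives $s^-\subset N_K(s^+)$, and you are done. You correctly identified Lemma~\ref{lema-boundedinfixcsleaf} as the main input, but missed this simple separation observation, which is the whole point.

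As for your limiting argument: the step where you claim a contradiction is not justified. After pulling back by $\alpha_k$, you have $s^+_k\to s^+_\infty$ through $\tilde p_\infty$, while $s^-_k$ escapes compact sets. There is nothing contradictory here. The perfect fit of $c_k$ with $s^+_k$ may well pass to a perfect fit in the limit leaf $L_\infty$, but perfect fits are perfectly allowed. Your sentence ``by non-separatedness we can choose $\sigma_k$ so that it also meets a transversal to $s^-_k$'' does not follow from the definitions: non-separatedness of $s^+_k$ and $s^-_k$ means they are simultaneous limits of a common sequence of stable leaves, not that a single nearby stable leaf $\sigma_k$ comes close to both. In particular the asserted dichotomy (``$\sigma_k$ converges near $\tilde p_\infty$'' versus ``$\sigma_k$ must exit every compact set'') is not forced by anything you have set up, and local product structure of the stable foliation does not prevent this behavior. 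So the argument, as written, has a genuine gap at the end.
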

\begin{proof}
	Since this gap is part of the axis $A^s = \bigcup_{k\in \ZZ} [s_k^-, s_k^+]$, it follows that $s^-$ separates $s^+$ from either $\ft(s^+)$ or $\ft^{-1}(s^+)$. Then Lemma~\ref{lema-boundedinfixcsleaf} implies that the Hausdorff distance between $s^+$ and $\ft^{\pm 1} (s^+)$ is uniformly bounded above, and the same holds for the Hausdorff distance between $s^+$ and $s^-$. 
\end{proof}

Recall that, since $\ft$ fixes all leaves of $\widetilde \cW^{\cs}$, Lemma \ref{Gromovhyp} (thanks to Candel's Theorem) implies that there is a metric $g$ on $M$ such that $\cW^{\cs}$ is leafwise hyperbolic. Let $d$ be the associated path metric on the leaf $L$. First, notice that since we are using a metric in $M$ provided by
Candel's Theorem \ref{thm-leafuniformisation}, it follows that
the path metric on leaves of $\cW^{\cs}$ is hyperbolic, hence
$L$ is isometric to the hyperbolic plane with this path metric.
In addition if $\gamma$ is a deck transformation leaving
$L$ invariant, then $\gamma$ is an isometry of this path
metric. This isometry has to be hyperbolic, as $M$ is a closed
manifold: There cannot be fixed points of $\gamma$ in $L$,
and the translation distance is strictly positive, because
$M$ is closed and there is a positive lower bound to the
injectivity radius, once a metric is fixed.
So $\gamma$ cannot be parabolic either.

\begin{lemma}\label{lema-pointsfar} 
  For any $K_0>0$, and any ray $r\subset s^+$, there exists $y\in r $ such that $d(y,h(y) ) > K_0$.
\end{lemma}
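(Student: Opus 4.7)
I would argue by contradiction: suppose there exist $K_0 > 0$ and a ray $r \subset s^+$ with $d(y, h(y)) \leq K_0$ for every $y \in r$. The first step is to convert this into a bound on the displacement of the isometry $\gamma^m$. By Lemma \ref{lema-boundedinfixcsleaf} applied to $\ft^n$ (which also fixes every leaf of $\widetilde \cW^{cs}$), there is a uniform constant $K_1$ with $d(z, \ft^n(z)) \leq K_1$ for every $z \in L$. Since $\gamma^m$ acts as an isometry of $(L,d)$, the triangle inequality gives
\[ d(y, \gamma^m y) \leq d(y, h(y)) + d(\gamma^m \ft^n(y), \gamma^m y) = d(y, h(y)) + d(y, \ft^n(y)) \leq K_0 + K_1 =: K_2 \]
for every $y \in r$.

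Next, I would exploit the fact that $\gamma^m$ is a nontrivial free isometry of the hyperbolic plane $(L, d) \simeq \mathbb{H}^2$; freeness follows because $\gamma$ generates the stabilizer of $L$ and $\cW^{cs}$ has no compact leaves. Thus $\gamma^m$ is either hyperbolic (with axis $a$ and translation length $\ell > 0$) or parabolic (with unique fixed point $\xi \in \partial_\infty L$), and the sublevel set $\{y \in L : d(y, \gamma^m y) \leq K_2\}$ is correspondingly a bounded tubular neighborhood $N_{D_{\max}}(a)$ of $a$ (with $D_{\max}$ determined by the standard identity $\sinh(K_2/2) = \sinh(\ell/2)\cosh(D_{\max})$) or a horoball based at $\xi$. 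In either case $r$ is confined to this bounded region, and since $r$ is properly embedded in $L$, its endpoint $\eta \in \partial_\infty L$ must be a fixed point of $\gamma^m$.

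The main obstacle -- and the step I expect to require the most work -- is to derive a contradiction from this confinement. I would combine Lemma \ref{claim.boundeddistanceapart} (bounding the Hausdorff distance between $s^+$ and $s^-$) with the perfect fit of $c$ with $s^+$ to conclude that the center leaf $c$ and the expanded center ray $c'$ also lie in a bounded tube around $a$ (respectively inside the horoball). Iterates $h^k(c')$ then remain in a tube whose width grows only linearly in $k$ (since $\ft^{nk}$ has displacement at most $kK_1$ in $L$), while Lemma \ref{l.expandedcenterDC} forces their $c'$-arclength to grow without bound. Passing to the compact quotient $\pi(N_{D_{\max}}(a)) \subset \pi(L)$ in the hyperbolic case, the descended map $\bar h$ is essentially $\overline{\ft^n}$ and so has bounded displacement, and the infinite embedded ray $\pi(c')$ is then forced to accumulate. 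I expect the final contradiction to come from a volume-versus-length estimate in the spirit of the one used at the end of the proof of Proposition \ref{p.hypSeifminimal}: the topological expansion of $c'$ by $h$ is incompatible with $h$ being a bounded perturbation of the isometry $\gamma^m$ inside a bounded tube, ultimately forcing $c'$ to auto-accumulate in $L$, contradicting proper embeddedness of the center leaf. The delicate point is that the expansion of the center direction in the Candel metric is only known topologically, not at an exponential rate, so the volume-versus-length argument must be carried out without assuming such a rate.
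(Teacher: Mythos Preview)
Your first two paragraphs are essentially the paper's argument: contradict, use Lemma~\ref{lema-boundedinfixcsleaf} to bound $d(y,\ft^n(y))$, and conclude $d(y,\gamma^m y)\le K_0+K_1$ along $r$, hence $r$ lies in a bounded tube around the axis of $\gamma^m$. One minor point: you can discard the parabolic case immediately, since $M$ is compact, so $\gamma$ acts as a hyperbolic isometry on $(L,d)$.

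Where you go astray is the last paragraph. You propose to drag the center leaves $c$ and $c'$ into the tube, invoke Lemma~\ref{l.expandedcenterDC}, and then attempt a volume-versus-length argument that you yourself flag as delicate because there is no exponential rate on the center. None of this is needed, and the speculative plan you outline is not a proof. The paper's finishing move is much simpler and uses only the \emph{stable} ray $r$ itself: once $r$ is trapped in a bounded neighborhood of the axis of $\gamma$, project to the annulus $\pi(L)=L/\langle\gamma\rangle$. Then $\pi(r)$ is a leaf of the (one-dimensional, nonsingular) stable foliation of $\pi(L)$ that stays in a compact subannulus. By Poincar\'e--Bendixson, $\pi(r)$ must accumulate on a closed stable leaf. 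But the stable foliation has no closed leaves, and that is the contradiction.

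So replace your entire final paragraph with: ``Since $r$ stays within bounded distance of the axis of $\gamma$ in $L$, the projection $\pi(r)$ stays in a compact core of the annulus $\pi(L)$; Poincar\'e--Bendixson then forces $\pi(r)$ to accumulate on a closed stable leaf, which is impossible.''
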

\begin{proof}
Let $r$ be a ray of $s^+$. Suppose for a contradiction that there exists $K_0$ such that for all $y$ in $r$ one has $d(y,h(y) ) < K_0$.

Recall that $h = \gamma^m \ft^n $, where $m$ and $n$ are fixed. By Lemma \ref{lema-boundedinfixcsleaf}, there exists a constant $K_1$ such that, for any $z$ in $L$,  $d(z,\ft^n(z)) < K_1$.
Thus, by assumption, for any $y\in r$, 
\begin{equation*}
 d(y,\gamma^m y) \leq d(y, \ft^n (\gamma^m y) ) + d(\gamma^m y, \ft^n (\gamma^m y) )  < K_0 + K_1.
\end{equation*}

Now, as explained above, $\gamma$ is an hyperbolic isometry for $d$. 
Hence, since $d(y, \gamma^m y)$ stays bounded for all $y$ in $r$, it implies that $r$ has to stay a bounded distance from the geodesic in $L$ that is the axis for the action of $\gamma$ on $L$.

So $\pi(r)$ stays a bounded distance away from the geodesic in $A= \pi(L)$ that lifts to the axis of $\gamma$.
Thus, Poincar\'e--Bendixson Theorem implies that $\pi(r)$ must accumulate onto a closed stable leaf in $M$, which is impossible (see Figure \ref{f.gromovhyp}).
\end{proof}

\begin{remark}\label{6.3nonhyp}
	Notice that this is the only place in the proof of Proposition~\ref{p.fixedcenter} that Candel's Theorem~\ref{thm-leafuniformisation} is used.
	
	In fact, the proof does not actually need $d$ to come from a Riemannian hyperbolic metric --- only that it is Gromov-hyperbolic: The only change one has to do in the proof is replace ``the'' geodesic realizing the axis of $\gamma$ by ``any'' geodesic. So Lemma \ref{lema-pointsfar} will hold as long as we know Gromov-hyperbolicity of the leafwise metric. We will need this in \cite{BFFP-sequel}.
\end{remark}

\begin{figure}[ht]
	\begin{center}
		\begin{overpic}[scale=0.6]{./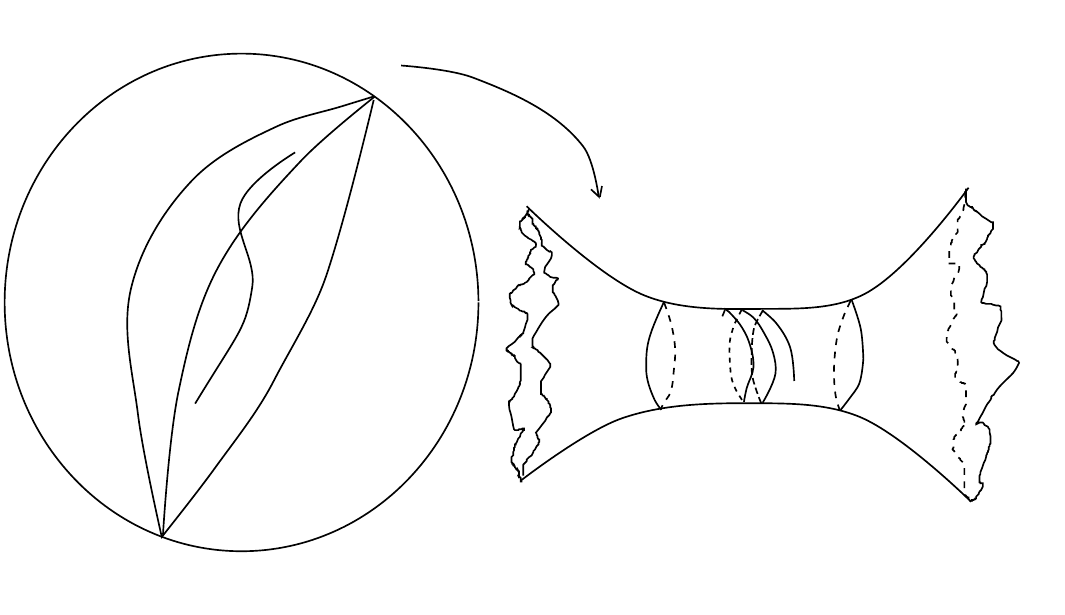}
		\put(45,10){$A=\pi(L)$}
		\put(50,49){$\pi$}
		\put(68,30){$\pi(r)$}
		\put(23,25){$r$}
		\put(7,7){$L$}
		\end{overpic}
	\end{center}
	\vspace{-0.5cm}
	\caption{If a stable ray in $L$ stays close to the axis of the deck transformation 
		$\gamma$ which is a hyperbolic isometry, then its projection
		in $M$ has to accumulate on a circle stable leaf.}\label{f.gromovhyp}
\end{figure}

\begin{proof}[Proof of Lemma~\ref{l.gapcontractionDC}]
	Let $y_1$ and $y_2$ be points in $s^+$ that lie on either side of, and far away from, the fixed point $x^+$, and let $\tau_1$ and $\tau_2$ be geodesic arcs from $y_1$ and $y_2$ to $s^-$. See Figure~\ref{f.7M_firstuseoffigure}. Note that the lengths of $\tau_i$, $i=1,2$, are uniformly bounded by Lemma~\ref{claim.boundeddistanceapart}, and since $f$ has bounded derivatives, the length of $h(\tau_i)$ is bounded as well. By Lemma~\ref{lema-pointsfar}, we can ensure that $h$ moves $y_i$ far enough to ensure that $h(\tau_i)$ is disjoint from $\tau_i$, and the lemma follows. 
\end{proof}

\begin{corollary}\label{c.contractedcenterDC}
	Some subsegment of $c'$ is contracted by $h$.
\end{corollary}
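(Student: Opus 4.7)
The plan is to define $\sigma$ as the connected component of $c' \cap R$ containing the fixed point $x$, where $R$ is the trapping rectangle from Lemma \ref{l.gapcontractionDC}, and to show that $h(\sigma) \subset \sigma$. Since $R$ is compact while $c'$ is an infinite ray, $\sigma$ is a compact subarc of $c'$ with $x$ as one endpoint and some $p \in \partial R$ as the other; if instead $c' \subset R$, then Lemma \ref{l.expandedcenterDC} is already contradicted outright.

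The key computations are essentially formal. First, since $h$ fixes $x$ and permutes center leaves, it preserves the entire center leaf $\hat c$ through $x$; replacing $h$ by $h^2$ if necessary (which does not affect the relevant properties, since $h^2(R) \subset h(R) \subset R$), we may assume that $h$ preserves each of the two rays emanating from $x$, so in particular $h(c') = c'$ as a set. Next, combining $h(\sigma) \subset h(R) \subset R$ with $h(\sigma) \subset c'$ yields $h(\sigma) \subset c' \cap R$; being connected and containing $h(x) = x$, the image $h(\sigma)$ lies inside the component of $c' \cap R$ through $x$, which is precisely $\sigma$. For strictness I would check that $h(p) \neq p$: if $p$ lies on one of the transverse arcs $\tau_i$ then this is forced by $h(\tau_i) \cap \tau_i = \emptyset$ from Lemma \ref{l.gapcontractionDC}, while if $p$ lies on the stable part of $\partial R$ it follows from the fact that $h$ contracts each stable side toward a unique attracting fixed point ($x$ on $s^+$ and some $x^-$ on $s^-$) together with $p \neq x$. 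In the borderline case $p = x^-$ one still obtains $h(\sigma) \subset \sigma$, which is already incompatible with the expansion in Lemma \ref{l.expandedcenterDC}, since the nontrivial subarc $\sigma$ would then stay trapped in $R$ under every iterate, violating the conclusion that points of $c' \setminus \{x\}$ escape every compact set.

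The main obstacle I anticipate is not the core trapping step but the orientation subtlety: one must confirm that $h$ sends the ray $c'$ to itself rather than interchanging it with the opposite ray of $\hat c$, which is dealt with by passing to $h^2$. Once this bookkeeping is handled, the corollary follows directly from the contraction $h(R) \subset R$ of Lemma \ref{l.gapcontractionDC} together with the connectedness of $h(\sigma)$, and combining it with Lemma \ref{l.expandedcenterDC} closes out the proof of Proposition \ref{p.fixedcenter}.
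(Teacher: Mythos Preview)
Your proof is correct and follows the same core idea as the paper's one-line argument: take the initial segment of $c'$ inside $R$ and use $h(R)\subset\mathring R$ to see it is mapped properly into itself. The paper's version is terser because it observes directly that the exit point of $c'$ from $R$ must lie on $\tau_1$ or $\tau_2$ (since $c'$ can intersect $s^+$ only at $x$, and cannot reach $s^-$ as the center leaf $c$ separates $s^+$ from $s^-$), which makes your case analysis for $p$ on the stable boundary and the passage to $h^2$ for orientation unnecessary --- the latter is already built into the setup, since $h$ fixes both $s^+$ and $c$ and hence preserves the side of $s^+$ containing $c'$.
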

\begin{proof}
	This follows from Lemma~\ref{l.gapcontractionDC}, noting that $c'$ must intersect either $\tau_1$ or $\tau_2$.
\end{proof}

This completes the proof of Proposition~\ref{p.fixedcenter}.

\begin{remark}
	Note that Lemma~\ref{l.gapcontractionDC} also implies that the center leaf $c$ that separates $s^+$ from $s^-$ is ``coarsely contracted'' by $h$, in the sense that certain sufficiently large subsegments of $c$ are taken properly into themselves. To show this one needs to note that the set of fixed points of $h$ in $c$ is compact and this follows from the same argument as in Lemma~\ref{lema-pointsfar}. 
\end{remark}	
	This then generalizes as follows:

	\begin{lemma}\label{l.condition_for_coarse_contraction}
		Let $c$ be a center leaf in a center-stable leaf $L \subset \mt$. Suppose that $L$ is Gromov-hyperbolic, and fixed by $\ft$ and some non-trivial $\gamma \in \pi_1(M)$. Moreover, assume that there exist two stable leaves $s_1, s_2$ on $L$ such that:
		\begin{enumerate}
			 \item The center leaf $c$ is in the region between $s_1$ and $s_2$;
			 \item The leaves $s_1$ and $s_2$ are a bounded Hausdorff distance apart;
			 \item The leaves $c$, $s_1$ and $s_2$ are all fixed by $h = \gamma^n \circ \ft^m$, $m\neq 0$.
		\end{enumerate}
		Then, there exists a compact segment $I\subset c$, such that $h$ (if $m > 0$) or $h^{-1}$ (if $m < 0$) acts as a contraction on $c \smallsetminus \mathring{I}$.
	\end{lemma}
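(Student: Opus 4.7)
The proof follows the template of Lemmas~\ref{lema-pointsfar} and~\ref{l.gapcontractionDC}, generalizing from the perfect-fit gap case to this abstract setting where the hypothesized bound on the Hausdorff distance between $s_1$ and $s_2$ replaces the conclusion of Lemma~\ref{claim.boundeddistanceapart}. Without loss of generality assume $m>0$ (the case $m<0$ follows symmetrically, by applying the argument to $h^{-1}$); replacing $h$ by a positive iterate, we may assume $h$ uniformly contracts stable lengths, so each $s_i$ has a unique $h$-fixed point $x_i$.

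The first step is to reproduce the conclusion of Lemma~\ref{lema-pointsfar} in the present setting. The proof given there uses only that: (i)~$s^+$ is $h$-invariant with $h$ contracting it to a unique fixed point; (ii)~$s^+$ is at bounded Hausdorff distance in $L$ from another $h$-invariant leaf; (iii)~$\gamma$ acts on the Gromov-hyperbolic leaf $L$ as a hyperbolic isometry (free and non-parabolic, since $M$ is compact and the stabilizer of $L$ in $\pi_1(M)$ is cyclic); (iv)~$\cW^s$ has no closed leaves in $M$. All four items hold for both $s_1$ and $s_2$ here. Hence, for any $K_0>0$ and any ray of $s_i$ based at $x_i$, there exist points $y$ with $d_L(y,h(y))>K_0$. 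Combined with the bounded Hausdorff distance, one then mimics the construction of Lemma~\ref{l.gapcontractionDC}: choose points $y_i^\pm\in s_i$ sufficiently far from $x_i$ along each ray, and transverse arcs $\tau^\pm$ of uniformly bounded length joining $y_1^\pm$ to $y_2^\pm$, so that $h(\tau^\pm)\cap\tau^\pm=\emptyset$. The resulting compact topological rectangle $R\subset L$, bounded by $\tau^\pm$ and long segments of $s_1,s_2$, satisfies $h(R)\subset R^\circ$.

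The conclusion is then read off from this trapping. Because $c$ lies in the open strip between $s_1$ and $s_2$, it meets $\partial R$ only along $\tau^+\cup\tau^-$, so $c\cap R$ is compact in $c$. Let $I\subset c$ be the smallest compact subsegment containing $c\cap R$. Then $c\setminus\mathring I$ consists of at most two closed rays, each disjoint from $R^\circ$, with finite endpoints on $\tau^+\cup\tau^-$. The strict trapping $h(R)\subset R^\circ$, together with the orientation preservation of $h$ acting on $c$, forces $h$ to act as a topological contraction on $c\setminus\mathring I$ in the sense of the preceding remark, namely that sufficiently large subsegments of $c$ are properly mapped into themselves.

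The most delicate point lies in the final step: carefully tracking the orientations inherited from the $R$-trapping to obtain the correct direction of contraction on each ray of $c\setminus\mathring I$. This requires using the absence of $h$-fixed points on the rays (which follows from the fact that any $h$-fixed point of $c$ that is accumulated by other fixed leaves-or-points interacts badly with the stable contraction, by the argument of Lemma~\ref{l.nofixedpoints}) together with the monotonicity of $h$ as a fixed-point-free orientation-preserving self-homeomorphism of each ray.
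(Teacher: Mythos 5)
Your overall route is the paper's: hypothesis (2) plays the role of Lemma~\ref{claim.boundeddistanceapart}, the displacement argument of Lemma~\ref{lema-pointsfar} runs on the Gromov-hyperbolic leaf $L$ (note that it also needs the bound $d_L(x,\ft^m(x))\le K_1$ of Lemma~\ref{lema-boundedinfixcsleaf}, which is not literally among your items (i)--(iv)), and one then builds a trapping rectangle $R$ with $h(R)\subset \mathring{R}$ exactly as in Lemma~\ref{l.gapcontractionDC}. Up to that point you are following the intended proof.

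The gap is in your final step, in two places. First, you exclude $h$-fixed points on the rays of $c\smallsetminus\mathring{I}$ ``by the argument of Lemma~\ref{l.nofixedpoints}''. That lemma rules out fixed points of powers of the good lift $\ft$ on a leaf accumulated by $\ft$-fixed leaves; it says nothing about $h=\gamma^n\circ\ft^m$, which certainly does have fixed points in $L$ (on $s_1$ and $s_2$, since it contracts them), and nothing in the present hypotheses provides accumulating fixed leaves. What is true --- and what the paper points to --- is that $\mathrm{Fix}(h)\cap c$ is \emph{compact}, by the displacement computation of Lemma~\ref{lema-pointsfar}: a fixed point $y$ satisfies $d_L(y,\gamma^{-n}y)=d_L(y,\ft^m y)\le K_1$, hence lies in a bounded neighborhood of the axis of the hyperbolic isometry $\gamma$, and a sequence of such points escaping in $c$ leads to a contradiction as in that lemma. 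One must then enlarge $I$ to contain this fixed set; your $I$ (the hull of $c\cap R$, which moreover could be empty unless $R$ is enlarged) need not contain it, and the asserted contraction can fail for that choice. Second, the direction of motion is not ``read off'' from the trapping: the rays lie outside $R$, so $h(R)\subset\mathring{R}$ by itself says nothing about them, and orientation preservation does not exclude $h$ translating a ray away from $I$. An actual argument is needed, for instance: choose a larger trapped rectangle $R'$ whose interior contains the initial point of the ray (possible, since Lemma~\ref{lema-pointsfar} supplies far-out points of large displacement on every sub-ray of $s_1$, $s_2$), let $q$ be the last point of the ray lying in $R'$; then $h(q)\in \mathring{R'}\cap c$ cannot lie beyond $q$, so $h(q)$ is on the $I$-side of $q$, and since the ray carries no fixed points the sign of the displacement along it is constant, giving the contraction toward $I$ (for $h$ when $m>0$, for $h^{-1}$ when $m<0$). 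Without these two repairs the last step, as written, does not go through.
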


	This remains true without assuming dynamical coherence --- we will use it in \cite{BFFP-sequel}. The proof of this lemma is very similar to that of Lemma~\ref{l.gapcontractionDC}. Note that we do not need $c$ to make a perfect fit with $s_1$ or $s_2$, nor do we need that $c$ necessarily goes to both ends of the band determined by $s_1$ and $s_2$ as in Figure \ref{f.7M_firstuseoffigure}. All we need is that $c$ is between $s_1$ and $s_2$, and that both rays of $c$ escape every compact set in $L$. That last fact is true of any center leaf in $\mt$.



\section{Mixed behavior} \label{s.nomix}

We can now eliminate mixed behavior in our cases of interest.

\begin{theorem}\label{mixedimpossible}
Let $f \colon M \to M$ be a dynamically coherent partially hyperbolic diffeomorphism homotopic to the identity. Assume that $\cW^{\cu}$ is $f$-minimal or that $M$ is hyperbolic or Seifert.

If a good lift $\ft$ fixes all the leaves of $\widetilde \cW^{\cs}$, then it also fixes all the leaves of $\widetilde \cW^{\cu}$.
\end{theorem}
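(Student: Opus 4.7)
The plan is to suppose for contradiction that $\ft$ does not fix every leaf of $\wt\cW^{cu}$ and then produce a single center leaf that is forced to lie in two distinct leaves of $\wt\cW^{cu}$. Applying Corollary \ref{coro-sumarizedichotomy} to $\cW^{cu}$ (whose hypotheses are exactly those of the theorem) will force $\cW^{cu}$ to be $\R$-covered and uniform with $\ft$ acting as a translation on its leaf space; in particular, no leaf of $\wt\cW^{cu}$ is $\ft$-fixed. The contradiction will then come from the following standard move: produce an $\ft$-fixed center leaf $c$ inside some leaf of $\wt\cW^{cs}$, note that $c$ lies in a unique leaf $U \in \wt\cW^{cu}$ by dynamical coherence, and observe that $\ft(c)=c$ gives $c \subset \ft(U)$; since distinct leaves of the foliation $\wt\cW^{cu}$ are disjoint, this forces $\ft(U)=U$, directly contradicting the translation.

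The key ingredient for producing the fixed center leaf is Proposition \ref{p.fixedcenter}, which hands us such a $c$ inside any leaf of $\wt\cW^{cs}$ that carries a nontrivial deck transformation. So the real task is to exhibit a leaf of $\wt\cW^{cs}$ with nontrivial stabilizer. Because $\ft$ fixes every leaf of $\wt\cW^{cs}$, Corollary \ref{coro-sumarizedichotomy}(2) applies and tells us that each leaf of $\cW^{cs}$ in $M$ is a plane, an annulus, or a M\"obius band. Leaves with nontrivial stabilizer are precisely the annular and M\"obius ones, so it suffices to rule out the scenario in which \emph{every} leaf of $\cW^{cs}$ is a plane. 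This is where the standing convention that $\pi_1(M)$ is not virtually solvable will be used: by a classical theorem of Rosenberg, a closed $3$-manifold admitting a (codimension-$1$) foliation by planes is homeomorphic to $T^3$, whose fundamental group is abelian and hence virtually solvable.

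I expect the only real subtlety to be the invocation of Rosenberg's theorem to eliminate the all-planes case; after that the argument is essentially formal. Once some leaf $A$ of $\cW^{cs}$ is known to be an annulus or M\"obius band, any lift $L \subset \mt$ of $A$ carries an infinite cyclic stabilizer, Proposition \ref{p.fixedcenter} furnishes the fixed center leaf $c \subset L$, and the clash ``$U = \ft(U)$ versus $\ft$ translating $\cL^{cu}$'' is immediate. Note that no case distinction between the hyperbolic, Seifert-fibered, and $f$-minimal settings is needed here: those hypotheses only enter through Corollary \ref{coro-sumarizedichotomy}, and the exclusion of $T^3$ then works uniformly in all three.
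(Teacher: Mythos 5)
Your proposal is correct and is essentially the paper's own argument, merely organized as a proof by contradiction: the paper likewise uses Rosenberg's theorem (Proposition \ref{Ros}, via $M \neq \TT^3$) to produce a leaf of $\cW^{cs}$ with nontrivial stabilizer, applies Proposition \ref{p.fixedcenter} to get an $\ft$-fixed center leaf and hence an $\ft$-fixed leaf of $\wt\cW^{cu}$, and then invokes the dichotomy of Corollary \ref{coro-sumarizedichotomy} for $\cW^{cu}$. The detour through Corollary \ref{coro-sumarizedichotomy}(2) to classify the topology of $\cW^{cs}$-leaves is harmless but unnecessary, since Proposition \ref{Ros} already yields a non-plane leaf directly.
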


\begin{proof}
Since $M$ is not $\mathbb{T}^3$ (recall that $\pi_1(M)$ is not virtually solvable), Proposition \ref{Ros} says that we can find a leaf in $\cW^{\cs}$ with non-trivial fundamental group. Let $L \in \widetilde \cW^{\cs}$ be a lift of such a leaf, which is invariant by some non-trivial $\gamma \in \pi_1(M)$.
By Proposition~\ref{p.fixedcenter}, $\ft$ fixes some center leaf $c$ in $L$, so it must fix the center-unstable leaf $K \in \widetilde \cW^{\cu}$ that contains $L$. From the dichotomy in Corollary \ref{coro-sumarizedichotomy}, it follows that $\ft$ fixes every leaf of $\widetilde \cW^{\cu}$ as desired.
\end{proof}

In particular, under the assumptions of this theorem, one rules out the mixed case (see item \ref{it.nomix} of section \ref{sss.proof_program}). 


\section{Double invariance}\label{s.doublyinvariant}

In this section we show that, under the appropriate conditions, the \emph{doubly invariant case} (see item \ref{it.dinv} of section \ref{sss.proof_program}) leads to a discretized Anosov flow. 

\begin{theorem} \label{t.weakfixedimpliesAnosov}
  Let $f$ be a dynamically coherent partially hyperbolic diffeomorphism.  Assume that $M$ is hyperbolic or Seifert or that $\cW^{\cs}$ and $\cW^{\cu}$ are $f$-minimal. 
  If there exists a good lift $\ft$ which fixes a leaf of $\widetilde \cW^{\cs}$, then $f$ is a discretized Anosov flow. 
  \end{theorem}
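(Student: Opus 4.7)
My plan is to bootstrap the hypothesis through two stages---leafwise invariance of both 2-dimensional foliations, then leafwise invariance of the 1-dimensional center foliation---before invoking the discretization scheme of Bonatti--Wilkinson.

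For the first stage, Corollary \ref{coro-sumarizedichotomy} applied to $\cW^{cs}$ immediately upgrades the hypothesis: since $\ft$ fixes at least one leaf of $\widetilde\cW^{cs}$, it does not act as a translation on the leaf space of $\widetilde\cW^{cs}$, so it must fix every leaf of $\widetilde\cW^{cs}$. Theorem \ref{mixedimpossible} then rules out the mixed case, so $\ft$ also fixes every leaf of $\widetilde\cW^{cu}$. In particular, for every center leaf $\tilde c = L \cap L'$ with $L\in\widetilde\cW^{cs}$ and $L'\in\widetilde\cW^{cu}$, the image $\ft(\tilde c)$ lies in $L \cap L'$, i.e.~in a union of center leaves sharing the same pair of cs- and cu-leaves.

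The main step is then to promote this to: $\ft$ fixes each center leaf individually. The strategy I would follow is to establish the dichotomy that either $\ft$ fixes every center leaf or $\ft$ fixes none. Combined with the existence of at least one fixed center leaf---which follows by applying Proposition \ref{p.fixedcenter} to any cs-leaf with nontrivial fundamental group, such a leaf being provided by Proposition \ref{Ros} (already invoked in the proof of Theorem \ref{mixedimpossible})---this forces $\ft$ to fix every center leaf. I expect this dichotomy to be the main obstacle: one has to understand how $\ft$ acts on the intersections $L \cap L'$, which can a priori have multiple components, and how ``fixedness'' of a single center leaf propagates to neighboring ones in both the cs and cu directions. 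The key inputs should be the leafwise invariance of both 2-dimensional foliations already obtained, the absence of fixed points of $\ft$ in $\mt$ (Lemma \ref{l.nofixedpoints}), and the uniform stable contraction and unstable expansion, which together should prevent $\ft$ from shifting a center leaf off itself while keeping every surrounding cs- and cu-leaf fixed---unless no center leaf is fixed at all.

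Once $\ft$ is known to fix every leaf of $\widetilde\cW^c$, the conclusion follows from the scheme outlined in \S\ref{ss.topoAnosov} and originally due to \cite[Section 3.5]{BW}: the good lift $\ft$ fixes every lifted center leaf and has no fixed points in $\mt$, so the center foliation $\cW^c$ can be parametrized as the orbit foliation of a topological Anosov flow $\Phi$ in such a way that there is a positive continuous function $t\colon M \to (0,\infty)$ with $f(p) = \Phi_{t(p)}(p)$. This is precisely the definition of a discretized Anosov flow, finishing the proof.
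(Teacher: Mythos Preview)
Your proposal is correct and follows essentially the same route as the paper: upgrade to $\ft$ fixing every leaf of both $\widetilde\cW^{cs}$ and $\widetilde\cW^{cu}$ via Corollary~\ref{coro-sumarizedichotomy} and Theorem~\ref{mixedimpossible}, then establish the all-or-none dichotomy for fixed center leaves (this is the paper's Proposition~\ref{p.allcentralfixed}, proved via Lemmas~\ref{l.fixedcentralopen} and~\ref{l.allcentralfixedornone}), combine with Proposition~\ref{p.fixedcenter} and Proposition~\ref{Ros} to get at least one fixed center, and finish with the Bonatti--Wilkinson argument (Proposition~\ref{p.leafconjugacy}). The only ingredient you leave implicit is that $f$-minimality of both foliations---needed to propagate fixedness of centers across all cs-leaves in the dichotomy step---is available in the hyperbolic/Seifert case as well, via Proposition~\ref{p.hypSeifminimal}.
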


Thanks to Proposition \ref{p.hypSeifminimal}, under any of the hypothesis of the Theorem, both $\cW^{\cs}$ and $\cW^{\cu}$ are $f$-minimal. 
Moreover, Theorem \ref{mixedimpossible} implies that $\ft$ must fix every leaf of both $\widetilde \cW^{\cs}$ and $\widetilde \cW^{\cu}$.

Notice that Theorem~\ref{t.weakfixedimpliesAnosov} together with the dichotomy of Corollary~\ref{c.minimalcase} proves Theorem~\ref{teo-main-coherent} from the introduction.

We will first show that connected components of the intersections of center-stable and center-unstable leaves are fixed by $\ft$ (i.e., that $\ft$ fixes leaves of the center foliation). To do that, we will prove that the set of connected components of intersections fixed by $\ft$ is both open and closed and then that it is non-empty.
Proving that $f$ is a discretized Anosov flow will then follow rather easily.

\subsection{Fixing center leaves}\label{ss.fixingcenter}

The main step in the proof of Theorem \ref{t.weakfixedimpliesAnosov} is the following proposition. Recall that the lift $\widetilde  \cW^c$ of the center foliation $\cW^{\mathrm{c}}$ consists of the connected components of the intersections between leaves of $\widetilde  \cW^{\cs}$ and $\widetilde  \cW^{\cu}$. 

\begin{proposition} \label{p.allcentralfixed}
 Let $f$ be a dynamically coherent partially hyperbolic diffeomorphism 
homotopic to the identity. Let $\ft$ be a 
good lift of $f$ which fixes every leaf
of $\widetilde \cW^{\cs}$ and $\widetilde \cW^{\cu}$.
Suppose that $\cW^{\cs}$ and $\cW^{\cu}$ are $f$-minimal in $M$. 
Then $\ft$ fixes every leaf of $\widetilde \cW^c$.
\end{proposition}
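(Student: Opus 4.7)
The plan is to show that the set $\cC \subset \wt\cW^c$ of center leaves fixed by $\ft$ is non-empty, closed, and open in the leaf space of $\wt\cW^c$. Since $\mt$ is connected, so is this leaf space, and a clopen non-empty subset must exhaust it, yielding that every center leaf is fixed.

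For non-emptiness I would apply Proposition~\ref{p.fixedcenter}: the standing assumption that $\pi_1(M)$ is not virtually solvable combined with Proposition~\ref{Ros} provides a leaf $L \in \wcs$ fixed by a non-trivial deck transformation, and Proposition~\ref{p.fixedcenter} then produces a center leaf in $L$ fixed by $\ft$. For closedness, suppose $\tilde c_n \in \cC$ accumulates on $\tilde c$ via $\tilde p_n \in \tilde c_n$ converging to $\tilde p \in \tilde c$. Continuity of $\ft$ gives $\ft(\tilde p_n) \to \ft(\tilde p)$, and since $\ft(\tilde p_n) \in \tilde c_n$, continuity of the local plaques of $\wt\cW^c$ at $\ft(\tilde p_n) \to \ft(\tilde p)$ places $\ft(\tilde p)$ on the leaf through $\tilde p$, namely $\tilde c$; hence $\ft(\tilde c) = \tilde c$.

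The main obstacle is openness. Given $\tilde c \in \cC$, set $L = \wcs(\tilde c)$ and $K = \wcu(\tilde c)$, both fixed by $\ft$ by hypothesis. Pick $\tilde x \in \tilde c$ and let $A \subset \tilde c$ be the compact subarc from $\tilde x$ to $\ft(\tilde x)$. Choose a tubular neighborhood $N$ of $A$ in $\mt$ small enough that $L \cap K \cap N = A$; this relies on properness of the center foliation in the dynamically coherent setting to keep other components of $L \cap K$ away from the compact set $A$. Work in small product foliation charts $U$ around $\tilde x$ with coordinates $(y^s, y^u, y^c)$ and $V$ around $\ft(\tilde x)$ with coordinates $(z^s, z^u, z^c)$, chosen so that $\ft(U) \subset V$. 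Because $\ft$ fixes every leaf of $\wcs$ and of $\wcu$ setwise, in these coordinates $\ft$ must take the form $(y^s, y^u, y^c) \mapsto (z^s(y^s), z^u(y^u), \ast)$; in particular, it sends the center plaque of a nearby leaf $\tilde c'$ in $U$ at transverse parameter $(y^s_0, y^u_0)$ to a center plaque in $V$ at $(z^s(y^s_0), z^u(y^u_0))$, which is automatically a plaque of $\ft(\tilde c')$. On the other hand, for $\tilde c'$ close enough to $\tilde c$, continuity of $\wt\cW^c$ over the compact arc $A$ ensures that $\tilde c'$ has a sub-arc inside $N$ close to $A$, so $\tilde c'$ passes through $V$ in a single plaque, and transversality of $\wcs$ and $\wcu$ inside $V$ pins its location to exactly $(z^s(y^s_0), z^u(y^u_0))$. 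Thus $\tilde c'$ and $\ft(\tilde c')$ share a plaque in $V$, and since distinct center leaves have disjoint plaques, $\ft(\tilde c') = \tilde c'$.

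Combining these three steps, $\cC$ is a non-empty clopen subset of the connected leaf space of $\wt\cW^c$, so it is everything, and $\ft$ fixes every center leaf. The hardest part is the openness argument, specifically arranging the tubular neighborhood $N$ so that $L \cap K \cap N = A$: this is where dynamical coherence plays an essential role, as it guarantees the genuine transverse product structure and the properness of center leaves needed to localize the analysis.
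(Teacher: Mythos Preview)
Your closedness argument has a genuine gap, and it is exactly the obstacle the paper's proof is designed to overcome. You argue: $\ft(\tilde p_n) \in \tilde c_n$ and $\ft(\tilde p_n) \to \ft(\tilde p)$, so ``continuity of the local plaques of $\wt\cW^c$ \dots\ places $\ft(\tilde p)$ on the leaf through $\tilde p$''. But continuity of plaques near $\ft(\tilde p)$ only tells you that the plaques of $\tilde c_n$ there converge to the plaque through $\ft(\tilde p)$, i.e.\ to a plaque of $\ft(\tilde c)$; it says nothing about $\tilde c$. You are implicitly assuming that the sequence $(\tilde c_n)$ has a unique limit in the center leaf space, which would require that leaf space to be Hausdorff. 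It is not: the center leaf space inside a single center-stable leaf $L$ is a simply connected, typically non-Hausdorff $1$-manifold, and a sequence of center leaves can perfectly well accumulate on two distinct, non-separated leaves $\tilde c$ and $\ft(\tilde c)$.

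This is not a hypothetical worry. The paper's Lemma~\ref{l.allcentralfixedornone} begins precisely by taking a center leaf $c_1$ on the boundary of $B = \cfix \cap L$: such a $c_1$ is approached by fixed center leaves but is itself not fixed, and $c_1$, $\ft(c_1)$ are then non-separated in the center leaf space of $L$. Ruling this out is the real content of the proposition and requires substantial work: one finds a stable leaf $s_1$ making a perfect fit with $c_1$, uses the (line) axis of $\ft$ and of a stabilizing deck transformation $\gamma$ in the stable leaf space of $L$, runs a graph transform to get a $\gamma$- and $\ft$-invariant center curve $c_0$, and finally derives a contradiction from stable contraction. The hypothesis of $f$-minimality (which your argument never uses) enters twice: to show every center-stable leaf meets $\cfix$, and to make the leaves with non-trivial stabilizer dense so the conclusion propagates. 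Your non-emptiness step and the spirit of your openness step are fine (openness is the easy Lemma~\ref{l.fixedcentralopen}), but the clopen strategy cannot be completed as stated.
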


The key point in the proof of this proposition
is to show that either all leaves 
of $\widetilde  \cW^c$ are fixed by $\ft$,
or no leaf of $\widetilde \cW^c$ is fixed by $\ft$.
In the latter case we will use an argument similar to that of the analysis of the mixed behavior case, reaching a contradiction from the results of section \ref{s.perfectfits}.

\begin{lemma} \label{l.fixedcentralopen}
The set 
\[
 \cfix := \left\{ x \in \mt \mid \text{the center leaf through } x \text{ is fixed by } \ft\right\}
\]
is open in $\mt$.
In addition $\cfix$ is invariant under deck transformations.
\end{lemma}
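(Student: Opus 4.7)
The plan is to handle the two assertions separately. Invariance under deck transformations follows immediately from the good lift property, while openness requires a local flowbox argument around the compact center segment joining $x$ to $\ft(x)$.

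For deck invariance, suppose $x \in \cfix$ and $\gamma \in \pi_1(M)$. Since $\gamma$ preserves $\widetilde{\cW}^c$, we have $\widetilde{\cW}^c(\gamma x) = \gamma\cdot \widetilde{\cW}^c(x)$. Because $\ft$ commutes with $\gamma$ (good lift property \ref{item.goodlift_commutes}), $\ft(\gamma\cdot \widetilde{\cW}^c(x)) = \gamma\cdot \ft(\widetilde{\cW}^c(x)) = \gamma\cdot \widetilde{\cW}^c(x)$, so $\gamma x \in \cfix$.

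For openness, fix $x \in \cfix$. The hypothesis gives $\ft(x) \in \widetilde{\cW}^c(x)$, so there is a compact center segment $\alpha \subset \widetilde{\cW}^c(x)$ with endpoints $x$ and $\ft(x)$. I would combine two local pictures. First, build a flowbox $U$ for $\widetilde{\cW}^c$ around $\alpha$: a product neighborhood $U \cong D^2 \times [0,1]$ whose slices $\{z\} \times [0,1]$ are center plaques, with $\alpha$ corresponding to the central slice. Such flowboxes exist around any compact arc in a leaf of a foliation. Second, choose a small foliation chart $B$ around $\ft(x)$ simultaneously adapted to all three one-dimensional and both two-dimensional foliations; in such a chart, for any center stable leaf $L^{cs}$ and center unstable leaf $L^{cu}$ that meet $B$, the set $L^{cs} \cap L^{cu} \cap B$ is a disjoint union of center plaques, and by continuous dependence of the foliations there is a distinguished "main" plaque $\pi_y^c \subset B$ sitting close to the plaque of $\widetilde{\cW}^c(x)$ through $\ft(x)$.

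By continuity of $\ft$, I would choose a neighborhood $V \ni x$ so small that $\ft(V) \subset B$ and so that for every $y \in V$ the plaque of $\widetilde{\cW}^c(y)$ in $U$ terminates inside $B$ arbitrarily close to $\ft(x)$. Since $\ft$ fixes both $\widetilde{\cW}^{cs}(y)$ and $\widetilde{\cW}^{cu}(y)$, the point $\ft(y)$ lies in their intersection; being close to $\ft(x)$, it must lie on the main plaque $\pi_y^c$. The flowbox structure identifies $\pi_y^c$ with the terminal piece of the $U$-plaque of $\widetilde{\cW}^c(y)$ through $y$, so $\ft(y)$ and $y$ lie in the same connected component of $\widetilde{\cW}^{cs}(y) \cap \widetilde{\cW}^{cu}(y)$, which is to say the same center leaf. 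Hence $y \in \cfix$.

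The main obstacle is purely bookkeeping: ensuring that $V$ is small enough that both the endpoint of the flowbox plaque through $y$ and the point $\ft(y)$ fall on the same component $\pi_y^c$ of $\widetilde{\cW}^{cs}(y) \cap \widetilde{\cW}^{cu}(y) \cap B$, rather than on different components that the leaves might produce if they returned to $B$. Dynamical coherence is what makes this matching unambiguous, since center leaves are genuine connected components of the intersection rather than branching objects.
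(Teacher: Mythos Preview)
Your proof is correct. Both arguments rest on the same key fact: since $\ft$ fixes every leaf of $\widetilde\cW^{cs}$ and $\widetilde\cW^{cu}$, the point $\ft(y)$ must lie in the same center-stable and center-unstable leaf as $y$, and it remains only to check that it lies in the same \emph{component} of their intersection.

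The execution differs. You build a flowbox along the entire center segment $\alpha$ from $x$ to $\ft(x)$ and match up plaques in a simultaneous chart $B$ at the far end. The paper avoids the flowbox by working first inside the center-stable leaf $L$ containing $x$ and using the stable leaf $s(\ft(x))$ as a global transversal: for $y\in L$ near $x$, both $c(y)$ and $c(\ft(y))$ meet $s(\ft(x))$, but they lie in the same center-unstable leaf (since $\ft$ fixes $\widetilde\cW^{cu}$), and a stable leaf can intersect a given center-unstable leaf at most once. Hence $c(y)=c(\ft(y))$. The symmetric argument in a center-unstable leaf then gives openness in $\mt$. This buys the paper a shorter proof with no need to control the geometry along all of $\alpha$; your approach is more hands-on but equally valid, and makes the role of dynamical coherence (unique center plaques in $B$) more visibly explicit.
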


\begin{proof}
 Let $c \in \widetilde \cW^c$ be such that $\ft(c) = c$. 
Let $L = \widetilde \cW^{\cs}(c)$ be the center-stable leaf containing $c$.
Let $\eps>0$ be small enough so that the center and stable foliations restricted to any ball of radius $\eps$ in $L$ is product (i.e., every stable and central leaf in the ball intersect each other).

Let $x\in c$.
By continuity of $f$, pick $\delta>0$ such that if $d(x,y)<\delta$ then $d(\ft(x), \ft(y)) < \eps$.
Up to taking $\delta$ smaller, and since $\ft(x)\in c$, we can assume that for any $y\in L$ such that $d(x,y)<\delta$, we have that $c(y)$, the central leaf through $y$, intersects $s(\ft(x))$, the stable leaf through $\ft(x)$.
This $\delta$ a priori depends on $x$.

Let $y\in L$ such that $d(x,y)<\delta$, then $c(y)\cap s(\ft(x))\neq \emptyset$. Moreover, since $d(\ft(x), \ft(y)) < \eps$, we have that $c(f(y))\cap s(\ft(x))\neq \emptyset$.
So the stable leaf $s(\ft(x))$ intersects both $c(y)$ and $c(\ft(y))$.

Now, as $\ft$ fixes the leaves of the central unstable foliations, we have that $\widetilde \cW^{\cu} (c(y)) = \widetilde \cW^{\cu}\left( c(\ft(y))\right)$. But $s(\ft(x))$ is transverse to $\widetilde \cW^{\cu}$, so it cannot intersect the same leaf of $\widetilde \cW^{\cu}$ more than once (see Theorem \ref{thm-tautfoliation}). Hence $c(\ft(y)) = c(y)$.

Thus, the set of center leaves fixed by $\ft$ in a center-stable leaf is open in that center-stable leaf. As the same argument applies
to the center-unstable leaves and it is uniform, we obtain that the union of points
in center leaves in $\cfix$ is open in $\mt$.
Finally, since $\ft$ commutes with every deck transformation, $\cfix$ is $\pi_1(M)$-invariant.
\end{proof}

We will think of $\cfix$ as both a subset of $\mt$ and
a collection of center leaves in $\widetilde \cW^c$.
Let $D := \pi(\cfix)$. By Lemma \ref{l.fixedcentralopen}, $D$ is open in $M$, and, obviously, $f$-invariant.

\begin{lemma} \label{l.allcentralfixedornone}
 Either $\cfix = \mt$ or $\cfix = \emptyset$.
\end{lemma}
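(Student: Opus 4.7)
The plan is to show that $\cfix$ is both open and closed in $\mt$, so that by connectedness of $\mt \cong \R^3$ it must equal either $\mt$ or $\emptyset$. Openness has already been established in Lemma~\ref{l.fixedcentralopen}, so the task reduces to showing that the complement $\mt \smallsetminus \cfix$ is also open.

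The key observation is that the openness provided by Lemma~\ref{l.fixedcentralopen} is in fact \emph{uniform}: I would inspect its proof and verify that one may choose a single radius $\delta > 0$ such that $B_\delta(z) \subset \cfix$ for \emph{every} $z \in \cfix$. Indeed, the radius $\delta$ in that proof depends only on (i) the uniform modulus of continuity of $\ft$ on $\mt$, which is uniform because $f$ is uniformly continuous on the compact manifold $M$ and $\ft$ commutes with every deck transformation, and (ii) the uniform size of local foliated product charts for $\widetilde{\cW}^{cs}$ and $\widetilde{\cW}^{cu}$, which is uniform by compactness of $M$ together with the $\pi_1(M)$-equivariance of the lifted foliations. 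The proof of Lemma~\ref{l.fixedcentralopen} does write ``this $\delta$ a priori depends on $x$,'' but the actual dependence is only through the two uniform quantities just listed.

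Given such a uniform $\delta$, openness of the complement follows immediately. Suppose $z \in \mt \smallsetminus \cfix$, and suppose for contradiction that some $z' \in B_\delta(z)$ lies in $\cfix$. Then by the uniform openness applied to $z'$, one has $B_\delta(z') \subset \cfix$, and since $z \in B_\delta(z')$ this would force $z \in \cfix$, contradicting our choice of $z$. Hence $B_\delta(z) \subset \mt \smallsetminus \cfix$, proving that $\mt \smallsetminus \cfix$ is open. Combined with Lemma~\ref{l.fixedcentralopen} and the connectedness of $\mt$, this gives the dichotomy $\cfix = \mt$ or $\cfix = \emptyset$.

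The only potential obstacle is the uniformity claim for $\delta$; this is the step that must be argued carefully, but it is ultimately a routine compactness argument on $M$ pulled back to $\mt$ via the $\pi_1(M)$-action. No appeal to the $f$-minimality hypothesis of Proposition~\ref{p.allcentralfixed} or to the perfect-fit machinery of Section~\ref{s.perfectfits} is needed for this particular step; those tools will be required only in the subsequent step of showing that $\cfix = \emptyset$ cannot occur.
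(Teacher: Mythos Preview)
Your argument has a genuine gap: the uniformity of $\delta$ in Lemma~\ref{l.fixedcentralopen} does \emph{not} follow from the two quantities you list. Look again at the step ``up to taking $\delta$ smaller, we can assume that $c(y)$ intersects $s(\ft(x))$.'' This asks that the center leaf through $y$ follow the center leaf $c$ all the way from a neighborhood of $x$ to a neighborhood of $\ft(x)$. The points $x$ and $\ft(x)$ are at bounded $d_L$-distance (Lemma~\ref{lema-boundedinfixcsleaf}), but that does not mean they lie in a single product chart of uniform size for the pair $(\widetilde\cW^c|_L,\widetilde\cW^s|_L)$. If the center leaf space inside $L$ is non-Hausdorff, a center leaf $c(y)$ with $y$ arbitrarily close to $x$ can branch away from $c$ before reaching $s(\ft(x))$, and the required $\delta$ genuinely degenerates to $0$ as $x$ approaches such a branching locus. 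Compactness of $M$ controls sizes of \emph{local} product charts, not the global product structure along a segment of center leaf of definite length.

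The paper's proof is organized precisely around this obstruction. It supposes there is a leaf $L$ (with nontrivial stabilizer $\gamma$) for which $B=\cfix\cap L$ is a proper nonempty open subset, picks $c_1\in\partial B$, and observes that $c_1$ and $\ft(c_1)$ are then non-separated in the center leaf space of $L$---exactly the non-Hausdorff scenario that blocks your uniformity claim. Ruling this out requires the perfect-fit/axis analysis and the graph transform (to produce an $\ft$- and $\gamma$-invariant center curve), and $f$-minimality is used earlier to guarantee that every center-stable leaf meets $\cfix$. So the tools you set aside as unnecessary are exactly what close the argument; a purely metric open-and-closed argument does not suffice.
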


\begin{proof}
Assume that $\cfix \neq \emptyset$, and thus, $D \not = \emptyset$.

We start by showing that every leaf of $\widetilde \cW^{\cs}$ has at least some fixed center leaves: Let $E$ be the $\widetilde \cW^{\cs}$-saturation of $\mathrm{Fix}^c_{\ft}$, and suppose for a contradiction that there exists $L$ a leaf of $\widetilde \cW^{\cs}$ such that $L\cap E =\emptyset$.
Since $\mathrm{Fix}^c_{\ft}$ is $\pi_1(M)$-invariant, so is $E$. Hence, for any $\gamma \in \pi_1(M)$, we have $\gamma L \cap \mathrm{Fix}^c_{\ft} = \emptyset$. Therefore, in $M$, we have
\[
\pi(L) \cap \pi(E) =\emptyset.
\]

So $\pi(L)$ is contained in the set $M \smallsetminus \pi(E)$, which is thus non-empty. But $\pi(E)$ is the $\cW^{\cs}$-saturation of $D$, hence open since $D$ is open. The set $\pi(E)$ is also $f$-invariant since $D$ is. Therefore, $M \smallsetminus \pi(E)$ is a non-empty, closed, $f$-invariant subset of $M$ saturated by $\cW^{\cs}$. The $f$-minimality of $\cW^{\cs}$ implies that $M \smallsetminus \pi(E) = M$, which is in contradiction with the fact that $\pi(E)$ is non-empty.
It follows that, for any center-stable leaf $L$, we have $L \cap \mathrm{Fix}^c_{\ft} \not = \emptyset$.

\medskip

Our next step is to prove that any center-stable leaf that has a non-trivial stabilizer in $\pi_1(M)$ is contained in $ \mathrm{Fix}^c_{\ft}$. 
Let $L$ be a leaf of $\widetilde \cW^{\cs}$ such that its projection $A = \pi(L)$ is not simply
connected (in which case it must be an annulus or a M\"{o}bius band according to Corollary \ref{coro-sumarizedichotomy}).
As we proved above, we know that $L\cap \cfix \neq \emptyset$. We now want to show that 
$L \subset \cfix$. Let us assume for a contradiction that $\cfix \cap L \neq L$. 

Recall that $\cfix$ is open (by Lemma \ref{l.fixedcentralopen}), thus so is $B= \cfix \cap L$ (for the relative topology on $L$). Notice that, since both $\cfix$ and $L$ are invariant by $\ft$, so is $B$, and in turn, so is its boundary $\partial B$.

Let $c_1$ be a center leaf in $\partial B$.
Then $\ft(c_1)\neq c_1$, but arbitrarily near $c_1$ there
are some fixed center leaves.
Since $c_1$ and $\ft(c_1)$ are both in $\partial B$, 
they are non-separated from each other in the leaf space of the center foliation in $L$.
Indeed, if one takes a sequence
$( c_n )$ of central leaves in $B$ that accumulates on $c_1$, then, since $\ft(c_n)=c_n$, the sequence also accumulates on $\ft(c_1)$.

As $c_1$ and $\ft(c_1)$ are not separated in the center leaf
space of $L$, it follows that there exists a stable leaf $s_1$ making a perfect fit with $c_1$, such that $s_1$ separates $c_1$ from $\ft(c_1)$.
 If some power of $\ft$ fixes $s_1$, then that power of
$\ft$ has a fixed point in 
$s_1$, 
contradicting Lemma \ref{l.nofixedpoints} (since $\ft$ fixes every leaves of $\widetilde \cW^{\cs}$).

It follows that the sequence 
$\left( \ft^i(s_1) \right)$ is infinite. Moreover, there exists $c \in \cfix$ that intersects all the leaves $\left( \ft^i(s_1) \right)$. Indeed, taking $c \in \cfix$ to be a central leaf close enough to $c_1$ so that $c\cap s_1\neq \emptyset$, then $c$ intersects every $\ft^i(s_1)$, because $\ft(c) =c$.
Furthermore, for all $i$, $\ft^i(s_1)$ separates $\ft^{i-1}(s_1)$ from $\ft^{i+1}(s_1)$,
because $\ft$ acts as a translation on $c$ (because $\ft$ cannot have a fixed point in $L$ by Lemma \ref{l.nofixedpoints}).

 As $\ft$ acts freely on the stable leaf space in $L$ 
(again thanks to Lemma \ref{l.nofixedpoints}),
then $\ft$ has an axis $A^s(\ft)$ for this action. By definition, all the leaves $\ft^i(s_1)$ are in this axis.
Since all the leaves $\ft^i(s_1)$ also intersect a common transversal $c$, we deduce that $A^s(\ft)$ is a line (see Figure \ref{f.S81}). 

Now recall that $C = \pi(L)$ is an annulus or a M\"{o}bius 
band (thanks to Corollary \ref{coro-sumarizedichotomy}).
Let $\gamma$ be the deck transformation associated
with the generator of $\pi_1(C)$, so  that $\gamma$ fixes $L$.

\begin{figure}[ht]
\begin{center}
\includegraphics[scale=0.54,angle=-90,origin=c]{./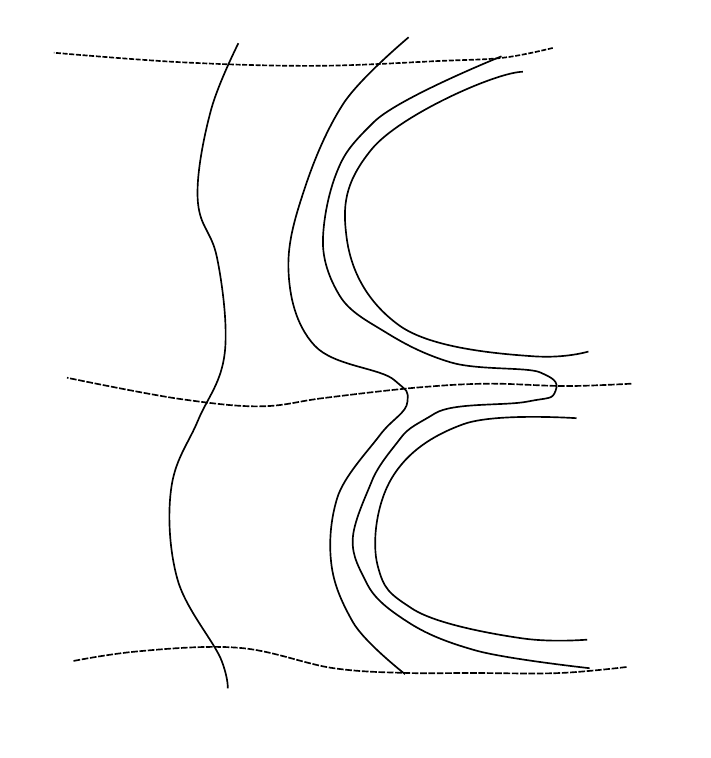}
\begin{picture}(0,0)
\put(-45,159){$B$}
\put(-139,136){$c_0$}
\put(-145,75){$\ft(c_1)$}
\put(-54,90){$c_1$}
\put(-45,110){$c$}
\put(-100,159){$s_{1}$}
\put(-170,159){$\ft(s_1)$}
\end{picture}
\end{center}
\vspace{-1.5cm}
\caption{{\small The combination of fixed and non-fixed center leaves allows to construct a center leaf 
intersecting $s_1$ and $\ft(s_1)$ in the axis $A^s(\ft)=A^s(\gamma)$.}\label{f.S81}}
\end{figure}

Recall also that, since there does not exist closed stable leaves in $M$, $\gamma$ must act freely on the stable leaf space in $L$. Thus $\gamma$ admits an axis $A^s(\gamma)$. Since $\ft$ and $\gamma$ commute, then
$A^s(\wt f) = A^s(\gamma)$ (see Proposition \ref{proposition-axes}). In particular, $A^s(\gamma)$ is a line.

Therefore there exists a $\gamma$-invariant curve in $L$ , that we call $\alpha$, such that $\alpha$ is transverse to the stable foliation,
and intersects each stable leaf in $A^s(\gamma) = A^s(\ft)$ exactly once. 
It follows that $\ft(\alpha)$ and $\alpha$ intersect exactly the same 
set of stable leaves in $L$. So we can use the Graph Transform argument (Lemma \ref{l.grapht}) on $\alpha$ and obtain that there exists a curve $c_0$ in $L$, tangent to the central direction\footnote{In fact, since there exists a central leaf that is transverse to the axis, an argument used in the proof of Lemma \ref{l.Axs-line} shows that $c_0$ is not just tangent to the central direction, but an actual central leaf. However, just having tangency to the central direction is enough to finish the proof.} $E^{\mathrm{c}}$, and invariant by both $\wt f$ and $\gamma$.

Since $c_0$ intersects $s_1$, and the leaves
$s_1$ and $c_1$ make a perfect fit, we deduce that there exists $s$, close to $s_1$, that intersects both  $c_0$  and $c_1$.
Let $x = c_0 \cap s$, $y = c_1 \cap s$ and $z = c_0 \cap s_1$. Up to choosing $s$ closer to $s_1$, we may assume that the distance between $x$ and $z$ is less than some fixed $K>0$, the length of the closed curve $\pi(c_0)$. Now, since $c_0$ is invariant by $\gamma$, we have that, for all $n$,
$$
d(\ft^n (x), \ft^n(z) ) \leq K.
$$
Moreover, since $\ft$ contracts stable length, we have that $d(\ft^n (x), \ft^n(y) )$ converges to $0$ as $n$ goes to $+\infty$.

Using the above, together with the invariance of $c_0$ by $\ft$ and the fact that $c_0$ is tangent to the central direction, we deduce that for some large enough $n$, the leaf $\ft^n(c_1)$ intersects $\ft^n(s_1)$, contradicting the fact that $s_1$ and $c_1$ do not intersect.

\begin{figure}[ht]
\begin{center}
\includegraphics[scale=0.94]{./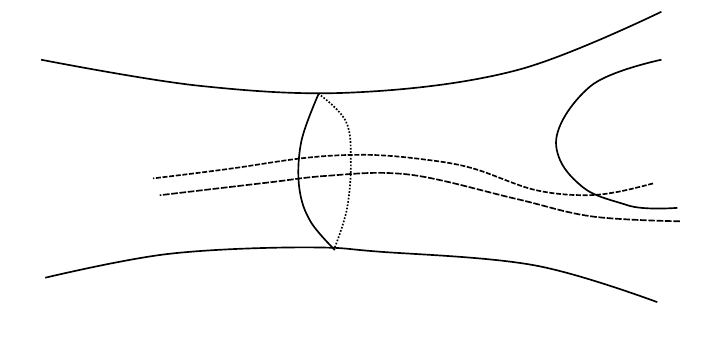}
\begin{picture}(0,0)

\put(-185,65){$\pi(z)$}
\put(-195,84){$\pi(x)$}

\put(-54,69){$\pi(y)$}
\put(-44,44){$\pi(s_1)$}
\put(-34,109){$\pi(c_1)$}
\put(-189,116){$\pi(c_0)$}
\put(-249,76){$\pi(s)$}
\end{picture}
\end{center}
\vspace{-0.5cm}
\caption{{\small The fixed center circle and the circles in the boundary of $\pi(B)$ are joined by a stable leaf.}\label{f.S82}}
\end{figure}

Hence, we proved thus far that for any $L$ a center-stable leaf with non-trivial stabilizer, we have $L \subset \cfix$. We can now finish the proof of Lemma \ref{l.allcentralfixedornone}.

Let $c$ be any center leaf in $\mt$. Let $x$ be a point in $c$, let $U$ be the center-unstable leaf containing $c$, and let $\tau$ be a small unstable segment in $\widetilde \cW^{\mathrm{u}}(x)$ that contains $x$ in its interior.
Recall (see Proposition \ref{Ros}) that there exists leaves in $\widetilde \cW^{\cs}$ with non-trivial stabilizer. Then $f$-minimality implies that such leaves are dense. Thus, we may assume that both endpoints of $\tau$ are on center-stable leaves with non-trivial stabilizer.
Call $c_1$ and $c_2$ the center leaves through the two endpoints of $\tau$. We proved above that both $c_1$ and $c_2$ are fixed by $\ft$ (since they are on center-stable leaves with non-trivial stabilizer).

Since
$c$ intersects $\tau$, an unstable segment from $c_1$ to $c_2$, it follows
that $c$ separates $c_1$ from $c_2$ in $U$. 
As $\ft$ fixes both $c_1$ and $c_2$ then $\ft(c)$ also
separates $c_1$ from $c_2$ in $U$. This implies that $\ft(c)$ 
also intersects $\tau$. 
As argued before, since 
$\ft$ fixes every center-stable leaves, $c$ and $\ft(c)$ must be in the same center-stable 
leaf, and, since they both intersect $\tau$, which is a transversal to the center-stable
foliation, we deduce that $c = \ft(c)$.
Therefore, we proved that $\ft$ fixes every center leaf, i.e., $\cfix = \mt$, as desired.
\end{proof}

We can now prove Proposition \ref{p.allcentralfixed}.

\begin{proof}[Proof of Proposition \ref{p.allcentralfixed}]
By assumption, $\ft$ fixes every leaf of $\wt\cW^{\cs}$, and, by Proposition \ref{Ros}, there exists some center-stable leaf with non-trivial stabilizer. Thus, Proposition \ref{p.fixedcenter} implies that there exists at least one fixed center leaf, i.e., $\cfix \neq \emptyset$. Lemma \ref{l.allcentralfixedornone}, then yields that $\cfix = \mt$, which is what we wanted to prove.
\end{proof}

\subsection{Showing that the map is a discretized Anosov flow}\label{ss.topoAnosov}

\begin{proposition}[Leaf conjugacy to a topological Anosov flow] \label{p.leafconjugacy}
 Let $f$ be a partially hyperbolic diffeomorphism on a $3$-manifold $M$. Suppose that there exists a lift $\ft$ to the universal cover $\mt$ such that $\ft$ moves points a bounded distance and $\wt f$ fixes every center leaf. Then the center foliation is the orbit foliation of a topological Anosov flow.
\end{proposition}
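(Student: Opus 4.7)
The plan is to construct a topological Anosov flow $\Phi$ on $M$ whose orbit foliation coincides with $\cW^c$. After possibly passing to a finite cover and an iterate, I first arrange that $E^c$ is orientable and that $\ft$ preserves the leafwise orientation; since $\ft$ fixes every center leaf and acts continuously, the induced action on orientations is locally, hence globally, constant on $\mt$. I then choose any continuous unit vector field $X$ tangent to the oriented $E^c$ and let $\Phi_s$ be its flow. By construction, the $\Phi$-orbits are exactly the leaves of $\cW^c$. Lifting gives a vector field $\wt X$ and its flow $\wt\Phi$ on $\mt$.

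The next step is to produce a displacement function $t\colon\mt\to\R$ with $\ft(x)=\wt\Phi_{t(x)}(x)$. This is well-defined and continuous because $\ft$ fixes each lifted center leaf and preserves its orientation, while the $\wt\Phi$-orbits are precisely those lifted center leaves. Since $\ft$ has bounded displacement on the closed manifold $M$, a standard argument shows that for every deck transformation $\gamma$ the element $\ft^{-1}\gamma\ft\gamma^{-1}$ is a deck transformation of bounded displacement and hence (in our setting) trivial; so $\ft$ commutes with all deck transformations. Combined with the $\pi_1(M)$-equivariance of $\wt X$, this makes $t$ deck-invariant and it descends to a continuous $\tau\colon M\to\R$ with $f=\Phi_{\tau(\cdot)}$.

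It remains to check that $\Phi$ is a topological Anosov flow. The candidates for the weak stable and weak unstable foliations are $\cW^{cs}$ and $\cW^{cu}$, which are transverse by the partial-hyperbolic splitting and are $\Phi$-invariant because each of their leaves is a union of center leaves, i.e.\ of $\Phi$-orbits. Inside each such leaf, $\cW^s$ and $\cW^u$ are the candidate strong stable and strong unstable foliations. The substantive step is the topological contraction/expansion: for $y\in\wt\cW^s(x)$, one needs $d(\wt\Phi_s(x),\wt\Phi_s(y))\to 0$ as $s\to+\infty$, and symmetrically for $\wt\cW^u$ as $s\to-\infty$.

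To obtain this, I would write $\ft^n(x)=\wt\Phi_{T_n(x)}(x)$, where $T_n(x)=\sum_{i=0}^{n-1}t(\ft^i x)$ is a Birkhoff sum. Boundedness of $|t|$ implies $T_{n+1}-T_n$ is uniformly bounded, so the sequence $\{T_n(x)\}$ is coarsely dense in its range. Partial hyperbolicity gives uniform exponential contraction of $\wt\cW^s$ under $\ft^n$; combined with $T_n(x)\to+\infty$ and the uniform continuity of $\wt\Phi$ on compact sets, this converts the discrete contraction into the desired continuous-time statement, and symmetrically for $\wt\cW^u$ in backward time. I expect the main technical obstacle to be exactly the positivity/sign issue for $\tau$: ensuring $T_n\to+\infty$ and handling any points where $\tau$ vanishes. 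The saddle-type derivative of $\ft$ along $E^s\oplus E^u$ at any fixed point, forced by partial hyperbolicity, should prevent $\tau$ from changing sign along stable or unstable leaves in a way that would break the contraction argument, and a careful uniform-continuity estimate then completes the verification.
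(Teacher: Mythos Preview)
Your outline is broadly the same as the paper's---integrate a unit field along $E^c$ and use the discrete dynamics of $\ft$ to certify the Anosov properties---but there are two genuine issues and one shortcut worth noting.

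First, the step where you deduce that $\ft$ commutes with all deck transformations is both unnecessary and not valid in general: a deck transformation of bounded displacement need not be trivial (the fiber direction in a Seifert manifold gives central elements of bounded displacement). Fortunately you never need $\tau$ to descend to $M$. The flow $\Phi$ is already defined on $M$ because $X$ is; what has to be checked is only that $\Phi$ is topological Anosov, and that verification happens entirely in $\mt$ using $\ft$.

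Second, the positivity of $\tau$ is indeed the crux, and you should not leave it as an expected obstacle. The paper resolves it by invoking \cite[Lemma~3.4]{BW}: since $\ft$ fixes every center leaf and is a bounded distance from the identity, it has no fixed points, and $d_c(x,\ft(x))$ is continuous and uniformly positive. This is exactly what forces $T_n\to+\infty$ and converts the discrete stable contraction $d(\ft^n(x),\ft^n(y))\to 0$ into the continuous-time statement. The paper also observes that once $\ft$ translates every center leaf nontrivially, no deck transformation can reverse the center orientation; so $E^c$ is orientable automatically and no finite cover is needed.

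Finally, rather than verifying the definition of topological Anosov directly as you propose, the paper shows the flow is \emph{expansive} and preserves the transverse nonsingular foliations $\cW^{cs}$, $\cW^{cu}$, then invokes Paternain's theorem \cite{Paternain}. This avoids the Birkhoff-sum bookkeeping entirely.
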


The proof is very similar to that given in \cite[Section 3.5]{BW}. We sketch the main points of the proof. We also refer to Appendix \ref{ss.DAF} for the precise definition of a topological Anosov flow, and more discussion about discretized Anosov flows.

\begin{proof}
Fix a metric on $M$ and consider $X^c$ a unit vector field in $E^{\mathrm{c}}$ which we first assume orientable. In the universal cover, using that $\ft$ fixes every center leaf, one can show that $\ft$ does not fix any point in $\mt$, that there is a uniform estimate for $d_c(x, \ft(x))$, and it is indeed continuous (see \cite[Lemma 3.4]{BW} for a proof with less hypothesis). In particular, we can assume that $[x,\ft(x)]^c$ is positively oriented with respect to $X^c$. 

Now, let $c_1,c_2$ be two center leaves in the same center-stable leaf such that for some $x\in c_1$ one has that 
$\widetilde \cW^s(x) \cap c_2 \neq \emptyset$. Then, letting $y$ be the point of intersection, we have that $d(\ft^n(x),\ft^n(y)) \to 0$ as $n \to \infty$. As the points are moving forward by
$\ft$ along the orbits of $X^c$ at bounded speed, this shows that the flow is locally contracted on center-stable manifolds. The symmetric arguments gives local contraction for the past in center-unstable manifolds. Notice that the fact that $\ft$ acts as a translation in all center leaves and that center leaves are fixed by $\ft$ implies that no deck transformation can reverse orientation of the center, this implies that our initial assumption is verified. 

This shows that the flow generated by $X^c$ is expansive. Moreover, it preserves the transverse foliations $\cW^{\cs}$ and $\cW^{\cu}$, which do not have singularities. Thus, the work of Paternain \cite{Paternain} implies that the flow generated by $X^c$ is a topological Anosov flow (see also Appendix \ref{ss.DAF}).
\end{proof}

Putting together Theorem \ref{mixedimpossible}, Proposition \ref{p.allcentralfixed}, Proposition \ref{p.leafconjugacy} and Proposition \ref{prop.equivDALC} one finishes the proof of Theorem \ref{t.weakfixedimpliesAnosov} and of Theorem \ref{teo-main-coherent}.


\section{Proof of Theorem \ref{thmintro:Seifert}}\label{s.proof_of_thmA_DCcase}

We are now ready to finish the proof of Theorem \ref{thmintro:Seifert}.

We start by proving that, in a Seifert manifold, one can always choose a good lift in such a way that it fixes one center-stable leaf.

\begin{proposition}\label{p.liftfixleaf_DC}
Let $f\colon M \to M$ be a dynamically coherent partially hyperbolic diffeomorphism on a Seifert manifold. Suppose that $f$ is homotopic to the identity and that the Seifert fibration in $M$ is orientable.
Then there exists a good lift of an iterate of $f$ which fixes a leaf (and therefore every leaf) of $\wt\cW^{\cs}$. 
\end{proposition}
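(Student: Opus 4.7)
The plan is to start with any good lift $\tf$ of $f$ and apply Corollary~\ref{coro-sumarizedichotomy}. If $\tf$ already fixes some leaf of $\wt\cW^{cs}$ we are done; otherwise $\cW^{cs}$ is $\R$-covered and uniform, and $\tf$ acts on the leaf space $\cL$ of $\wt\cW^{cs}$ (homeomorphic to $\R$) as a fixed-point-free orientation-preserving homeomorphism. The goal becomes to find $k \geq 1$ and a central deck transformation $\delta$ so that $\delta \tf^k$ is a good lift of $f^k$ fixing a leaf.

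Let $\gamma_0$ generate the infinite cyclic central subgroup of $\pi_1(M)$ coming from the regular Seifert fiber (this is central because the Seifert fibration is orientable). Using Theorem~\ref{thm-horizontalseif} we may assume $\cW^{cs}$ is horizontal, so no leaf of $\wt\cW^{cs}$ can contain a lifted fiber; hence $\gamma_0$, which acts by translating along lifted fibers, has no fixed leaf. By the intermediate value theorem $\gamma_0$ preserves the orientation of $\cL$, so its action is conjugate to a translation and $C := \cL/\langle\gamma_0\rangle$ is a topological circle. Since $\tf$ commutes with $\gamma_0$, it descends to an orientation-preserving homeomorphism $\bar\tf$ of $C$ commuting with the induced action of $\Gamma := \pi_1(M)/\langle\gamma_0\rangle$.

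Because $\pi_1(M)$ is not virtually solvable, the base orbifold of the Seifert fibration has negative Euler characteristic, so $\Gamma$ is a non-elementary Fuchsian group. The structure theory of horizontal foliations in such Seifert manifolds identifies the action of $\Gamma$ on $C$ as being semi-conjugate to the canonical Fuchsian action on $\partial \HH^2 \cong S^1$; in particular every hyperbolic element $\alpha \in \Gamma$ acts on $C$ with a pair of fixed points $\{p,q\}$. Since $\bar\tf$ commutes with $\alpha$, it must preserve $\{p,q\}$, so $\bar\tf^2$ fixes both of these points. Lifting $p$ back to $\cL$ produces a leaf $L \in \wt\cW^{cs}$ with $\tf^2(L) = \gamma_0^n(L)$ for some $n \in \ZZ$, and therefore $\delta := \gamma_0^{-n}$ makes $\delta \tf^2$ a good lift of $f^2$ fixing $L$; Proposition~\ref{p.hypSeifminimal} then promotes this to fixing every leaf of $\wt\cW^{cs}$. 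I expect the main obstacle to be the justification that a hyperbolic element of $\Gamma$ has honest fixed points on the leaf-space circle $C$ rather than merely on an abstract boundary circle; this step requires extracting a semi-conjugacy of the $\Gamma$-action on $C$ with the Fuchsian boundary action from the combined horizontal and $\R$-covered uniform structure, after which the commuting-centralizer argument is essentially formal.
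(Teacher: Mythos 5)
Your reduction (quotient the leaf space by the central fiber element $\gamma_0$ to get a circle $C$, note that the good lift descends to a homeomorphism commuting with the induced action of $\Gamma=\pi_1(M)/\langle\gamma_0\rangle$, and look for a periodic point of that circle map) is exactly the paper's setup, but the step you yourself flag as the ``main obstacle'' is a genuine gap, and in fact the claim is false as stated. The action of $\Gamma$ on $C$ is \emph{not} in general semi-conjugate to the Fuchsian boundary action: its Euler number equals the Euler number of the Seifert fibration, and by Theorem~\ref{thm-horizontalseif} the manifolds that occur are finite (fiber) covers of $T^1S$, for which this Euler number is typically \emph{not} extremal; Matsumoto--Ghys rigidity, which is what would give a semi-conjugacy to the Fuchsian action, only applies at the extremal value. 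In the model situation (the examples $f_{k,i}$ of Remark~\ref{rem.classification_weird_examples}, living on the $k$-fold fiber cover of $T^1\Sigma$), the action of $\Gamma\cong\pi_1(\Sigma)$ on $C$ is a lift of the boundary action to a $k$-fold cover of the circle, and there a hyperbolic element usually acts on $C$ \emph{freely} (rational nonzero rotation number); only suitable powers of it acquire fixed points, and then up to $2k$ of them. Consequently your conclusion that $\bar{\widetilde f}^{\,2}$ already has a fixed point --- i.e.\ that a good lift of $f^2$ always fixes a leaf --- is too strong: for $f_{k,i}$ with $\gcd(i,k)=1$ and $k\geq 3$, a good lift of $f_{k,i}^m$ fixes a center stable leaf only when $k\mid m$. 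Even the weakened statement you would need (some element of $\Gamma$ with a nonempty \emph{finite} fixed set on $C$, so that the centralizer argument yields a periodic point of $\bar{\widetilde f}$) amounts to a structural description of the $\Gamma$-action that is not available at this point of the argument --- it is essentially part of what the classification theorem eventually proves --- and merely knowing that $\bar{\widetilde f}$ preserves the (possibly Cantor) fixed set of some element does not by itself force a rational rotation number.

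The paper's proof avoids individual group elements altogether: it works with the induced circle homeomorphism $\hat f$ itself and shows it has \emph{rational} rotation number, by arguing that if the rotation number were irrational then $\hat f$ would be (semi)conjugate to an irrational rotation, the whole commuting representation $\rho$ would then be (semi)conjugate into rotations and hence have zero Euler class (\cite[Sections 5.2 and 5.3]{Mann}), contradicting the fact that partial hyperbolicity forces the Euler class of $M$ (equivalently of $\rho$) to be nonzero by \cite[Theorem B]{HaPS}. A periodic point of $\hat f$ then gives $\delta^n\widetilde f^{\,i}$ fixing a leaf, and this is the only place partial hyperbolicity enters. If you want to salvage your line of attack, you would have to first establish that the $\Gamma$-action on $C$ is semi-conjugate to a finite lift of a Fuchsian action (not to the Fuchsian action itself), and then pass to appropriate powers of both $\alpha$ and $\widetilde f$; the Euler-class/rotation-number argument is the mechanism the paper uses precisely because that structural input is not yet known. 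Your final step --- promoting one fixed leaf to all leaves via Proposition~\ref{p.hypSeifminimal} --- is fine.
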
 

\begin{proof}
To prove the result, we will need partial hyperbolicity for two things: To get that $M$ has non-zero Euler class (\cite[Theorem B]{HaPS}), and that the foliation is horizontal (\cite[Theorem 3.1]{HaPS}). 

First up to taking a finite lift we assume that $M$ is an orientable circle bundle over a higher genus (orientable) surface $\Sigma$. 

Consider the leaf space $\cL^{\cs}$ of the center-stable 
foliation and let $\delta$ be the deck transformation associated 
with the center of $M$. As the foliation is horizontal (see Theorem \ref{thm-horizontalseif}), 
it follows that the leaf space $\cL^{\cs}$ is homeomorphic
to the real line. 
In addition, $\cL^{\cs}/_{\langle \delta \rangle}$ 
is a circle that we will call $S^1_\delta$. 

Consider a good lift $\ft$ of $f$.
The map $\widetilde f$  induces a homeomorphism $\hat f\colon S^1_\delta \to S^1_\delta$. Moreover, $\hat f$ commutes with the image of the homeomorphisms $\hat \rho(\gamma)\colon S^1_\delta \to S^1_\delta$ which are defined for all $\gamma \in \pi_1(M)$. Note that 
$\hat \rho(\delta\gamma) = \hat \rho(\gamma)$, so $\hat \rho$ naturally induces a quotient representation $\rho\colon \pi_1(\Sigma) \to \mathrm{Homeo}_+(S^1_\delta)$ when using the identification $\pi_1(M)/_{\langle \delta \rangle} \cong \pi_1(\Sigma)$.  The Euler class of $M$ coincides with the one of the representation $\rho$ (see \cite[Chapter 4]{CandelConlonII}).

We first show that $\hat f$ has rational rotation number.
We proceed by contradiction: Assume that $\hat f$ has irrational rotation number.

Suppose first that $\hat f$ is minimal. It directly implies that $\hat f$ is conjugate to an irrational rotation by a homeomorphism $h\colon S^1_\delta \to S^1_\delta$. Conjugating the homeomorphisms $\rho(\gamma)$ (that is,
$h^{-1} \rho(\gamma) h$), since they commute with an irrational rotation they must commute with every rotation. 
Therefore the homeomorphisms $\rho(\gamma)$ are all conjugate by $h$ to rigid rotations. This implies that $\rho\colon \pi_1(\Sigma) \to \mathrm{Homeo}_+(S^1)$ is conjugate to a representation into $\operatorname{SO}(2,\R)$. This allows to construct a path to the trivial representation, because one can move 
freely along $\operatorname{SO}(2, \R)$ until one gets to the identity without
altering the relations. Therefore the representation 
has zero Euler class, see \cite[Sections 5.2 and 5.3]{Mann}, a contradiction. 

If $\hat f$ is not minimal, it is a Denjoy counterexample, one can see that the representation of $\pi_1(\Sigma)$ into $\mathrm{Homeo}(S^1)$ is semi-conjugate to a representation which commutes with a minimal homeomorphism, and so it also has to have zero-Euler class (see \cite[Section 5.2]{Mann}). This is a contradiction and proves that $\hat f$ has rational rotation number.

Now we go back to the original manifold. Since in the finite cover,
the corresponding map $\hat f$ had rational rotation number, the
same is true for $\hat f$ associated with the original manifold.
In particular, $\hat f$ has a periodic point, which means that for 
some $i \not = 0$, $\delta^n \widetilde f^i$ has a fixed point. So $\delta^n \widetilde f^i$ is the sought good lift (note that it is a good lift because the Seifert fibration is orientable, and thus $\delta$ is in  the center of $\pi_1(M)$)
This finishes the proof.
\end{proof}

Notice that the symmetric statement holds for $\wt \cW^{\cu}$ but a priori not for both simultaneously.

\begin{remark}\label{rem.same_proof_works_for_nonDC}
 In this proof, we did not really use dynamical coherence (see \cite{BFFP-sequel}). One could give a slightly simpler proof that uses dynamical coherence. However, since we will need this result in the non-dynamically coherent case in \cite{BFFP-sequel}, it is more efficient to give the general proof.
\end{remark}
  
So we can now prove Theorem \ref{thmintro:Seifert}.

 \begin{proof}[Proof of Theorem \ref{thmintro:Seifert}]
If the result holds in a finite cover of $M$, then, by projection, it also holds in $M$. So, by lifting to a double cover, we may assume that
$M$ has orientable Seifert fibration.
  Let $\ft$ be a good lift of some power $f^k$ given by Proposition \ref{p.liftfixleaf_DC}. Then $\ft$ does not act as a translation on both center-stable and center-unstable leaf spaces, so is not in case \ref{item.teo-main-coherent-double_translation} of Theorem \ref{teo-main-coherent}. Thus it is in case \ref{item.teo-main-coherent-discretized} of Theorem \ref{teo-main-coherent}, i.e., $f^k$ is a discretized Anosov flow.
 \end{proof}

 \begin{remark}\label{rem.classification_weird_examples}
  Note that, in Theorem \ref{thmintro:Seifert}, we need to take a power of $f$ to get a discretized Anosov flow, whereas Theorem \ref{teo-main-coherent} holds for the original $f$. This condition is necessary, i.e., there are some dynamically coherent partially hyperbolic diffeomorphisms homotopic to the identity on a Seifert manifold that are not discretized Anosov but such that a (non-trivial) iterate is. We will give such an example below and also classify all such examples.
  
  Consider $\Sigma$ a hyperbolic surface (or orbifold) and $g^t$ the geodesic flow on $T^1\Sigma$. Let $M$ be a $k$-fold cover of $T^1\Sigma$ obtained by unwrapping the fiber and $g^t_M \colon M \to M$ be a lift of $g^t$ to $M$. Call $s\colon M \to M$ the map obtained by lifting the ``rotation by $2\pi$'' along the fiber in $T^1\Sigma$.
  Then for any $i= 1, \dots, k-1$, the diffeomorphism $f_{k,i}:=g^1_M\circ s^i$ is a partially hyperbolic diffeomorphism, dynamically coherent, homotopic to the identity, and it is \emph{not} a dicretized Anosov flow (but $f_{k,i}^k$ is a discretized Anosov).
  Notice that the action of any good lift of $f_{k,i}$ on the center-stable and center-unstable leaf spaces is by translations.
  
  Now, suppose that $M$ is a Seifert manifold and $f$ is a dynamically coherent partially hyperbolic diffeomorphisms homotopic to the identity.
  Then, by Theorem \ref{thmintro:Seifert}, there exists $k$ such that $f^k$ is a discretized Anosov flow. Thus, by the classification of Anosov flows on Seifert manifolds (see \cite{Ghys,Barbot96}), $M$ is a finite lift of the unit tangent bundle of an orbifold $\Sigma$ and $f^k$ is leaf-conjugate to the time-one map of the lift of the geodesic flow. Then the action of (a good lift of) $f$ on both the central stable and central unstable leaf spaces is conjugated to the action of (a good lift of) a diffeomorphism $f_{k,i}$ as above. So $f$ and $f_{k,i}$ are leaf-conjugate.
 \end{remark}


\section{Coarse dynamics of translations}\label{s.coarse_dynamics_translations}

In this section, we consider a homeomorphism $f\colon M\to M$ of a hyperbolic $3$-manifold that preserves a uniform, $\R$-covered foliation $\fol$ and acts as a translation on its leaf space. We show that the dynamics of $f$ is comparable to the dynamics of the pseudo-Anosov flow $\Phi$ (given by Theorem \ref{teo-transversepA}) that regulates $\fol$. More precisely, for every periodic orbit of $\Phi$, we show that there exists a compact core (in a lift of $M$) invariant by $f$ that plays the role of the periodic orbit of $\Phi$. 

So in particular, the result of this section does not require $f$ to be partially hyperbolic and is of independent interest. The description of the dynamics of $f$ in periodic leaves of $\fol$ (if any) can be compared to the global shadowing for pseudo-Anosov homeomorphisms done in \cite{Handel}. 
We will use the results obtained here to complete the proof of Theorem \ref{thmintro:Hyperbolic} in the next section.

To make this comparison precise we will introduce some more objects.
Let $f\colon M \to M$ be a homeomorphism of a hyperbolic $3$-manifold. 
We assume that $f$ is homotopic to the identity, 
and preserves a foliation $\fol$. Furthermore, we suppose that $\fol$ is $\R$-covered and uniform and such that a good lift $\ft$ of $f$ acts as a translation on the leaf space of $\fn$.

Since $\ft$ commutes with any deck transformation and acts as a translation on the leaf space of $\fn$, it implies that the foliation $\fol$ is actually transversely orientable. Hence Theorem \ref{teo-transversepA} applies and we call $\Phi$ a transverse regulating pseudo-Anosov flow. We denote by $\wphi$ its lift to the universal cover $\mt$. We note that all periodic orbits of a pseudo-Anosov flow are homotopically non-trivial (in the case at hand this is even easier as the flow is transverse to a Reebless foliation). 

Let $\gamma \in \pi_1(M)$ be an element associated with a periodic orbit of $\Phi$ (i.e., such that there is a flow line of $\wphi$ invariant under $\gamma$). Let $M_\gamma:= \mt/_{<\gamma>}$ be the cover of $M$ associated with that deck transformation.
The foliation $\hat \fol_\gamma$ lifted from $\fol$ to $M_\gamma$ is a foliation by planes. Indeed, since $\Phi$ is regulating, each orbit of $\wphi$ can represent the leaf space $\cL_{\fn}$ of $\fn$. Thus $\gamma$, and all of its powers, act as a translation on $\cL_{\fn}$, so no leaf of $\fn$ can be fixed by a power of $\gamma$. Therefore, $\hat \fol_\gamma$ is a foliation by planes (see, e.g., \cite{Fen2002} for more details).
Since $\ft$ is a good lift of $f$ it induces a lift $\hat f_\gamma$ of $f$ in $M_\gamma$. 

For a periodic orbit $\alpha$ of $\Phi$, we call a (stable) \emph{half-leaf} of $\alpha$ any connected component of the complement of $\alpha$ in its stable leaf (so that a regular orbit has two half-leaves for each foliations, and a $p$-prong orbit has $p$ half-leaves).
We can now state precisely the main result of this section.

\begin{proposition}\label{p.homeotranslation} 
Let $M$, $f\colon M \to M$, $\fol$ and $\Phi$ be as above.

Then, for every $\gamma \in \pi_1(M)$ associated with a periodic orbit of $\Phi$, there is a compact $\hat f_{\gamma}$-invariant set $T_\gamma$ in $M_\gamma$ which intersects every leaf of $\hat \fol_{\gamma}$.

 Moreover, if an iterate $\hat f_{\gamma}^k$ of $\hat f_{\gamma}$ fixes a leaf $L$ of $\hat \fol_{\gamma}$, and $\gamma$ fixes all the half -leaves of the periodic
orbit associated with $\gamma$, then the fixed set of $\hat f_{\gamma}^k$ in $L$ is contained in $T_\gamma \cap L$ and has negative Lefschetz index. 
\end{proposition}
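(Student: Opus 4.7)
The plan is to use the regulating pseudo-Anosov flow $\Phi$ to construct $T_\gamma$ as a compact analogue of the periodic orbit $\alpha$ corresponding to $\gamma$, and to compute the Lefschetz index via the pseudo-Anosov structure of $\gamma$ on the orbit space of $\wphi$.

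Let $\tilde\alpha \subset \mt$ be the orbit of $\wphi$ fixed by $\gamma$, and $\alpha_\gamma := \pi_\gamma(\tilde\alpha) \subset M_\gamma$. Since $\tilde\alpha$ meets each leaf of $\fn$ exactly once by the regulating property, $\gamma$ acts on $\cL_{\fn} \simeq \R$ as a non-trivial translation. Because $\ft$ also acts as a translation and commutes with $\gamma$, the induced action of $\hat f_\gamma$ on $\cL_{\hat\fol_\gamma} \simeq \R/\gamma\Z \simeq S^1$ is a rotation. Moreover, $\hat f_\gamma$ moves points a uniformly bounded distance in $M_\gamma$, inherited from the good-lift property of $\ft$ through the quotient map.

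I would then construct $T_\gamma$ as the maximal invariant set in a tubular neighborhood of $\alpha_\gamma$. For $R > 0$, let $U_R$ be the closed $R$-neighborhood of $\alpha_\gamma$ in $M_\gamma$; bounded displacement gives $\hat f_\gamma(U_R) \subset U_{R+K}$. The pseudo-Anosov structure of $\Phi$ near $\alpha$ --- specifically, that $\gamma$ acts on the orbit space $\mathcal{O} := \mt/\wphi$ as a pseudo-Anosov homeomorphism with a $p$-pronged fixed point at $\tilde\alpha$ --- combined with the rotational action on $S^1$ forces compactness of the maximal invariant set in $U_R$ for $R$ sufficiently large. Define $T_\gamma := \bigcap_{n \in \Z} \hat f_\gamma^n(U_R)$; this is compact and $\hat f_\gamma$-invariant, and by the regulating property (which forces $T_\gamma$ to contain a holonomy section meeting each leaf) it intersects every leaf of $\hat\fol_\gamma$.

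For the Lefschetz part, suppose $\hat f_\gamma^k$ fixes a leaf $L$, and lift $L$ to $\tilde L \subset \mt$. Then $\ft^k(\tilde L) = \gamma^m \tilde L$ for some $m \neq 0$ (since $\ft^k$ acts as a non-trivial translation on $\cL_{\fn}$), so $g := \gamma^{-m}\ft^k$ restricts to a self-homeomorphism of $\tilde L$. Identifying $\tilde L$ with $\mathcal{O}$ via the regulating map, $\gamma^{-m}$ acts on $\mathcal{O}$ as a pseudo-Anosov map with fixed point $p_\gamma$ (corresponding to $\tilde\alpha$) having all $p$ prongs fixed by hypothesis. The map $g|_{\tilde L}$ becomes a bounded perturbation of this pseudo-Anosov, since $\ft^k$ is at bounded distance from the identity while $\gamma^{-m}$ supplies the large-scale structure. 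By pseudo-Anosov expansion, outside a large ball around $p_\gamma$ the map $\gamma^{-m}$ moves points further than the displacement bound of $\ft^k$, so there can be no fixed points of $g$ outside $T_\gamma \cap L$. Inside, stability of the local Lefschetz index under small perturbations pins the total local index at $1 - p < 0$.

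The main obstacle is compactness of $T_\gamma$ in Step~2: bounded displacement alone does not confine iterated orbits, so compactness must come from the pseudo-Anosov structure near $\tilde\alpha$. Making this rigorous requires carefully tracking how the combinations $\gamma^{-k_n}\ft^n$ (for suitable $k_n$ chosen from the commensurability of the translation amounts) keep bounded neighborhoods bounded in $\mt$, which descends to the desired compactness in $M_\gamma$. This interplay between the bounded-distance dynamics of $\ft$ and the pseudo-Anosov expansion/contraction on $\mathcal{O}$ is the technical heart of the proof.
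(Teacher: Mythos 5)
Your overall strategy is the same as the paper's (shadow the periodic orbit $\delta_0$ by the maximal invariant set of $\hat f_\gamma$ in a large tubular neighborhood, then compare $\hat f_\gamma^k|_L$ with the ``flow along $\Phi$'' map via boundedness of displacement and homotopy invariance of the Lefschetz index). However, there is a genuine gap at the heart of the argument, and you have in fact mislabeled it. Compactness of $T_\gamma := \bigcap_{n\in\ZZ}\hat f_\gamma^n(U_R)$ is automatic: $U_R$ is a closed bounded neighborhood of a closed curve in the complete manifold $M_\gamma$, hence compact, and an intersection of compact sets is compact. What is \emph{not} automatic --- and what you never argue --- is that this intersection is non-empty, and more precisely that it meets \emph{every} leaf of $\hat\fol_\gamma$. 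Your parenthetical ``the regulating property forces $T_\gamma$ to contain a holonomy section meeting each leaf'' is an assertion, not a proof: bounded displacement plus the regulating property do not by themselves prevent every orbit of $\hat f_\gamma$ from eventually leaving $U_R$ (e.g.\ drifting out along the unstable prongs in forward time and the stable prongs in backward time), in which case $T_\gamma=\emptyset$. This is exactly where the paper's proof does its real work: leafwise, one fixes $\gamma$-equivariant neighborhoods $P^i_L$, $N^i_L$ of the unstable and stable ideal prong points in $L\cup\partial_\infty L$, passes to powers of $\gamma$ and $f$ so that (using the fact that $\ft$ is boundedly close to the flow-along map $\tau_f$) the $P^i$ are attracting and the $N^i$ repelling with definite margins and the boundary ideal points of the $N$'s are pushed into the $P$'s, and then proves by induction that the sets $R^n_L$ and $Q^n_L$ always contain compact \emph{connected} pieces joining two distinct $P^i$'s (resp.\ $N^i$'s). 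Because the stable and unstable ideal points alternate on $\partial_\infty L$, these connected pieces must link, forcing $R^n_L\cap Q^n_L\neq\emptyset$ for all $n$ and hence $T_L\neq\emptyset$ on every leaf. The connectedness bookkeeping is delicate (the image of a connected set crossing the disk can exit and re-enter, so one only retains a component joining \emph{some} pair of distinct $P$'s), and none of this is supplied or even sketched in your proposal; you explicitly defer it as ``the technical heart,'' which is precisely the part that needs to be done.

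A secondary, smaller issue: in the Lefschetz step you exclude fixed points of $g=\gamma^{-m}\ft^k$ far from the core by invoking ``pseudo-Anosov expansion,'' but the needed statement is about displacement in the \emph{leafwise} metric of $L$, where the identification with the orbit space is only coarse (quasigeodesic prongs, multiplicative distortion). In the paper this is a genuine lemma (Lemma~\ref{fact-pA3} and its consequences), proved by a compactness/limiting argument that uses hyperbolicity of $M$ and the free action of $\gamma$ on the leaf space, not by expansion alone. Granting that lemma, your use of homotopy invariance (rather than ``stability under small perturbation'' --- the perturbation is bounded, not small, so one must check no fixed points cross the boundary of a large disk during the homotopy from $\gamma^{-m}\circ\tau$ to $g$) does match the paper's conclusion that the total index is $1-p$.
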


\begin{remark}\label{rem-indexpoints}
In fact, the proof will show that the total Lefschetz
index $I_{T_\gamma \cap L}(\hat f_{\gamma}^k|_{L})$ equals $-1$ if the periodic orbit of $\Phi$ is a regular periodic orbit, and equals $1-p$ if the periodic orbit is a $p$-prong, $p \geq 3$, assuming that
$\gamma$ fixes the half-leaves of the orbit.

We also remark that, by construction, the set $T_\gamma$ is \emph{essential} in the sense that any neighborhood of it contains a curve homotopic to (a power of) $\gamma$. 
\end{remark}

To prove this proposition, we first need to explore some properties of the pseudo-Anosov flow $\Phi$ and its interaction with the foliation $\fol$.

Let $\Lambda_s$ and $\Lambda_u$ be the weak stable and weak unstable (singular) foliations of the pseudo-Anosov flow $\Phi$. We denote by $\widetilde \Lambda_s$ and $\widetilde \Lambda_u$ their lift to the universal cover.
For any leaf $L$ of $\fn$, we denote by $\cG^s_L$ and $\cG^u_L$ the one-dimensional (singular) foliations obtained by intersecting the foliations $\widetilde \Lambda_s$ and $\widetilde \Lambda_u$ with $L$. 

\begin{fact}\label{fact-pA1}
The length along foliations $\cG_L^s$ and $\cG_L^u$ is uniformly efficient up to a multiplicative distortion at measuring distances in the leaves of $\fn$. That is, the rays of $\cG_L^s$ and $\cG_L^u$ are uniform quasi-geodesics for the path metric on $L$. 
\end{fact}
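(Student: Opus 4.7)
The plan is to combine the hyperbolic geometry of the leaves of $\fn$ with the transverse structure of the regulating pseudo-Anosov flow $\Phi$, so as to show that rays of $\cG^s_L$ and $\cG^u_L$ converge to well-defined points at infinity in $L$ and stay at bounded distance from the corresponding geodesic rays.

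First, I would obtain a uniformly hyperbolic structure on each leaf of $\fn$. Since $\fol$ admits a transverse regulating pseudo-Anosov flow, and a good lift $\ft$ acts as a translation on $\cL_\fn$, the foliation $\fol$ carries no invariant transverse measure: an invariant measure would lift to a $\pi_1(M)$-invariant measure on $\cL_\fn$ that is also preserved by $\ft$, which is incompatible with the action of the regulating pseudo-Anosov on the leaf space. Candel's theorem (Theorem~\ref{thm-leafuniformisation}) then endows each leaf of $\fol$ with a metric of constant curvature $-1$. Uniformity of $\fol$ combined with the hyperbolicity of $M$ imply, via standard results on uniform $\R$-covered foliations in hyperbolic $3$-manifolds (see \cite{Fen2002}), that each leaf $L \in \fn$ is uniformly quasi-isometric to $\HH^2$, and indeed uniformly quasi-isometrically embedded in $\mt \cong \HH^3$.

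Second, I would use the structure of the regulating pseudo-Anosov flow. The transversality of $\Phi$ to $\fol$ implies that leaves of $\widetilde\Lambda^s$ and $\widetilde\Lambda^u$ meet each leaf $L \in \fn$ transversely, and by compactness of $T^1M$ the intersection angle is uniformly bounded away from $0$ on the complement of the finitely many singular orbits. At $p$-prong singularities one splits the singular leaf of $\widetilde\Lambda^s$ into $p$ smooth half-planes and argues on each. Each ray of $\cG^s_L$ is moreover proper in $L$: the corresponding half-plane of $\widetilde\Lambda^s$ is contracted exponentially under forward $\wphi$-flow and $\Phi$ is regulating, so a non-proper ray would, after flowing, accumulate onto itself on $L$ and violate the transversality of $\wphi$ with $\fn$.

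Finally, the quasi-geodesic property would follow by a Morse-lemma-type argument. Given two points $p, q$ on the same ray of $\cG^s_L$, the ray segment between them lies in a single half-plane $S$ of $\widetilde\Lambda^s$. Both $L$ and $S$ are uniformly quasi-isometrically embedded in $\mt \cong \HH^3$ (the statement for $S$ being a consequence of the regulating pseudo-Anosov structure, via the fact that the flow lines of $\wphi$ are quasi-geodesic in $\HH^3$), and they meet at a uniform angle, so their intersection is a uniform quasi-geodesic curve in the metric on $L$; a symmetric argument applies to $\cG^u_L$. The main technical obstacle is the verification of these uniform quasi-isometric embeddings and of the coarse quasi-geodesic behavior of the flow structure in $\HH^3$; the relevant statements are established in the Fenley theory of $\R$-covered foliations in hyperbolic $3$-manifolds and are invoked directly.
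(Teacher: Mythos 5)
Your argument has a genuine gap at its foundation: the claim that each leaf $L \in \fn$ (and each half-plane of $\wt\Lambda^s$) is uniformly quasi-isometrically embedded in $\mt \cong \HH^3$ is false in this setting. For a uniform $\R$-covered foliation in a closed hyperbolic $3$-manifold, the limit set of every leaf of $\fn$ is the \emph{entire} sphere at infinity (this is part of the Thurston--Calegari--Fenley theory you cite), so leaves are only uniformly \emph{properly} embedded and are exponentially distorted; a quasi-isometrically embedded plane would have a quasi-circle as limit set. The same issue affects the leaves of $\wt\Lambda^s$ and $\wt\Lambda^u$: being regulating and transverse does not make the flow (let alone its weak stable/unstable $2$-dimensional leaves) quasigeodesic in $\HH^3$. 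Consequently the Morse-lemma step --- ``two uniformly QI-embedded planes meeting at a uniform angle intersect in a uniform quasi-geodesic of $L$'' --- cannot be run, and even granting the embeddings it would need a separate argument to pass from an extrinsic statement in $\HH^3$ to a quasi-geodesic for the \emph{path metric on} $L$, which is what the Fact asserts. (Your first step, producing the Candel leafwise hyperbolic metric, is fine, and your properness argument for rays is plausible, but neither gives the multiplicative efficiency.)

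The intended proof is intrinsic to the leaves and comes directly from the construction of $\Lambda^s$ and $\Lambda^u$ in \cite{Thurston2,Calegari00,Fen2002}: these singular foliations are obtained by blowing down laminations that intersect each leaf of the lifted foliation along \emph{geodesics} of the Candel uniformization metric. The blow-down moves points a uniformly bounded leafwise distance, so the intersections $\cG^s_L$ and $\cG^u_L$ are uniformly bounded perturbations of leafwise geodesics, hence uniform quasi-geodesics for the path metric on $L$, with constants independent of $L$. No embedding of $L$ or of the weak stable/unstable leaves into $\HH^3$ is needed; if you want to salvage your approach, you would have to replace the QI-embedding claims by this leafwise-geodesic property of the pre-blow-down laminations, at which point the extrinsic machinery becomes unnecessary.
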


\begin{proof}
This fact is a consequence of the construction of the foliations $\widetilde \Lambda_s$ and $\widetilde \Lambda_u$. They are obtained by blowing down certain laminations that intersect the leaves of $\wt\fol$ along geodesics (with respect to the uniformization metric obtained via Candel's Theorem \ref{thm-leafuniformisation}). We refer to \cite{Fen2002} or \cite{Calegari:book} for the construction of these laminations.

In particular, there exists a uniform $K_1>1$ such that for every $L \in \fn$ and $y \in \cG^s_L(x)$ one has
$$ \ell ([x,y]^{\cG^s_L}) \leq K_1 d_L(x,y) + K_1 $$
where $\ell([x,y]^{\cG^s_L})$ denotes the length of the arc in $\cG^s_L$ joining $x$ and $y$. And similarly for $\cG^u_L(x)$.
\end{proof}

The flow $\wphi$ does not preserve the foliation $\fn$, but since it is transverse and regulating to the foliation, it makes sense to consider, given $L_1, L_2 \in \fn$ two leaves, the map $\tau_{12}\colon L_1 \to L_2$ consisting in flowing along $\wphi$ from one leaf to the other. By construction, the map $\tau_{12}$ is a homeomorphism. Notice that since $\fol$ is $\R$-covered and uniform, the Hausdorff distance between $L_1$ and $L_2$ is bounded multiplicatively with the flow distance between the leaves -- at least for
leaves which are sufficiently apart from each other. 

By convention, we will always assume that $L_2$ is taken to be \emph{above} $L_1$, in the sense that one has to follow the orbits of $\wt\Phi$ in the positive direction to go from $L_1$ to $L_2$.
Notice that invariance of $\widetilde\Lambda_s$ and $\widetilde \Lambda_u$ by $\wt \Phi$ imply that the homeomorphism $\tau_{12}$ maps the foliations $\cG^{\mathrm{s}}_{L_1}$ and $\cG^u_{L_1}$ into the the foliations $\cG^s_{L_2}$ and $\cG^u_{L_2}$ respectively. 

When the leaves $L_1, L_2$ are understood, we will omit them from the notation. It is a standard fact from the dynamics of pseudo-Anosov flows and the bounded comparison between flow distance and leaves\footnote{It is worth noting that the pseudo-Anosov property is invariant under reparametrizations of the flows.} that the following holds: 

\begin{fact}\label{fact-pA2} 
For any leaves $L_1$ and $L_2$ sufficiently far apart, the map $\tau_{12}$ expands lengths (and, equivalently distances) of $\cG^{\mathrm{u}}_{L_1}$ exponentially in terms of the Hausdorff distance between $L_1$ and $L_2$. 
That is, there exists a $\lambda>0$, independent of $L_1,L_2$, such that,  for any $x\in L_1$ and $y\in \cG^u_{L_1}(x)$, we have
\[
 d_{L_2}(\tau_{12}(x), \tau_{12}(y)) \geq e^{\lambda d_{\mathrm{Haus}}(L_1,L_2)} d_{L_1}(x,y),
\]
as long as $d_{\mathrm{Haus}}(L_1,L_2)$ is sufficiently big.
Similarly, $\tau_{12}^{-1}$ expands the lengths in $\cG^s$ exponentially in terms of the Hausdorff distance between $L_1$ and $L_2$. 
\end{fact}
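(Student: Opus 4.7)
The plan is to combine three ingredients: uniform comparability between flow time and leafwise Hausdorff distance, the exponential expansion of unstable transversals by the pseudo-Anosov flow $\wphi$, and the quasi-geodesic property of $\cG^u$-arcs supplied by Fact~\ref{fact-pA1}. I will address the $\cG^u$ statement; the $\cG^s$ case follows by reversing time.

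First I would establish that the flow time from $L_1$ to $L_2$ is uniformly comparable to $d_{\mathrm{Haus}}(L_1,L_2)$. Precisely, using compactness of $M$, the transversality of $\Phi$ to $\fol$, and uniformity of $\fol$, there exist constants $C_1,C_2 > 0$, independent of $(L_1,L_2)$, such that for every $p \in L_1$ the time $t(p)$ at which the orbit of $p$ under $\wphi$ meets $L_2$ satisfies
\[
C_1\, d_{\mathrm{Haus}}(L_1,L_2) - C_2 \;\leq\; t(p) \;\leq\; C_1^{-1}\, d_{\mathrm{Haus}}(L_1,L_2) + C_2.
\]
Indeed, compactness of $M$ bounds both the speed of $\Phi$ and, by transversality, the angle between $T\Phi$ and $T\fol$ away from zero, so the progress of an orbit across $\fn$ is proportional to flow time; uniformity of $\fol$ promotes this local estimate to the global comparison above.

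Next I would exploit the pseudo-Anosov nature of $\Phi$. Fix the weak unstable leaf $W^u$ of $\wphi$ containing the arc $\alpha = [x,y]^{\cG^u_{L_1}}$; this arc is transverse to $\wphi$ inside $W^u$. Since $\Phi$ is a topological pseudo-Anosov flow on a compact manifold, there exist $\mu > 0$ and $C_3 \geq 1$ such that for any arc $\beta$ transverse to $\wphi$ inside a weak unstable leaf and any $t \geq 0$ one has $\ell(\wphi_t(\beta)) \geq C_3^{-1} e^{\mu t}\,\ell(\beta)$ (measured in the ambient Riemannian metric lifted from $M$, which agrees with the leafwise metric for curves contained in a single leaf of $\fn$). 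Applying this pointwise with $t = t(p)$ from Step~1, and using the lower bound there, the image arc $\tau_{12}(\alpha)$, which lies in $\cG^u_{L_2}(\tau_{12}(x))$ because $\wphi$ preserves $\wt\Lambda_u$, has length at least $C_3^{-1} e^{\mu (C_1 d - C_2)}\,\ell(\alpha)$, where $d = d_{\mathrm{Haus}}(L_1,L_2)$.

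Finally, Fact~\ref{fact-pA1} converts $\cG^u$-arc length into leafwise distance: $\cG^u_{L_2}$-rays are uniform quasi-geodesics in $L_2$, so $d_{L_2}(\tau_{12}(x),\tau_{12}(y))$ is comparable to $\ell(\tau_{12}(\alpha))$ up to uniform multiplicative and additive constants. Absorbing all constants for $d$ sufficiently large yields $d_{L_2}(\tau_{12}(x),\tau_{12}(y)) \geq e^{\lambda d}\,d_{L_1}(x,y)$ for any fixed $\lambda \in (0, \mu C_1)$, which is the desired exponential expansion. I expect the main obstacle to be making Step~2 precise in the \emph{topological} pseudo-Anosov category: one must either invoke a metric adapted to the flow in which exponential expansion holds literally, or extract uniform quantitative hyperbolicity from the topological contracting/expanding definition together with compactness of $M$. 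The pointwise variation of $t(p)$ along $\alpha$ is a minor nuisance, handled by replacing $t(p)$ with $\min_{p\in\alpha} t(p)$ before applying Step~2.
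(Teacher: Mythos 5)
The paper gives no proof of Fact~\ref{fact-pA2}: it is asserted as a standard consequence of pseudo-Anosov dynamics together with the multiplicative comparison between flow time and Hausdorff distance of leaves stated just before it, and this is precisely the outline you flesh out (your Steps 1--3, with Fact~\ref{fact-pA1} converting $\cG^u$-arc length into leafwise distance). Your formulation with the factor $d_{L_1}(x,y)$ on the right-hand side is the correct reading of the statement, and the one substantive point you flag --- extracting uniform quantitative expansion of unstable arcs from a merely \emph{topological} pseudo-Anosov flow on a compact manifold --- is exactly the ``standard fact'' the authors invoke, so your proposal matches the paper's intended argument.
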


The following simple result will be extremely useful for us. 
The leaf space is endowed with an orientation. If a deck transformation
$\beta$
acts freely on the leaf space,  we say that $\beta$ {\emph acts
decreasingly} if $\beta z < z$ with respect 
to the orientation for some (and hence all) $z$ in the leaf space.

\begin{lemma}\label{fact-pA3} 
Suppose that $\beta$ is a deck transformation
that acts freely and decreasingly on the leaf space
of $\fn$. 
Let $L_1$ be a 
leaf of $\widetilde \fol$. Let $\tau_{12}$ be the flow
along map from $L_1$ to $L_2:=\beta^{-1}(L_1)$. Define $\gbl :=
\beta \circ \tau_{12} \colon L_1 \to L_1$.
Fix a point $x_1$ in $L_1$.
Then, for every $K>0$, there exists $R>0$ such that for any $x$ in $L_1$ satisfying $d_{L_1}(x,x_1)> R$, then $d_{L_1}(x, g_{\beta,L_1}(x))> K$. 
\end{lemma}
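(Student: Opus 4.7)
The map $\gbl = \beta \circ \tau_{12}\colon L_1 \to L_1$ preserves the two transverse singular one-dimensional foliations $\cG^u_{L_1}, \cG^s_{L_1}$: indeed $\tau_{12}$ sends them to $\cG^u_{L_2}, \cG^s_{L_2}$ respectively (since the flow $\wphi$ preserves $\wt\Lambda_u, \wt\Lambda_s$), and the deck transformation $\beta$ is an isometry permuting the leaves of these two laminations. Combining Fact~\ref{fact-pA2} with the fact that $\beta$ is an isometry, $\gbl$ uniformly expands $\cG^u_{L_1}$-arc lengths by a factor $\mu = e^{\lambda d_{\mathrm{Haus}}(L_1, L_2)}>1$ and contracts $\cG^s_{L_1}$-arc lengths by $1/\mu$, provided $d_{\mathrm{Haus}}(L_1, L_2)$ is large enough. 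I would arrange $\mu$ as large as I wish---in particular so that $\mu > K_1^2$, where $K_1$ is the quasi-geodesic constant from Fact~\ref{fact-pA1}---by replacing $\beta$ with a sufficiently high power at the outset.

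I would argue by contradiction: suppose there exist $K>0$ and a sequence $(x_n) \subset L_1$ with $d_{L_1}(x_n, x_1) \to \infty$ but $d_{L_1}(x_n, \gbl(x_n)) \leq K$. Using the local product structure of the transverse pair $(\cG^u_{L_1}, \cG^s_{L_1})$, I would connect $x_1$ to $x_n$ by a broken path $x_1 \rightsquigarrow y_n \rightsquigarrow x_n$ whose first leg is a $\cG^u$-arc of length $\ell_u^n$ and whose second leg is a $\cG^s$-arc of length $\ell_s^n$. By Fact~\ref{fact-pA1} this concatenation is a uniform quasi-geodesic in $L_1$, so $\ell_u^n + \ell_s^n$ is comparable to $d_{L_1}(x_1, x_n) \to \infty$; after extracting a subsequence, at least one of $\ell_u^n$ or $\ell_s^n$ diverges.

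If $\ell_u^n \to \infty$, then $\gbl(y_n)$ lies on $\cG^u(x_1)$ at arc-distance $\mu \ell_u^n$ from $x_1$, so $d_{L_1}(x_1, \gbl(x_n)) \geq \mu \ell_u^n/K_1 - O(1)$; since $\mu > K_1^2$, this grows strictly faster than $d_{L_1}(x_1, x_n) \leq K_1(\ell_u^n+\ell_s^n)+O(1)$, forcing $d_{L_1}(x_n,\gbl(x_n)) \to \infty$ by the triangle inequality. If instead $\ell_s^n \to \infty$ while $\ell_u^n$ stays bounded, then $y_n$ and $\gbl(y_n)$ remain in a compact piece of $\cG^u(x_1)$, while $x_n$ escapes along $\cG^s(y_n)$ at arc-distance $\ell_s^n$ and $\gbl(x_n)$ escapes only at arc-distance $\ell_s^n/\mu$ along $\cG^s(\gbl(y_n))$; the asymmetry between these two arc-distances, combined with the Gromov-hyperbolicity of the leafwise metric (which prevents distinct quasi-geodesic rays in $L_1$ from fellow-travelling indefinitely), again forces $d_{L_1}(x_n, \gbl(x_n)) \to \infty$, contradicting the bounded-displacement assumption.

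The main obstacle is the calibration $\mu > K_1^2$, resolved by replacing $\beta$ by a high power before starting. A secondary technical point is constructing the broken quasi-geodesic $\cG^u/\cG^s$ path in the presence of the (finitely many local types of) singularities of the pseudo-Anosov foliations on $L_1$; this is standard for pseudo-Anosov laminations and can be handled by chaining a uniformly bounded number of alternating legs without affecting the preceding argument.
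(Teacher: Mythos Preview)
Your argument has a genuine gap. The step ``$\gbl(y_n)$ lies on $\cG^u(x_1)$ at arc-distance $\mu \ell_u^n$ from $x_1$'' requires that $\cG^u(x_1)$ be $\gbl$-invariant, i.e., that $x_1$ be a fixed point of $\gbl$. But the lemma does \emph{not} assume that $\beta$ represents a periodic orbit of $\Phi$; the paper explicitly flags this in the remark immediately following the statement. When $\beta$ is not associated with a periodic orbit, $\gbl$ has no fixed point at all, and your expansion/contraction bookkeeping along $\cG^u(x_1)$ and $\cG^s(y_n)$ breaks down: $\gbl$ carries $\cG^u(x_1)$ to the \emph{different} leaf $\cG^u(\gbl(x_1))$, so the comparison of arc-lengths along a common ray is unavailable. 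The same issue undermines the second case.

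There are secondary problems as well. Replacing $\beta$ by a high power so that $\mu > K_1^2$ changes the statement; you would then need to deduce bounded displacement of $g_{\beta^N,L_1} = (g_{\beta,L_1})^N$ from bounded displacement of $g_{\beta,L_1}$ along your sequence, which does not follow, since you have no control over $d_{L_1}\bigl(g_{\beta}^k(x_n), g_{\beta}^{k+1}(x_n)\bigr)$ for $k \geq 1$. Also, the claim that a single $\cG^u$-arc followed by a $\cG^s$-arc connects $x_1$ to an arbitrary $x_n$ and is a uniform quasi-geodesic is not what Fact~\ref{fact-pA1} says, and the brush-off ``chaining a uniformly bounded number of alternating legs'' is not standard for the lifted singular foliations on $L_1$.

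For contrast, the paper's proof does not use the pseudo-Anosov dynamics at all. It argues by contradiction via cocompactness: pull back the escaping sequence $y_n$ by deck transformations $\gamma_n$ to a convergent sequence, deduce that the conjugates $\gamma_n \beta \gamma_n^{-1}$ eventually stabilise, and then use that $\beta$ is a hyperbolic isometry of $\mathbb{H}^3$ (with a geodesic axis) to force the $\gamma_n$ to differ by powers of a single element $\alpha$ conjugate to a power of $\beta$. This contradicts the free action of $\alpha$ on the leaf space. The argument works for any regulating flow, not just the pseudo-Anosov one.
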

\begin{remark}
 Notice that in this Lemma, we do not ask for $\beta$ to be associated with a periodic orbit of the pseudo-Anosov regulating flow.
\end{remark}

\begin{proof}
Suppose for a contradiction that there exists $K>0$ and a sequence $y_n$ escaping to infinity in $L_1$ and such that $d_{L_1}(y_n, \gbl(y_n))\leq K$ for all $n$. Then, up to taking a subsequence, there exists $\gamma_n \in \pi_1(M)$ such that $\gamma_n(y_n)$ converges to $y_0$ in $\mt$.

We define a map $\tau_{\beta}\colon \mt \rightarrow \mt$ as
follows: given $x$ in $\mt$,  it is in $L$ a leaf of 
$\widetilde \fol$, then we let $\tau_{\beta}(x)$ be the intersection
of the flow line of $\wt\Phi$ through $x$ with $\beta^{-1}(L)$. 
Notice that if $x\in L_1$, then $\tau_{\beta}(x) = \tau_{12}(x)$. In particular, for every $n$, $\tau_{\beta}(y_n) = \tau_{12}(y_n)$.

Since $\gamma_n(y_n)$ converges to $y_0$, and $\tau_{\beta}$ consists of flowing along $\wt\Phi$ a uniformly bounded amount, for $n$ big enough, we have that $d(\tau_{12}(y_n), \tau_{\beta}(\gamma_n^{-1}(y_0))$ is as small as we want. Hence, for $n$ big enough, we have 
$$
d(\beta \circ \tau_{12}(y_n), \beta \circ \tau_{\beta}(\gamma_n^{-1}(y_0))
< 1.
$$
This only uses that $\beta$ is an isometry of $\mt$.
Now, $\beta \tau_{12}(y_n) = \gbl(y_n)$ is at distance 
less than $K$ from $y_n$ in $L_1$ $-$ this is by hypothesis. 
Since path distance in the leaf is less than distance in $\mt$,
it follows that the same inequality is true for $d$.
The triangle inequality implies that
$d(y_n, \beta \circ \tau_{\beta}(\gamma^{-1}_n(y_0)) < 1 + K$ $-$ 
again $d$ is distance in $\mt$.
Thus, after applying $\gamma_n$, we get
$$
d(\gamma_n(y_n), \gamma_n \circ \beta \circ \tau_{\beta}(\gamma_n^{-1}(y_0)))
< 1 + K.$$
We are again using that $\gamma_n$ is an isometry of $\mt$.
Note that the map $\tau_{\beta}$ moves every point a bounded distance, the transformations $\gamma_n, \beta$ are isometries, and, for $n$ big enough, $d(\gamma_n(y_n),y_0)$
is very small. Therefore, 
$d(y_0, \gamma_n \circ \beta \circ \gamma_n^{-1}(y_0)) < K'$ for all $n$ big enough and a fixed constant $K'$. 

So we can extract a converging subsequence once more, and get that for any
$n, m$ the distance between $\gamma_n \beta \gamma_n^{-1}(y_0)$ and $\gamma_m \beta \gamma_m^{-1}(y_0)$ is smaller than the injectivity radius of $M$. It follows that
\[
 \gamma_m \beta \gamma_m^{-1} =
\gamma_n \beta \gamma_n^{-1},
\]
for all $n, m$.

Now we use that $M$ is hyperbolic. So $\beta$ is a hyperbolic
isometry of ${\mathbb H}^3 \cong \mt$. It has an axis with
ideal points $a, b$. 
Then, since $\gamma_n \beta \gamma_n^{-1} =
\gamma_1 \beta \gamma_1^{-1}$, we have that $\gamma_n(a) = \gamma_1(a)$ and $\gamma_n(b) = \gamma_1(b)$, for all $n$.
Let $c:=\gamma_1(a)$ and $d:= \gamma_1(b)$. Notice that (for all $n$) the axis of the isometry $\gamma_n \beta \gamma_n^{-1}$ has endpoints $c$ and $d$. 

Let $\alpha$ be the generator of
the group of deck transformations fixing $c, d$.
Then, for all $n$ there is an integer
$i_n$ so that $\gamma_n = \alpha^{i_n} \gamma_1$.
In addition since $y_n$ escapes compact sets in $L_1$ and
$L_1$ is properly embedded, it follows that $y_n$ escapes
compact sets in $\mt$. On the other hand 
$\gamma_n(y_n)$ converges to $y_0$, so
$|i_n|$ converges to infinity.

Notice that, since $\gamma_1$ sends the axis of $\beta$ to the axis of $\alpha$, a power of $\alpha$ is conjugated to a power
of $\beta$ by $\gamma_1$. Now, since $\beta$ acts freely on the
leaf space of $\widetilde \fol$, then so does $\alpha$.
Let $z_n := \gamma_1(y_n)$, $L := \gamma_1(L_1)$ and
$E$ the leaf of $\widetilde \cF$ through $y_0$.
Recall that $\gamma_n(y_n)$ converges to $y_0$. Hence

$$\alpha^{i_n}(z_n) \  = \  \alpha^{i_n} \gamma_1(y_n) \ = \ 
\gamma_n(y_n)$$
converges to $y_0$. But all $z_n$ are in the fixed leaf $L$.
It follows that $\alpha^{i_n}(L)$ converges to $E$,
and does not escape in the leaf space. This
contradicts the fact that $\alpha$ acts freely on the leaf 
space because
$|i_n| \to \infty$.
\end{proof}

We remark that this proof 
only uses geometry of $M$ and foliations.
That is, this proof 
works for 
any regulating flow transverse to a transversely
oriented,  $\R$-covered, uniform  foliation in a hyperbolic 
$3$-manifold.

The reason we will be able to compare the dynamics of $\ft$ and $\wt\Phi$ is thanks to the fact that they are a uniform bounded distance apart. That is, we have the following.

\begin{lemma}\label{lema-ftclose}
Let $f\colon M \to M$ be a homeomorphism of a hyperbolic 3-manifold $M$ preserving an $\R$-covered uniform  foliation $\fol$ and $\ft$ a good lift to $\mt$. There exists $R_1>0$ so that for every $L_1 \in \fn$, if $L_2 = \ft(L_1)$ and $x\in L_1$ then $d_{L_2}(\ft (x), \tau_{12}(x)) < R_1$. 
\end{lemma}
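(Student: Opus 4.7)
The plan is to establish the lemma in two steps: first obtain a uniform bound on the ambient distance $d_{\mt}(\ft(x), \tau_{12}(x))$, and then convert this into the required leafwise bound in $L_2$ via uniform proper embedding of leaves.

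For the ambient estimate, I apply the triangle inequality
\[
d_{\mt}(\ft(x), \tau_{12}(x)) \leq d_{\mt}(x, \ft(x)) + d_{\mt}(x, \tau_{12}(x)).
\]
The first summand is at most the good-lift constant $K_0$ from Definition~\ref{d.goodlift}. For the second, $\tau_{12}(x)$ lies on the $\wt\Phi$-orbit of $x$ at the first instant that orbit meets $L_2$; since $\ft(x) \in L_2$ is at distance at most $K_0$ from $x$, the Hausdorff distance between $L_1$ and $L_2 = \ft(L_1)$ is at most $K_0$. Using the comparison between Hausdorff distance and $\wt\Phi$-flow distance for $\R$-covered uniform foliations with regulating pseudo-Anosov flows (noted in the excerpt just before Fact~\ref{fact-pA2}), together with $\pi_1(M)$-equivariance and compactness of $M$ to make the bound uniform, the flow time from $x$ to $\tau_{12}(x)$ is bounded by some $T_0$ independent of $x$ and $L_1$. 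After fixing a parametrization of $\Phi$ on the compact manifold $M$, the lifted flow $\wt\Phi$ has uniformly bounded speed on $\mt$, so this gives $d_{\mt}(x, \tau_{12}(x)) \leq R_2$ for some uniform $R_2$, and hence $d_{\mt}(\ft(x), \tau_{12}(x)) \leq K_0 + R_2$.

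For the second step, I invoke the standard fact that in a hyperbolic $3$-manifold, leaves of an $\R$-covered uniform foliation are uniformly properly embedded in $\mt$ (as in \cite[Lemma 4.48]{Calegari:book}, already used in the proof of Lemma~\ref{fact-pA5}). This produces a function $\phi \colon [0,\infty) \to [0,\infty)$, finite on finite inputs, such that any two points $p,q$ on a common leaf $L \in \fn$ with $d_{\mt}(p,q) \leq D$ satisfy $d_L(p,q) \leq \phi(D)$. Applied to $\ft(x), \tau_{12}(x) \in L_2$, this yields the desired bound
\[
d_{L_2}(\ft(x), \tau_{12}(x)) \leq \phi(K_0 + R_2) =: R_1.
\]

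The main technical point is the uniform flow-time bound used in the first step: namely, that for leaves $L_1, L_2 \in \fn$ at Hausdorff distance at most $K_0$, the $\wt\Phi$-flow time along any orbit from $L_1$ to $L_2$ is uniformly bounded. Granted the continuity of the flow-time function $(x,L) \mapsto T(x,L)$, its $\pi_1(M)$-equivariance, and the regulating property of $\Phi$, this reduces to a compactness argument on $M$; no estimate has to survive near singular orbits because the flow itself is continuous. Once this uniform flow-time bound is in hand, both of the remaining ingredients (bounded speed of $\wt\Phi$ and uniform proper embedding of leaves) are standard.
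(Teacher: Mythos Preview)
Your proof is correct and follows essentially the same approach as the paper: bound the ambient distance $d_{\mt}(\ft(x),\tau_{12}(x))$ using the good-lift bound together with a uniform flow-time bound coming from the bounded Hausdorff distance between $L_1$ and $\ft(L_1)$, then upgrade to a leafwise bound via uniform proper embedding of leaves. The paper's proof is simply a more compressed version of the same argument, citing the same ingredients (the discussion before Fact~\ref{fact-pA2} and \cite[Lemma 4.48]{Calegari:book}).
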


\begin{proof}
Since $\ft$ is a good lift it follows that one can join $x$ with $\ft(x)$ by an arc of bounded length. In particular, since the foliation $\fol$ is $\R$-covered and uniform, it follows that the Hausdorff distance between $L_1$ and $L_2 = \ft(L_1)$ is uniformly bounded above and below independently of $L_1 \in \fn$. Therefore, as explained before the statement of Fact \ref{fact-pA2} the amount of flowing needed to go from $L_1$ to $L_2$ is also uniformly bounded below and above. 
Thus $d_{\mt}(\ft(x),\tau_{12}(x))$ is uniformly bounded. Again
we use the fact that leaves of an $\R$-covered taut foliation are uniformly properly embedded in the universal cover (see \cite[Lemma 4.48]{Calegari:book}). The result follows.   
\end{proof}

Now we are ready to prove Proposition \ref{p.homeotranslation}.  

\begin{proof}[Proof of Proposition \ref{p.homeotranslation}] 

Let $\gamma \in \pi_1(M)$ be represented by a periodic orbit $\delta_0$ of $\Phi$ and take $\delta$ the unique lift of $\delta_0$ to $\mt$ fixed by $\gamma$.
 We will build the core $T_\gamma$ that we seek by considering a very large tubular neighborhood of $\delta$ and taking the intersection of this tubular neighborhood with all of its forward and backwards images under $\ft$ (see figure~\ref{fig-to1}). We will prove that this infinite intersection is non-empty, thus its projection to $M_\gamma$ will have the desired properties.
 
 Note that, if we build the core for a power $\gamma^{k_1}$ and $\ft^{k_2}$ instead, then taking its union with its images by $\gamma, \dots, \gamma^{k_1-1}$ and $\ft, \dots, \ft^{k_2-1}$ will produce a core which
is both $\gamma$ and $\ft$ invariant, as we want. 
 So, in this proof we may take any finite power of $\gamma$ or $\ft$.
 
 Thus, if $\delta_0$ is a $p$-prong, we replace $\gamma$ by a power if necessary, so that $\gamma$ fixes every half-leaf of $\delta$. Furthermore, we take a power of $f$ so that for any $L$, $\ft(L)$ is above $\gamma(L)$. For notations sake, we assume this is the original $f$.

\begin{figure}[ht]
\begin{center}
\includegraphics[scale=0.5]{./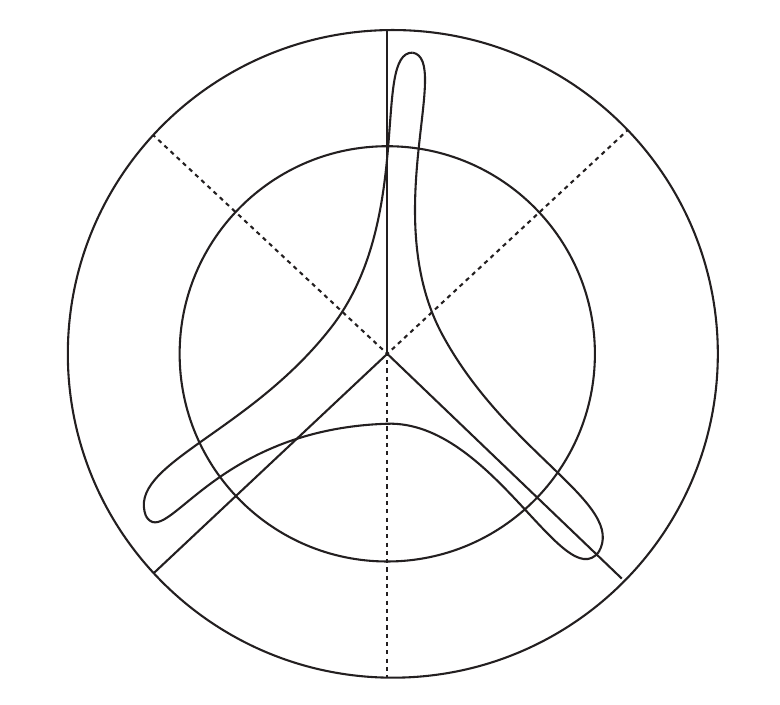}
\begin{picture}(0,0)
\put(-40,22){$a^1_L$}
\put(-162,22){$a^2_L$}
\put(-100,166){$a^3_L$}
\put(-40,140){$r^1_L$}
\put(-100,0){$r^2_L$}
\put(-162,140){$r^3_L$}
\end{picture}
\end{center}
\vspace{-0.5cm}
\caption{{\small The image of a large tubular neighborhood of the lift of the prong by $\ft$ in a given center-stable leaf.}}\label{fig-to1}
\end{figure}

For any leaf $L$ in $\fn$, we write $x_L$ to be the (unique) intersection of $\delta$ with $L$.
Let $a^i_L$, with $i=1, \dots, p$, be all the ideal points on the boundary at infinity of $L$ of the weak unstable leaf (of $\wt\Phi$) through $\delta$ intersected with $L$, where $p=2$ if $\delta$ is a regular orbit and otherwise $\delta$ is a $p$-prong orbit. Equivalently, $a^i_L$ is the ideal point determined by each ray of $\cG^u_L(x_L)$.
Similarly, we define $r^i_L$, $i=1, \dots, p$, to be the ideal ends of the rays of $\cG^s_L(x_L)$.

For every $L \in \fn$ and for every $i$, we choose $P^i_{L}$ and $N^i_{L}$ neighborhoods (in $L \cup \partial_{\infty} L$) of, respectively $a^i_{L}$ and $r^i_{L}$. We also choose these neighborhoods such that their boundary (in $L$) are geodesics for the path metric on $L$.
Furthermore, we choose these neighborhoods in such a way that they depend continuously on $L\in \fn$ and they are $\gamma$-invariant, i.e., $\gamma (P^i_{L}) = P^i_{\gamma (L)}$ and $\gamma (N^i_{L}) = N^i_{\gamma (L)}$.
Up to taking the neighborhoods smaller, we assume that for any $L$ and any $i,j$, $P^i_L \cap N^j_L = \emptyset$;  and for 
any $i \not = j$, $P^i_L \cap P^j_L = \emptyset$, $N^i_L \cap N^j_L = \emptyset$.

We define a map $\tau_f\colon \mt \to \mt$ in the following way:
For any $L$ in $\fn$ and any $x\in L$, $\tau_f(x)$ is the intersection of the orbit of $\wt\Phi$ through $x$ with $\ft(L)$.

Let $R_1$ be the constant given by Lemma \ref{lema-ftclose} (i.e., such that $\ft$ and $\tau_f$ are $R_1$-close). Using that leaves of $\cG^u_L(x_L)$ and $\cG^s_L(x_L)$ are quasigeodesics and that the flow in a given time expands and contracts their length by a factor different from one, one can construct neighborhoods $P^i_L$ and $N^i_L$ sufficiently small so that:

\begin{enumerate}[label=(\roman*)]
 \item For any $L$ and any $i$,
 \[
  \tau_{f}(P^i_L) \subset P^i_{\ft(L)} \text{ and } 
d_{\ft(L)}\left(\tau_{f}(P^i_L), 
\partial P^i_{\ft(L)}\right) > 10R_1.
 \]
 \item For any $L$ and any $i$, 
 \[
  \tau_{f}^{-1}(N^i_L) \subset N^i_{\ft^{-1} (L)} \text{ and } 
d_{\ft^{-1}(L)}\left(\tau_{f}^{-1}(N^i_L), 
\partial N^i_{\ft^{-1} (L)}\right) > 10R_1.
 \]
\end{enumerate}
To make sense of the distance between sets in $L \cup \partial_{\infty}L$ above, we decide that ideal points are at infinite distance from any other point.
A direct consequence of the conditions above is that
\begin{enumerate}
 \item\label{item_P} For any $L$ and any $i$, $\ft(P^i_L) \subset P^i_{\ft(L)}$
 \item\label{item_N} For any $L$ and any $i$, $\ft^{-1}(N^i_L) \subset N^i_{\ft^{-1} (L)}$
\end{enumerate}

Lemma \ref{lema-ftclose} shows that, for any $L$, the maps $\tau_f|_L$ and $\ft|_L$ are a finite distance from each other. Thus their extension to the circles at infinity $\partial_{\infty} L$ is the same. Now, recall that $\tau_f$ corresponds to flowing along the pseudo-Anosov flow $\wt\Phi$.
Hence, up to replacing $\ft$ by a very high power of $\ft$, we can moreover assume that:
\begin{enumerate}
\setcounter{enumi}{2}
 \item \label{item_no_escape_future} For any $L$ and $i$, if $\omega$ is an ideal endpoint of $\partial N_L^i$, then  $\ft(\omega) \in P_{\ft(L)}^j$, for some $j$ (where $j$ is the unique index such that $a^j_L$ is the first attractor on the side of $\omega$ from $r^i_L$);
 \item \label{item_no_escape_past} For any $L$ and $i$, if $\omega$ is an ideal endpoint of $\partial P_L^i$, then $\ft^{-1}(\omega) \in N_{\ft^{-1}(L)}^j$, for some $j$ (where $j$ is uniquely determined as above).
\end{enumerate}
 Note that conditions (\ref{item_P}) and (\ref{item_N}) still are satisfied  by our high power of $\ft$.

Now we choose a constant $R$ large enough so that for every $L$ and every $i$, the ball $D_L:= B(x_L, R)$, of radius $R$ around $x_L$, intersects every $P^i_L$. Moreover, we choose it to satisfy:
\[ D_L \supset \ft\left(\partial N_{\ft^{-1}(L)}^i\right)\smallsetminus \bigcup_j P_L^j \quad , \quad 
D_L \supset \ft^{-1}\left(\partial P_{\ft(L)}^i\right)\smallsetminus \bigcup_j N_L^j  \]
This is possible because the ideal points of 
$A = \ft(\partial N^i_{\ft^{-1}(L)})$ are contained in
the interior of the ideal boundary of the union
of the $P^j_L$. It follows that for each $L$ only a compact part
of $A \cap L$ is outside the union, and this
varies continuously with $L$. By choosing $R$ big
enough one satisfies the equations above.

Let 
\[
 V := \bigcup_{L\in \fn} D_L.
\]
We will show that the set $\bigcap_{n\in\ZZ} \ft^n(V)$ is non-empty and thus its projection to $M_{\gamma}$ is the core $T_{\gamma}$ that we seek.
The proof will be done by induction. In order to make that induction work, we need the following
\begin{claim}\label{claim_for_induction_future}
 Let $L$ be a leaf in $\fn$. Let $C \subset D_L$ be any compact and path-connected set that does not intersect any $N^i_{L}$.
 
 If there exists $i_1,i_2$ distinct such that $C$ intersect both $P^{i_1}_L$ and $P^{i_2}_L$, then there exists a path-connected component of $\ft(C)\cap D_{\ft(L)}$ that intersects $P^{i_1}_{\ft(L)}$ and $P^{j_2}_{\ft(L)}$, for some $j_2\neq i_1$ ($j_2$ is not necessarily $i_2$) and that does not intersect any $N^i_{\ft(L)}$.
\end{claim}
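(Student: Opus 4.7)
The plan is to push a path $\alpha$ in $C$ connecting $p_1 \in C\cap P^{i_1}_L$ to $p_2 \in C\cap P^{i_2}_L$ forward by $\ft$ and then extract a suitable sub-arc lying in $D_{\ft(L)}$. First, I would observe that $\ft(C)$ avoids every $N^i_{\ft(L)}$: applied to the leaf $\ft(L)$, condition~\ref{item_N} gives $\ft^{-1}(N^i_{\ft(L)})\subset N^i_L$, so if $\ft(C)$ met $N^i_{\ft(L)}$ then $C$ would meet $N^i_L$, contradicting the hypothesis on $C$. By condition~\ref{item_P}, the path $\beta:=\ft\circ\alpha$ satisfies $\beta(0)\in P^{i_1}_{\ft(L)}$, $\beta(1)\in P^{i_2}_{\ft(L)}$, and $\beta$ avoids every $N^j_{\ft(L)}$.

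Next, to locate where $\beta$ transitions between distinct $P$-regions, I would set $t_0:=\sup\{t\in[0,1]:\beta(t)\in P^{i_1}_{\ft(L)}\}$ and $t_1:=\inf\bigl\{t>t_0:\beta(t)\in \bigcup_{j\neq i_1}P^j_{\ft(L)}\bigr\}$. By continuity, $\beta(t_0)\in\partial P^{i_1}_{\ft(L)}$ and $\beta(t_1)\in\partial P^{j_2}_{\ft(L)}$ for some $j_2\neq i_1$, while for $t\in(t_0,t_1)$ the point $\beta(t)$ lies in the middle region $\mathcal{M}_{\ft(L)}:=\ft(L)\setminus\bigcup_j(P^j_{\ft(L)}\cup N^j_{\ft(L)})$.

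The main step, and the hard part of the proof, is to show that $\beta([t_0,t_1])\subset D_{\ft(L)}$. This is where the choice of $R$ is essential. Applied to the leaf $\ft(L)$, the defining conditions on $R$ yield $D_{\ft(L)}\supset\ft(\partial N^i_L)\setminus\bigcup_j P^j_{\ft(L)}$ and $D_{\ft(L)}\supset\ft^{-1}(\partial P^i_{\ft^2(L)})\setminus\bigcup_j N^j_{\ft(L)}$. By conditions~\ref{item_no_escape_future} and~\ref{item_no_escape_past}, the relevant arcs of these curves run from one $\partial P^{j_1}_{\ft(L)}$ to another $\partial P^{j_2}_{\ft(L)}$ through $\mathcal{M}_{\ft(L)}$, forming ``barriers'' inside $D_{\ft(L)}$ that separate distinct $P^j_{\ft(L)}$ regions. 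Together with Lemma~\ref{lema-ftclose} (which compares $\ft$ to the flow-along map $\tau_f$) and the strict inclusions at distance $10R_1$ from conditions~(i) and~(ii), these barriers force any path in $\ft(C)$ that exits $D_{\ft(L)}$ to do so through one of the $P^j_{\ft(L)}$ regions. Hence the transition arc of $\beta$, which lies strictly in $\mathcal{M}_{\ft(L)}$, cannot leave $D_{\ft(L)}$.

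Once $\beta([t_0,t_1])\subset D_{\ft(L)}$ is established, taking $\beta(t_0-\varepsilon)$ and $\beta(t_1+\varepsilon)$ for small $\varepsilon>0$ yields points of $\ft(C)\cap D_{\ft(L)}$ lying inside $P^{i_1}_{\ft(L)}$ and $P^{j_2}_{\ft(L)}$ respectively, so the path-connected component of $\ft(C)\cap D_{\ft(L)}$ containing $\beta([t_0-\varepsilon,t_1+\varepsilon])$ satisfies all the required properties: it intersects $P^{i_1}_{\ft(L)}$ and $P^{j_2}_{\ft(L)}$, and it avoids every $N^j_{\ft(L)}$ by the first step.
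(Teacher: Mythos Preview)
Your approach is essentially the same as the paper's, but you have made the central step more elaborate than necessary. The paper's proof is three sentences: after noting that $\ft(C)$ meets both $P^{i_1}_{\ft(L)}$ and $P^{i_2}_{\ft(L)}$ (condition~\ref{item_P}), it asserts directly that $\ft(C)\cap\partial D_{\ft(L)}\subset\bigcup_j P^j_{\ft(L)}$, citing only condition~\ref{item_no_escape_future} and the first displayed inclusion in the choice of $D_L$, together with the hypothesis that $C$ avoids every $N^i_L$. From this single assertion the conclusion follows, since a connected set joining $P^{i_1}_{\ft(L)}$ to $P^{i_2}_{\ft(L)}$ can only leave $D_{\ft(L)}$ through a $P^j$.

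Your ``barrier'' paragraph arrives at the same geometric fact, but you invoke several ingredients that are not needed here: condition~\ref{item_no_escape_past}, the second displayed inclusion for $D_L$, the $10R_1$ margins from (i)--(ii), and Lemma~\ref{lema-ftclose}. These are used for the symmetric Claim~\ref{claim_for_induction_past} or for the original construction of the neighborhoods, not for this forward step. The essential reason your transition arc $\beta([t_0,t_1])$ stays in $D_{\ft(L)}$ is precisely that $\ft(C)$ cannot cross $\partial D_{\ft(L)}$ outside $\bigcup_j P^j_{\ft(L)}$; once you isolate this, the path-and-sub-arc bookkeeping becomes unnecessary.

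One point you make explicit that the paper leaves implicit: your first observation that $\ft(C)$ misses every $N^i_{\ft(L)}$ (via condition~\ref{item_N}) is exactly what is needed for the ``does not intersect any $N^i_{\ft(L)}$'' clause of the conclusion, and it is good that you spelled it out.
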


\begin{proof}
Since $C$ intersects $P^{i_1}_L$ and $P^{i_2}_L$, $\ft(C)$ also intersects both $P^{i_1}_{\ft(L)}$ and $P^{i_2}_{\ft(L)}$ (thanks to the condition (\ref{item_P})).
Now, since $C$ does not intersect any $N^i_L$, because of condition (\ref{item_no_escape_future}) and the choice of $D_L$, the intersections of $\ft(C)$ with $\partial D_{\ft(L)}$ are contained in the union
of the $P^i_{\ft(L)}$.

Thus, as claimed, $\ft(C)\cap D_{\ft(L)}$ contains a connected component that intersects $P^{i_1}_{\ft(L)}$ and $P^{j_2}_{\ft(L)}$, for some $j_2\neq i_1$ ($j_2$ is not necessarily $i_2$) and that does not intersect any $N^i_{\ft(L)}$.
\end{proof}

Figure \ref{fig-to2} shows a case where $j_2$ is not equal to $i_2$: It may be that $\ft(C)$ stretches well into $P^{j_2}(\ft(L))$ and out of $D_{\ft(L)}$. Thus, as in the figure, the intersection $\ft(C) \cap D_{\ft(L)}$ can have two components
$C_1$ and $C_2$, neither of which intersects both $P^{i_1}_{\ft(L)}$ and $P^{i_2}_{\ft(L)}$.

\begin{figure}[ht]
\begin{center}
\includegraphics[scale=0.6]{./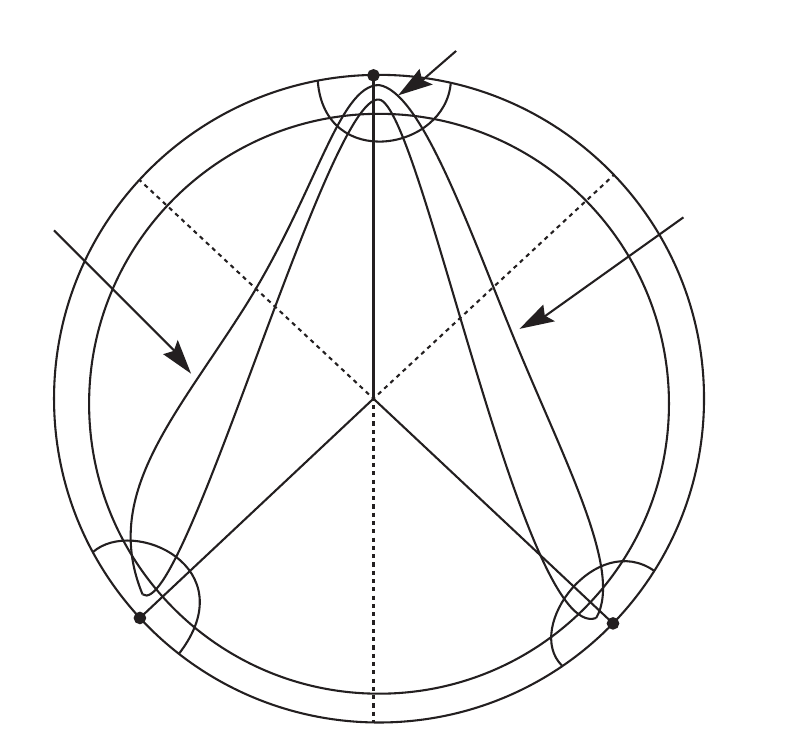}
\begin{picture}(0,0)
\put(-85,85){$C_2$}
\put(-182,85){$C_1$}
\put(-107,208){{\small $\ft(C)$}}
\put(-32,158){{\small $\ft(C)$}}
\put(-239,155){{\small $\ft(C)$}}
\put(-16,104){$\ft(L)$}
\put(-202,23){$P^{i_1}_{\ft(L)}$}
\put(-52,23){$P^{i_2}_{\ft(L)}$}
\put(-134,209){$P^{j_2}_{\ft(L)}$}
\put(-156,65){$D_{\ft(L)}$}
\end{picture}
\end{center}
\vspace{-0.5cm}
\caption{{\small The intersection $\ft(C)\cap D_{\ft(L)}$ may not have a connected set joining $P^{i_1}$ to $P^{i_2}$.}}\label{fig-to2}
\end{figure}

The same proof as above, using $\ft^{-1}$ instead (and the conditions (\ref{item_N}) and (\ref{item_no_escape_past})), gives

\begin{claim}\label{claim_for_induction_past}
 Let $L$ be a leaf in $\fn$. Let $C \subset D_L$ be any compact and path-connected set that does not intersect any $P^i_{\ft(L)}$. 
 
 If there exists $i_1,i_2$ distinct such that $C$ intersect both $N^{i_1}_L$ and $N^{i_2}_L$, then there exists a path-connected component of $\ft^{-1}(C)\cap D_{\ft^{-1}(L)}$ that intersects $N^{i_1}_{\ft^{-1}(L)}$ and $N^{j_2}_{\ft^{-1}(L)}$, for some $j_2\neq i_1$ and that does not intersect any $P^i_{\ft^{-1}(L)}$.
\end{claim}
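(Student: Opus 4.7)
My plan is to carry out the proof as a time-reversed mirror of Claim \ref{claim_for_induction_future}, with $\ft$ replaced by $\ft^{-1}$, the leaf $\ft(L)$ replaced by $\ft^{-1}(L)$, the attracting neighborhoods $P^i$ replaced by the repelling neighborhoods $N^i$, and conditions (\ref{item_N}) and (\ref{item_no_escape_past}) used in place of (\ref{item_P}) and (\ref{item_no_escape_future}). The second ``choice-of-$D_L$'' inclusion, namely $D_L \supset \ft^{-1}(\partial P^i_{\ft(L)}) \setminus \bigcup_j N^j_L$, will play here the exact role that the first inclusion played in Claim \ref{claim_for_induction_future}.

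First I would transport the hypothesis under $\ft^{-1}$: by condition (\ref{item_N}), $\ft^{-1}(N^{i_k}_L) \subset N^{i_k}_{\ft^{-1}(L)}$ for $k = 1,2$, so $\ft^{-1}(C)$ is a compact path-connected subset of $\ft^{-1}(D_L)$ meeting both $N^{i_1}_{\ft^{-1}(L)}$ and $N^{i_2}_{\ft^{-1}(L)}$. Next I would control how $\ft^{-1}(C)$ can exit $D_{\ft^{-1}(L)}$. In the forward argument, the fact that $C$ avoided every $N^i_L$ was used together with (\ref{item_no_escape_future}) and the first inclusion in the definition of $D_L$ to conclude that $\ft(C) \cap \partial D_{\ft(L)} \subset \bigcup_j P^j_{\ft(L)}$. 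By the symmetric statement: the avoidance of every $P^i_{\ft(L)}$ by $C$, combined with condition (\ref{item_no_escape_past}) applied to the leaf $L$ (which sends each ideal endpoint of $\partial P^i_L$ into some $N^j_{\ft^{-1}(L)}$), together with the inclusion $D_{\ft^{-1}(L)} \supset \ft^{-1}(\partial P^i_L) \setminus \bigcup_j N^j_{\ft^{-1}(L)}$, forces the intersection of $\ft^{-1}(C)$ with $\partial D_{\ft^{-1}(L)}$ to lie inside $\bigcup_j N^j_{\ft^{-1}(L)}$.

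Finally I would extract the desired component. Pick a point $p \in \ft^{-1}(C) \cap N^{i_1}_{\ft^{-1}(L)} \cap D_{\ft^{-1}(L)}$, which is nonempty by the choice of radius $R$ ensuring that $D_{L'}$ hits every $N^i_{L'}$ for every leaf $L'$. Let $C'$ be the path-connected component of $\ft^{-1}(C) \cap D_{\ft^{-1}(L)}$ containing $p$. Because $\ft^{-1}(C)$ is path-connected, meets both $N^{i_1}_{\ft^{-1}(L)}$ and $N^{i_2}_{\ft^{-1}(L)}$, and because the only way any connecting path can cross $\partial D_{\ft^{-1}(L)}$ is through one of the $N^j_{\ft^{-1}(L)}$, the component $C'$ must meet $N^{i_1}_{\ft^{-1}(L)}$ together with at least one other $N^{j_2}_{\ft^{-1}(L)}$, $j_2 \neq i_1$. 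The same exit-control tells us $C'$ avoids every $P^i_{\ft^{-1}(L)}$, giving the conclusion.

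The step I expect to require the most care is the bookkeeping illustrated by Figure \ref{fig-to2}: just as in the forward claim, it can happen that $j_2$ is not the ``expected'' index $i_2$, because the component of $\ft^{-1}(C)$ reaching toward $N^{i_2}_{\ft^{-1}(L)}$ might first exit $D_{\ft^{-1}(L)}$ through a different $N^{j_2}_{\ft^{-1}(L)}$ and thus be disconnected inside $D_{\ft^{-1}(L)}$ from $p$. The argument must therefore only claim the existence of \emph{some} second index $j_2 \neq i_1$, which is exactly what the statement asserts; this is guaranteed because $\ft^{-1}(C)$ itself is connected and the exit set is confined to $\bigcup_j N^j_{\ft^{-1}(L)}$.
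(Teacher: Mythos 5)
Your proof is correct and is essentially the paper's own argument: the paper establishes Claim \ref{claim_for_induction_past} by simply invoking the proof of Claim \ref{claim_for_induction_future} with $\ft^{-1}$ in place of $\ft$ and conditions (\ref{item_N}) and (\ref{item_no_escape_past}) in place of (\ref{item_P}) and (\ref{item_no_escape_future}), which is exactly the mirroring you carry out (including the use of the second choice-of-$D_L$ inclusion and the caveat that $j_2$ need not equal $i_2$). One small point of care: the hypothesis as printed, that $C$ avoids every $P^i_{\ft(L)}$, is vacuous for $C\subset D_L$ and should read $P^i_L$ (this is what the induction in Claim \ref{claim_R_and_Q_good_property} actually supplies), and your exit-control step implicitly, and correctly, uses that corrected version.
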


For any leaf $L$ and any integer $n\geq 0$, define
\[
 R_L^n = \bigcap_{k=0}^n \ft^k(D_{\ft^{-k}(L)}) \quad \text{and} \quad Q_L^n = \bigcap_{k=0}^n \ft^{-k}(D_{\ft^{k}(L)}).
\]

\begin{claim}\label{claim_R_and_Q_good_property}
 For every $i$ and every $n$, $R_L^n$ contains a subset $C$, compact and path-connected that does not intersect any $N^j_{L}$ but does intersect $P^i_L$ and some $P^{i_2}_L$ (for some $i_2\neq i$).
 
 Similarly, for every $i$ and every $n$, $Q_L^n$ contains a subset $C$, compact and path-connected that does not intersect any $P^j_{L}$ but does intersect $N^i_L$ and some $N^{i_2}_L$ (for some $i_2\neq i$).
\end{claim}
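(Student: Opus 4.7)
The proof is by induction on $n$, uniformly in $L \in \fn$. The key observation is the pair of recursive identities
\[
R_L^{n+1} = D_L \cap \ft\!\left(R^n_{\ft^{-1}(L)}\right) \quad \text{and} \quad Q_L^{n+1} = D_L \cap \ft^{-1}\!\left(Q^n_{\ft(L)}\right),
\]
and by the obvious symmetry --- swapping the roles of the $P^j$'s and $N^j$'s and replacing Claim~\ref{claim_for_induction_future} by Claim~\ref{claim_for_induction_past} --- it is enough to treat the $R$-case.

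For the base case $n=0$, one has $R_L^0 = D_L$, and this disk meets every $P^j_L$ by the choice of $R$. Since the $P^j_L$ and $N^j_L$ are small, pairwise disjoint neighborhoods of the distinct ideal endpoints of the prongs of $\delta$ in $L$, and in particular all avoid the center $x_L$, one can connect $P^i_L$ to any prescribed $P^{i_2}_L$ (with $i_2 \neq i$) by a compact arc lying in $D_L$, passing through a small neighborhood of $x_L$, and avoiding every $N^j_L$. This arc is the required $C$.

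For the inductive step, fix $L \in \fn$ and an index $i$, and assume the statement holds at level $n$ for every leaf. Applied to $\ft^{-1}(L)$ with the same index $i$, it produces a compact path-connected set $C' \subset R^n_{\ft^{-1}(L)}$ meeting $P^i_{\ft^{-1}(L)}$ and some $P^{i'}_{\ft^{-1}(L)}$, and disjoint from every $N^j_{\ft^{-1}(L)}$. Claim~\ref{claim_for_induction_future} then furnishes a path-connected component $C$ of $\ft(C') \cap D_L$ meeting $P^i_L$ and some $P^{j_2}_L$ (with $j_2 \neq i$) and missing every $N^j_L$. Since
\[
C \subset \ft(C') \cap D_L \subset \ft\!\left(R^n_{\ft^{-1}(L)}\right) \cap D_L = R_L^{n+1},
\]
the induction is complete.

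The only delicate point is the base case. Although $D_L$ is forced to be large enough to reach every $P$-neighborhood, and therefore may also reach the $N$-neighborhoods, the smallness and pairwise disjointness of the prong neighborhoods, together with the fact that $x_L$ lies outside all of them, guarantees that one can always walk from one $P$ to another through $x_L$ while avoiding every $N$. Once this is established, the inductive step is entirely formal: Claims~\ref{claim_for_induction_future} and \ref{claim_for_induction_past} are tailor-made to preserve the ``two $P$'s and no $N$'' (respectively ``two $N$'s and no $P$'') property under one further iteration, and the recursion identifies this invariant set with $R_L^{n+1}$ (respectively $Q_L^{n+1}$).
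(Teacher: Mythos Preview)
Your proof is correct and follows essentially the same approach as the paper: induction on $n$ via the recursion $R_L^{n+1} = D_L \cap \ft(R^n_{\ft^{-1}(L)})$, with Claim~\ref{claim_for_induction_future} supplying the inductive step. You give slightly more detail in the base case --- spelling out why one can connect two $P$-neighborhoods through $x_L$ while avoiding all $N^j_L$ --- but the structure and key ideas are identical.
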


\begin{proof}
We only do the proof for $R_L^n$, as the claim for $Q_L^n$ follows similarly.

First, since $R^0_L = D_L$, the claim is true for $n=0$ and any leaf $L$ (because $D_L$ clearly contains such a subset).
Let us assume that the claim holds for $R^{n-1}_L$ and for any $L$.
Then, Claim \ref{claim_for_induction_future} implies that (for any $L$) $\ft(R^{n-1}_L)\cap D_{\ft(L)}$ has a compact and path-connected subset that does not intersect any $N^j_{L}$ but does intersect $P^i_L$ and some $P^{i_2}_L$ (for some $i_2\neq i$).

But, by definition, we have 
\[
 R_L^n = \bigcap_{k=0}^n \ft^k(D_{\ft^{-k}(L)}) = \ft\left(R^{n-1}_{\ft^{-1}(L)}\right) \cap D_L.
\]
Thus the claim is proved.
\end{proof}

Now, since for any $L$, the ideal points $a^i_L$ and $r^i_L$ alternate, the properties of $R^n_L$ and $Q^n_L$ given by Claim \ref{claim_R_and_Q_good_property} imply that, for all $n$, $R^n_L\cap Q^n_L$ is a non-empty compact set.

Since $R^n_L$ and $Q^n_L$ are decreasing sets, the set 
\[
 T_L := \bigcap_{n\geq 0} \left(R^n_L\cap Q^n_L\right)
\]
is (for any $L$) non-empty and compact. Thus
\[
 T:= \bigcup_{L\in\fn} T_L
\]
is non-empty, and, by construction, $\ft$-invariant (note also that $T = \cap_{n\in\ZZ} \ft^n(V)$ as we claimed above).
Hence, the projection $T_\gamma$ of $T$ to $M_{\gamma}$ is non-empty, compact and $\hat{f}_\gamma$-invariant.

Once $T_{\gamma}$ is built, the second half of Proposition \ref{p.homeotranslation} follows directly from the homotopy invariance of Lefschetz index together with Lemma \ref{lema-ftclose}. So we finished the proof of Proposition \ref{p.homeotranslation}.
\end{proof}

In the proof of Proposition \ref{p.homeotranslation}, we obtained the following result which we state independently for future reference.

\begin{lemma}\label{fixedpointbounded} 
Let $f\colon M \to M$ be a homeomorphism of a hyperbolic 3-manifold, $f$
homotopic to the identity. Suppose that $f$ preserves an $\R$-covered uniform foliation $\cF$ and that a good lift $\ft$ of $f$ acts as a translation on the leaf space of $\fn$. Let $\gamma \in \pi_1(M)$ be a deck transformation.

If $h=\gamma \circ \ft^n$ fixes some leaf $L \in \fn$ (with $n\neq 0$) then the set of fixed points of $h$ in $L$ is contained in a compact subset of $L$.

Moreover, given $n > 0$ big enough, then 
for every $R>0$ there is a compact set $D \subset L$ such that if $y \notin D$ then $d_{L}(y,h(y))>R$.

Finally, let $P$ be the set of ideal points in the boundary at infinity $S^1(L)$ that are attracting and fixed under the map $\gamma \circ \tau_{12}$, where $\tau_{12} \colon L \to \ft^n(L)$ is the flow along $\Phi$ map.
Then, for any $y\in P$, there exists a  neighborhood $U$
of $y$ in $L \cup S^1(L)$ 
such that 
\begin{enumerate}
 \item $h(\overline U)$ is strictly contained in $\overline U$, and
 \item $\bigcap_{i \geq 0} h^i(U) = \{ y \}$.
\end{enumerate}
\end{lemma}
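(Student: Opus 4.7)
The plan is to compare $h = \gamma \circ \ft^n$ with the model map $h' := \gamma \circ \tau_{12} \colon L \to L$, whose dynamics is governed by the regulating pseudo-Anosov flow via Lemmas~\ref{fact-pA3} and~\ref{fact-pA2}; all three conclusions will follow by first establishing them for $h'$ and then transferring via a uniformly bounded leafwise comparison estimate. Because $\ft$ acts as a translation on $\cL_{\fn}$ and $h$ fixes $L$, the deck transformation $\gamma$ necessarily sends $\ft^n(L)$ back to $L$, and therefore acts freely and monotonically on $\cL_{\fn}$ in the opposite direction to $\ft$. In particular Lemma~\ref{fact-pA3} applies with $\beta = \gamma$, and the flow-along map from $L$ to $\gamma^{-1}(L) = \ft^n(L)$ appearing there is precisely $\tau_{12}$.

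Iterating Lemma~\ref{lema-ftclose}, and using that $\ft$ is uniformly Lipschitz in the ambient metric (since $f$ has bounded derivatives on compact $M$) together with the uniform properness of leaves of $\fn$ in $\mt$ (guaranteed by uniformity of $\fol$), we obtain a constant $C_n > 0$ such that
\[
d_L\bigl(h(y), h'(y)\bigr) \le C_n \qquad \text{for every } y \in L.
\]
The key point is that $\gamma$ is an isometry, so $d_L(\gamma \ft^n(y), \gamma \tau_{12}(y)) = d_{\gamma^{-1}(L)}(\ft^n(y), \tau_{12}(y))$, and the right-hand side is bounded by iterating the one-step estimate of Lemma~\ref{lema-ftclose} in $\mt$ and converting back to the leaf metric.

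Parts (1) and (2) follow quickly. Lemma~\ref{fact-pA3} says that for every $K>0$ the set $\{ y \in L : d_L(y, h'(y)) \le K \}$ is contained in a compact subset $D_K \subset L$. A fixed point of $h$ satisfies $d_L(y, h'(y)) \le C_n$, so the fixed set of $h$ sits inside the compact $D_{C_n}$, proving Part (1). For Part (2), given $R>0$ take $D := D_{R+C_n}$; for any $y \notin D$ the triangle inequality gives
\[
d_L(y, h(y)) \ge d_L(y, h'(y)) - d_L(h(y), h'(y)) > (R + C_n) - C_n = R,
\]
as required.

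Part (3) is more subtle. Because $L$ is Gromov-hyperbolic (as an $\R$-covered uniform leaf in a hyperbolic 3-manifold), the compactification $L \cup S^1(L)$ is well-defined, and because $h, h'$ are at bounded leafwise distance they extend to the same homeomorphism of $S^1(L)$; hence every $y \in P$ is also fixed by the extension of $h$. For $n$ sufficiently large the flow time from $L$ to $\ft^n(L)$ is long, so Fact~\ref{fact-pA2} produces genuine exponential expansion of $\tau_{12}$ along $\cG^u$. Mimicking the construction of the cone-shaped neighborhoods $P^i_L$ in the proof of Proposition~\ref{p.homeotranslation}, we build a neighborhood $U$ of $y$ in $L \cup S^1(L)$ bounded by a quasigeodesic arc (along appropriate $\cG^s$-leaves) and an arc of $S^1(L)$ around $y$, chosen so that $h'(\overline U)$ lies in the interior of $U$ with leafwise slack strictly greater than $C_n$; taking $U$ small enough toward $y$ makes this slack arbitrarily large. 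The comparison estimate then yields $h(\overline U) \subsetneq U$, and iterating—while keeping the $C_n$-perturbation dominated by the expansion, a standard Gromov-hyperbolic shadowing argument—gives $\bigcap_{i \ge 0} h^i(U) = \{y\}$. The main technical obstacle is exactly this quantitative construction of $U$: one must arrange the expansion from Fact~\ref{fact-pA2} to beat $C_n$ uniformly along iterates, which is the genuine content of the ``$n$ big enough'' hypothesis.
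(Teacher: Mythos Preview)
Your approach is essentially the paper's: Lemma~\ref{fixedpointbounded} is not proved separately there but is extracted from the proof of Proposition~\ref{p.homeotranslation}, and the ingredients you isolate---Lemma~\ref{fact-pA3} for the displacement of $h' = \gamma\circ\tau_{12}$, Lemma~\ref{lema-ftclose} for the bounded leafwise comparison between $h$ and $h'$, and the cone-neighborhoods $P^i_L$ for Part~(3)---are exactly the ones used there. One small slip: you invoke ``bounded derivatives'' of $f$ to iterate Lemma~\ref{lema-ftclose}, but in this lemma $f$ is only a homeomorphism; the clean fix is to observe that $\ft^n$ is itself a good lift of $f^n$, so Lemma~\ref{lema-ftclose} applies directly to $\ft^n$ and yields $d_{\ft^n(L)}(\ft^n(x),\tau_{12}(x)) < R_1^{(n)}$ without any iteration or Lipschitz estimate.
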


\begin{remark}
 Notice that the results of this section should be adaptable to the case of a homeomorphism acting as a translation on the leaf space of a manifold with one atoroidal piece. What would be required is some sort of analogue of Theorem \ref{teo-transversepA}. That is, we would need to know that there exists a transverse regulating flow such that any orbit that stays in the atoroidal piece is a hyperbolic $p$-prong ($p\geq 2$). Although that result seems likely to be true, it has not been proven. A similar case is dealt with in a companion paper \cite{BFFP_companion} where we study integrability for partially hyperbolic diffeomorphisms not homotopic to identity in Seifert manifolds. 
\end{remark}


\section{Double translations in hyperbolic manifolds}\label{sec-thmB}

In this section we prove Theorem \ref{thmintro:Hyperbolic}. 

Let $f\colon M \to M$ be a dynamically coherent partially hyperbolic diffeomorphism of a hyperbolic 3-manifold $M$. 
Recall that we denote by $\cW^{\cs}$ and $\cW^{\cu}$ a pair
of $f$-invariant foliations tangent respectively to $E^{\cs}$ and $E^{\cu}$. Up to taking an iterate, one has that $f$ is homotopic to identity and therefore has a good lift $\ft$ to $M$. We fix that good lift.

We want to show that $\ft$ fixes the leaves of both foliations $\widetilde \cW^{\cs}$ and $\widetilde \cW^{\cu}$. By Theorem \ref{t.weakfixedimpliesAnosov} this is enough to prove Theorem \ref{thmintro:Hyperbolic}. Notice that by Corollary \ref{coro-sumarizedichotomy} and Theorem \ref{mixedimpossible} we can assume by contradiction that both foliations are $\R$-covered and uniform and that $\ft$ acts as translation on both leaf spaces.

We in fact will get a contradiction using just one of the translations thanks to Proposition \ref{p.homeotranslation}, together with the following result. Notice that we thus obtain an alternative proof, albeit much more complicated, of the fact that there cannot be a mixed behavior in a hyperbolic manifold. 

\begin{proposition}\label{fixedleaf_DC}
Assume that a good lift $\ft$ of $f$ acts as a translation on the foliation $\wt\cW^{\cs}$ and let $\Phi$ be a transverse regulating pseudo-Anosov flow for $\cW^{\cs}$. Then, for every $\gamma \in \pi_1(M)$ associated to the inverse of a periodic orbit $\gamma$ of $\Phi$ there is $n>0, m>0$ such that $h= \gamma^n \circ \ft^m$ fixes a leaf $L$ of $\wt\cW^{\cs}$. 
\end{proposition}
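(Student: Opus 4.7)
The plan is to reformulate the question as a search for a periodic point of an induced circle homeomorphism on the leaf space of $\wt\cW^{cs}$, and then exploit the compact invariant core produced by Proposition~\ref{p.homeotranslation}.

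First, I would set up coordinates. Let $\delta\subset\mt$ denote the unique $\gamma$-invariant lift of the periodic orbit of $\Phi$ corresponding to $\gamma$. Since $\Phi$ is regulating and transverse to $\cW^{cs}$, the orbit $\delta$ meets each leaf of $\wt\cW^{cs}$ in exactly one point, so flow-time along $\delta$ identifies $\cL_{\wt\cW^{cs}}$ with $\R$. In these coordinates $\bar\gamma$ is a rigid translation by $-T_\gamma$ (with $T_\gamma>0$ the period of the orbit; the sign reflects that $\gamma$ corresponds to the \emph{inverse} of the orbit), and the translation hypothesis on $\ft$ means that the induced map $\bar\ft$ is an orientation-preserving, fixed-point-free homeomorphism of $\R$ commuting with $\bar\gamma$. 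After orienting $\Phi$ so that $\bar\ft$ translates in the positive direction, finding $n,m>0$ with $\gamma^n\ft^m$ fixing a leaf is equivalent to finding $t\in\R$ and positive integers $n,m$ with $\bar\ft^m(t)=t+nT_\gamma$, i.e., to producing a periodic point of the induced circle homeomorphism $\bar{\hat f}_\gamma\colon S^1_\gamma\to S^1_\gamma$, where $S^1_\gamma=\R/T_\gamma\ZZ$. The positivity of $n$ is automatic since $\bar\ft^m$ is strictly upward.

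Next, I would apply Proposition~\ref{p.homeotranslation} to obtain a compact $\hat f_\gamma$-invariant set $T_\gamma\subset M_\gamma$ meeting every leaf of $\hat\fol_\gamma$. Its preimage $T\subset\mt$ is $\langle\gamma\rangle$-cocompact, $\ft$-invariant, and contained in a uniformly bounded neighborhood of $\delta$; moreover its projection to $S^1_\gamma$ covers the entire circle. This hands us a compact $\bar{\hat f}_\gamma$-invariant set surjecting onto $S^1_\gamma$. The task now is to upgrade this invariance into rationality of the rotation number of $\bar{\hat f}_\gamma$.

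This rationality step is the main obstacle. The plan is to argue by contradiction: supposing the rotation number irrational, one aims to violate the uniform containment of $T$ in a tubular neighborhood of $\delta$. The key tools are the shadowing estimate $d(\ft(p),\tau_f(p))\leq R_1$ from Lemma~\ref{lema-ftclose}, which replaces $\ft$-iterates by $\wt\Phi$-flow-along iterates up to bounded error, together with the quantitative escape estimates of Lemma~\ref{fact-pA3} and Lemma~\ref{fact-pA5}, which show that points of a leaf far from $x_L$ are sent even farther from $x_{\ft^m(L)}$ after flowing past each successive $\gamma$-translate. Under an irrational rotation number the returns of $\bar\ft$-orbits to a fundamental interval of $\bar\gamma$ are never exact; a careful accounting of the aggregate leafwise displacement of a point $p\in T$ incurred across these non-exact returns forces some iterate $\ft^m(p)$ out of the bounded tube about $\delta$, contradicting $\ft$-invariance of $T$. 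Hence the rotation number is rational. Finally, a periodic point of $\bar{\hat f}_\gamma$ of period $m\geq 1$ lifts to a leaf $L$ with $\bar\ft^m(t_L)=t_L+nT_\gamma$ for some integer $n$, and strict upward displacement of $\bar\ft^m$ forces $n\geq 1$, so that $h:=\gamma^n\circ\ft^m$ fixes $L$ with both $n,m>0$ as required.
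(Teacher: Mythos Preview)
Your reformulation in terms of a circle map on $S^1_\gamma$ is fine, and your invocation of Proposition~\ref{p.homeotranslation} to produce the compact $\hat f_\gamma$-invariant core $T_\gamma$ is correct. The genuine gap is the rationality step: the argument you sketch does not work, and in fact cannot work as stated, because it never uses partial hyperbolicity.

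The escape estimates of Lemmas~\ref{fact-pA3} and~\ref{fact-pA5} concern points \emph{far} from $x_L$; they say nothing about points inside the bounded tube, which is exactly where $T_\gamma$ lives. More to the point, the construction of $T_\gamma$ in Proposition~\ref{p.homeotranslation} makes no assumption on the rotation number of $\bar{\hat f}_\gamma$, so the existence of a compact $\hat f_\gamma$-invariant set surjecting onto $S^1_\gamma$ is perfectly compatible with an irrational rotation number. Your ``aggregate leafwise displacement'' heuristic has no mechanism: the shadowing bound $d(\ft,\tau_f)\le R_1$ is an \emph{upper} bound per step, not a lower bound, and $\tau_f$ does not push points within the tube outward. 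Nothing in your outline distinguishes irrational from rational rotation number, and you have given no reason why ``non-exact returns'' in the leaf space should produce drift in the transverse (within-leaf) direction.

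The paper's proof is shorter and takes a different route, using the unstable expansion from partial hyperbolicity --- an ingredient entirely absent from your argument. One takes any $z\in T_\gamma$ and, by compactness, finds $i<j$ with $\hat f_\gamma^i(z)$ and $\hat f_\gamma^j(z)$ close in $M_\gamma$ (recurrence in the manifold, not merely in $S^1_\gamma$). A short unstable segment $\mathsf{t}$ through $\hat f_\gamma^i(z)$ is a transversal to $\hat\cW^{cs}$; since $\hat f_\gamma^{j-i}$ expands unstable length and the centers return close, the image $\hat f_\gamma^{j-i}(\mathsf{t})$ crosses every $\hat\cW^{cs}$ leaf that $\mathsf{t}$ crosses. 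This covering of an interval of the leaf space by its own image forces a fixed leaf for $\hat f_\gamma^{j-i}$, and hence for some $\gamma^n\ft^m$ with $n,m>0$. If you want to salvage your framework, this is the missing idea: you need the transverse expansion of $f$ along $E^u$ to upgrade recurrence to periodicity.
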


By symmetry, the same result holds if applied to $\cW^{\cu}$. Notice that once one knows that $h$ fixes a leaf $L$ of $\wt\cW^{\cs}$, the second part of Proposition \ref{p.homeotranslation} applies to $f$. 

\begin{proof}
Thanks to Proposition \ref{p.homeotranslation}, we can consider the cover $M_\gamma= \mt /<\gamma>$ and let $V$ be a compact solid torus in 
$M_{\gamma}$ such that $\bigcap_{n\in\ZZ} \hat f_{\gamma}^n(V)= T_\gamma$ is compact, non-empty, and far from $\partial V$.
Let $z \in T_\gamma$. Let $y\in T_{\gamma}$ be an accumulation point of $\left(\hat f_{\gamma}^n(z)\right)$.

 We take $i,j$ big enough, with $j$ much bigger than $i$, such that $\hat f_{\gamma}^i(z)$ and $\hat f_{\gamma}^j(z)$ are both very close to $y$.
 Now, consider $I$ a small closed unstable segment containing
$\hat f^i(z)$ in its interior.
 Since $\hat f^{j-i}_{\gamma}$ increases the unstable length, every leaf of $\hat\cW^{\cs}$ through $I$ intersects the interior of $\hat f^{j-i}_{\gamma}(I)$.
This set of $\hat\cW^{\cs}$ leaves is an interval.
This produces a fixed $\hat \cW^{\cs}$ leaf under $\hat f_{\gamma}^{j-i}$.
Lifting to $\mt$ proves the proposition.
\end{proof}

We can now finish the proof of Theorem \ref{thmintro:Hyperbolic}.
\begin{proof}[Proof of Theorem \ref{thmintro:Hyperbolic}]
 
 Let $\ft$ be a good lift of $f$ and let $L_0$ be a leaf fixed by $h:=\gamma \circ \ft^k$ for some $k>0$ and $\gamma \in \pi_1(M) \setminus \{\mathrm{id}\}$ given by Proposition \ref{fixedleaf_DC}.
 
 For any leaf $L$ fixed by $h$, the map $h|_{L}$ has negative Lefschetz index (according to Proposition \ref{p.homeotranslation}). We stress
that this is the two dimensional Lefschetz index
of $h|_L$ and not the of the map $h$ on a $3$-manifold.
Thus there exists a point $x_L\in L$ fixed by $h$. Now, $h$ is partially hyperbolic, so any fixed leaf $L$ is repelling along the unstable manifold through $x_L$.
 But this is impossible, as in the leaf space of $\wt\cW^{\cs}$, the closed interval between $L_0$ and $\gamma (L_0)$ is mapped to itself so cannot contain only repelling fixed points.

This contradiction implies that $\ft$ cannot act as a translation
on either leaf spaces. 
 It follows that $\ft$ has to fix every center-stable and center-unstable leaf. Therefore by Theorem \ref{t.weakfixedimpliesAnosov}, it is conjugate to a discretized Anosov flow.
This proves Theorem B.
\end{proof}


\begin{appendix}

\section{Some $3$-manifold topology} \label{app.3_manifold_topology}

We collect here some concepts from $3$-manifold topology that were
used in this article.   
We refer the reader to \cite{He,Hatcher} for more background. 

A 3-manifold (which we always mean to be a smooth manifold) is \emph{irreducible}, if every 
smoothly embedded sphere bounds a ball. It is well known that closed 3-manifolds admitting taut foliations are irreducible (see, e.g., \cite{Calegari:book}). 

An irreducible compact, 3-manifold $M$ is said to be \emph{homotopically atoroidal} if every $\pi_1$-injective map of a torus in $M$ is homotopic to a map into the boundary of $M$.
The manifold is \emph{geometrically atoroidal} if every $\pi_1$-injective, embedded smooth torus is homotopic to the boundary of $M$.

If a manifold with exponential growth of fundamental group is 
homotopically atoroidal, then by the geometrization theorem it is \emph{hyperbolic}, i.e., 
the interior of $M$ admits a complete,
Riemannian metric of constant negative curvature.

Notice that when $M$ is homotopically
atoroidal, $\pi_1(M)$ does not contain any subgroup isomorphic to $\ZZ^2$.

A $3$-manifold is called a \emph{Seifert manifold} if it admits a partition by distinct circles such that a tubular neighborhood of each fiber is homeomorphic by a fiber-preserving homeomorphism to either:
\begin{itemize}
 \item A fibered solid torus of type $(p,q)$. This is a torus obtained from $\mathbb{D}^2 \times [0,1]$ by identifying $\mathbb{D}^2 \times \{0\}$ to $\mathbb{D}^2 \times \{1\}$ via the map $(z,0) \mapsto (z\exp(2\pi i p/q), 1)$. The fiber $\{0\} \times S^1$ is called \emph{regular} if $p=0$ and \emph{exceptional} otherwise. Or,
 \item A fibered solid Klein bottle, obtained from $\mathbb{D}^2 \times [0,1]$ by identifying $\mathbb{D}^2 \times \{0\}$ to $\mathbb{D}^2 \times \{1\}$ via the map $(z,0) \mapsto (\bar{z}, 1)$. The fibers $\{z\}\times S^1$, $z\in \R$, are also called \emph{exceptional}.
\end{itemize}

The quotient of a Seifert manifold by the Seifert fibration, called the \emph{base}, $B$, has a structure of a $2$-orbifold (without corner reflectors). The exceptional fibers separate into two sets: The axis of the fibered solid torus projected to isolated points in the interior of $B$ (called \emph{conical points}), while the exceptional fibers coming from fibered solid Klein bottles projects to a closed $1$-submanifold of the boundary of $B$ (and each connected component is called a \emph{reflector curve}).

\begin{remark}
 The definition above is not the one originally taken by Seifert. Indeed, the fibered solid Klein bottles neighborhood were not allowed in the original definition. However, it is now more common to use this definition. In particular, with this definition, works of Epstein and Tollefson imply that all $3$-manifolds foliated by circles are Seifert (see, e.g., \cite{Sco83}).
 
 Note that both the original definition and the one chosen here agree when the manifold is assumed orientable.
\end{remark}
If a Seifert manifold has fundamental group
with exponential growth, then it is finitely
covered by a circle bundle over a surface of genus $\geq 2$. 
In particular, thanks to the classification of Seifert manifolds (see \cite[Theorem 3.8]{Sco83}), the Seifert fibration is unique in this case.

If a $3$-manifold $M$ is geometrically atoroidal but not 
homotopically atoroidal then the proof of the Seifert 
fibered conjecture (see, e.g., \cite{Calegari:book}) implies that 
$M$ is closed and Seifert.
The base surface has to be a sphere with $3$-singular fibers.
Unless the 
difference between geometric and homotopic atoroidal
is essential we only refer to it as atoroidal.

The JSJ decomposition theorem implies that compact,
irreducible,  and orientable
3-manifolds admit a decomposition into finitely many
pieces, which are either geometrically
atoroidal or Seifert 
\cite{He,Hatcher}.

The following lemma was used 
when establishing minimality of foliations in Seifert and hyperbolic 3-manifolds:

\begin{lemma}\label{l.torus_inball_or_bounds_solid_torus}
 If $T$ is an embedded torus inside an orientable closed hyperbolic $3$-manifold $M$, then $T$ either bounds a solid torus or is contained in a $3$-dimensional ball.
\end{lemma}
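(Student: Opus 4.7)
The plan is to exploit two properties of the hyperbolic 3-manifold $M$: it is irreducible, and its fundamental group contains no $\ZZ^2$ subgroup (equivalently, $M$ is atoroidal). I would first reduce to the case where $T$ is compressible. If $T$ were incompressible, then $\pi_1(T)=\ZZ^2$ would inject into $\pi_1(M)$, contradicting hyperbolicity. So there must be a compressing disk $D\subset M$ with $\partial D\subset T$ essential on $T$ and $\mathring D\cap T=\emptyset$.

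Next I would analyze the topology near $T\cup D$. Let $N$ be a closed regular neighborhood of $T\cup D$ in $M$. A direct check shows that $\partial N$ has two components: a parallel copy $T'$ of $T$ (on the side of $T$ opposite to $D$) and a sphere $S$ obtained by compressing $T$ along $D$ (a torus compressed along an essential curve is a sphere since $\chi$ goes from $0$ to $2$). Because $M$ is irreducible, $S$ bounds a $3$-ball $B\subset M$, and everything comes down to which side of $S$ this ball lies on.

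There are two cases. If the ball $B$ lies on the $T$-side of $S$, then $T\subset N\subset B$ and $T$ is contained in a $3$-ball, as desired. Otherwise, $B$ lies on the side of $S$ disjoint from $T$, and the union $W:=N\cup B$ is a compact $3$-submanifold of $M$ whose only boundary component is the torus $T'$. By Van Kampen, $\pi_1(W)=\pi_1(N)=\ZZ$, since $\pi_1(T\cup D)$ is obtained from $\pi_1(T)=\ZZ^2$ by killing the class of $\partial D$, and attaching the ball $B$ along the simply connected $S$ does not affect $\pi_1$. Moreover, $W$ is irreducible: any $2$-sphere in $W\subset M$ bounds a ball in $M$, and that ball cannot escape $W$ without crossing the torus $\partial W$. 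A compact, orientable, irreducible $3$-manifold with torus boundary and infinite cyclic fundamental group is a solid torus (this is standard and follows, for instance, from the loop theorem and the classification of Heegaard genus-one manifolds). Hence $W$ is a solid torus with boundary $T'$, so $T$ itself bounds a solid torus.

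The only delicate step is the identification of $W$ as a solid torus in the second case; the rest is routine cut-and-paste using irreducibility and the absence of incompressible tori. Equivalently, one could argue directly that $N$ is homeomorphic to $T^2\times[0,1]$ with a $2$-handle attached along an essential curve on $T^2\times\{1\}$, and that capping off the resulting spherical boundary component with a $3$-ball yields a solid torus.
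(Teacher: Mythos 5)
Your proof is correct and follows essentially the same route as the paper's sketch: produce a compressing disk for $T$ (possible because a closed hyperbolic manifold has no $\ZZ^2$ in its fundamental group), compress to get a sphere, and invoke irreducibility — the paper then defers the final dichotomy to standard references, while you carry it out explicitly via the regular neighborhood $N$ of $T\cup D$ and the case analysis on which side the sphere bounds a ball. One small imprecision in your second case: a ball bounded in $M$ by a sphere in the interior of $W$ need not ``cross'' $\partial W=T'$ to leave $W$, since it could contain $T'$ in its interior; but in that situation $T'$, and hence $T$ (which is ambient isotopic to $T'$ across the product region in $N$), lies inside a ball, so the lemma's conclusion holds in that subcase as well, and otherwise your identification of $W$ as a solid torus goes through.
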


\begin{proof}
 This is standard result in $3$-manifold topology, so we only sketch the proof.
Since $M$ is orientable and hyperbolic, $T$ is two sided, and not $\pi_1$-injective. Since $T$ is furthermore embedded, Dehn's lemma \cite{He} implies
that there is a compressing disk $D$. That is, $D$ is 
embedded and $D\cap T = \partial D$. Cutting
$T$ along $D$ and capping with two copies of $D$ produces a sphere.
Since $M$ is hyperbolic, it is irreducible, so it follows 
that the sphere bounds a ball. This implies that either $T$ bounds
a solid torus or $T$ is contained in a $3$-ball
\cite{He}.
\end{proof}

We also use the following consequence of Mostow rigidity

\begin{proposition}\label{p.mostow}
If $M$ is a hyperbolic 3-manifold and $f\colon M \to M$ a homeomorphism, then it has an iterate which is homotopic to identity. 
\end{proposition}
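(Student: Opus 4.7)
The plan is to reduce the statement to the classical theorem that the isometry group of a closed hyperbolic $3$-manifold is finite, via Mostow rigidity.

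First I would use the fact that a closed hyperbolic $3$-manifold $M$ is aspherical (its universal cover is $\mathbb{H}^3$, which is contractible), so $M$ is a $K(\pi_1(M),1)$. A standard consequence is that homotopy classes of self-homotopy-equivalences of $M$ are in bijection with $\mathrm{Out}(\pi_1(M))$. The homeomorphism $f$ therefore determines an element $[f_*]\in \mathrm{Out}(\pi_1(M))$ (independent of the choice of basepoint/path used to define $f_*$).

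Next I would invoke Mostow rigidity \cite{Mostow}: every automorphism of $\pi_1(M)$ is realized, up to inner automorphism, by a unique isometry of the hyperbolic metric on $M$. Hence $\mathrm{Out}(\pi_1(M))\cong \mathrm{Isom}(M)$. Since $M$ is closed hyperbolic of dimension $\geq 2$, $\mathrm{Isom}(M)$ is finite: it is a discrete (by rigidity) and compact (since $M$ is closed, any isometry preserves the volume form and $\mathrm{Isom}(M)$ sits inside a compact group acting on the orthonormal frame bundle at a point) Lie group, hence finite.

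It follows that $[f_*]\in \mathrm{Out}(\pi_1(M))$ has finite order, say $k$. Then $f^k_*$ is an inner automorphism of $\pi_1(M)$. By asphericity once more, a self-map of $M$ whose induced map on $\pi_1$ is inner is homotopic to the identity (given any $g\in \pi_1(M)$ with $f^k_*=c_g$, one can homotope $f^k$ first to kill the basepoint translation prescribed by $g$, after which the induced map on $\pi_1$ is the identity, and then extend the identity on the $1$-skeleton to a homotopy to $\mathrm{id}_M$ using obstruction theory, with all higher obstructions vanishing because $\pi_i(M)=0$ for $i\geq 2$). Thus $f^k$ is homotopic to the identity, proving the proposition. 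There is essentially no obstacle here beyond correctly packaging Mostow rigidity plus asphericity; all steps are classical.
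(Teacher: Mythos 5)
Your proof is correct, but it packages Mostow rigidity differently from the paper. The paper's argument is more direct and softer: by Mostow rigidity $f$ is homotopic to an isometry $g$ of the hyperbolic metric; since $\mathrm{Isom}(M)$ is a compact group, some iterate $g^k$ is arbitrarily $C^0$-close to the identity (recurrence in a compact group), and a map sufficiently close to the identity on a closed Riemannian manifold is homotopic to it (e.g.\ by straight-line geodesic homotopy within the injectivity radius); hence $f^k \simeq g^k \simeq \mathrm{id}$. Your route instead goes through the algebraic side: asphericity identifies homotopy classes of self-equivalences with $\mathrm{Out}(\pi_1(M))$, Mostow rigidity identifies this with $\mathrm{Isom}(M)$, which is finite (compact plus discrete, discreteness again coming from rigidity), so $[f]$ has finite order and $f^k$ is homotopic to the identity by the standard $K(\pi,1)$ argument. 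What your approach buys is a sharper statement (the period $k$ can be taken to be the order of $[f_*]$ in the finite group $\mathrm{Out}(\pi_1(M))$, and one sees that only finitely many homotopy classes of homeomorphisms exist at all); what the paper's buys is brevity, avoiding obstruction theory and the finiteness/discreteness discussion entirely, since compactness of the isometry group already suffices. One small point in your write-up: the claim that $\mathrm{Isom}(M)$ is discrete "by rigidity" deserves a sentence --- the cleanest justification is that an isometry homotopic to the identity must equal the identity (uniqueness in Mostow, or the absence of nontrivial connected isometry groups on closed negatively curved manifolds), which rules out positive-dimensional components; as stated it is slightly elliptical but not a gap.
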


\begin{proof}
Mostow rigidity (see, e.g., \cite{Be-Pe}) implies that every homeomorphism is homotopic to an isometry. 
By Theorem \cite[Theorem C.5.6]{Be-Pe} isometries of a closed
hyperbolic $3$-manifold are torsion elements, implying the result.
\end{proof}

\section{Taut foliations in 3-manifolds}\label{ap.tautfol}

All the foliations considered in this article are continuous foliations, with $C^1$ leaves, tangent to a continuous distribution of a $3$-manifold (so they are foliations of regularity $C^{0,1+}$ in the terminology of \cite{CandelConlonI}). In this appendix, all foliations are $2$-dimensional.

A foliation on $M$ is called \emph{taut} if it admits a closed transversal that intersects every leaf of $\cT$.

Let $\lt$ denote the lift of the foliation $\cT$ to $\mt$. The \emph{leaf space} of $\cT$ is defined as the set $\cL_{\lt}:= \mt/_{\lt}$ equipped with the quotient topology.

The following theorem gathers some known properties of taut foliations (see, for instance, \cite[Chapter 4]{Calegari:book} for the proofs) and relies particularly on the celebrated theorems by Novikov and Palmeira.
\begin{theorem}\label{thm-tautfoliation} 
A foliation without compact leaves in a 3-manifold $M$ is taut.

If $M$ is a 3-manifold that is not finitely covered by $S^2 \times S^1$ and admitting a taut foliation $\cT$\footnote{Note that since $M$ is not finitely covered by $S^2 \times S^1$, no leaves of $\cT$ can be a sphere or a projective plane.} then $\mt$ is homeomorphic to $\R^3$. Moreover, every leaf of $\cT$ lifts to a plane $L \in \lt$ which is properly tamely embedded in $\mt$ and separates $\mt$ in two half spaces.

The leaf space $\cL_{\lt}$ is a one-dimensional  (non necessarily Hausdorff), simply connected (separable) manifold.
Furthermore, every point in $\cL_{\lt}$ is contained in the interior
of an interval in $\cL_{\lt}$.

In particular, if $\beta$ is a transersal to $\lt$, then $\beta$ intersects a leaf of $\lt$ at most once.
\end{theorem}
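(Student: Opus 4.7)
The plan is to prove the four assertions in order, relying on Novikov's theorem, Reeb stability, and Palmeira's theorem; the last claim then follows by a short separation argument using the coorientability of $\lt$ on the simply connected space $\mt$. For assertion (1), I would note that a Reeb component has a torus boundary leaf, so a foliation $\cT$ on a closed $3$-manifold without compact leaves has no Reeb components, and Novikov's theorem \cite{Nov} then directly yields tautness.

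For assertion (2), I would first establish that each leaf $L \in \lt$ is a properly embedded plane. Tautness (via Novikov) forbids vanishing cycles, which forces $\pi_1$-injectivity of the leaves of $\cT$ in $M$; hence each $L \in \lt$ is the universal cover of a leaf of $\cT$, and so is simply connected. As $L$ is a simply connected surface without boundary, it is either a plane or a sphere; a sphere leaf would force, by Reeb stability, a finite cover of $M$ of the form $S^2 \times S^1$, which is excluded by hypothesis. Proper embedding of $L$ in $\mt$ is standard for taut foliations without sphere leaves (see, e.g., \cite[Chapter 4]{Calegari:book}). With the leaves of $\lt$ being proper planes and $\mt$ simply connected, Palmeira's theorem \cite{Pal} yields $\mt \cong \R^3$. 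The separation statement then follows from coorientability of $\lt$ on $\mt$: the two local sides of $L$ patch together into two globally disjoint open half-spaces whose union is $\mt \setminus L$.

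For assertions (3) and (4), the local product structure of $\lt$ in $\mt$ projects to local $\R$-coordinates in the quotient $\cL_{\lt}$, so it is a (possibly non-Hausdorff) $1$-manifold and every point lies in the interior of such a coordinate interval. Simple connectivity of $\cL_{\lt}$ follows from that of $\mt$ together with contractibility of the plane leaves: any loop in $\cL_{\lt}$ lifts to an arc in $\mt$ whose endpoints lie in a common leaf, which can then be closed within that contractible leaf and contracted. For the transversal claim, coorientability of $\lt$ forces a transversal $\beta$ to cross each leaf in the same direction; two consecutive intersections of $\beta$ with a fixed leaf $L$ would yield a connecting arc which must, on the one hand, lie in a single component of $\mt \setminus L$, and, on the other hand, change sides across its endpoints --- a contradiction.

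The main obstacle I anticipate is chaining the classical results correctly: one must first rule out Reeb components to obtain tautness, then rule out vanishing cycles and spherical leaves (via Novikov and Reeb stability, respectively) to conclude that leaves lift to proper planes, and only then can Palmeira's theorem be invoked to identify $\mt$ with $\R^3$. Each step is classical but requires care in verifying the precise hypotheses, and the sphere-leaf exclusion is the one place where the assumption that $M$ is not finitely covered by $S^2 \times S^1$ enters essentially.
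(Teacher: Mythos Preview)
The paper does not actually prove this theorem: it is stated in the appendix as a collection of known facts, with the proofs deferred to the literature (specifically \cite[Chapter 4]{Calegari:book}, relying on Novikov \cite{Nov} and Palmeira \cite{Pal}). Your sketch invokes precisely these ingredients --- Novikov for tautness and $\pi_1$-injectivity, Reeb stability to exclude sphere leaves, Palmeira to identify $\mt$ with $\R^3$ --- and is a correct outline of the standard argument found in those references.
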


When $\mathcal L_{\lt}$ is Hausdorff, then it is homeomorphic
to the real numbers $\R$. In this case, the foliation $\cT$ is called $\R$-\emph{covered}.

Since $\mt$ is simply connected, $\lt$ is transversely orientable (but deck transformations of $\mt$ may flip this transverse orientation).

For reference, we cite the following result that we used in this article (for the $C^0$ version of the arguments used, see \cite[\S 9]{CandelConlonI}, or \cite[Lemma 7.21]{Calegari:book}).

\begin{proposition}[Rosenberg]\label{Ros}
Let $M$ be a closed $3$-manifold which is not homeomorphic to ${\TT}^3$,
and let $\cT$ be a foliation on $M$. Then some leaf of $\cT$ is not a plane. 
\end{proposition}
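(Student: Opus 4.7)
My plan is by contradiction: assume every leaf of $\cT$ is a plane, and derive $M = \TT^3$, violating the hypothesis.

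First, since planes are not compact, $\cT$ has no compact leaves; in particular, no Reeb component (whose boundary leaf is a torus). Hence $\cT$ is Reebless, and in particular taut. (A direct check rules out $M$ being finitely covered by $S^2 \times S^1$: the universal cover of such a manifold is $S^2 \times \R$, which admits no foliation by planes.) By Palmeira's theorem, the pair $(\mt, \lt)$ is homeomorphic to $(\R^3, F_0)$, where $F_0$ is the standard foliation of $\R^3$ by horizontal planes; in particular the leaf space $\cL_{\lt}$ is canonically identified with $\R$. Next, each leaf $L$ of $\lt$ is a plane that projects to a plane leaf $\pi(L)$ of $\cT$, so the covering $L \to \pi(L)$ must be a homeomorphism and no nontrivial element of $\pi_1(M)$ can stabilize $L$. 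Therefore the induced action of $\pi_1(M)$ on $\cL_{\lt} \cong \R$ is free. Since any orientation-reversing self-homeomorphism of $\R$ has a fixed point by the intermediate value theorem, this free action must be by orientation-preserving homeomorphisms.

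The main ingredient is then \emph{Hölder's theorem}: any group acting freely on $\R$ by orientation-preserving homeomorphisms embeds into $(\R, +)$, so is torsion-free abelian. The induced left-invariant order on the group is Archimedean because iterating any fixed-point-free orientation-preserving homeomorphism of $\R$ sends every orbit to $\pm \infty$. Applied to our situation, $\pi_1(M)$ is torsion-free abelian. Since $\pi_1(M)$ also acts freely, properly discontinuously, and cocompactly on $\mt \cong \R^3$, a standard cohomological dimension argument forces $\pi_1(M) \cong \ZZ^3$, and thus $M$ is a closed aspherical $3$-manifold with fundamental group $\ZZ^3$, hence $M = \TT^3$. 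This contradicts the hypothesis $M \neq \TT^3$.

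The main obstacle is the invocation of Hölder's theorem, whose applicability hinges on the Archimedean property of the order induced by the free action; the remaining steps reduce to classical foliation theory (Novikov, Palmeira) together with the short topological observation ruling out orientation-reversing elements. The identification $M = \TT^3$ from $\pi_1(M) \cong \ZZ^3$ is then routine.
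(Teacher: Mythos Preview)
The paper does not supply its own proof here; the proposition is quoted as a classical result of Rosenberg (with a pointer to \cite[\S 9]{CandelConlonI} for the $C^0$ case), so there is no ``paper's proof'' to compare against. Your overall strategy---all leaves planes $\Rightarrow$ free action of $\pi_1(M)$ on the leaf space $\Rightarrow$ abelian via H\"older $\Rightarrow$ $M=\TT^3$---is indeed Rosenberg's.

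There is, however, a genuine gap. You claim that Palmeira's theorem gives $(\mt,\lt)\cong(\R^3,F_0)$ with $F_0$ the \emph{standard} horizontal foliation, and hence leaf space $\cL_{\lt}\cong\R$. Palmeira does not say this: it only yields that $(\mt,\lt)$ is conjugate to $(\R^2,\cG)\times\R$ for \emph{some} foliation $\cG$ of $\R^2$ by lines, and the leaf space of $\cG$ (hence of $\lt$) may perfectly well be a non-Hausdorff simply connected $1$-manifold. Your invocation of H\"older's theorem, and the Archimedean argument you give for it, both require the leaf space to genuinely be $\R$; on a branching $1$-manifold neither goes through as stated. The repair is exactly the substantive part of Rosenberg's argument that you have skipped: since every leaf is simply connected, the foliation has \emph{trivial holonomy}, and one then shows that a codimension-one foliation without holonomy on a closed manifold is $\R$-covered (Sacksteder in class $C^2$; Imanishi's structure theorem in the $C^0$ setting, which is what the reference to \cite[\S 9]{CandelConlonI} is pointing at). Once $\cL_{\lt}\cong\R$ is established, your H\"older step and the cohomological-dimension identification $\pi_1(M)\cong\ZZ^3\Rightarrow M=\TT^3$ are correct.
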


\section{Uniformization of leaves}

The following result is very helpful
to understand the action of deck transformations inside 
leaves of the foliations of a partially hyperbolic
diffeomorphism. See, e.g., \cite{Calegari:book} for a proof.

\begin{theorem}[Candel]\label{thm-leafuniformisation} 
Let $\cF$ be a taut foliation of a 3-manifold $M$ and assume that it has no transverse invariant measure. Then, there is a metric in $M$
which restricts to a ($2$-dimensional) hyperbolic metric in each leaf of $\cF$.
\end{theorem}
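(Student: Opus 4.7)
The plan is to perform leafwise conformal uniformization in a transversely continuous way. Start with any smooth Riemannian metric $g$ on $M$. Restricted to a leaf $L$ of $\cF$, this gives a complete Riemannian metric $g_L$ with some Gaussian curvature function $K_L$. By the classical uniformization theorem, $L$ is conformally either the round sphere $S^2$, the Euclidean plane $\R^2$, or the hyperbolic plane $\HH^2$, and there exists a conformal factor $e^{2u_L}$ on $L$ making $e^{2u_L} g_L$ of constant curvature $+1$, $0$, or $-1$. The $u_L$ is a solution of the leafwise Liouville equation $\Delta_{g_L} u_L = K_L - K e^{2u_L}$ with $K\in\{-1,0,1\}$. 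The goal is to (i) show every leaf is of hyperbolic type, and (ii) verify that $u_L$ patches into a continuous function $u \colon M \to \R$, smooth along leaves; then $e^{2u} g$ is the desired metric.

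The first step is to rule out leaves of spherical conformal type: such a leaf would be closed, and a closed leaf gives an atomic holonomy-invariant transverse measure, contradicting the hypothesis. The harder step is ruling out leaves of parabolic (Euclidean) type. For this I would invoke Garnett's theory of harmonic measures: on a compact foliated manifold, there always exists a probability measure $m$ on $M$ whose conditionals along leaves are positive leafwise harmonic functions with respect to the leafwise Laplacian of $g$. On a parabolic leaf the constant function is, up to conformal change, essentially the only positive harmonic function, and this rigidity lets one decompose $m$ and extract an honest holonomy-invariant transverse measure supported on the union of parabolic leaves. Since by hypothesis no such transverse invariant measure exists, every leaf must be conformally hyperbolic. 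The main obstacle in the argument lies in this step: one must carefully relate Garnett's harmonic measure to an invariant transverse measure in the parabolic case, which is precisely the content of Candel's key estimate and his use of the Ahlfors--Schwarz lemma along leaves.

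Once every leaf is hyperbolic, the conformal factor $u_L$ solving $\Delta_{g_L} u_L = K_L + e^{2u_L}$ is uniquely determined. A leafwise Ahlfors--Schwarz type estimate (bounding $e^{u_L}$ by the ratio of the Poincaré density of $L$ to the $g_L$-density, which varies continuously in the transverse direction thanks to compactness of $M$) gives uniform bounds on $u$. Standard elliptic regularity, applied in foliation charts, then yields transverse continuity of $u$ and smoothness along leaves: indeed a small transverse perturbation of $g_L$ and $K_L$ perturbs the solution of the Liouville equation continuously. The resulting metric $e^{2u} g$ on $M$ restricts to the unique complete hyperbolic metric in each leaf, completing the proof.
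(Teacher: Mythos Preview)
The paper does not prove this theorem; it is quoted from Candel's paper \cite{Candel} and stated without proof in the appendix, with the remark that Candel's actual hypothesis concerns the Euler characteristic of invariant transverse measures rather than their mere nonexistence. So there is no ``paper's own proof'' to compare against.

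As a sketch of Candel's argument your outline is broadly faithful in shape: leafwise uniformization plus transverse continuity via elliptic estimates and the Ahlfors--Schwarz lemma is indeed the architecture of \cite{Candel}. The main soft spot is exactly where you flag it. Candel does not argue by extracting an invariant transverse measure from a Garnett harmonic measure on parabolic leaves in the way you describe; rather, he computes the Euler characteristic of a harmonic measure via a foliated Gauss--Bonnet formula and shows that if this characteristic is negative for every harmonic measure (which follows from the absence of a transverse invariant measure of nonnegative Euler characteristic), then the uniformizing curvature is $-1$ on every leaf. Your claim that ``on a parabolic leaf the constant function is essentially the only positive harmonic function'' is false as stated (think of $\R^2$: there are many positive harmonic functions), and the passage from a harmonic measure with parabolic support to an honest holonomy-invariant transverse measure requires more: one needs that the leafwise densities are actually constant, which uses a Liouville-type theorem together with ergodic decomposition and the structure of harmonic measures on parabolic leaves. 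This is exactly the delicate point, and your sketch does not supply it; if you want a self-contained argument you should either go through the foliated Gauss--Bonnet/Euler-characteristic route or make precise the Liouville step using subexponential growth of parabolic leaves.
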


A transverse invariant measure is an assignment of a non-negative
number to each arc transverse to $\cF$, such that it satisfies
the properties of measures under countable union and restriction.
In addition the measure is unchanged if we homotope the
transverse arcs keeping each point in its respective leaf.
The statement of Candel's Theorem gives further properties on the transverse invariant measure (also called holonomy invariant transverse measure), 
stating that it has zero Euler characteristic, but we will avoid defining this (see \cite{Calegari:book} for a detailed treatment).

It is well known that every taut foliation in a hyperbolic 3-manifold (see e.g., \cite{Calegari:book}) or the horizontal foliations in a Seifert manifold used
in this article satisfy the conclusion of Theorem \ref{thm-leafuniformisation}. We remark that it is possible to show that any minimal foliation on a manifold with non-virtually solvable fundamental group satisfies the hypothesis of Theorem \ref{thm-leafuniformisation} (see \cite[Section 5.1]{FenleyPotrie}).

\section{Uniform foliations and transverse pseudo-Anosov flows} \label{app.regulatingpA}

Uniform foliations were introduced by Thurston \cite{Thurston2},
and have been intensively studied, particularly when $M$ is a hyperbolic 3-manifold. They are intimately related to the notion of \emph{slitherings} (see \cite{Thurston2} or \cite[Chapter 9]{Calegari:book}). 

\begin{definition}\label{def.uniform}
 An  $\R$-covered foliation $\cT$ is called \emph{uniform} 
if the Hausdorff distance between any pair of leaves $L$, $L'$ of 
$\widetilde\cT$ is finite.
That is, there exists $K>0$ (depending on $L$ and $L'$) such that $L \subset B_{K}(L')$ and $L' \subset B_K (L)$ where $B_K(X)$ denotes the set of points at distance less than $K$ from $X \subset \mt$.
\end{definition}

Thurston build a special pseudo-Anosov flow associated with a $\R$-covered foliation in a hyperbolic manifold.

\begin{definition}
 Let $\fol$ be a foliation of a $3$-manifold. A flow $\Psi\colon M \to M$ is called \emph{regulating} for $\fol$ if every orbit of the lifted flow $\wt\Psi$ intersects every leaf of the lifted foliation $\wt\fol$ in the universal cover $\wt M$.
\end{definition}

\begin{theorem}[Thurston, Calegari, Fenley \cite{Thurston2,Calegari00,Fen2002}]\label{teo-transversepA}
A transversely oriented, $\R$-covered,
uniform foliation in a hyperbolic 3-manifold admits a regulating transverse pseudo-Anosov flow $\Phi$. 
Moreover, $\Phi$ can be chosen so that the singular foliations
have $C^1$ leaves outside the prongs.
\end{theorem}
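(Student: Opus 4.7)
The plan is to extract $\Phi$ from the leafwise geometry of $\cF$, going through a universal-circle construction in the style of Thurston's slitherings. First, I would verify that Candel's Theorem \ref{thm-leafuniformisation} applies, i.e., that $\cF$ carries no holonomy-invariant transverse measure. On a hyperbolic $3$-manifold, such a measure on an $\R$-covered uniform foliation, combined with the bounded Hausdorff comparison between leaves, would force either a closed leaf (impossible since $\cF$ is taut with $\pi_1(M)$ non virtually solvable) or a nontrivial $\pi_1(M)$-equivariant cocycle incompatible with hyperbolicity of $M$. Thus Candel yields a metric on $M$ that restricts to a hyperbolic metric of curvature $-1$ on every leaf; in particular every leaf of $\wt\cF$ is isometric to $\HH^2$ with a well-defined ideal boundary $\partial_{\infty} L \cong S^1$.

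Second, I would build the universal circle $S^1_{\mathrm{univ}}$. For any two leaves $L_1, L_2 \in \wt\cF$, their finite Hausdorff distance implies that the nearest-point projection $p_{12}\colon L_1 \to L_2$ is a quasi-isometry of hyperbolic planes, hence extends to a homeomorphism $\partial_{\infty} L_1 \to \partial_{\infty} L_2$. These identifications are coherent and assemble into a single topological circle $S^1_{\mathrm{univ}}$, canonically identified with $\partial_{\infty} L$ for every $L$, and carrying an action of $\pi_1(M)$ by orientation-preserving homeomorphisms. I would then construct two $\pi_1(M)$-invariant geodesic laminations $\wt\Lambda^{s}, \wt\Lambda^{u}$ on each leaf by exploiting the non-Hausdorff behavior of $\cL_{\wt\cF}$: nonseparated pairs of leaves in the leaf space produce distinguished pairs of endpoints in $S^1_{\mathrm{univ}}$; passing to closures under the $\pi_1(M)$-action and using the coarse expansion/contraction encoded by the transverse $\R$-structure yields a pair of transverse geodesic laminations in every leaf, with isolated singularities corresponding to future prong orbits.

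Third, I would assemble these leafwise laminations into two $2$-dimensional singular foliations $\wt\Lambda_s, \wt\Lambda_u$ on $\mt$, transverse to each other and to $\wt\cF$. Collapsing the complementary regions of $\wt\Lambda_s \cup \wt\Lambda_u$ along their one-dimensional intersection foliation produces the orbit foliation of the lifted flow $\wt\Phi$, descending to a flow $\Phi$ on $M$. The regulating property follows because the orbits of $\wt\Phi$ are transverse to the $\R$-parametrized leaf space of $\wt\cF$ and proper, so they cross every leaf. Transversality to $\cF$ holds by construction. The pseudo-Anosov property follows from the fact that the holonomy of $\cF$ along the transverse $\R$-direction exponentially expands $\wt\Lambda^{u}$-leaves and contracts $\wt\Lambda^{s}$-leaves (this is the source of Fact \ref{fact-pA2}). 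The $C^1$-regularity outside prongs is obtained by observing that the lamination leaves are bi-infinite geodesics in the hyperbolic leaves (hence smooth within each leaf), while the transverse variation is controlled by the nearest-point maps between leaves, which one smooths by a standard averaging argument away from the singular orbits.

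The main obstacle will be step two: producing the two transverse $\pi_1(M)$-invariant laminations on $S^1_{\mathrm{univ}}$ with the correct regularity. This is the deep technical heart of the Thurston--Calegari--Fenley theory, where one must pass from purely topological data (nonseparated leaves, uniform comparison) to a pair of well-behaved transverse laminations. The $C^1$-regularity outside prongs also requires a separate, delicate geometric argument, using the leafwise hyperbolicity supplied by Candel and the smoothness of the comparison maps. Given these inputs, the construction of $\Phi$ and the verification of its properties are relatively routine.
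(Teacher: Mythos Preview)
The paper does not prove this theorem; it is stated in the appendix as a known result and attributed with citations to \cite{Thurston2,Calegari00,Fen2002}. So there is no ``paper's own proof'' to compare against, only the original literature.

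That said, your sketch contains a genuine error that would stop the argument. In step two you propose to build the invariant laminations by ``exploiting the non-Hausdorff behavior of $\cL_{\wt\cF}$: nonseparated pairs of leaves in the leaf space produce distinguished pairs of endpoints in $S^1_{\mathrm{univ}}$.'' But the hypothesis is precisely that $\cF$ is $\R$-covered, i.e., $\cL_{\wt\cF} \cong \R$ is Hausdorff, so there are \emph{no} nonseparated leaves. This mechanism is the one used for foliations with branching leaf space; it is vacuous here, and you would produce no laminations at all.

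In the actual construction for uniform $\R$-covered foliations, the laminations come from a different source. After identifying all the circles $\partial_\infty L$ into a single universal circle (your first two paragraphs are roughly in the right direction for this), one studies the \emph{contracting directions} of the coarse comparison maps between far-apart leaves: flowing from $L_1$ to a distant $L_2$ collapses most of $\partial_\infty L_1$ toward finitely many attracting points, and the geodesics joining the repelling directions give the stable lamination (and dually for the unstable). Equivalently, one analyzes the action of $\pi_1(M)$ on $S^1_{\mathrm{univ}}$ and extracts the laminations from the dynamics of that action; hyperbolicity of $M$ is what guarantees enough hyperbolic-type elements to make the laminations nontrivial and transverse. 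This is the content that the paper alludes to just after Fact~\ref{fact-pA1} (``obtained by blowing down certain laminations that intersect the leaves of $\wt\cF$ along geodesics''). Your steps one and three are broadly reasonable as an outline, but step two needs to be replaced entirely by this contracting-directions/universal-circle-action argument.
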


Recall that a pseudo-Anosov flow $\Phi$ is a flow generated by a vector field $X$ which preserves two singular foliations $\Lambda^s$ and $\Lambda^u$ and such that, outside a finite number of singular orbits, the flow is locally modeled on a (topological) Anosov flow (see Appendix \ref{ss.DAF}). The foliations glue along the singularities forming $p$-prongs (with $p\geq 3$). We refer the reader to \cite{Calegari:book} for more details. We note also that every \emph{expansive} flow is orbit-equivalent to a pseudo-Anosov flow \cite{Paternain,InabaMatsumoto}.

Work of Barbot and the second author implies that Thurston's regulating flow is genuinely pseudo-Anosov:

\begin{proposition}\label{prop-transversepAprong}
If $\Phi$ is a  pseudo-Anosov flow regulating and transverse to a uniform foliation in a non-virtually solvable 3-manifold, then, $\Phi$ is not a topological Anosov flow. In particular, there are singular periodic orbits which are $p$-prongs with $p\geq 3$.  
\end{proposition}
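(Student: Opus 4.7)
The plan is to argue by contradiction: assume that $\Phi$ is a genuine topological Anosov flow, with no singular orbits, and deduce that $\pi_1(M)$ is virtually solvable. The first step is to exploit the regulating hypothesis to endow $\Phi$ itself with an $\R$-covered structure. Because $\Phi$ regulates $\fol$, each orbit of $\wt\Phi$ meets each leaf of $\wt\fol$ in exactly one point, yielding a $\pi_1(M)$-equivariant homeomorphism between the orbit space of $\wt\Phi$ and any leaf $L \in \wt\fol$. Under this identification, the $1$-dimensional foliation $\cG^s_L := \widetilde\Lambda_s \cap L$, non-singular by the contradiction hypothesis, corresponds to the projection of $\widetilde\Lambda_s$ onto the orbit space. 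Combining the $\R$-covered and uniform structure of $\fol$ with the transversality of $\cG^s_L$ and $\cG^u_L$ inside $L$, I would conclude that the weak stable foliation $\Lambda_s$, and similarly $\Lambda_u$, is $\R$-covered; hence $\Phi$ is an $\R$-covered Anosov flow.

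The next step is to invoke Barbot's classification of $\R$-covered Anosov flows \cite{Bar98}: such a flow is either topologically equivalent to the suspension of a hyperbolic toral automorphism, or it is \emph{skew} $\R$-covered. In the suspension case, $M$ is a mapping torus of a hyperbolic toral automorphism, so $\pi_1(M) \cong \mathbb{Z}^2 \rtimes \mathbb{Z}$ is solvable --- the desired contradiction.

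The main obstacle will be ruling out the skew $\R$-covered case, since such flows do exist on non-solvable manifolds (e.g., on unit tangent bundles of hyperbolic surfaces). Here I would appeal to the rigidity results of Barbot and the second author on foliations that are transverse to, and regulated by, a skew $\R$-covered Anosov flow (see \cite{Bar98,Fen2002}): essentially the only uniform $\R$-covered foliations with this compatibility come from the flow's own weak stable or weak unstable structure, and, because these tangentially contain the flow direction, they cannot be genuinely transverse to $\Phi$. Since $\fol$ is by hypothesis transverse to $\Phi$, this contradicts the regulating assumption and completes the proof. The most delicate point will be making precise how the transversality and uniformity of $\fol$, combined with the non-singular assumption on $\cG^s_L, \cG^u_L$, forces the pair $(\Phi,\fol)$ into the very rigid configuration described by Barbot's classification.
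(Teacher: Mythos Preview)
The paper's proof is much shorter and uses a different key fact. Since $\Phi$ is transverse and regulating, every $\gamma\in\pi_1(M)$ representing a periodic orbit of $\Phi$ acts on the leaf space $\cL_{\wt\fol}\simeq\R$ as a nontrivial translation, and all such translations go in the \emph{same direction} (determined by the flow). But a theorem of Barbot and Fenley (the paper cites \cite[Theorem~2.15]{BBGRH}) says that every Anosov flow on a $3$-manifold, except suspensions of toral automorphisms, admits a pair of periodic orbits $\alpha,\beta$ that are freely homotopic to the inverse of one another. Since $M$ is not virtually solvable, $\Phi$ is not a suspension; so $[\alpha]$ is conjugate in $\pi_1(M)$ to $[\beta]^{-1}$, forcing $[\alpha]$ and $[\beta]^{-1}$ to translate $\R$ in the same direction --- impossible since $[\alpha]$ and $[\beta]$ already do. That is the entire argument; no $\R$-covered classification of $\Phi$ is needed.

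Your route has two genuine gaps. First, the inference ``$\fol$ is $\R$-covered and uniform, and $\cG^s_L$, $\cG^u_L$ are transverse in $L$, hence $\Lambda_s$ is $\R$-covered'' is not justified: the leaf space of $\widetilde\Lambda_s$ equals the leaf space of $\cG^s_L$ inside the plane $L$, and nothing you wrote forces that to be Hausdorff. Two transverse nonsingular foliations of the plane can each have branching leaf spaces, and the $\R$-covered/uniform hypotheses on $\fol$ describe how the leaves $L$ sit in $\mt$, not the internal structure of $\cG^s_L$ on a single $L$. Second, the rigidity you invoke in the skew case --- that any uniform $\R$-covered foliation transverse to and regulated by a skew $\R$-covered Anosov flow must come from its weak stable or weak unstable foliation --- is not a theorem appearing in \cite{Bar98} or \cite{Fen2002}; you would have to formulate and prove it, which you yourself flag as ``the most delicate point,'' and doing so is not easier than the paper's direct argument. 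The freely-homotopic-inverse fact dispatches all non-suspension Anosov flows at once, without ever deciding whether they are $\R$-covered.
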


\begin{proof}
This fact can be found in \cite{Fen2013}, but we recall the argument: every element of the fundamental group that represents a periodic orbit of 
$\Phi$ acts as a translation on the leaf space of $\wt\fol$. However, this is inconsistent with the fact that every Anosov flow on a $3$-manifold, except for suspensions of Anosov diffeomorphisms (which do not exist on non-virtually solvable manifolds), admits pairs of periodic orbit that are freely homotopic to the inverse of each other (this fact follows from work of Barbot and the second author, see \cite[Theorem 2.15]{BBGRH}).
\end{proof}

\section{Axes}\label{ss.axes}

Here, we recall some needed results from the theory of \emph{axes} for free actions on one-dimensional, non-Hausdorff, simply connected, manifolds. These results
extend similar results for trees. 
We refer the reader to \cite{Fen2003,Bar98} for a more detailed account.
All of the results we state are true for homeomorphisms of one-dimensional, non-Hausdorff, simply connected, separable manifolds. However, to keep the terminology close to the core of this article, we phrase our results in the setting of homeomorphisms preserving a foliation. 

Let $L$ be a complete plane. Let $\cC$ be a foliation such that its leaf space $\cL_\cC$ is a one-dimensional (not necessarily Hausdorff) simply connected manifold. 

\begin{definition}[Axis of a foliation-preserving homeomorphism]\label{d.axis}
 Let $g\colon L \to L$ be a homeomorphism which preserves $\cC$.
 The \emph{axis} (or  $\cC$-\emph{axis}) of $g$ is the set of leaves $c\in \cC$ such that $g(c)$ separates $c$ from $g^2(c)$. 
\end{definition}

For the statement below, we recall that a $\ZZ$-union of intervals means an ordered set consisting of countably many closed (possibly degenerate) intervals which are ordered according to $\ZZ$. 

\begin{proposition}\label{proposition-axes}
Let $g\colon L \to L$ be a homeomorphism that preserves $\cC$ without leaving any leaf of $\cC$ fixed. 
Then, the $\cC$-axis for the action of $g$ in $\cL_{\cC}$ is non-empty. 
In addition, the axis is either a line or an ordered $\ZZ$-union of intervals. 
In the second case, the axis is $\cup I_i$, where $I_i = [x_i,y_i]$ is
a closed interval and $y_i$ is not separated from $x_{i+1}$ in the leaf space of
$\cC$.

Moreover, suppose that $g,h\colon L \to L$ are two $\cC$-preserving homeomorphisms that do not fix any $\cC$-leaves, and that share the same axis. If the group generated by $g$ and $h$ acts freely\footnote{Recall that we say that a group acts \emph{freely} if no element different from the identity has a fixed point.} on this axis, then it is abelian.
\end{proposition}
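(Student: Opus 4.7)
The plan is to establish the three assertions in sequence, relying throughout on the fact that the simply-connected $1$-manifold $\cL_\cC$ (possibly non-Hausdorff) has the property that any two points $a,b$ are joined by a unique embedded arc $[a,b]$, and $c\in[a,b]$ precisely when $c$ separates $a$ from $b$.

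\textbf{Step 1 (non-emptiness of the axis).} I would fix an arbitrary leaf $c_0$ and study its orbit $c_n:=g^n(c_0)$ together with the arcs $\alpha_n:=[c_n,c_{n+1}]$. The basic observation is that $c_n$ lies in the axis if and only if $\alpha_{n-1}\cap \alpha_n=\{c_n\}$, so if \emph{no} $c_n$ were in the axis then consecutive arcs would always share a nondegenerate subarc $[d_{n+1},c_{n+1}]$. Applying $g$ to $[d_n,c_n]\subset \alpha_{n-1}\cap\alpha_n$ one sees $g([d_n,c_n])=[g(d_n),c_{n+1}]\subset \alpha_n\cap\alpha_{n+1}=[d_{n+1},c_{n+1}]$, so the overlaps propagate coherently. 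I would then extract from this nested structure either a contradiction with separability of $\cL_\cC$ or a leaf periodic under $g$, contradicting that $g$ acts freely on $\cL_\cC$. This step is the main technical obstacle: in a Hausdorff leaf space the axis is almost immediate from an ordering argument, but the possible branching in $\cL_\cC$ forces a careful bookkeeping of the overlaps.

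\textbf{Step 2 (structure of the axis).} The axis $A$ is $g$-invariant by definition and inherits from $\cL_\cC$ a natural total order, because the unique arc joining two axis-leaves meets $A$ along a connected set. As a subset of the $1$-manifold $\cL_\cC$, $A$ decomposes into its maximal Hausdorff subintervals; I would argue these are closed intervals $[x_i,y_i]$, and that the local model of $\cL_\cC$ at a pair of consecutive endpoints $(y_i,x_{i+1})$ forces them to be non-separated. The indexing by $\ZZ$ comes from the fact that $g$ acts freely on the collection of such maximal intervals as a translation (using Step 1 applied to the induced action).

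\textbf{Step 3 (commutativity under a free action).} Both $g$ and $h$ act on $A$ by order-preserving homeomorphisms, since they send arcs to arcs in $\cL_\cC$. The hypothesis is that every nontrivial element of $\langle g,h\rangle$ acts without fixed points on $A$. In the line case $A\cong\R$, this is precisely the setup of H\"older's theorem, which yields that $\langle g,h\rangle$ is abelian. In the $\ZZ$-union case, I would collapse each non-separated pair $\{y_i,x_{i+1}\}$ to a single point; since $g$ and $h$ are homeomorphisms of $\cL_\cC$ they preserve the non-separation relation, so they descend to free orientation-preserving actions on the quotient, which is an ordered topological $\R$. H\"older's theorem then again gives commutativity, and one lifts the commutation relation back to $A$ using that the elements are determined by their action on the orbits together with their already-known action on each interval.

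The crux of the whole argument is Step 1; the other two steps are mostly structural unpackings of non-Hausdorff $1$-manifold topology combined with the classical theorem of H\"older on free actions on the line.
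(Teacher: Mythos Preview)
The paper's own ``proof'' of this proposition is simply a citation: it refers to Section~3 of \cite{Fen2003} (specifically Lemma~3.5, Theorem~3.8, and Proposition~3.10 there) for the existence and structure of the axis, and invokes H\"older's Theorem for the commutativity assertion. So there is no self-contained argument in the paper to compare your steps against; your outline is an attempt to reconstruct what the cited reference does, and in broad shape it is correct and matches the paper's one explicit hint (H\"older in Step~3).

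Two comments on the details. In \textbf{Step 1}, your characterization has an index slip: by the definition of the axis, $c_{n-1}$ (not $c_n$) lies in the axis iff $\alpha_{n-1}\cap\alpha_n=\{c_n\}$. More substantively, your overlap computation actually yields an \emph{equality} $g([d_n,c_n])=[d_{n+1},c_{n+1}]$, since $g$ is a homeomorphism taking $\alpha_{n-1}\cap\alpha_n$ onto $\alpha_n\cap\alpha_{n+1}$; so $(d_n)$ is itself a $g$-orbit, and one then has to argue that the median $d_n$ of $\{c_{n-1},c_n,c_{n+1}\}$ eventually lies in the axis. You are right that this is the crux, and what you have written there is a plan rather than a proof; the full argument in \cite{Fen2003} occupies several pages.

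In \textbf{Step 3}, the $\ZZ$-union case admits a shortcut that sidesteps the collapsing and the lifting you flag as needing justification: the group $\langle g,h\rangle$ permutes the maximal intervals $\{I_i\}$ (these are determined by the non-separation structure), and any element fixing some $I_i$ setwise preserves the order on $A$ and hence fixes its endpoints, contradicting freeness. Thus $\langle g,h\rangle$ acts freely by order-preserving bijections on the index set $\ZZ$, i.e., by translations, and is therefore abelian. Your collapsing argument also works once one observes that orientation-preservation on $A$ rules out swapping a non-separated pair, so that the descended action on $\R$ is still free, and that $[g,h]$ acting trivially on the quotient forces it to fix interior points of each $I_i$ and hence to be the identity.
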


\begin{proof}This is proven in section 3 of \cite{Fen2003}. See in particular Lemma 3.5, Theorem 3.8 and Proposition 3.10 there.
The last statement uses H\"older's Theorem (see e.g., \cite[Theorem 2.90]{Calegari:book})
to deduce that the group generated by $g$ and $h$ must be abelian. 
\end{proof}

\begin{remark}\label{r.axis}
Two commuting homeomorphisms that induce a free action of $\mathbb{Z}^2$ have the same axis (see \cite[Section 2]{Bar98} or \cite[Section 3]{Fen2003}).
We remark a couple of subtle points: 
\begin{enumerate}
\item The fact that $f$ acts freely
does not imply that any non-trivial power of $f$ acts freely, and in fact there
are easy counterexamples, 
\item  Unlike in the case of trees, $f$ acting
freely does not necessarily imply that the axis is properly
embedded in the leaf space. If the axis is a bi-infinite union of intervals, then
it is properly embedded (see Lemma \ref{l.union_intervals_axis_are_properly_embedded} below). If the axis is the reals, it may fail to be
properly embedded, even if all powers of $f$ act freely.
\end{enumerate}
\end{remark}

Notice that as a consequence we obtain:
\begin{corollary}\label{coro-abelianaxis}
Let $f,g,h\colon L \to L$ be three $\cC$-preserving homeomorphisms such that both $\langle f, g\rangle$ and $\langle f, h\rangle$ induce free actions of $\mathbb{Z}^2$ in the $\cC$-leaf space. Assume moreover that $f$ commutes with $g$ and with $h$. Then, the group generated by $f,g,h$ is abelian. 
\end{corollary}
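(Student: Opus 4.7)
The plan is to reduce the corollary to the last statement of Proposition~\ref{proposition-axes} by showing that $f$, $g$, and $h$ all share a common axis, and then invoking the proposition pairwise.

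First I would use the remark following Proposition~\ref{proposition-axes}, namely that two commuting homeomorphisms which act freely on the leaf space share the same axis. Since $f$ and $g$ commute and both act freely (as the whole group generated by $f,g,h$ acts freely, in particular each generator does), $f$ and $g$ have the same $\cC$-axis $A$. Similarly, $f$ and $h$ commute and act freely, so $h$ shares the same axis $A$. In particular, $g$ and $h$ share the axis $A$.

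Next I would apply the last sentence of Proposition~\ref{proposition-axes} to the pair $(g,h)$: these two $\cC$-preserving homeomorphisms do not fix any leaf (since they act freely on $\cL_{\cC}$), they share the axis $A$, and the group generated by $g$ and $h$ acts freely on $A$ (since it is a subgroup of the group generated by $f,g,h$, which acts freely on $\cL_{\cC}$ by hypothesis, hence on $A$). The proposition then gives $gh = hg$.

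Combined with the standing assumptions $fg = gf$ and $fh = hf$, all three generators pairwise commute, so the group $\langle f, g, h\rangle$ is abelian, completing the proof. The only conceptual step is recognizing that commutativity with $f$ forces $g$ and $h$ to share $f$'s axis; after that, the result is a direct application of Proposition~\ref{proposition-axes} (whose proof in turn ultimately rests on H\"older's theorem). There is no real obstacle here — the statement is essentially a packaging of the pairwise version already established.
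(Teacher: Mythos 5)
Your proposal is correct and follows essentially the same route as the paper: commutativity with $f$ forces $g$ and $h$ to share $f$'s axis, and then the last statement of Proposition~\ref{proposition-axes} applied to the pair $(g,h)$ (whose generated group acts freely as a subgroup of $\langle f,g,h\rangle$) yields $gh=hg$, hence the group is abelian.
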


\begin{proof}
Notice that as $f$ commutes with $g$ and $h$ then all three homeomorphisms of the leaf spaces share the same axis (\cite[Section 2]{Bar98}). Now the result follows from the previous proposition.
\end{proof}

Another useful fact about axes is the following:

\begin{lemma}\label{l.union_intervals_axis_are_properly_embedded}
 If $A$ is the axis of a $\cC$-preserving homeomorphism $f$ and $A$ is a $\ZZ$-union of intervals, then $A$ is properly embedded in the leaf space of $\cC$.
\end{lemma}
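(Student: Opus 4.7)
The plan is to argue by contradiction. Proper embedding of $A = \bigsqcup_{i \in \ZZ} I_i$ in $\cL_\cC$ unpacks to the assertion that every compact $K \subset \cL_\cC$ meets only finitely many $I_i$; assuming otherwise, I extract $d_n \in K \cap I_{i_n}$ with $|i_n| \to \infty$ and, by compactness, a subsequential limit $d_n \to d \in K$, with WLOG $i_n \to +\infty$ (note that the axes of $f$ and $f^{-1}$ coincide, so one can freely swap $f$ and $f^{-1}$).

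I first show that $d$ must lie in $A$. The defining property of the axis (Definition~\ref{d.axis}) gives, for each $n$, that $f(d_n)$ separates $d_n$ from $f^2(d_n)$ in $\cL_\cC$. Passing to the limit using continuity of $f$ and the fact that $d, f(d), f^2(d)$ are pairwise distinct---since $f$ acts freely on $\cL_\cC$ by the hypothesis of Proposition~\ref{proposition-axes}---one deduces that $f(d)$ separates $d$ from $f^2(d)$ (the separation relation between distinct triples is closed under convergence in a $1$-manifold, as checked in a local Hausdorff chart around $f(d)$). Hence $d \in A$, say $d \in I_j$. If $d$ lies in the interior $I_j \setminus \{x_j, y_j\}$, the contradiction is immediate: by invariance of domain (the inclusion of $I_j \setminus \{x_j, y_j\}$ into $\cL_\cC$ being a continuous injection between $1$-manifolds), $I_j \setminus \{x_j, y_j\}$ is open in $\cL_\cC$, forcing $d_n \in I_j$ for large $n$---contradicting $d_n \in I_{i_n}$ disjoint from $I_j$.

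The main obstacle is the boundary case, say $d = y_j$ (the case $d = x_j$ is symmetric), where $d$ lacks a Hausdorff neighborhood in $\cL_\cC$ on account of the non-separation with $x_{j+1}$. My plan is to select a Hausdorff open chart $U \simeq \R$ around $y_j$ in $\cL_\cC$; such a $U$ cannot contain $x_{j+1}$, since otherwise the non-separation of $y_j$ from $x_{j+1}$ would provide non-disjoint open neighborhoods of these two distinct points inside the Hausdorff chart $U$. The crucial assertion is then $U \cap A = U \cap I_j$, which I prove using the chain structure of the axis: passing along $A$ from $I_j$ to $I_{j+1}, I_{j+2}, \dots$ requires crossing the non-separated pair $\{y_j, x_{j+1}\}$ into the $x_{j+1}$-branch of $\cL_\cC$ (disjoint from $U$), and by the simple connectedness of the $1$-manifold $\cL_\cC$ this injective chain, once it has exited $U$, cannot return; the symmetric argument on the $I_{j-1}, I_{j-2}, \dots$ side via $x_j$ yields the same conclusion. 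Consequently $d_n \in U \cap A \subseteq I_j$ for large $n$, again contradicting $d_n \in I_{i_n}$ disjoint from $I_j$, and completing the proof.
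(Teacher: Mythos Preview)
Your argument has two genuine gaps. First, in showing $d \in A$, you assert that separation between distinct triples is closed under limits in $\cL_\cC$. This fails in non-Hausdorff $1$-manifolds: in the line with two origins, $1/n$ separates $-1$ from $1$ for all $n$, and $1/n \to 0_1$, yet $0_1$ does not separate $-1$ from $1$ since $\cL_\cC \setminus \{0_1\}$ stays connected through $0_2$. A local Hausdorff chart around $f(d)$ does not help, because the two local sides of $f(d)$ can be globally reconnected through any leaf non-separated from $f(d)$; you give no reason why this cannot occur here. Second, your ``crucial assertion'' $U \cap A = U \cap I_j$ is also false: with $y_j = 0_1$ and $x_{j+1} = 0_2$ as above, any Hausdorff chart $U \ni y_j$ contains an interval $(0,\epsilon) \subset I_{j+1}$. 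The ``$x_{j+1}$-branch'' is not disjoint from $U$; the two branches share all nearby points except the non-separated endpoints themselves. A weaker claim (that $U$ meets only finitely many $I_i$) would suffice, but your justification does not establish it.

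The paper's proof sidesteps both issues by using the one structural fact from Proposition~\ref{proposition-axes} that you never invoke: the consecutive endpoints $y_i$ and $x_{i+1}$ are non-separated in $\cL_\cC$. A transversal (equivalently, a Hausdorff interval in the leaf space) cannot contain two non-separated leaves, so once a transversal through an accumulation point is shown to contain both $y_i$ and $x_{i+1}$ for some large $i$, the contradiction is immediate---with no need to locate the accumulation point within $A$ or to analyze how $U$ meets the chain of intervals.
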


\begin{proof}
 Let $A= \cup_{\ZZ} I_i$, where $I_i = [x_i,y_i]$, with $y_i$ and $x_{i+1}$ not separated in the leaf space of $\cC$.
 
 If $A$ is not properly embedded, then there exists a leaf $c \in \cC$ such that $(x_i)$ and $(y_i)$ converges to $c$ as, say, $n$ goes to $+\infty$. Now, for any $i$, the interval $I_i$ separates $I_{i-1}$ from $I_{i+1}$. Thus, if $\tau$ is a transversal to $\cC$ through $c$, then $\tau$ intersects every $I_i$, for $i$ big enough. So in particular, for some $i$ big enough, $\tau$ intersects both $y_i$ and $x_{i+1}$. But this is impossible since these two leaves are not separated.
\end{proof}

\section{On partial hyperbolicity} \label{app.partial_hyperbolicity}

Here we state some facts about partial hyperbolicity that are used in the article but are well-known to the experts.

Recall that a $C^1$-diffeomorphism $f \colon M \to M$ of a $3$-manifold, is partially hyperbolic if there exists a $Df$-invariant splitting $TM = E^{\mathrm{s}} \oplus E^{\mathrm{c}} \oplus E^{\mathrm{u}}$ into $1$-dimensional bundles and an $n>0$ such that for every $x\in M$ we have

$$ \|Df^n|_{E^{\mathrm{s}}(x)}\| < \min\{1, \|Df^n|_{E^{\mathrm{c}}(x)}\|\} \leq \max \{ 1, \|Df^n|_{E^{\mathrm{c}}(x)}\|\} < \|Df^n|_{E^{\mathrm{u}}(x)}\|. $$

By changing the Riemannian metric, one can always assume that $n=1$ (see \cite{Gourmelon,CP}). 

A partially hyperbolic diffeomorphism is called \emph{dynamically coherent} if there exists $f$-invariant foliations $\cW^{\cs}$ and $\cW^{\cu}$ tangent to $E^{\cs}= E^{\mathrm{s}} \oplus E^{\mathrm{c}}$ and $E^{\cu}=E^{\mathrm{c}} \oplus E^{\mathrm{u}}$. Taking the intersection of $\cW^{\cs}$ and $\cW^{\cu}$ gives a one-dimensional foliation $\cW^{\mathrm{c}}$ tangent to $E^{\mathrm{c}}$ and $f$-invariant. Note that these foliations are not assumed to be unique in any sense (see \cite{BuW} for a discussion).

Partially hyperbolic diffeomorphisms need not be dynamically coherent, but when they are, the standard notion of equivalence (which goes back to \cite{HPS}) is that of \emph{leaf conjugacy}: Two dynamically coherent partially hyperbolic diffeomorphisms $f\colon M \to M$ and $g\colon N \to N$ are said to be \emph{leaf-conjugate} if there exists a homeomorphism $h\colon M \to N$ that maps the center foliation $\cW^c_f$ of $f$ to the center foliation $\cW^c_g$ of $g$. More precisely, $h$ is such that $h(\cW^c_f(f(x)))= \cW^c_g(g(h(x)))$ for all $x\in M$. We refer the reader to \cite{PotrieICM} and references therein for more discussions. 

We state the following result of Hertz, Hertz and Ures in a way that fits our particular needs.

\begin{theorem}[\cite{HHU-tori}]\label{thm-HHUtori} 
Let $f \colon M \to M$ be a partially hyperbolic diffeomorphism admitting a compact manifold\footnote{Notice that a compact manifold tangent to $E^{\cs}$ is necessarily a torus, see e.g., \cite{HHU-tori}.} tangent to $E^{\cs}$ (or $E^{\cu}$). Then, $M$ has solvable fundamental group (indeed, it is a torus bundle over the circle). 
\end{theorem}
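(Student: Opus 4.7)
The plan is to show that the existence of a compact surface tangent to $E^{cs}$ forces a highly rigid global structure on $M$: it is (virtually) the mapping torus of a torus automorphism. I will assume we have a compact surface $T$ tangent to $E^{cs}$; the case of $E^{cu}$ is symmetric.

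First I would show that $T$ must be a torus. Since $T$ is tangent to $E^{cs} = E^s \oplus E^c$, the restrictions of $E^s$ and $E^c$ are two continuous, transverse, non-vanishing line fields on $T$, so $\chi(T) = 0$ and $T$ is a torus or Klein bottle. Passing to a double cover of $M$ if necessary (which does not affect solvability of the fundamental group, and whose torus bundle structure will descend), we may assume $T$ is an orientable embedded torus.

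Next I would show that a positive power of $f$ preserves $T$. The distribution $E^u$ is transverse to $E^{cs}$, hence to $T$; so $T$ has a tubular neighborhood of the form $T \times (-\epsilon, \epsilon)$ with the $\{p\} \times (-\epsilon,\epsilon)$ pieces being short unstable arcs. Since any two compact surfaces tangent to the continuous distribution $E^{cs}$ and meeting at a point must agree in a neighborhood of that point, $T$ and $f^n(T)$ are, for each $n$, either equal or disjoint. Compactness of $M$, together with a uniform lower bound on the ``transverse thickness'' of such tangent tori coming from the local product structure of $E^{cs}\oplus E^u$, forces only finitely many disjoint compact surfaces tangent to $E^{cs}$ to coexist. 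Hence $f^k(T) = T$ for some $k \geq 1$; replace $f$ by $f^k$ so that $f(T) = T$.

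With $T$ $f$-invariant, the final step is to build the torus bundle structure. The union $U$ of unstable leaves through $T$ is an $f$-invariant open neighborhood of $T$; I would use that $f$ expands unstable length to argue that every unstable leaf must return to $T$ (an unstable ray escaping $U$ would produce infinite unstable length trapped in a compact complementary region, contradicting either expansion or the local product structure). Connectedness of $M$, together with $f$-invariance of $U$, then gives $U = M$. The first return of unstable leaves to $T$ defines a homeomorphism $\phi\colon T \to T$, exhibiting $M$ as the mapping torus of $\phi$. Therefore $M$ is a torus bundle over $S^1$ and $\pi_1(M)$ fits in a short exact sequence $1 \to \mathbb{Z}^2 \to \pi_1(M) \to \mathbb{Z} \to 1$, hence is solvable.

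The hard part I expect to be the step showing $U = M$, i.e.\ that the unstable saturation of the $f$-invariant torus fills the whole manifold and yields a genuine fibration. Both the ``return'' of unstable leaves to $T$ and the global consistency of the return map require delicate use of expansion along $E^u$ combined with compactness, and this is where dynamical and topological hypotheses must be woven together most carefully.
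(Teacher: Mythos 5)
First, note that the paper itself does not prove this statement: it is quoted from \cite{HHU-tori}, and the proof there is a substantial piece of work, so your sketch has to be measured against that. The most serious gap in your outline is the claim that two compact surfaces tangent to $E^{cs}$ which meet at a point must coincide near that point, which you use to conclude that $T$ and $f^n(T)$ are either equal or disjoint. The bundle $E^{cs}$ is only a continuous distribution and is not uniquely integrable in general; the theorem is stated for arbitrary partially hyperbolic $f$, and the known examples with a torus tangent to $E^{cs}$ include precisely the non-dynamically coherent ones (cf.\ \cite{HHU-noncoherent}), where surfaces tangent to $E^{cs}$ can merge and branch. So ``equal or disjoint'' is unjustified, and with it the finiteness step: there is no ``uniform transverse thickness'' for compact surfaces tangent to a continuous distribution, and nothing in your argument prevents infinitely many disjoint tangent tori from accumulating. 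Consequently you have not produced an $f$-periodic torus, which is the starting point of everything that follows. (The reduction to a double cover is also slightly off for the ``torus bundle'' part of the conclusion, since that structure is asserted for $M$ itself, though solvability of $\pi_1$ would indeed descend.)

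Even granting an invariant torus $T$, the final step --- that the unstable saturation $U$ of $T$ is all of $M$ and that the first return of unstable leaves to $T$ defines a fibration of $M$ over the circle with fiber $T$ --- is essentially the content of the theorem and is not established; you acknowledge this yourself. Unstable leaves through $T$ need not return to $T$ (they may accumulate on a disjoint invariant set), return times are not locally constant, and without already knowing that $T$ is incompressible and that its complement is a product-like region, no fibration results from this saturation. The actual argument in \cite{HHU-tori} runs along different lines: one first proves that any embedded torus tangent to $E^{cs}$ (or $E^{cu}$, or $E^s\oplus E^u$) is incompressible --- this is where the interaction between the transverse dynamics and irreducibility of $M$ is really used --- and then shows that its isotopy class is an ``Anosov torus,'' i.e., it admits an invariant representative on which some diffeomorphism of $M$ acts hyperbolically on homology; combined with $3$-manifold topology this forces $M$ to be $\mathbb{T}^3$, the mapping torus of $-\mathrm{Id}$, or the mapping torus of a hyperbolic automorphism of $\mathbb{T}^2$, all torus bundles with solvable fundamental group. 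Your outline does not recover either of these ingredients, so as it stands it is a plausibility sketch rather than a proof.
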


In particular, if the fundamental group of $M$ is not virtually solvable, and $f$ is dynamically coherent, then the center-stable and center-unstable foliations are taut.

Hence, using the fundamental results of Burago and Ivanov \cite{BI} one gets:

\begin{corollary}[\cite{BI,Parwani}]\label{c.aesferical}
Let $M$ be a $3$-manifold with non-solvable fundamental group.
Suppose that $M$ admits a partially hyperbolic diffeomorphism $f$ such that the
bundles $E^{\mathrm{s}}, E^{\mathrm{c}}, E^{\mathrm{u}}$ are orientable and $Df$ preserves these orientations.
Then $M$ admits a taut foliation. 
In particular, $M$ is irreducible, and aspherical. 
\end{corollary}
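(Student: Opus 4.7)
The plan is to invoke the fundamental theorem of Burago--Ivanov \cite{BI}, which, under the hypothesis that $E^s$, $E^c$, $E^u$ are orientable and that $Df$ preserves these orientations, produces \emph{branching foliations} $\fbs$ and $\fbu$ tangent to $E^{cs}$ and $E^{cu}$, together with an approximation result (proven in \cite{BI} and extended by Parwani \cite{Parwani}) giving, for every small $\eps>0$, genuine $f$-invariant foliations $\fes$ and $\feu$ whose leaves are $C^0$-close to those of the branching foliations and tangent to small cone fields around $E^{cs}$ and $E^{cu}$. Call $\cF = \fes$ one such foliation.

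First I would show that $\cF$ has no compact leaves. The approximation provided by Burago--Ivanov has the property that a compact leaf of $\fes$ would lie in a small neighborhood of a compact leaf of $\fbs$, and, in the branching setting, a compact leaf of $\fbs$ is an embedded compact surface tangent to $E^{cs}$. Then Theorem \ref{thm-HHUtori} applies: the existence of such a surface forces $\pi_1(M)$ to be (virtually) solvable, contradicting our hypothesis. Hence $\cF$ is a codimension-one foliation of a closed $3$-manifold with no compact leaves.

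Second, by Novikov's theorem (as recalled at the beginning of Appendix \ref{ap.tautfol}), a codimension-one foliation of a closed $3$-manifold without compact leaves is automatically taut. Thus $M$ admits a taut foliation, which is the first claim of the corollary.

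Finally, because $\pi_1(M)$ is not virtually solvable, it is not virtually $\mathbb Z$, so $M$ cannot be finitely covered by $S^2\times S^1$. Theorem \ref{thm-tautfoliation} then gives that the universal cover $\mt$ is homeomorphic to $\R^3$; in particular all higher homotopy groups vanish, so $M$ is aspherical. Irreducibility follows from Rosenberg \cite{Ros} (a closed $3$-manifold admitting a taut foliation is irreducible, except for $S^2\times S^1$ which is excluded here). The main point requiring care is the passage from the branching foliation to a genuine foliation while preserving the absence of compact leaves; this is exactly the content of the approximation results of \cite{BI,Parwani} combined with Theorem \ref{thm-HHUtori}.
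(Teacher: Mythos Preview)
Your proof is correct and is exactly the argument the paper has in mind: the corollary is stated without proof, citing \cite{BI,Parwani}, and the preceding sentence (``Hence, using the fundamental results of Burago and Ivanov\ldots'') indicates precisely the route you take---produce the branching foliation, approximate by a genuine one, rule out compact leaves via Theorem~\ref{thm-HHUtori}, and conclude tautness and hence irreducibility and asphericity.

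One small correction: the approximating foliations $\fes$, $\feu$ furnished by \cite{BI} are \emph{not} in general $f$-invariant; only the branching foliations are. This does not affect your argument at all, since $f$-invariance is never used---the point is only that the collapsing map $h_\eps\colon M\to M$ sends each leaf of $\fes$ diffeomorphically onto a leaf of $\fbs$, so a compact leaf of $\fes$ yields a compact surface tangent to $E^{cs}$, and Theorem~\ref{thm-HHUtori} applies. Just drop the words ``$f$-invariant'' from your description of $\fes$ and $\feu$.
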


We recall that when $\pi_1(M)$ is (virtually) solvable a complete classification of partially hyperbolic diffeomorphisms is known \cite{HP-Nil,HP-Sol,HP-Tori}. 

In the setting of Seifert manifolds we used the following result from \cite{HaPS} which is partially based on the study of horizontal and vertical laminations in Seifert manifolds \cite{Brit}. 

\begin{theorem}[\cite{HaPS}]\label{thm-horizontalseif} Let $f \colon M \to M$ be a dynamically coherent partially hyperbolic diffeomorphism on a Seifert manifold $M$ whose fundamental group is not (virtually) solvable. Then, $M$ is a finite cover of $T^1S$ where $S$ is a 2-dimensional hyperbolic orbifold, and the center-stable and center-unstable foliations of $f$ are \emph{horizontal}. That is, there exists a Seifert fibration $p\colon M \to \Sigma$ for which every leaf of the center-stable and center-unstable ($2$-dimensional) foliations is transverse to the ($1$-dimensional) fibers.
\end{theorem}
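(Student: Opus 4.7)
The plan is to combine the classification of Seifert manifolds with non-virtually-solvable fundamental group with Brittenham's structure theorem for essential laminations in Seifert manifolds, using the dynamical consequences of partial hyperbolicity to rule out vertical pieces. First, I would record that when $\pi_1(M)$ is not virtually solvable and $M$ is Seifert, the base orbifold $\Sigma$ must be hyperbolic (otherwise $\pi_1$ is virtually nilpotent or virtually $\ZZ^2$-by-$\ZZ$), the Seifert fibration is unique up to isotopy, and up to finite cover $M$ is $T^1 S$ for a closed hyperbolic $2$-orbifold $S$. This handles the first assertion of the theorem.

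Next I would turn to the foliations. By Theorem \ref{thm-HHUtori}, since $\pi_1(M)$ is not virtually solvable, neither $\cW^{cs}$ nor $\cW^{cu}$ can contain a compact leaf (such a leaf would be a $2$-torus and force $\pi_1(M)$ to be solvable). By Novikov's theorem (cf.\ Theorem \ref{thm-tautfoliation}) this makes both foliations taut, and in particular they are essential laminations in the sense of Gabai--Oertel. Brittenham's theorem then says that each of $\cW^{cs}$, $\cW^{cu}$, after an isotopy, either is horizontal (transverse to every Seifert fiber) or contains a sublamination which is vertical (a union of saturated annuli, tori and Klein bottles made from regular fibers, plus possibly some $I$-bundle pieces over closed curves in $\Sigma$).

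The key step, which I expect to be the main obstacle, is to rule out the vertical alternative using the partially hyperbolic dynamics. Suppose $\cW^{cs}$ contains a vertical sublamination $\Lambda$. The closed leaves of $\Lambda$ are tori or Klein bottles saturated by Seifert fibers, but such leaves would be compact leaves of $\cW^{cs}$, contradicting Theorem \ref{thm-HHUtori} (non-solvable $\pi_1$). So $\Lambda$ must consist of non-closed vertical leaves, i.e., vertical annuli or M\"obius bands whose closures contain additional vertical leaves. Any two such leaves project to geodesics in $\Sigma$, and in the universal cover they lift to bi-infinite vertical planes. Using that the unstable foliation $\cW^u$ is transverse to $\cW^{cs}$ and made of curves that grow exponentially under $f$, together with the fact that $f$ is homotopic to an isotopy respecting the fibration (after finite iterate), one shows that the holonomy of $\Lambda$ along the Seifert direction would force unstable leaves to be trapped between vertical planes, contradicting exponential expansion and the compactness of $M$; this is the step where one really uses partial hyperbolicity rather than mere foliation theory, and is essentially the content of the argument in \cite{HaPS}. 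The same reasoning applies verbatim to $\cW^{cu}$.

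Once verticality is ruled out, both $\cW^{cs}$ and $\cW^{cu}$ are horizontal for \emph{some} Seifert fibration. Uniqueness of the Seifert fibration in the hyperbolic-base case implies both are horizontal for the same fibration $p\colon M \to \Sigma$, which is the conclusion of the theorem. The transverse structure then gives, via the holonomy representation into $\mathrm{Homeo}_+(S^1)$ along the fibers, the identification of $M$ as a finite cover of $T^1 S$ used throughout the paper.
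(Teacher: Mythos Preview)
The paper does not give a proof of this statement; it is quoted from \cite{HaPS} with only the remark that the argument is ``partially based on the study of horizontal and vertical laminations in Seifert manifolds \cite{Brit}.'' So there is no proof in the paper to compare against; I can only assess your sketch against what the actual argument in \cite{HaPS} requires.

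Your outline has the right skeleton (Brittenham's horizontal/vertical dichotomy for essential laminations, then rule out vertical pieces via partial hyperbolicity), but two steps are genuinely off. First, the claim that ``up to finite cover $M$ is $T^1 S$'' does \emph{not} follow just from the base orbifold being hyperbolic: a circle bundle over a hyperbolic surface is finitely covered by $T^1 S$ only when its Euler number is nonzero, and there are Seifert manifolds with hyperbolic base and Euler number zero (the trivial bundles). Proving the Euler number is nonzero is a separate, nontrivial argument in \cite{HaPS} (their Theorem~B, also invoked in the present paper in the proof of Proposition~\ref{p.liftfixleaf_DC}); you cannot get it from the Seifert classification alone. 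Second, in the key step excluding vertical sublaminations you invoke that ``$f$ is homotopic to an isotopy respecting the fibration (after finite iterate)'', but the theorem makes no homotopy hypothesis on $f$ whatsoever, so this is an illegitimate assumption; the actual argument in \cite{HaPS} rules out vertical leaves by a volume-versus-length argument using the unstable foliation, not by exploiting a homotopy class of $f$. As written, your ``key step'' is a placeholder rather than an argument.
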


An invariant foliation is called \emph{$f$-minimal} if the only
non-empty, saturated, closed set invariant by $f$ is the whole manifold.  
The following result motivates asking for $f$-minimality of the foliations as a hypothesis as it covers the most important (from a dynamical standpoint) cases. 

\begin{proposition}[see Lemma 1.1 of \cite{BW}]\label{p.BWminimal}
Let $f \colon M \to M$ be a dynamically coherent partially hyperbolic diffeomorphism. If $f$ is either volume-preserving or transitive,\footnote{It in fact suffices that $f$ be \emph{chain-recurrent}, that is, if a non-empty open set $U$ verifies that $f(\overline{U})\subset U$ then $U=M$, see \cite{CP} for equivalences.} then the center-stable and center-unstable foliations are
$f$-minimal. 
\end{proposition}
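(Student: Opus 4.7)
The plan is to show that any non-empty, closed, $f$-invariant, $\cW^{cs}$-saturated subset $\Lambda\subset M$ must equal $M$; the argument for $\cW^{cu}$ will be symmetric, exchanging $\cW^u$ for $\cW^s$ and $f$ for $f^{-1}$. The footnote supplies the right unified hypothesis: both ``transitive'' and ``volume-preserving'' imply chain-recurrence of $f$ (transitivity via the existence of a dense orbit, volume-preservation via Poincar\'e recurrence together with the full support of Lebesgue measure), and a chain-recurrent diffeomorphism of a closed manifold admits no proper open trapping region for either $f$ or $f^{-1}$.

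My main construction is the open unstable $\epsilon$-neighborhood of $\Lambda$,
\[
V_\epsilon := \bigcup_{y\in \Lambda} \cW^u_\epsilon(y),
\]
where $\cW^u_\epsilon(y)$ denotes the open unstable arc of radius $\epsilon$ around $y$. The first step is to verify that $V_\epsilon$ is open in $M$. Given $x\in \cW^u_\epsilon(y_0)$ with $y_0\in\Lambda$, the local product structure of $\cW^{cs}$ and $\cW^u$ expresses any nearby $x'$ as sitting on the unstable leaf of a point $y'\in \cW^{cs}_{\mathrm{loc}}(y_0)$, and $y'\in \Lambda$ by $\cW^{cs}$-saturation; for $x'$ close enough to $x$ the unstable distance from $x'$ to $y'$ remains $<\epsilon$, so $x'\in V_\epsilon$. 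This is where $\cW^{cs}$-saturation is essential: without it the union of one-dimensional arcs $\cW^u_\epsilon(y)$ would have no transverse spread and would not be open.

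The second step is to show that $V_\epsilon$ is a forward-trapping region for $f^{-1}$. Since $f$ expands unstable distances by some factor $\lambda>1$, the map $f^{-1}$ contracts them by $\lambda$, hence $f^{-1}(\cW^u_\epsilon(y))\subset \cW^u_{\epsilon/\lambda}(f^{-1}(y))$, and the $f$-invariance of $\Lambda$ gives $f^{-1}(V_\epsilon)\subset V_{\epsilon/\lambda}$. A standard continuity/compactness argument (using that $\Lambda$ is closed and unstable manifolds depend continuously on the base point) then upgrades this to $f^{-1}(\overline{V_\epsilon})\subset V_\epsilon$, the ``cushion'' $\epsilon(1-\lambda^{-1})>0$ absorbing the closure.

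Chain-recurrence of $f$ (equivalently of $f^{-1}$) therefore forces $V_\epsilon\in\{\emptyset, M\}$, and $\Lambda\ne\emptyset$ rules out the empty case, so $V_\epsilon=M$ for every $\epsilon>0$. Every point of $M$ thus lies within unstable distance $\epsilon$ of $\Lambda$; letting $\epsilon\to 0$ and using that $\Lambda$ is closed gives $\Lambda=M$. The main subtlety is Step 1, the openness of $V_\epsilon$: this is exactly where the $\cW^{cs}$-saturation hypothesis enters the argument, and the rest relies only on the uniform unstable expansion coming from partial hyperbolicity, independently of whether $f$ is transitive or volume-preserving.
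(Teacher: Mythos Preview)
Your proof is correct and follows essentially the same approach as the paper: the paper's one-line argument is that a proper closed $f$-invariant $\cW^{cs}$-saturated set $\Lambda$ must be a repeller, which contradicts transitivity or volume preservation. Your construction of the unstable $\epsilon$-neighborhood $V_\epsilon$ and the verification that $f^{-1}(\overline{V_\epsilon})\subset V_\epsilon$ is precisely what it means for $\Lambda$ to be a repeller, so you have simply unpacked the paper's assertion in detail.
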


\begin{proof} Assume that there is a compact, non-empty $f$-invariant set $\Lambda$ saturated by center-stable leaves. If $\Lambda \neq M, \emptyset$ then it must be a repeller, so $f$ cannot be transitive nor volume-preserving. 
\end{proof}

We remark that the property of
$f$-minimality of $\cW^{\cs}$ and $\cW^{\cu}$ is a strictly weaker hypothesis than (chain-)transitivity (as seen, for instance, in the examples of \cite{BonattiGuelman}).

Finally, we recall the classification of partially hyperbolic diffeomorphisms in manifolds with virtually solvable fundamental group under the assumption that $f$ is homotopic to the identity (see \cite{HP-Nil,HP-Sol,HP-Tori} for the general case): 

\begin{theorem}\label{teo.solv}
Let $f: M \to M$ be a partially hyperbolic diffeomorphism homotopic to identity in a 3-manifold with virtually solvable fundamental group. Then $M$ is not Seifert-fibered and if $f$ is dynamically coherent then it is a discretized Anosov flow. Moreover, if there are no tori tangent to $E^{\cs}$ or $E^{\cu}$ then $f$ is dynamically coherent. 
\end{theorem}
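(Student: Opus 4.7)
The theorem is essentially a consolidation of the existing classifications \cite{HP-Nil,HP-Sol,HP-Tori} under the additional hypothesis that $f$ is homotopic to the identity. My plan is to run a case analysis via the geometrization of closed $3$-manifolds with virtually solvable fundamental group. Such an $M$ is, up to finite cover, modeled on one of the geometries $S^3$, $S^2 \times \mathbb{R}$, $\mathbb{E}^3$, Nil, or Sol, with Sol manifolds (mapping tori of Anosov $\mathbb{T}^2$-diffeomorphisms) being the only family not Seifert fibered.

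To establish the first assertion I would rule out each Seifert case in turn. For the spherical and $S^2 \times \mathbb{R}$ geometries, classical obstructions (recalled in \cite{HP-survey}) preclude the existence of partially hyperbolic diffeomorphisms. For the $\mathbb{E}^3$ and Nil cases, the main results of \cite{HP-Tori} and \cite{HP-Nil} give that every partially hyperbolic diffeomorphism $f$ is, up to finite iterate and cover, leaf conjugate to a Lie group automorphism of the universal cover whose induced action on $H_1(M;\mathbb{Z})$ is a hyperbolic matrix with distinct real eigenvalues. If $f$ is homotopic to the identity then its induced action on $H_1$ is trivial, contradicting this hyperbolicity. Thus $M$ must be a Sol manifold, which is not Seifert fibered.

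Once we know $M$ is a Sol manifold, the remaining assertions follow from \cite{HP-Sol}. That paper shows that any dynamically coherent partially hyperbolic diffeomorphism on a Sol manifold is, up to finite iterate and cover, leaf conjugate to the time-one map of the natural suspension Anosov flow; combined with Proposition~\ref{prop.equivDALC}, this yields the discretized Anosov flow conclusion. Moreover, \cite{HP-Sol} proves that on Sol manifolds the only obstruction to dynamical coherence is an $f$-invariant compact surface tangent to $E^{cs}$ or $E^{cu}$, which by Theorem~\ref{thm-HHUtori} must be a $2$-torus. Excluding such tori by hypothesis therefore gives dynamical coherence, completing the theorem. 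The main technical difficulty is not in this organizational argument but rather in the depth of the underlying classifications, particularly the delicate analysis carried out in \cite{HP-Sol}.
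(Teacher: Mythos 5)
Your overall strategy is the same one the paper itself relies on: Theorem~\ref{teo.solv} is stated there without proof, as a recollection of the classification results of \cite{HP-Nil,HP-Sol,HP-Tori}, and your case analysis via geometrization (ruling out the Seifert geometries using the hyperbolic action on homology, then reducing to Sol) is a correct way to organize the first assertion; the small imprecision that the linearization on $\mathbb{T}^3$ or a nilmanifold need only be \emph{partially} hyperbolic on $H_1$ (the center eigenvalue may have modulus one) does not matter, since an eigenvalue of modulus greater than one already contradicts triviality (or finite order, after passing to the finite cover) of the action on homology.

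There is, however, a genuine gap in your last step. You quote \cite{HP-Sol} in the form ``up to finite iterate and cover, leaf conjugate to the time-one map of the suspension Anosov flow'' and then claim that, combined with Proposition~\ref{prop.equivDALC}, this gives that $f$ \emph{itself} is a discretized Anosov flow. That deduction is not formal: knowing that some iterate $f^k$ (or a lift to a finite cover) fixes every center leaf does not imply that $f$ does, and this is exactly the point where the statement could fail --- indeed, Remark~\ref{rem.classification_weird_examples} exhibits dynamically coherent examples homotopic to the identity on Seifert manifolds for which $f^k$ is a discretized Anosov flow but $f$ is not. So on Sol manifolds an extra argument, using both the Sol structure and the hypothesis that $f$ is homotopic to the identity, is required to remove the iterate and the cover. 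One way to supply it: once the center foliation is identified with the orbit foliation of the suspension flow, its leaf space in $\mt$ is the plane $\R^2$ on which the deck group $\ZZ^2 \rtimes_A \ZZ$ acts, with $\ZZ^2$ acting by translations and the monodromy acting by the hyperbolic matrix $A$; a good lift $\ft$ induces a homeomorphism of this plane commuting with the whole deck group and moving points a bounded distance, and writing it as $\id + h$ with $h$ bounded, the relation $h(Ax)=Ah(x)$ forces $A^n h(x)$ to be bounded for all $n\in\ZZ$, hence $h\equiv 0$. Thus $\ft$ fixes every center leaf, and only then does Proposition~\ref{prop.equivDALC} (equivalently, the Bonatti--Wilkinson argument of Section~\ref{ss.topoAnosov}) yield that $f$ itself is a discretized Anosov flow. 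Without some argument of this kind, your proof only establishes the conclusion for a finite iterate on a finite cover, which is strictly weaker than the statement of Theorem~\ref{teo.solv}.
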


\section{Discretized Anosov flows}\label{ss.DAF}

Let $\varphi_t\colon M \to M$ be a continuous flow generated by a continuous vector field $X = \frac{\partial \varphi_t}{\partial t}|_{t=0}$. It is called a \emph{topological Anosov flow} if it preserves two topologically transverse codimension one continuous\footnote{We emphasize here that we do not require a priori the foliations to have $C^1$-leaves.} foliations $\cF^{ws}$ and $\cF^{wu}$ (called \emph{weak stable} and \emph{weak unstable}) such that:
\begin{enumerate}[label=(\roman*)]
 \item  For every pair of points $x,y \in \cF^{ws}$ (resp.~$x,y \in \cF^{wu}$), there exists an increasing continuous reparametrization $h\colon \R \to \R$ so that $d(\varphi_t(x), \varphi_{h(t)}(y)) \to 0$ as $t \to +\infty$ (resp.~as $t\to -\infty$);\label{item.topoAnosovForward}
 \item  There exists $\eps > 0$ such that for every $x,y \in \cF^{ws}$ (resp.~$x,y \in \cF^{wu}$) not on the same orbit, there exists $t\leq0$ (resp.~$t\geq 0$) such that $d(\varphi_t(x), \varphi_{t}(y)))>\eps$.\label{item.topoAnosovBackwards}
\end{enumerate}
As mentioned earlier in Appendix \ref{app.regulatingpA}, thanks to the work of Paternain \cite{Paternain} and Inaba and Matsumoto \cite{InabaMatsumoto}, the definition of topological Anosov flow can be replaced by asking for the flow to be expansive and to preserve two (non-singular, i.e., without prongs) foliations.
Note also that just condition \ref{item.topoAnosovForward} is not enough for a flow to be topological Anosov as condition \ref{item.topoAnosovForward} does not imply condition \ref{item.topoAnosovBackwards}. 

Conditions \ref{item.topoAnosovForward} and \ref{item.topoAnosovBackwards} allow one to obtain the same classical results as for Anosov flows (e.g., there are no closed $\cF^s$ or $\cF^u$ leaves; the foliations $\cF^{ws}$ and $\cF^{wu}$ are taut; the leaves are planes, annuli or M\"{o}bius bands --- these last two possibilities arising only when the leaves contain a periodic orbit; periodic points are dense in the non-wandering set, etc., see \cite{BarbotHDR} and references therein).

We say that a diffeomorphism $f\colon M \to M$ is a \emph{discretized Anosov flow} if there exists a topological Anosov flow $\varphi_t \colon M \to M$ and a continuous function $\tau\colon M \to \R_{>0}$ such that $f(x) = \varphi_{\tau(x)}(x)$. 

\begin{remark}
 Note that one should not expect a discretized Anosov flow to be the time-$1$ map of a reparametrization of the Anosov flow. To see this, one can for instance restrict to a periodic orbit of the flow: Then the discretized Anosov flow $f$ restricts to a diffeomorphism of the circle. But diffeomorphisms of the circle do not all come from the time-$1$ map of a non-singular flow on the circle.
\end{remark}

The following result relates the notion of discretized Anosov flows with the usual form of equivalence between partially hyperbolic systems.

\begin{proposition}\label{prop.equivDALC}
Let $f\colon M \to M$ be a partially hyperbolic diffeomorphism. The following are equivalent: 
\begin{enumerate}[label=\rm{(\arabic*)}]
\item \label{item.discretizedAF} $f$ is a discretized Anosov flow;
\item \label{item.leaf_conjugate} $f$ is dynamically coherent, the center leaves are fixed by $f$ and the center foliation is the flow line foliation of a topological Anosov flow.
\end{enumerate}
\end{proposition}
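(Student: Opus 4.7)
The proposition asserts the equivalence of two descriptions of a discretized Anosov flow. I will sketch both implications.

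\emph{Direction $(1)\Rightarrow(2)$:} Assume $f(x)=\Phi_{\tau(x)}(x)$ for a topological Anosov flow $\Phi$ generated by the continuous vector field $X$ and continuous $\tau\colon M\to(0,\infty)$. Write $\mathcal{O}$ for the orbit foliation of $\Phi$, and $\cF^{ws}$, $\cF^{wu}$ for its weak stable and unstable foliations. Since $f(x)\in\mathcal{O}(x)\subset\cF^{ws}(x)\cap\cF^{wu}(x)$, $f$ preserves each of $\mathcal{O}$, $\cF^{ws}$, and $\cF^{wu}$ leafwise. The central step is to show $X(x)\in E^c(x)$ pointwise. Differentiating
\[
f(\Phi_t(x))=\Phi_{\tau(\Phi_t x)+t}(x)
\]
in $t$ at $t=0$, which is valid because the left side is $C^1$ in $t$ (since $f$ is $C^1$ and the orbits of $\Phi$ are $C^1$, so the right side inherits differentiability), yields
\[
Df_x(X(x))=\bigl(1+X\tau(x)\bigr)\,X(f(x)).
\]
Hence $\langle X\rangle$ is a continuous $Df$-invariant line field. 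By iteration $Df^n_x(X(x))=c_n(x)X(f^n(x))$, and since $\tau$ is bounded between positive constants by compactness of $M$ and $\|X\|$ is bounded above and below, $|\log c_n(x)|$ grows at most linearly in $n$. Thus $\langle X\rangle$ is neither uniformly contracted nor uniformly expanded, so the cone-field criterion of partial hyperbolicity forces $\langle X\rangle\subseteq E^c$, and equality holds by dimension.

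Knowing that $T\mathcal{O}=E^c$, I would next identify $T\cF^{ws}=E^{cs}$. Any $y\in E^s(x)$ has $f$-orbit exponentially converging to that of $x$; since $f^n(x)=\Phi_{S_n(x)}(x)$ with $S_n(x)/n$ bounded, the forward $\Phi$-orbits of $x$ and $y$ stay close under a bounded time reparametrization, placing $y\in\cF^{ws}(x)$. Combined with $T\mathcal{O}=E^c\subset T\cF^{ws}$, dimension gives $T\cF^{ws}=E^{cs}$. Symmetrically $T\cF^{wu}=E^{cu}$, so $f$ is dynamically coherent with $\cW^{cs}=\cF^{ws}$, $\cW^{cu}=\cF^{wu}$, and $\cW^c=\mathcal{O}$, each center leaf being fixed by $f$.

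\emph{Direction $(2)\Rightarrow(1)$:} Assume $\cW^c$ is the orbit foliation of a topological Anosov flow $\Phi$, with each center leaf fixed by $f$. Then for each $x$, $f(x)$ lies on the $\Phi$-orbit through $x$, so $f(x)=\Phi_{\tau(x)}(x)$ for some $\tau(x)$. To make $\tau$ well-defined and continuous, lift to $\mt$: every $\wt\Phi$-orbit is homeomorphic to $\R$ (a periodic $\Phi$-orbit of period $T$ lifts to a line on which the corresponding non-trivial deck element acts as $\wt\Phi_T$), and any lift $\ft$ of $f$ preserves each $\wt\Phi$-orbit setwise. Thus $\wt\tau(x)\in\R$ is the unique real with $\ft(x)=\wt\Phi_{\wt\tau(x)}(x)$, continuous in $x$ because $\wt\Phi$ acts freely on each orbit. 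Deck-equivariance of $\ft$ and $\wt\Phi$ makes $\wt\tau$ descend to a continuous $\tau\colon M\to\R$. For positivity, after possibly replacing $f$ by an iterate so that $f$ preserves an orientation of $E^c$ and then choosing $\Phi$'s direction consistently, $\tau>0$ holds globally.

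The main obstacle will be the identification $\langle X\rangle=E^c$ in direction $(1)\Rightarrow(2)$: since $\Phi$ is merely topological Anosov, $X$ is only continuous and $\tau\circ\Phi_t$ is a priori non-differentiable in $t$, so the chain rule applies only by first noting that the $C^1$-regularity of $f$ forces the right side of the flow identity to be differentiable in $t$ at $t=0$. The subsequent matchings of weak (un)stable foliations with center (un)stable foliations are then standard once $T\mathcal{O}=E^c$ is known, invoking only the dynamical characterization of $E^s$ and $E^u$ by contraction/expansion together with the bounded time reparametrization between $f$- and $\Phi$-orbits.
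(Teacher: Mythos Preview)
Your overall strategy is sensible, but the central step in $(1)\Rightarrow(2)$ has a real gap. The bound $|\log c_n(x)|\leq Cn$ is indeed correct (it follows from $c_1(y)=\|Df_y(X(y))\|/\|X(f(y))\|$ being bounded above and below on the compact $M$), but it does \emph{not} imply that $\langle X\rangle$ avoids uniform contraction or expansion: if $X(x)\in E^s(x)$ then $|c_n(x)|\leq\lambda^n$ for some $\lambda<1$, which gives $|\log c_n(x)|\geq n|\log\lambda|$ --- linear growth, perfectly consistent with your bound. So your inference ``$|\log c_n|$ grows at most linearly $\Rightarrow$ $\langle X\rangle$ is neither uniformly contracted nor expanded'' is false. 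To rule out $\langle X\rangle\subset E^s$ you would need sublinear growth (or a lower bound on $c_n$), which does not follow from boundedness of $\tau$ and $\|X\|$.

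The paper closes this gap differently. It first shows $X(x)\neq E^s(x)$ and $X(x)\neq E^u(x)$ pointwise: if $X(x)\in E^s(x)$, then by uniqueness of the strong stable manifold (for Lipschitz invariant curves in a stable cone, cf.~\cite{CP}) an arc $I$ of the $\Phi$-orbit through $x$ coincides with the local stable manifold. Iterating backward, $f^{-n}(I)$ has exponentially growing arc length; but $f^{-n}(I)$ is an orbit arc whose flow-time span is bounded by (flow-time of $I)+n\max\tau$, hence its arc length grows at most linearly --- contradiction. Only \emph{after} this does the paper use the $\omega$-limit/cone argument to upgrade to $X\subset E^{cs}$ everywhere, and symmetrically $X\subset E^{cu}$.

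There is a second issue in your identification $T\cF^{ws}=E^{cs}$: for a \emph{topological} Anosov flow, $\cF^{ws}$ is a priori only $C^0$, so $T\cF^{ws}$ need not exist, and ``dimension count'' is not available. The paper handles this by invoking the Burago--Ivanov fact that the strong-stable saturation of any center curve is a $C^1$ surface tangent to $E^s\oplus E^c$; since this saturation lies in a weak-stable leaf, the weak-stable leaves are $C^1$ and tangent to $E^{cs}$, giving dynamical coherence.

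Finally, in $(2)\Rightarrow(1)$, ``replacing $f$ by an iterate'' does not establish the claim for $f$ itself. The clean argument is that $f$ has no fixed point: if $f(p)=p$, then for every $y\in\cW^s(p)$ one has $f(y)\in\cW^c(y)\cap\cW^s(p)=\{y\}$, so the entire strong stable leaf is fixed pointwise, contradicting contraction. Hence $\tau$ is continuous and never zero, so it has constant sign; reversing $\Phi$ if necessary gives $\tau>0$.
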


\begin{proof}
The fact that the second condition implies the first follows from arguments in \cite{BW}, as was done in section \ref{ss.topoAnosov}. 

The newer result is the other implication, which we now prove. 
Let $\varphi_t\colon M \to M$ be a topological Anosov flow and $\tau\colon M \to \R_{>0}$ be the positive continuous function such that $f(x)=\varphi_{\tau(x)}(x)$. 
Let $F$ be the distribution generated by the vector field $X$ generating $\varphi_t$. First, we claim that $F=E^{\mathrm{c}}$. To prove this we will first show that $F$ cannot be equal to $E^{\mathrm{s}}$ or $E^{\mathrm{u}}$ at any point and then deduce that $F$ has to be $E^{\mathrm{c}}$.

Suppose that 
there is $x\in M$ such that $F(x)=E^{\mathrm{s}}(x)$. Then, the invariance of the orbits of the flow $\varphi_t$ by $f$ together with the uniqueness of the stable manifold implies that there is an arc $I = \varphi_{[-\eps,\eps]}(x)$ of the orbit of $x$ by $\varphi_t$ which is tangent to $E^{\mathrm{s}}$. 
That fact is proven in \cite{CP}, we give here a brief explanation and the precise references.
Consider a small cone field
around $E^{\mathrm{s}}$. Let $\alpha$ be the orbit of $\varphi^t$ through $x$.
Since $F(x)=E^{\mathrm{s}}(x)$, it follows that, near $f^n(x)$, the curves $f^n(\alpha)$ are uniformly Lipschitz and their tangent are inside the cone field around $E^{\mathrm{s}}$.
Notice further that the family of curves $\{ f^n(\alpha) \}$ is also invariant under $f$.
The uniqueness of the stable manifold implies that Lipschitz
curves that are invariant and inside the cone have to be
the stable manifold near the point (this is done in \cite[Sections 4.2 and 4.3]{CP}). Hence $f^n(\alpha)$ must contain an open interval inside the stable manifold near $x$.

Iterating $I$ backwards, we get that the length of $f^{-n}(I)$ grows exponentially, contradicting the continuity of $\tau\colon M \to \R_{>0}$. 

Thus $F$ is never tangent to $E^{\mathrm{s}}$. The same argument shows that it is never tangent to $E^{\mathrm{u}}$. 

Now, suppose that there is a point $y$ such that $F$ is not inside $E^{\cs}$ at $y$. Then applying $Df^n$ to $F(y)$ will get $F(f^n(y))$ closer and closer to $E^{\mathrm{u}}(f^n(y))$. Hence, for any point $z$ in the $\omega$-limit set of $y$, one has that $F(z) = E^{\mathrm{u}}(z)$, contradicting the above. So $F$ is everywhere inside $E^{\cs}$ and, by the same argument, also inside $E^{\cu}$. Thus $F = E^{\mathrm{c}}$ everywhere.

The last step is to show that $f$ is dynamically coherent, for this, we use the fact that the strong stable saturation of a center curve is tangent to $E^{\mathrm{s}} \oplus E^{\mathrm{c}}$ (see \cite[Proposition 3.1]{BI}). We stress that a
center curve here means a curve whose tangent everywhere is in the
center bundle $E^{\mathrm{c}}$. Since $f$ is a discretized Anosov flow, the saturation of a flow line by strong stable leaves of $f$ is contained in the weak stable foliation of the Anosov flow $\varphi_t$ (cf. the proof of Proposition \ref{p.leafconjugacy}).
In particular this implies that the weak stable foliation $\cF^{ws}$ of $\varphi_t$ (which a priori could be only continuous) has $C^1$ leaves and is everywhere tangent to $E^{\mathrm{s}} \oplus E^{\mathrm{c}}$. This establishes dynamical coherence and completes the proof.  
\end{proof}

We remark that a long-standing conjecture (see \cite{BW}) states that every topological Anosov flow is orbit-equivalent to an Anosov flow. If this conjecture is true, then condition \ref{item.leaf_conjugate} above is equivalent to saying that $f$ is dynamically coherent and \emph{leaf-conjugate} to the time-one map of an Anosov flow. We remark here that it has been recently announced that this conjecture is true in the setting of transitive topological Anosov flows (Shannon \cite{Shannon}).

\section{The graph transform argument}\label{app.graph_transform}

We give here an application of the general graph transform technique to the particular case we needed it in.

We call \emph{center-stable plane} any embedded $C^1$-plane tangent to $E^{\mathrm{s}} \oplus E^{\mathrm{c}}$ in $\mt$. Notice that by unique integrability of $E^{\mathrm{s}}$ there is always a stable foliation inside a center-stable plane. 

\begin{lemma}[Graph transform lemma]\label{l.grapht}
Let $f$ be a partially hyperbolic diffeomorphism in $M$. Suppose
that $L \subset \mt$ is a center-stable plane which is fixed by
a lift $\hat f$ of $f$ to $\mt$, and by some $\gamma \in \pi_1(M) \smallsetminus \{id\}$.
Assume that there is a properly embedded $C^1$ curve $\eta$
transverse to the stable foliation in $L$ and such that
\[
 \gamma \eta = \eta \quad \text{and} \quad \hat f(\eta) \subset \bigcup_{z \in \eta} \widetilde{\cW}^s(z).
\]
Then in $L$ there is a curve
$\hat \eta$ which is fixed by both $\hat f$ and $\gamma$ and is everywhere tangent to $E^{\mathrm{c}}$. 
\end{lemma}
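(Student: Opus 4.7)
The statement is a concrete instance of the graph transform technique, applied after reducing to a compact quotient. My plan is to descend to $C := L/\langle \gamma \rangle$, which is a cylinder or M\"obius band (since $\gamma$ has infinite order and acts freely on $L \cong \R^2$), and run a contraction argument there.

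\emph{Setup on the quotient.} Because $\hat f$ commutes with $\gamma$ and fixes $L$, its restriction $\hat f|_L$ descends to a homeomorphism $\bar f \colon C \to C$ preserving the projected stable foliation $\bar \cW^s$. The curve $\eta$ projects to a simple closed $C^1$ curve $\bar \eta \subset C$ transverse to $\bar \cW^s$. The hypothesis $\hat f(\eta) \subset \bigcup_{z\in\eta} \widetilde \cW^s(z)$ translates to the statement that $\bar f(\bar \eta)$ and $\bar \eta$ meet exactly the same set $\Omega$ of leaves of $\bar \cW^s$, each in a single point; in particular $\bar f(\Omega) = \Omega$.

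\emph{Graph transform and cone contraction.} Let $\mathcal{G}$ be the space of simple closed $C^1$ curves $\sigma \subset C$ which (i) meet each $s \in \Omega$ in exactly one point, and (ii) have tangent directions in a fixed cone $\mathcal{C}^c$ around $E^c$ inside $E^{cs}$, with $\mathcal{C}^c$ wide enough to contain the tangents of $\bar \eta$. Define $T \colon \mathcal{G} \to \mathcal{G}$ by $T(\sigma) := \bar f(\sigma)$. That $T$ preserves the graph condition uses $\bar f(\Omega) = \Omega$ and that $\bar f$ is a homeomorphism preserving $\bar \cW^s$. That $T$ preserves $\mathcal{C}^c$ is the standard partial-hyperbolicity cone criterion: for $v = v_s + v_c \in E^{cs}$ with $v_c \neq 0$, the pointwise domination $\|Df|_{E^s(x)}\| < \|Df|_{E^c(x)}\|$ forces the ratio $\|Df v_s\|/\|Df v_c\|$ to shrink by a definite factor $\kappa < 1$. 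Iterating, $T^n$ maps $\mathcal{G}$ into the corresponding cone narrowed by $\kappa^n$.

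\emph{Fixed point via stable contraction.} On a suitable closed ball $B \subset \mathcal{G}$ around $\bar \eta$, I will use the distance
\[
d(\sigma, \sigma') \;:=\; \sup_{s \in \Omega} d_s\bigl(\sigma \cap s, \; \sigma' \cap s\bigr),
\]
where $d_s$ is the intrinsic path metric on $s$. For any $s \in \Omega$, the points $T(\sigma) \cap s$ and $T(\sigma') \cap s$ are the $\bar f$-images of $\sigma \cap \bar f^{-1}(s)$ and $\sigma' \cap \bar f^{-1}(s)$, which both lie on the stable leaf $\bar f^{-1}(s)$. Uniform stable contraction (by some $\lambda < 1$) gives $d(T\sigma, T\sigma') \leq \lambda\, d(\sigma,\sigma')$. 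Choosing the radius of $B$ larger than $d(\bar f(\bar \eta), \bar \eta) / (1-\lambda)$ --- finite by compactness of $\bar \eta$ --- makes $B$ both $T$-invariant and complete, and Banach's theorem yields a unique fixed curve $\bar{\hat \eta} \in B$. Since $\mathcal{C}^c$ could be chosen arbitrarily narrow and $T^n(\bar \eta)$ eventually enters any such narrowed cone, uniqueness of the fixed point forces $\bar{\hat \eta}$ to have tangents in every cone around $E^c$, hence to be tangent to $E^c$ everywhere. Lifting $\bar{\hat \eta}$ back to $L$ produces the desired curve $\hat \eta$: it is $C^1$, tangent to $E^c$, $\gamma$-invariant by construction, and $\hat f$-invariant because $\bar{\hat \eta}$ is $\bar f$-invariant.

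\emph{Main obstacle.} The principal technical work is setting up a complete subspace of curves on which the metric $d$ is finite and $T$ is a genuine contraction, and arguing that the $C^0$-limit is actually $C^1$ with tangents lying in $E^c$ (this last point uses the uniform convergence of tangent directions provided by the cone contraction). Once these standard aspects of the graph transform machinery are in place, everything reduces to the two clean inputs above: uniform stable contraction of $\bar f$, and the pointwise domination $\|Df|_{E^s}\| < \|Df|_{E^c}\|$ that shrinks the center cone.
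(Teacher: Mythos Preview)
Your proposal is correct and takes essentially the same approach as the paper: descend to the quotient $L/\langle\gamma\rangle$, treat curves transverse to the stable foliation as graphs over the circle $\bar\eta$, and run a contraction mapping argument using uniform stable contraction for the metric and the domination $\|Df|_{E^s}\| < \|Df|_{E^c}\|$ to control tangent directions. The only packaging difference is that the paper works directly with the space of bounded Lipschitz functions $S^1 \to \mathbb{R}$ (which is complete under the sup norm from the outset, sidestepping the issue you flag in your ``Main obstacle'' paragraph) and then argues the fixed point is $C^1$ tangent to $E^c$ via degeneracy of the tangent cone; your formulation in terms of $C^1$ curves with cone conditions is equivalent once you pass to the Lipschitz closure to get completeness.
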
 

Notice the subtlety in the conclusion of this lemma: The curve $\hat \eta$ produced is tangent to the center direction, however, it may \emph{not} be a center leaf since the bundle may not be uniquely integrable. 

\begin{remark}\label{r.condGT} 
The second hypothesis of the lemma is equivalent to saying that the union
$\bigcup_{z \in \eta} \widetilde{\cW}^s(z)$
is invariant by $\hat f$. In particular, all positive and negative images of $\eta$ by powers of $\hat f$ are contained in this union.
To see this, notice that, calling $\alpha$ the projection of $\eta$ to the cylinder $L /_{< \gamma >}$, the second condition implies that 
$f_1(\alpha)$ is freely homotopic to $\alpha$ (because $L /_{< \gamma >}$ is a cylinder), and $\alpha$ (or $f_1(\alpha)$) is not null homotopic in this cylinder.
Therefore $\bigcup_{z\in \eta} \widetilde{\cW}^s(z) = \bigcup_{z \in \hat f(\eta)} \widetilde{\cW}^s(z)$. 
Thus $\bigcup_{z\in \eta} \widetilde{\cW}^s(z)$ is $\hat f$ invariant. The converse is immediate.
\end{remark}

\begin{proof}  We work in the quotient $L/_{<\gamma>}$
which is an annulus on which $\eta$ projects to a closed $C^1$-circle transverse 
to the stable foliation, denoted by $\alpha$. 
Let $\pi_0\colon L \rightarrow L /_{< \gamma >}$ the quotient map.
Let $f_1$ be the induced diffeomorphism on $L/_{<\gamma>}$. 

Up to a small modification of $\alpha$ if necessary, we can assume that 
$\alpha$ is simple, that is, it goes around the
cylinder $L/_{<\gamma>}$ once.

We parametrize $\alpha$ in $L/_{<\gamma>}$ by arclength (for the leaf-wise path metric on $L /_{< \gamma >}$). Then, we parametrize $\bigcup_{z\in \alpha} \cW^s(z)$ as a cylinder $S^1 \times \mathbb{R}$ contained
in $L/_{<\gamma>}$,  where
$\alpha$ is the zero section and the stable leaves are parametrized by arclength. 

Since all $\hat f^n(\eta)$ are in $\bigcup_{z\in \eta} \cW^s(z)$, we can express $\eta_n := \hat f^n(\eta)$ as graphs of $C^1$-functions\footnote{In this specific case with one-dimensional center, one can assume that the stable foliation is $C^1$ inside center-stable leaves so that this makes sense, see \cite[Section 4.7]{CP}. If the stable foliation is less regular then one can go through with the proof by taking a smooth approximating foliation instead, and the arguments would be essentially the same.} from $S^1$ to $\mathbb{R}$ 

We want to show that $\hat f$ acts as a contraction on curves transverse to the stable foliation in (at least a compact part of) $\bigcup_{z\in \alpha} \cW^s(z) \simeq S^1\times \R$. 	
First we show that all the $\eta_n$ stay in a compact subset of $\bigcup_{z\in \eta} \cW^s(z)$ (and thus also of $L/_{<\gamma>}$).

Our assumptions imply that there exists some $a_0>0$, such that $f_1(\alpha)$ is contained
in an $a_0$ stable neighborhood of $\eta$. That is, in the union of stable segments of length $2 a_0$ centered at $\eta$.

Let $\lambda < 1$ be the smallest contraction factor for $f_1$ along
stable leaves. It follows that $f^2_1(\eta)$ is contained
in the stable neighborhood of size $a_0 + \lambda a_0$ around $\alpha$,
and so on.
Thus, we immediately get that, for all $n$, $f_1^n(\alpha)$ is contained in a compact subset of the annulus.

Now that we know that all the $\eta_n$ curves are contained in a compact subset, we can 
use the fact that $f_1$ contracts stable leaves more than centers to prove the following:

There exists some constant $a_1$ such that $f_1$ globally preserves the space of uniformly bounded (for some appropriately large bound) Lipschitz functions from $S^1$ to $\mathbb{R}$ with Lipschitz constant less than $a_1$. By standard computations, one can see that this acts as a contraction on this complete metric space (this is usually called the \emph{graph transform technique} see e.g., \cite{HPS} or \cite[Section 4.2]{CP} for a more detailed study of this technique and the reason for considering Lipschitz functions).

Therefore, one obtains that there is a unique fixed point of this 
action which corresponds to the graph of a Lipschitz function from $S^1$ to $\mathbb{R}$ which is the 
unique invariant Lipschitz graph under $f_1$. It is also standard to show that the tangent cones at each point must actually be degenerate (see \cite[Section 4.2]{CP}), i.e., the invariant curve is $C^1$.
Moreover, since $E^{\mathrm{c}}$ is the only invariant bundle transverse to $E^{\mathrm{s}}$, the curve must be everywhere tangent to $E^{\mathrm{c}}$. 
The lift of this curve to $L$ is the curve we sought. 
\end{proof}

Under the assumptions of the graph transform lemma (Lemma \ref{l.grapht}), another thing we easily deduce is that there must be a periodic center leaf of $f$ in the projection of the leaf $L$:
\begin{lemma}\label{l.if_graph_transform_then_periodic_center}
 Let $f$ be a partially hyperbolic diffeomorphism in $M$. Suppose
that $L \subset \mt$ is a center-stable plane which is fixed by
a lift $\hat f$ of $f$ to $\mt$, and by some $\gamma \in \pi_1(M) \smallsetminus \{id\}$.

Assume that there exists a curve $\hat\eta$ that is fixed by both $\hat f$ and $\gamma$. Then there exists a center leaf $c$ in $L$ and two integers $n,m$, with $m\neq 0$, such that
$c = \gamma^n \hat{f}^m c$.
\end{lemma}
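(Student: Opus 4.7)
The plan is to leverage the fact that the curve $\hat\eta$ produced by Lemma \ref{l.grapht} is $C^1$ and tangent to $E^c$ at every point to show that $\hat\eta$ must actually be contained in a single leaf of the center foliation inside $L$; the conclusion will then follow immediately by invariance, with $n = 0$ and $m = 1$.

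The first step is to set up the local picture inside $L$. Since $f$ is dynamically coherent, the traces $\wt\cW^c \cap L$ and $\wt\cW^s \cap L$ are two continuous transverse $1$-dimensional foliations of the plane $L$, tangent respectively to $E^c$ and $E^s$. By the standard local model for a pair of transverse $1$-dimensional foliations of a surface, every point $p \in L$ has a neighborhood $U \simeq I^s \times I^c$ in which the stable plaques are horizontal slices $I^s \times \{y\}$ and the center plaques are vertical slices $\{x\} \times I^c$; in such a chart $E^c$ is the vertical direction at every point.

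Next I would argue that $\hat\eta$ is contained in a single center leaf. In a local product chart $U$ as above, write $\hat\eta \cap U$ as a $C^1$ curve $t \mapsto (x(t), y(t))$. Tangency to $E^c$ forces $x'(t) \equiv 0$, so $x$ is constant on $\hat\eta \cap U$ and this intersection lies entirely in a single center plaque $\{x_0\} \times I^c$. Since $\hat\eta$ is connected, propagating this local statement from plaque to plaque along the curve shows that $\hat\eta$ is contained in a single leaf $c$ of $\wt\cW^c$, which necessarily lies in $L$.

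Finally, since $\hat f$ permutes leaves of $\wt\cW^c$ and $\hat f(\hat\eta) = \hat\eta$, the leaf $\hat f(c)$ meets $c$ along $\hat\eta \neq \emptyset$, so distinctness of center leaves gives $\hat f(c) = c$. Taking $n = 0$ and $m = 1$ yields $c = \gamma^n \hat f^m c$, as desired. I do not expect a genuine obstacle: the only nontrivial input is the product structure for two transverse continuous $1$-dimensional foliations of a surface, which is classical, and once $\hat\eta$ is identified with a center leaf the conclusion is essentially formal.
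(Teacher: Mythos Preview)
Your argument has a genuine gap at the step where you conclude that $\hat\eta$ lies in a single center leaf. The product chart $U \simeq I^s \times I^c$ straightening both the stable and the center foliations is only a \emph{homeomorphism}, because the center foliation in $L$ is merely $C^0$ (with $C^1$ leaves). The function $x$ you want to differentiate is the local quotient map to the center leaf space, hence only continuous, so the equation ``$x'(t)\equiv 0$'' has no meaning. More conceptually: even when a center foliation exists, the bundle $E^c$ is only continuous and need not be uniquely integrable, so a $C^1$ curve everywhere tangent to $E^c$ is \emph{not} forced to be a center leaf. The paper flags precisely this point right after Lemma~\ref{l.grapht}. (This is also why, in the proof of Lemma~\ref{l.Axs-line}, an extra limiting argument with iterated center segments is needed to identify the graph-transform curve as an actual center leaf.) A secondary issue is that the lemma as stated does not assume $\hat\eta$ is tangent to $E^c$; you are importing that from Lemma~\ref{l.grapht}.

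The paper's proof sidesteps unique integrability entirely. It projects $\hat\eta$ to a circle $\eta$ in $L/\langle\gamma\rangle$, on which the induced map $f_1$ acts. Either $f_1$ has a periodic point (and one takes the center leaf through it), or by recurrence there exist $n,m$ with $m$ large so that $\gamma^n\hat f^m$ brings a point $x\in\hat\eta$ very close to itself. Since $\hat f$ contracts stable lengths, for $m$ large the image under $\gamma^n\hat f^m$ of a short stable arc $\tau$ through $x$ lands well inside $\tau$ in the sense that every center leaf through the image meets the interior of $\tau$; this gives a continuous self-map of the compact interval of center leaves meeting $\tau$, and a fixed point of that map is the desired center leaf.
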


To prove this lemma, we need to use the center leaf space on $L$. When the foliations are branching (i.e. in the non-dynamically coherent setting), the center leaf space will be defined in \cite{BFFP-sequel}. 

\begin{proof}
 Let $\eta = \pi(\hat\eta)$ be the projection of $\hat\eta$ to $M$. Since $\hat\eta$ is invariant by $\gamma$ and $\hat f$, the curve $\eta$ is a circle on which $f$ acts.
 
 Suppose first that $f$ has a periodic point on $\eta$. Then, there exists a center leaf through that point that is periodic, as claimed. 
 
 Otherwise, there exists a point in $\eta$ that is inside its $\omega$-limit set for $f$.
 
 Lifting back to the universal cover, this means that there exists $x\in \hat\eta$ such that there exists integers $m,n$, with $m$ arbitrarily large, such that $x$ and $\gamma^n \hat{f}^m(x)$ can be made arbitrarily close.
 
 Let $\tau$ be a compact segment of the stable leaf through $x$. Since $\hat f$ contracts the length of stable segments, we can choose $m\in \mathbb{N}$ large enough so that every center leaf through $\gamma^n \hat{f}^m (\tau)$ intersects \emph{the interior} of $\tau$. (This is possible as $\gamma^n \hat{f}^m (\tau)$ can be chosen arbitrarily small and arbitrarily close to $x$, which is in the interior of $\tau$.)
 
 Let $\cL^c_L$ be the leaf space of the center foliation in $L$. 
 Let 
 \[
  \tau_c = \left\{ c\in \cL^c_L \mid c \cap \tau \neq \emptyset \right\}.
 \]
 Notice that $\tau_c$ is a compact interval in the $1$-manifold $\cL^c_L$.
 
 Then consider the function $h\colon \tau_c \to \cL^c_L$ defined by $h(c) = \gamma^n \hat{f}^m (c)$. The map $h$ is continuous, and, thanks to our choice of $m$, $h(\tau_c)$ is contained in the interior of $\tau_c$. Hence, there exists $c_0\in \tau_c$ that is fixed by $h$, as claimed.
\end{proof}

\end{appendix}
\bibliographystyle{amsalpha_for_Rafael}
\bibliography{biblio}

\end{document}